\newtheorem{theorem}{Theorem}[section]
\newtheorem{axiom}[theorem]{Axiom}
\newtheorem{conjecture}[theorem]{Conjecture}
\newtheorem{corollary}[theorem]{Corollary}
\newtheorem{definition}[theorem]{Definition}
\newtheorem{example}[theorem]{Example}
\newtheorem{exercise}[theorem]{Exercise}
\newtheorem{lemma}[theorem]{Lemma}
\newtheorem{notation}[theorem]{Notation}
\newtheorem{proposition}[theorem]{Proposition}
\newtheorem{remark}[theorem]{Remark}
\let\pdfoutput=\undefined\fi
\chardef\@x10\chardef\@xv60
\def\tcitime{
\def\@time{%
  \@minute\time\@hour\@minute\divide\@hour\@xv
  \ifnum\@hour<\@x 0\fi\the\@hour:%
  \multiply\@hour\@xv\advance\@minute-\@hour
  \ifnum\@minute<\@x 0\fi\the\@minute
  }}%
\def\x@hyperref#1#2#3{%
   \catcode`\~ = 12
   \catcode`\$ = 12
   \catcode`\_ = 12
   \catcode`\# = 12
   \catcode`\& = 12
   \catcode`\% = 12
   \y@hyperref{#1}{#2}{#3}%
}
\def\y@hyperref#1#2#3#4{%
   #2\ref{#4}#3
   \catcode`\~ = 13
   \catcode`\$ = 3
   \catcode`\_ = 8
   \catcode`\# = 6
   \catcode`\& = 4
   \catcode`\% = 14
}
\def\QCTOpt[#1]#2{%
  \def\QCTOptB{#1}
  \def\QCTOptA{#2}
}
\def\QCTNOpt#1{%
  \def\QCTOptA{#1}
  \let\QCTOptB\empty
}
\def\Qct{%
  \@ifnextchar[{%
    \QCTOpt}{\QCTNOpt}
}
\def\QCBOpt[#1]#2{%
  \def\QCBOptB{#1}%
  \def\QCBOptA{#2}%
}
\def\QCBNOpt#1{%
  \def\QCBOptA{#1}%
  \let\QCBOptB\empty
}
\def\Qcb{%
  \@ifnextchar[{%
    \QCBOpt}{\QCBNOpt}%
}
\def\PrepCapArgs{%
  \ifx\QCBOptA\empty
    \ifx\QCTOptA\empty
      {}%
    \else
      \ifx\QCTOptB\empty
        {\QCTOptA}%
      \else
        [\QCTOptB]{\QCTOptA}%
      \fi
    \fi
  \else
    \ifx\QCBOptA\empty
      {}%
    \else
      \ifx\QCBOptB\empty
        {\QCBOptA}%
      \else
        [\QCBOptB]{\QCBOptA}%
      \fi
    \fi
  \fi
}
\def\GRAPHICSPS#1{%
 \ifcase\GRAPHICSTYPE
   \special{ps: #1}%
 \or
   \special{language "PS", include "#1"}%
 \fi
}%
\def\graffile#1#2#3#4{%
    \bgroup
	   \@inlabelfalse
       \leavevmode
       \@ifundefined{bbl@deactivate}{\def~{\string~}}{\activesoff}%
        \raise -#4 \BOXTHEFRAME{%
           \hbox to #2{\raise #3\hbox to #2{\null #1\hfil}}}%
    \egroup
}%
\def\draftbox#1#2#3#4{%
 \leavevmode\raise -#4 \hbox{%
  \frame{\rlap{\protect\tiny #1}\hbox to #2%
   {\vrule height#3 width\z@ depth\z@\hfil}%
  }%
 }%
}%
\let\nographics=\@msidraft
\newif\ifwasdraft
\def\GRAPHIC#1#2#3#4#5{%
   \ifnum\@msidraft=\@ne\draftbox{#2}{#3}{#4}{#5}%
   \else\graffile{#1}{#3}{#4}{#5}%
   \fi
}
\def\addtoLaTeXparams#1{%
    \edef\LaTeXparams{\LaTeXparams #1}}%
\newif\ifBoxFrame \BoxFramefalse
\newif\ifOverFrame \OverFramefalse
\newif\ifUnderFrame \UnderFramefalse
\def\BOXTHEFRAME#1{%
   \hbox{%
      \ifBoxFrame
         \frame{#1}%
      \else
         {#1}%
      \fi
   }%
}
\def\doFRAMEparams#1{\BoxFramefalse\OverFramefalse\UnderFramefalse\readFRAMEparams#1\end}%
\def\readFRAMEparams#1{%
 \ifx#1\end%
  \let\next=\relax
  \else
  \ifx#1i\dispkind=\z@\fi
  \ifx#1d\dispkind=\@ne\fi
  \ifx#1f\dispkind=\tw@\fi
  \ifx#1t\addtoLaTeXparams{t}\fi
  \ifx#1b\addtoLaTeXparams{b}\fi
  \ifx#1p\addtoLaTeXparams{p}\fi
  \ifx#1h\addtoLaTeXparams{h}\fi
  \ifx#1X\BoxFrametrue\fi
  \ifx#1O\OverFrametrue\fi
  \ifx#1U\UnderFrametrue\fi
  \ifx#1w
    \ifnum\@msidraft=1\wasdrafttrue\else\wasdraftfalse\fi
    \@msidraft=\@ne
  \fi
  \let\next=\readFRAMEparams
  \fi
 \next
 }%
\def\IFRAME#1#2#3#4#5#6{%
      \bgroup
      \let\QCTOptA\empty
      \let\QCTOptB\empty
      \let\QCBOptA\empty
      \let\QCBOptB\empty
      #6%
      \parindent=0pt
      \leftskip=0pt
      \rightskip=0pt
      \setbox0=\hbox{\QCBOptA}%
      \@tempdima=#1\relax
      \ifOverFrame
          \typeout{This is not implemented yet}%
          \show\HELP
      \else
         \ifdim\wd0>\@tempdima
            \advance\@tempdima by \@tempdima
            \ifdim\wd0 >\@tempdima
               \setbox1 =\vbox{%
                  \unskip\hbox to \@tempdima{\hfill\GRAPHIC{#5}{#4}{#1}{#2}{#3}\hfill}%
                  \unskip\hbox to \@tempdima{\parbox[b]{\@tempdima}{\QCBOptA}}%
               }%
               \wd1=\@tempdima
            \else
               \textwidth=\wd0
               \setbox1 =\vbox{%
                 \noindent\hbox to \wd0{\hfill\GRAPHIC{#5}{#4}{#1}{#2}{#3}\hfill}\\%
                 \noindent\hbox{\QCBOptA}%
               }%
               \wd1=\wd0
            \fi
         \else
            \ifdim\wd0>0pt
              \hsize=\@tempdima
              \setbox1=\vbox{%
                \unskip\GRAPHIC{#5}{#4}{#1}{#2}{0pt}%
                \break
                \unskip\hbox to \@tempdima{\hfill \QCBOptA\hfill}%
              }%
              \wd1=\@tempdima
           \else
              \hsize=\@tempdima
              \setbox1=\vbox{%
                \unskip\GRAPHIC{#5}{#4}{#1}{#2}{0pt}%
              }%
              \wd1=\@tempdima
           \fi
         \fi
         \@tempdimb=\ht1
         \advance\@tempdimb by -#2
         \advance\@tempdimb by #3
         \leavevmode
         \raise -\@tempdimb \hbox{\box1}%
      \fi
      \egroup%
}%
\def\DFRAME#1#2#3#4#5{%
  \vspace\topsep
  \hfil\break
  \bgroup
     \leftskip\@flushglue
	 \rightskip\@flushglue
	 \parindent\z@
	 \parfillskip\z@skip
     \let\QCTOptA\empty
     \let\QCTOptB\empty
     \let\QCBOptA\empty
     \let\QCBOptB\empty
	 \vbox\bgroup
        \ifOverFrame 
           #5\QCTOptA\par
        \fi
        \GRAPHIC{#4}{#3}{#1}{#2}{\z@}%
        \ifUnderFrame 
           \break#5\QCBOptA
        \fi
	 \egroup
  \egroup
  \vspace\topsep
  \break
}%
\def\FFRAME#1#2#3#4#5#6#7{%
  \@ifundefined{floatstyle}
    {
     \begin{figure}[#1]%
    }
    {
	 \ifx#1h
      \begin{figure}[H]%
	 \else
      \begin{figure}[#1]%
	 \fi
	}
  \let\QCTOptA\empty
  \let\QCTOptB\empty
  \let\QCBOptA\empty
  \let\QCBOptB\empty
  \ifOverFrame
    #4
    \ifx\QCTOptA\empty
    \else
      \ifx\QCTOptB\empty
        \caption{\QCTOptA}%
      \else
        \caption[\QCTOptB]{\QCTOptA}%
      \fi
    \fi
    \ifUnderFrame\else
      \label{#5}%
    \fi
  \else
    \UnderFrametrue%
  \fi
  \begin{center}\GRAPHIC{#7}{#6}{#2}{#3}{\z@}\end{center}%
  \ifUnderFrame
    #4
    \ifx\QCBOptA\empty
      \caption{}%
    \else
      \ifx\QCBOptB\empty
        \caption{\QCBOptA}%
      \else
        \caption[\QCBOptB]{\QCBOptA}%
      \fi
    \fi
    \label{#5}%
  \fi
  \end{figure}%
 }%
\def\makeactives{
  \catcode`\"=\active
  \catcode`\;=\active
  \catcode`\:=\active
  \catcode`\'=\active
  \catcode`\~=\active
}
   \gdef\activesoff{%
      \def"{\string"}%
      \def;{\string;}%
      \def:{\string:}%
      \def'{\string'}%
      \def~{\string~}%
    }
\def\FRAME#1#2#3#4#5#6#7#8{%
 \bgroup
 \ifnum\@msidraft=\@ne
   \wasdrafttrue
 \else
   \wasdraftfalse%
 \fi
 \def\LaTeXparams{}%
 \dispkind=\z@
 \def\LaTeXparams{}%
 \doFRAMEparams{#1}%
 \ifnum\dispkind=\z@\IFRAME{#2}{#3}{#4}{#7}{#8}{#5}\else
  \ifnum\dispkind=\@ne\DFRAME{#2}{#3}{#7}{#8}{#5}\else
   \ifnum\dispkind=\tw@
    \edef\@tempa{\noexpand\FFRAME{\LaTeXparams}}%
    \@tempa{#2}{#3}{#5}{#6}{#7}{#8}%
    \fi
   \fi
  \fi
  \ifwasdraft\@msidraft=1\else\@msidraft=0\fi{}%
  \egroup
 }%
\def\TEXUX#1{"texux"}
\long\def\QQQ#1#2{%
     \long\expandafter\def\csname#1\endcsname{#2}}%
\long\def\QQA#1#2{}%
\def\QTR#1#2{{\csname#1\endcsname {#2}}}%
\long\def\TeXButton#1#2{#2}%
\def\EXPAND#1[#2]#3{}%
\def\NOEXPAND#1[#2]#3{}%
\def\LaTeXparent#1{}%
\def\ChildStyles#1{}%
\def\ChildDefaults#1{}%
\def\QTagDef#1#2#3{}%
  \providecommand{\UNICODE}[2][]{\protect\rule{.1in}{.1in}}
  \providecommand{\U}[1]{\protect\rule{.1in}{.1in}}
\def\QQfnmark#1{\footnotemark}
 \def\abstract{%
  \if@twocolumn
   \section*{Abstract (Not appropriate in this style!)}%
   \else \small 
   \begin{center}{\bf Abstract\vspace{-.5em}\vspace{\z@}}\end{center}%
   \quotation 
   \fi
  }%
   \def\registered{\relax\ifmmode{}\r@gistered
                    \else$\m@th\r@gistered$\fi}%
 \def\r@gistered{^{\ooalign
  {\hfil\raise.07ex\hbox{$\scriptstyle\rm\text{R}$}\hfil\crcr
  \mathhexbox20D}}}}{}%
\newdimen\theight
\def\newfmtname{LaTeX2e}
  \DeclareOldFontCommand{\rm}{\normalfont\rmfamily}{\mathrm}
  \DeclareOldFontCommand{\sf}{\normalfont\sffamily}{\mathsf}
  \DeclareOldFontCommand{\tt}{\normalfont\ttfamily}{\mathtt}
  \DeclareOldFontCommand{\bf}{\normalfont\bfseries}{\mathbf}
  \DeclareOldFontCommand{\it}{\normalfont\itshape}{\mathit}
  \DeclareOldFontCommand{\sl}{\normalfont\slshape}{\@nomath\sl}
  \DeclareOldFontCommand{\sc}{\normalfont\scshape}{\@nomath\sc}
\def\alpha{{\Greekmath 010B}}%
\def\beta{{\Greekmath 010C}}%
\def\gamma{{\Greekmath 010D}}%
\def\delta{{\Greekmath 010E}}%
\def\epsilon{{\Greekmath 010F}}%
\def\zeta{{\Greekmath 0110}}%
\def\eta{{\Greekmath 0111}}%
\def\theta{{\Greekmath 0112}}%
\def\iota{{\Greekmath 0113}}%
\def\kappa{{\Greekmath 0114}}%
\def\lambda{{\Greekmath 0115}}%
\def\mu{{\Greekmath 0116}}%
\def\nu{{\Greekmath 0117}}%
\def\xi{{\Greekmath 0118}}%
\def\pi{{\Greekmath 0119}}%
\def\rho{{\Greekmath 011A}}%
\def\sigma{{\Greekmath 011B}}%
\def\tau{{\Greekmath 011C}}%
\def\upsilon{{\Greekmath 011D}}%
\def\phi{{\Greekmath 011E}}%
\def\chi{{\Greekmath 011F}}%
\def\psi{{\Greekmath 0120}}%
\def\omega{{\Greekmath 0121}}%
\def\varepsilon{{\Greekmath 0122}}%
\def\vartheta{{\Greekmath 0123}}%
\def\varpi{{\Greekmath 0124}}%
\def\varrho{{\Greekmath 0125}}%
\def\varsigma{{\Greekmath 0126}}%
\def\varphi{{\Greekmath 0127}}%
\def\nabla{{\Greekmath 0272}}
\def\FindBoldGroup{%
   {\setbox0=\hbox{$\mathbf{x\global\edef\theboldgroup{\the\mathgroup}}$}}%
}
\def\Greekmath#1#2#3#4{%
    \if@compatibility
        \ifnum\mathgroup=\symbold
           \mathchoice{\mbox{\boldmath$\displaystyle\mathchar"#1#2#3#4$}}%
                      {\mbox{\boldmath$\textstyle\mathchar"#1#2#3#4$}}%
                      {\mbox{\boldmath$\scriptstyle\mathchar"#1#2#3#4$}}%
                      {\mbox{\boldmath$\scriptscriptstyle\mathchar"#1#2#3#4$}}%
        \else
           \mathchar"#1#2#3#4%
        \fi 
    \else 
        \FindBoldGroup
        \ifnum\mathgroup=\theboldgroup 
           \mathchoice{\mbox{\boldmath$\displaystyle\mathchar"#1#2#3#4$}}%
                      {\mbox{\boldmath$\textstyle\mathchar"#1#2#3#4$}}%
                      {\mbox{\boldmath$\scriptstyle\mathchar"#1#2#3#4$}}%
                      {\mbox{\boldmath$\scriptscriptstyle\mathchar"#1#2#3#4$}}%
        \else
           \mathchar"#1#2#3#4%
        \fi     	    
	  \fi}
\newif\ifGreekBold  \GreekBoldfalse
\let\SAVEPBF=\pbf
\def\pbf{\GreekBoldtrue\SAVEPBF}%
  \newcounter{equationnumber}  
  \def\mathletters{%
     \addtocounter{equation}{1}
     \edef\@currentlabel{\theequation}%
     \setcounter{equationnumber}{\c@equation}
     \setcounter{equation}{0}%
     \edef\theequation{\@currentlabel\noexpand\alph{equation}}%
  }
    \def\BibTeX{{\rm B\kern-.05em{\sc i\kern-.025em b}\kern-.08em
                 T\kern-.1667em\lower.7ex\hbox{E}\kern-.125emX}}}{}%
\def\AmS{{\protect\usefont{OMS}{cmsy}{m}{n}%
                A\kern-.1667em\lower.5ex\hbox{M}\kern-.125emS}}}{}%
\def\@@eqncr{\let\@tempa\relax
    \ifcase\@eqcnt \def\@tempa{& & &}\or \def\@tempa{& &}%
      \else \def\@tempa{&}\fi
     \@tempa
     \if@eqnsw
        \iftag@
           \@taggnum
        \else
           \@eqnnum\stepcounter{equation}%
        \fi
     \fi
     \global\tag@false
     \global\@eqnswtrue
     \global\@eqcnt\z@\cr}
\def\TCItag{\@ifnextchar*{\@TCItagstar}{\@TCItag}}
\def\@TCItag#1{%
    \global\tag@true
    \global\def\@taggnum{(#1)}%
    \global\def\@currentlabel{#1}}
\def\@TCItagstar*#1{%
    \global\tag@true
    \global\def\@taggnum{#1}%
    \global\def\@currentlabel{#1}}
\def\tint{\msi@int\textstyle\int}%
\def\tiint{\msi@int\textstyle\iint}%
\def\tiiint{\msi@int\textstyle\iiint}%
\def\tiiiint{\msi@int\textstyle\iiiint}%
\def\tidotsint{\msi@int\textstyle\idotsint}%
\def\toint{\msi@int\textstyle\oint}%
\newtoks\temptoksa
\newtoks\temptoksb
\newtoks\temptoksc
\def\msi@int#1#2{%
 \def\@temp{{#1#2\the\temptoksc_{\the\temptoksa}^{\the\temptoksb}}}%
 \futurelet\@nextcs
 \@int
}
\def\@int{%
   \ifx\@nextcs\limits
      \typeout{Found limits}%
      \temptoksc={\limits}%
	  \let\@next\@intgobble%
   \else\ifx\@nextcs\nolimits
      \typeout{Found nolimits}%
      \temptoksc={\nolimits}%
	  \let\@next\@intgobble%
   \else
      \typeout{Did not find limits or no limits}%
      \temptoksc={}%
      \let\@next\msi@limits%
   \fi\fi
   \@next   
}%
\def\@intgobble#1{%
   \typeout{arg is #1}%
   \msi@limits
}
\def\msi@limits{%
   \temptoksa={}%
   \temptoksb={}%
   \@ifnextchar_{\@limitsa}{\@limitsb}%
}
\def\@limitsa_#1{%
   \temptoksa={#1}%
   \@ifnextchar^{\@limitsc}{\@temp}%
}
\def\@limitsb{%
   \@ifnextchar^{\@limitsc}{\@temp}%
}
\def\@limitsc^#1{%
   \temptoksb={#1}%
   \@ifnextchar_{\@limitsd}{\@temp}%
}
\def\@limitsd_#1{%
   \temptoksa={#1}%
   \@temp
}
\def\dint{\msi@int\displaystyle\int}%
\def\diint{\msi@int\displaystyle\iint}%
\def\diiint{\msi@int\displaystyle\iiint}%
\def\diiiint{\msi@int\displaystyle\iiiint}%
\def\didotsint{\msi@int\displaystyle\idotsint}%
\def\doint{\msi@int\displaystyle\oint}%
\def\dsum{\mathop{\displaystyle \sum }}%
\def\dprod{\mathop{\displaystyle \prod }}%
\def\dbigoplus{\mathop{\displaystyle \bigoplus }}%
\def\dcoprod{\mathop{\displaystyle \coprod }}%
\def\dbigcup{\mathop{\displaystyle \bigcup }}%
\def\ExitTCILatex{\makeatother }
\if@compatibility\message{amsmath already loaded}\fi\aftergroup\ExitTCILatex}
\if@compatibility\message{amstex already loaded}\fi\aftergroup\ExitTCILatex}
\if@compatibility\message{amsgen already loaded}\fi\aftergroup\ExitTCILatex}
\let\DOTSI\relax
\def\RIfM@{\relax\ifmmode}%
\def\FN@{\futurelet\next}%
\def\iint{\DOTSI\intno@\tw@\FN@\ints@}%
\def\iiint{\DOTSI\intno@\thr@@\FN@\ints@}%
\def\iiiint{\DOTSI\intno@4 \FN@\ints@}%
\def\idotsint{\DOTSI\intno@\z@\FN@\ints@}%
\def\ints@{\findlimits@\ints@@}%
\newif\iflimtoken@
\newif\iflimits@
\def\findlimits@{\limtoken@true\ifx\next\limits\limits@true
 \else\ifx\next\nolimits\limits@false\else
 \limtoken@false\ifx\ilimits@\nolimits\limits@false\else
 \ifinner\limits@false\else\limits@true\fi\fi\fi\fi}%
\def\multint@{\int\ifnum\intno@=\z@\intdots@                          
 \else\intkern@\fi                                                    
 \ifnum\intno@>\tw@\int\intkern@\fi                                   
 \ifnum\intno@>\thr@@\int\intkern@\fi                                 
 \int}
\def\multintlimits@{\intop\ifnum\intno@=\z@\intdots@\else\intkern@\fi
 \ifnum\intno@>\tw@\intop\intkern@\fi
 \ifnum\intno@>\thr@@\intop\intkern@\fi\intop}%
\def\intic@{%
    \mathchoice{\hskip.5em}{\hskip.4em}{\hskip.4em}{\hskip.4em}}%
\def\negintic@{\mathchoice
 {\hskip-.5em}{\hskip-.4em}{\hskip-.4em}{\hskip-.4em}}%
\def\ints@@{\iflimtoken@                                              
 \def\ints@@@{\iflimits@\negintic@
   \mathop{\intic@\multintlimits@}\limits                             
  \else\multint@\nolimits\fi                                          
  \eat@}
 \else                                                                
 \def\ints@@@{\iflimits@\negintic@
  \mathop{\intic@\multintlimits@}\limits\else
  \multint@\nolimits\fi}\fi\ints@@@}%
\def\intkern@{\mathchoice{\!\!\!}{\!\!}{\!\!}{\!\!}}%
\def\plaincdots@{\mathinner{\cdotp\cdotp\cdotp}}%
\def\intdots@{\mathchoice{\plaincdots@}%
 {{\cdotp}\mkern1.5mu{\cdotp}\mkern1.5mu{\cdotp}}%
 {{\cdotp}\mkern1mu{\cdotp}\mkern1mu{\cdotp}}%
 {{\cdotp}\mkern1mu{\cdotp}\mkern1mu{\cdotp}}}%
\def\RIfM@{\relax\protect\ifmmode}
\def\text{\RIfM@\expandafter\text@\else\expandafter\mbox\fi}
\let\nfss@text\text
\def\text@#1{\mathchoice
   {\textdef@\displaystyle\f@size{#1}}%
   {\textdef@\textstyle\tf@size{\firstchoice@false #1}}%
   {\textdef@\textstyle\sf@size{\firstchoice@false #1}}%
   {\textdef@\textstyle \ssf@size{\firstchoice@false #1}}%
   \glb@settings}
\def\textdef@#1#2#3{\hbox{{%
                    \everymath{#1}%
                    \let\f@size#2\selectfont
                    #3}}}
\newif\iffirstchoice@
\def\Let@{\relax\iffalse{\fi\let\\=\cr\iffalse}\fi}%
\def\vspace@{\def\vspace##1{\crcr\noalign{\vskip##1\relax}}}%
\def\multilimits@{\bgroup\vspace@\Let@
 \baselineskip\fontdimen10 \scriptfont\tw@
 \advance\baselineskip\fontdimen12 \scriptfont\tw@
 \lineskip\thr@@\fontdimen8 \scriptfont\thr@@
 \lineskiplimit\lineskip
 \vbox\bgroup\ialign\bgroup\hfil$\m@th\scriptstyle{##}$\hfil\crcr}%
\def\Sb{_\multilimits@}%
\def\endSb{\crcr\egroup\egroup\egroup}%
\def\Sp{^\multilimits@}%
\newdimen\ex@
\def\rightarrowfill@#1{$#1\m@th\mathord-\mkern-6mu\cleaders
 \hbox{$#1\mkern-2mu\mathord-\mkern-2mu$}\hfill
 \mkern-6mu\mathord\rightarrow$}%
\def\leftarrowfill@#1{$#1\m@th\mathord\leftarrow\mkern-6mu\cleaders
 \hbox{$#1\mkern-2mu\mathord-\mkern-2mu$}\hfill\mkern-6mu\mathord-$}%
\def\leftrightarrowfill@#1{$#1\m@th\mathord\leftarrow
\mkern-6mu\cleaders
 \hbox{$#1\mkern-2mu\mathord-\mkern-2mu$}\hfill
 \mkern-6mu\mathord\rightarrow$}%
\def\overrightarrow{\mathpalette\overrightarrow@}%
\def\overrightarrow@#1#2{\vbox{\ialign{##\crcr\rightarrowfill@#1\crcr
 \noalign{\kern-\ex@\nointerlineskip}$\m@th\hfil#1#2\hfil$\crcr}}}%
\def\overleftarrow{\mathpalette\overleftarrow@}%
\def\overleftarrow@#1#2{\vbox{\ialign{##\crcr\leftarrowfill@#1\crcr
 \noalign{\kern-\ex@\nointerlineskip}$\m@th\hfil#1#2\hfil$\crcr}}}%
\def\overleftrightarrow{\mathpalette\overleftrightarrow@}%
\def\overleftrightarrow@#1#2{\vbox{\ialign{##\crcr
   \leftrightarrowfill@#1\crcr
 \noalign{\kern-\ex@\nointerlineskip}$\m@th\hfil#1#2\hfil$\crcr}}}%
\def\underrightarrow{\mathpalette\underrightarrow@}%
\def\underrightarrow@#1#2{\vtop{\ialign{##\crcr$\m@th\hfil#1#2\hfil
  $\crcr\noalign{\nointerlineskip}\rightarrowfill@#1\crcr}}}%
\def\underleftarrow{\mathpalette\underleftarrow@}%
\def\underleftarrow@#1#2{\vtop{\ialign{##\crcr$\m@th\hfil#1#2\hfil
  $\crcr\noalign{\nointerlineskip}\leftarrowfill@#1\crcr}}}%
\def\underleftrightarrow{\mathpalette\underleftrightarrow@}%
\def\underleftrightarrow@#1#2{\vtop{\ialign{##\crcr$\m@th
  \hfil#1#2\hfil$\crcr
 \noalign{\nointerlineskip}\leftrightarrowfill@#1\crcr}}}%
\def\qopnamewl@#1{\mathop{\operator@font#1}\nlimits@}
\let\nlimits@\displaylimits
\def\setboxz@h{\setbox\z@\hbox}
\def\varlim@#1#2{\mathop{\vtop{\ialign{##\crcr
 \hfil$#1\m@th\operator@font lim$\hfil\crcr
 \noalign{\nointerlineskip}#2#1\crcr
 \noalign{\nointerlineskip\kern-\ex@}\crcr}}}}
 \def\rightarrowfill@#1{\m@th\setboxz@h{$#1-$}\ht\z@\z@
  $#1\copy\z@\mkern-6mu\cleaders
  \hbox{$#1\mkern-2mu\box\z@\mkern-2mu$}\hfill
  \mkern-6mu\mathord\rightarrow$}
\def\leftarrowfill@#1{\m@th\setboxz@h{$#1-$}\ht\z@\z@
  $#1\mathord\leftarrow\mkern-6mu\cleaders
  \hbox{$#1\mkern-2mu\copy\z@\mkern-2mu$}\hfill
  \mkern-6mu\box\z@$}
\def\projlim{\qopnamewl@{proj\,lim}}
\def\injlim{\qopnamewl@{inj\,lim}}
\def\varinjlim{\mathpalette\varlim@\rightarrowfill@}
\def\varprojlim{\mathpalette\varlim@\leftarrowfill@}
\def\varliminf{\mathpalette\varliminf@{}}
\def\varliminf@#1{\mathop{\underline{\vrule\@depth.2\ex@\@width\z@
   \hbox{$#1\m@th\operator@font lim$}}}}
\def\varlimsup{\mathpalette\varlimsup@{}}
\def\varlimsup@#1{\mathop{\overline
  {\hbox{$#1\m@th\operator@font lim$}}}}
\def\align{\@verbatim \frenchspacing\@vobeyspaces \@alignverbatim
You are using the "align" environment in a style in which it is not defined.}
\let\csname endalign*\endcsname =\endtrivlist
\def\alignat{\@verbatim \frenchspacing\@vobeyspaces \@alignatverbatim
You are using the "alignat" environment in a style in which it is not defined.}
\let\csname endalignat*\endcsname =\endtrivlist
\def\xalignat{\@verbatim \frenchspacing\@vobeyspaces \@xalignatverbatim
You are using the "xalignat" environment in a style in which it is not defined.}
\let\csname endxalignat*\endcsname =\endtrivlist
\def\gather{\@verbatim \frenchspacing\@vobeyspaces \@gatherverbatim
You are using the "gather" environment in a style in which it is not defined.}
\let\csname endgather*\endcsname =\endtrivlist
\def\multiline{\@verbatim \frenchspacing\@vobeyspaces \@multilineverbatim
You are using the "multiline" environment in a style in which it is not defined.}
\let\csname endmultiline*\endcsname =\endtrivlist
\def\arrax{\@verbatim \frenchspacing\@vobeyspaces \@arraxverbatim
You are using a type of "array" construct that is only allowed in AmS-LaTeX.}
\def\tabulax{\@verbatim \frenchspacing\@vobeyspaces \@tabulaxverbatim
You are using a type of "tabular" construct that is only allowed in AmS-LaTeX.}
\let\csname endarrax*\endcsname =\endtrivlist
\let\csname endtabulax*\endcsname =\endtrivlist
 \def\endequation{%
     \ifmmode\ifinner 
      \iftag@
        \addtocounter{equation}{-1} 
        $\hfil
           \displaywidth\linewidth\@taggnum\egroup \endtrivlist
        \global\tag@false
        \global\@ignoretrue   
      \else
        $\hfil
           \displaywidth\linewidth\@eqnnum\egroup \endtrivlist
        \global\tag@false
        \global\@ignoretrue 
      \fi
     \else   
      \iftag@
        \addtocounter{equation}{-1} 
        \eqno \hbox{\@taggnum}
        \global\tag@false%
        $$\global\@ignoretrue
      \else
        \eqno \hbox{\@eqnnum}
        $$\global\@ignoretrue
      \fi
     \fi\fi
 } 
 \newif\iftag@ \tag@false
 \def\TCItag{\@ifnextchar*{\@TCItagstar}{\@TCItag}}
 \def\@TCItag#1{%
     \global\tag@true
     \global\def\@taggnum{(#1)}%
     \global\def\@currentlabel{#1}}
 \def\@TCItagstar*#1{%
     \global\tag@true
     \global\def\@taggnum{#1}%
     \global\def\@currentlabel{#1}}
     \def\tag{\@ifnextchar*{\@tagstar}{\@tag}}
     \def\@tag#1{%
         \global\tag@true
         \global\def\@taggnum{(#1)}}
     \def\@tagstar*#1{%
         \global\tag@true
         \global\def\@taggnum{#1}}
\DeclareMathOperator{\coker}{coker}
\begin{document}
\title[Cosheaves]{Cosheaves}
\author{Andrei V. Prasolov}
\address{Institute of Mathematics and Statistics\\
The University of Troms\o\ - The Arctic University of Norway\\
N-9037 Troms\o , Norway}
\email{andrei.prasolov@uit.no}
\urladdr{http://serre.mat-stat.uit.no/ansatte/andrei/Welcome.html}
\date{}
\subjclass[2010]{Primary 18F10, 18F20, 18G05, 18G10; Secondary 55P55, 55Q07,
14F20}
\keywords{Cosheaves, precosheves, cosheafification, pro-category, cosheaf
homology, precosheaf homology, 
\u{C}ech
homology, shape theory, pro-homology, pro-homotopy, locally presentable
categories}

\begin{abstract}
The categories $\mathbf{pCS}\left( X,\mathbf{Pro}\left( k\right) \right) $
of precosheaves and $\mathbf{CS}\left( X,\mathbf{Pro}\left( k\right) \right) 
$ of cosheaves on a small Grothendieck site $X$, with values in the category 
$\mathbf{Pro}\left( k\right) $ of pro-$k$-modules, are constructed. It is
proved that $\mathbf{pCS}\left( X,\mathbf{Pro}\left( k\right) \right) $
satisfies the AB4 and AB5* axioms, while $\mathbf{CS}\left( X,\mathbf{Pro}%
\left( k\right) \right) $ satisfies AB3 and AB5*. Homology theories for
cosheaves and precosheaves, based on quasi-projective resolutions, are
constructed and investigated.
\end{abstract}

\maketitle
\tableofcontents

\setcounter{section}{-1}

\section{Introduction}

A \emph{presheaf} (\emph{precosheaf}) on a topological space $X$ with values
in a category $\mathbf{K}$ is just a contravariant (covariant) functor from
the category of open subsets of $X$ to $\mathbf{K}$, while a \emph{sheaf} (%
\emph{cosheaf}) is such a functor satisfying some extra conditions. The
category of (pre)cosheaves with values in $\mathbf{K}$ is dual to the
category of (pre)sheaves with values in the dual category $\mathbf{K}^{op}$.

While the theory of sheaves is well developed, and is covered by a plenty of
publications, the theory of cosheaves is more poorly represented. The main
reason for this is that \emph{cofiltered limits} are \emph{not} exact in the
\textquotedblleft usual\textquotedblright\ categories like sets, abelian
groups, rings, or modules. On the contrary, \emph{filtered colimits} are
exact in the above categories, which allows to construct rather rich
theories of sheaves with values in \textquotedblleft
usual\textquotedblright\ categories. To sum up, the \textquotedblleft
usual\textquotedblright\ categories $\mathbf{K}$ are badly suited for
cosheaf theory. Dually, the categories $\mathbf{K}^{op}$ are badly suited
for sheaf theory.

The first step in building a suitable theory of cosheaves would be
constructing a \emph{cosheaf }$\mathcal{A}_{\#}$ \emph{associated with a
precosheaf} $\mathcal{A}$ (simply: \emph{cosheafification} of $\mathcal{A}$%
), as a right adjoint%
\begin{equation*}
\left( {}\right) _{\#}:Precosheaves\longrightarrow Cosheaves
\end{equation*}%
to the inclusion%
\begin{equation*}
\iota :Cosheaves\hookrightarrow Precosheaves.
\end{equation*}%
As is shown in \cite[Theorem 3.1]%
{Prasolov-Cosheafification-2016-zbMATH06684178}, it is possible in many
situations, namely for precosheaves with values in an arbitrary \emph{%
locally presentable }\cite[Chapter 1]%
{Adamek-Rosicky-1994-Locally-presentable-categories-MR1294136}\emph{\
category} (or a dual to such a category). See also Theorem \ref%
{Th-Cosheafification} in this paper.

However, our purpose is to prepare a foundation for \emph{homology} theory
of cosheaves (see Theorems \ref{Th-Precosheaf-homology}, \ref%
{Th-Cosheaf-homology}, and Conjecture \ref{Conj-Satellites-H}). In future
papers, we plan to develop also the \emph{nonabelian} homology theory (in
other words, the \emph{homotopy} theory) of (pre)cosheaves (see Conjectures %
\ref{Conj-Satellites-Pi}, \ref{Conj-Satellites-Locally-finite}, and \ref%
{Conj-Etale} below).

Therefore, we need a more or less \emph{explicit} construction. Moreover, we
need a construction satisfying \emph{good exactness} properties. As is shown
in \cite{Prasolov-Cosheafification-2016-zbMATH06684178}, the most suitable
categories for these purposes are the categories of (pre)cosheaves with
values in the pro-category $\mathbf{Pro}\left( \mathbf{K}\right) $
(Definition \ref{Def-Pro-category}), where $\mathbf{K}$ is a cocomplete
(Definition \ref{Def-(co)complete}) category. In \cite[Theorem 3.11]%
{Prasolov-Cosheafification-2016-zbMATH06684178}, connections with \textbf{%
shape theory} have been established: it was shown that the cosheafification $%
G_{\#}$ of the \emph{constant} precosheaf $G^{const}$, $G\in \mathbf{K}$, is
isomorphic to $G\otimes _{\mathbf{Set}}pro$-$\pi _{0}$, where $pro$-$\pi
_{0} $ is the pro-homotopy from Definition \ref{Def-Pro-homotopy-groups}
(for the pairing $\otimes _{\mathbf{Set}}$ see Definition \ref%
{Def-Pairings-functors}(\ref{Def-Pairings-functors-Pro(Set)})). If $\mathbf{%
K=Mod}\left( k\right) $ is the category of pro-modules over a commutative
ring $k$, the cosheafification $G_{\#}$ becomes the pro-homology (Definition %
\ref{Def-Pro-homology-groups}):%
\begin{equation*}
G_{\#}%
\simeq%
\left( U\longmapsto pro\text{-}H_{0}\left( U,G\right) \right) .
\end{equation*}

\begin{remark}
An interesting attempt is made in \cite{Schneiders-MR885939} where the
author sketches a cosheaf theory on topological spaces with values in a
category $\mathbf{L}$, dual to an \textquotedblleft
elementary\textquotedblright\ category $\mathbf{L}^{op}$. He proposes a
candidate for such a category. Let $\alpha <\beta $ be two inaccessible
cardinals. Then $\mathbf{L}$ is the category $\mathbf{Pro}_{\beta }\left( 
\mathbf{Ab}_{\alpha }\right) $ of abelian pro-groups $\left( G_{j}\right)
_{j\in \mathbf{J}}$ such that $card\left( G_{j}\right) <\alpha $ and $%
card\left( Mor\left( \mathbf{J}\right) \right) <\beta $. However, our\
pro-category $\mathbf{Pro}\left( \mathbf{K}\right) $ cannot be used in the
cosheaf theory from \cite{Schneiders-MR885939} because the category $\left( 
\mathbf{Pro}\left( \mathbf{K}\right) \right) ^{op}$ is \textbf{not}
elementary.
\end{remark}

The main results of this paper are establishing the most important
properties of precosheaves (Theorem \ref{Th-Properties-precosheaves}) and
cosheaves (Theorem \ref{Th-Properties-cosheaves}), as well as constructing
homology theory for precosheaves (Theorem \ref{Th-Precosheaf-homology}) and
cosheaves (Theorem \ref{Th-Cosheaf-homology}). We construct the \emph{abelian%
} homology theory of (pre)cosheaves with values in the category%
\begin{equation*}
\mathbf{Pro}\left( k\right) =\mathbf{Pro}\left( \mathbf{Mod}\left( k\right)
\right)
\end{equation*}%
(Notation \ref{Not-Pro(k)}), where $k$ is a \emph{quasi-noetherian}
(Definition \ref{Def-quasi-noetherian}) commutative ring. Due to Proposition %
\ref{Prop-Noetherian-Quasi-noetherian}, the class of such rings is
sufficiently large, and our construction includes, e.g., (pre)cosheaves with
values in%
\begin{equation*}
\mathbf{Pro}\left( \mathbf{Ab}\right) 
\simeq%
\mathbf{Pro}\left( \mathbf{Mod}\left( \mathbb{Z}\right) \right) =\mathbf{Pro}%
\left( \mathbb{Z}\right) .
\end{equation*}

\begin{remark}
A cosheaf theory with values in the category $\mathbf{Pro}\left( k\right) $
on topological spaces was sketched in \cite{Sugiki-2001-33}. Definition
2.2.7 of a cosheaf on a topological space $X$ in \cite{Sugiki-2001-33} is
dual to our definition of a cosheaf on the corresponding site $OPEN\left(
X\right) $, see Example \ref{Site-TOP} and Remark \ref%
{Denote-standard-site-simply}. Theorem 2.2.8 in \cite{Sugiki-2001-33} states
that the cosheafification exists. However, no proof of that theorem is
given, and no explicit construction of such cosheafification is provided.

Moreover, in \cite[Definition 4.1.3]{Sugiki-2001-33} the author introduces
the notion of \textbf{c-injective} cosheaves which seem to be dual to our 
\textbf{quasi-projective} cosheaves, and claims in \cite[Theorem 4.1.7]%
{Sugiki-2001-33} that c-injective cosheaves form a cogenerating subcategory
in the category of all cosheaves. That statement seems to be dual to our
Theorem \ref{Th-Cosheaf-homology}(\ref{Th-Cosheaf-homology-surjection}).
However, the proof is only sketched, and is based on several statements
given without proofs. Moreover, the cosheaf homology in \cite{Sugiki-2001-33}
is constructed (sketchy!) only for topological spaces (for the site $%
OPEN\left( X\right) $, see Example \ref{Site-TOP} and Remark \ref%
{Denote-standard-site-simply}). In this paper, on the contrary, we construct
the cosheaf homology theory for arbitrary small sites.
\end{remark}

\begin{conjecture}
\label{Conj-Satellites-H}~

\begin{enumerate}
\item \label{Conj-Satellites-H-paracompact}On the standard site $OPEN\left(
X\right) $ (Example \ref{Site-TOP}), the left satellites of $H_{0}$ are
naturally isomorphic to the \textbf{pro-homology} (Definition \ref%
{Def-Pro-homology-groups}):%
\begin{equation*}
H_{n}\left( X,pro\text{-}H_{0}\left( \bullet ,A\right) \right) =H_{n}\left(
X,A_{\#}\right) 
{:=}%
L_{n}H_{0}\left( X,A_{\#}\right) 
\simeq%
pro\text{-}H_{n}\left( X,A\right) ,
\end{equation*}%
provided $X$ is \textbf{Hausdorff paracompact}.

\item \label{Conj-Satellites-H-paracompact-normal}The above isomorphisms
exist also for the site $NORM\left( X\right) $ (Example \ref{Site-NORM}) and
are valid for \textbf{all} topological spaces.
\end{enumerate}
\end{conjecture}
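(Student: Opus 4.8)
\medskip
\noindent\emph{Towards a proof of Conjecture~\ref{Conj-Satellites-H}.} The plan is to compute the left satellites $L_nH_0(X,A_\#)$ by an explicit \v{C}ech-type quasi-projective resolution of $A_\#$ assembled from the nerves of covers, and then to recognise its homology as pro-homology. By the homology theory of Theorem~\ref{Th-Cosheaf-homology}, $L_nH_0(X,-)$ may be evaluated on \emph{any} quasi-projective resolution $P_\bullet\twoheadrightarrow A_\#$ in $\mathbf{CS}(X,\mathbf{Pro}(k))$, so the task reduces to exhibiting a convenient one. For an object $U$ of the site write $k_U$ for the quasi-projective cosheaf attached to $U$ furnished by Theorem~\ref{Th-Cosheaf-homology} (concretely, the cosheafification, Theorem~\ref{Th-Cosheafification}, of the free precosheaf on $U$, weighted by the coefficients $A$); by the AB3 part of Theorem~\ref{Th-Properties-cosheaves} any coproduct of such cosheaves is again quasi-projective. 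For a cover $\mathcal{U}=\{U_i\}_{i\in I}$ of $X$ let $P^{\mathcal{U}}_\bullet$ be the \v{C}ech chain complex $P^{\mathcal{U}}_n=\bigoplus k_{U_{i_0}\cap\cdots\cap U_{i_n}}$ (the sum over tuples with nonempty intersection, with alternating-sum differentials and augmentation to $A_\#$), and put $P_\bullet:=\varprojlim_{\mathcal{U}}P^{\mathcal{U}}_\bullet$, the cofiltered limit over the directed set of covers ordered by refinement. One arranges the $k_U$ precisely so that $H_0(X,P_\bullet)$ is the pro-chain-complex $\varprojlim_{\mathcal{U}}C_\bullet\big(N(\mathcal{U});A\big)$ computing \v{C}ech homology of $X$ with coefficients in $A$.

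The first point to settle is that $P_\bullet\to A_\#$ is exact in $\mathbf{CS}(X,\mathbf{Pro}(k))$, i.e.\ that it is a genuine resolution. This is the step for which the passage to the pro-category is essential: over a \emph{fixed} cover the reduced \v{C}ech complex is \emph{not} acyclic, but the system indexed by refinements is \emph{pro-acyclic}, and because $\mathbf{Pro}(k)$ satisfies AB5* (Theorem~\ref{Th-Properties-precosheaves}) its cofiltered limits are exact, so $P_\bullet$ becomes an exact complex of $\mathbf{Pro}(k)$-valued cosheaves. Exactness must be checked \emph{locally}, after evaluation at every object $U$ of the site, and it is exactly here that a covering hypothesis enters: one needs enough refinements to produce, on each cover, a contracting homotopy of the reduced \v{C}ech complex restricted to $U$. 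On $OPEN(X)$ this is supplied by locally finite refinements and subordinate partitions of unity, i.e.\ by \textbf{Hausdorff paracompactness}, which gives part~\ref{Conj-Satellites-H-paracompact}; on $NORM(X)$ (Example~\ref{Site-NORM}) one works from the outset with numerable (normal) covers, which form a directed system with the required cofinality for \emph{every} topological space, giving part~\ref{Conj-Satellites-H-paracompact-normal} with no paracompactness assumption.

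Granting exactness, the homology of $H_0(X,P_\bullet)$ is identified with $pro\text{-}H_n(X,A)$: since taking homology of a complex commutes with the exact cofiltered limits of $\mathbf{Pro}(k)$ (AB4 together with AB5*, Theorem~\ref{Th-Properties-precosheaves}), one gets $H_n\big(H_0(X,P_\bullet)\big)\simeq\varprojlim_{\mathcal{U}}H_n\big(C_\bullet(N(\mathcal{U});A)\big)=\varprojlim_{\mathcal{U}}H_n\big(N(\mathcal{U});A\big)$, which is by definition the pro-homology of Definition~\ref{Def-Pro-homology-groups}. For $n=0$ this reproduces the identification $A_\#\simeq\big(U\mapsto pro\text{-}H_0(U,A)\big)$ recalled in the Introduction via \cite[Theorem~3.11]{Prasolov-Cosheafification-2016-zbMATH06684178}, which is a consistency check on the augmentation; an independent route to the same conclusion is to dualise the classical comparison of \v{C}ech and sheaf cohomology on paracompact spaces. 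Finally, to land on the \emph{established} shape-theoretic invariants one invokes Mardeši\'c--Segal theory: for Hausdorff paracompact $X$ the nerves of open covers form an expansion of $X$ in the homotopy category of polyhedra, so the pro-system above is isomorphic, in $\mathbf{Pro}(k)$, to the one defining shape homology; for arbitrary $X$ the same holds with $NORM(X)$ replacing $OPEN(X)$.

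The main obstacle is the exactness assertion of the second step — producing the resolution and proving it acyclic after evaluation at each $U$. Concretely one must show that for every $U$ the reduced \v{C}ech pro-complex $\varprojlim_{\mathcal{U}}\widetilde{C}_\bullet\big(N(\mathcal{U});A\big)\big|_U$ is pro-zero, which requires combining a partition-of-unity (respectively numerable-cover) contracting homotopy on each cover with the interchange of homology and cofiltered limits, with some care about the bookkeeping of indices and the naturality in $U$; one must also confirm that the geometric terms $P_n$ really are quasi-projective in the precise sense of Theorem~\ref{Th-Cosheaf-homology}, and not merely projective in some ambient category. A secondary point is that the abstract satellites $L_nH_0$ depend a priori only on the \emph{existence} of quasi-projective resolutions; that this particular geometric $P_\bullet$ computes them is a comparison-of-resolutions argument, which should follow formally from Theorem~\ref{Th-Cosheaf-homology} once acyclicity is in hand.
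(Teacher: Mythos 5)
You should first note that the paper does not prove this statement: it is stated as Conjecture \ref{Conj-Satellites-H} and is only \emph{illustrated} by Example \ref{Ex-Convergent-sequence} (and contrasted with Example \ref{Ex-Finite-circle}, where the isomorphism fails for a non-Hausdorff space). So there is no proof in the paper to compare against, and your text, which is explicitly a plan rather than a proof, does not close the conjecture either. That said, the strategy you outline is the natural one, and it is worth recording where the genuine gaps are.

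The first gap is the quasi-projectivity of $P_\bullet:=\underleftarrow{\lim}_{\mathcal{U}}P^{\mathcal{U}}_\bullet$. Quasi-projectivity of a cosheaf $\mathcal{P}$ means that $\left\langle \mathcal{P},T\right\rangle$ is an \emph{injective} sheaf for every injective $T$, and by Theorem \ref{Th-Properties-cosheaves}(\ref{Th-Properties-cosheaves-cofiltered-limit}) the pairing converts your cofiltered limit into a filtered colimit of injective sheaves; a filtered colimit of injectives need not be injective. This is precisely the difficulty that forces the proof of Theorem \ref{Th-Cosheaf-homology}(\ref{Th-Cosheaf-homology-surjection}) to run a transfinite induction up to a cardinal $\beta$ of large cofinality, chosen against the subobjects of a generator, so that every map from a subsheaf of $\mathcal{G}$ factors through some finite stage. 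Your $P_\bullet$ would need an analogous cofinality argument over the directed set of covers, and it is not clear the set of covers of a general space has the required cofinality properties; you cannot simply cite AB3. The second gap is the identification of $H_0(X,P^{\mathcal{U}}_\bullet)$ with the simplicial chains of the nerve: by \cite[Theorem 3.11]{Prasolov-Cosheafification-2016-zbMATH06684178} the constant cosheaf evaluated on $V$ is $A\otimes_{\mathbf{Set}}pro$-$\pi_{0}\left( V\right)$, not $A$, so the complex your generators produce has one copy of $A$ for each element of $pro$-$\pi_{0}$ of each intersection $U_{i_{0}}\cap\dots\cap U_{i_{n}}$, whereas Definition \ref{Def-Pro-homology-groups} uses one vertex per cover element. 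Reconciling the two (a Vietoris--\u{C}ech comparison in the limit over refinements) is a nontrivial step you have not addressed. Finally, the central exactness claim --- that the augmented reduced pro-complex is pro-zero after evaluation at each $U$, via partitions of unity --- is exactly the content of the conjecture in disguise; asserting that it follows by ``dualising the classical comparison of \u{C}ech and sheaf cohomology'' is not a proof, since that dualisation trades exact filtered colimits in $\mathbf{Mod}\left( k\right)$ for cofiltered limits in $\mathbf{Pro}\left( k\right)$ and must be re-justified from scratch (this is the whole reason the paper works in $\mathbf{Pro}\left( k\right)$ at all). Your proposal is a reasonable roadmap, but each of these three points is a substantive open step, not bookkeeping.
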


Example \ref{Ex-Convergent-sequence} illustrates the conjecture.

\begin{conjecture}
\label{Conj-Satellites-Pi}~

\begin{enumerate}
\item \label{Conj-Satellites-Pi-paracompact}On the standard site $OPEN\left(
X\right) $, the \textbf{nonabelian} left satellites of $H_{0}$ are naturally
isomorphic to the pro-homotopy (Definition \ref{Def-Pro-homotopy-groups}):%
\begin{eqnarray*}
H_{n}\left( X,S_{\#}\right) &=&H_{n}\left( X,S\times pro\text{-}\pi
_{0}\right) 
{:=}%
L_{n}H_{0}\left( X,S_{\#}\right) 
\simeq%
S\times pro\text{-}\pi _{n}\left( X\right) , \\
H_{n}\left( X,\left( \mathbf{pt}\right) _{\#}\right) &=&H_{n}\left( X,pro%
\text{-}\pi _{0}\right) 
{:=}%
L_{n}H_{0}\left( X,\left( \mathbf{pt}\right) _{\#}\right) 
\simeq%
pro\text{-}\pi _{n}\left( X\right) ,
\end{eqnarray*}%
provided $X$ is Hausdorff paracompact.

\item \label{Conj-Satellites-Pi-normal}The above isomorphisms exist also for
the site $NORM\left( X\right) $ and are valid for \textbf{all} topological
spaces.
\end{enumerate}
\end{conjecture}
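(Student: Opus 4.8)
The plan is to transcribe, into the nonabelian (homotopy) setting, the mechanism behind the abelian Theorem~\ref{Th-Cosheaf-homology}: to present $L_{\bullet}H_{0}$ by a \emph{quasi-projective resolution} and then identify the resulting pro-object with shape-theoretic pro-homotopy. First I would reduce to the case of $\left( \mathbf{pt}\right) _{\#}$. Indeed $\left( \mathbf{pt}\right) _{\#}\simeq pro\text{-}\pi _{0}$ and, more generally, $S_{\#}\simeq S\times pro\text{-}\pi _{0}$ by the cosheafification-of-constants theorem (\cite[Theorem~3.11]{Prasolov-Cosheafification-2016-zbMATH06684178}, recalled above), so the set $S$ enters only as an outer factor; the AB4 and AB5* exactness of precosheaves (Theorem~\ref{Th-Properties-precosheaves}) lets one commute that factor past the (co)limits computing $L_{\bullet}H_{0}$, reducing the $S_{\#}$ statement to the $\left( \mathbf{pt}\right) _{\#}$ one.

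The core step is an \emph{explicit} quasi-projective resolution of $\left( \mathbf{pt}\right) _{\#}$ built from nerves of covers, i.e.\ a cosheaf-theoretic \v{C}ech resolution. For a cover $\mathcal{U}$ of $X$ (an arbitrary open cover on the site $OPEN\left( X\right) $, a \emph{normal} cover on $NORM\left( X\right) $), the representable quasi-projective cosheaves attached to the open sets indexing the simplices of the nerve $N\left( \mathcal{U}\right) $ assemble, as $\mathcal{U}$ ranges over the directed system of covers, into a (pro-)simplicial quasi-projective object augmented towards $\left( \mathbf{pt}\right) _{\#}$. Showing that this augmentation is a resolution is a nerve-theorem assertion, already essentially contained, for cosheafifications of constants, in \cite{Prasolov-Cosheafification-2016-zbMATH06684178}. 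This is precisely where the hypotheses enter: on $OPEN\left( X\right) $ one needs Hausdorff paracompactness, so that by Stone's theorem every open cover admits a locally finite --- hence normal --- refinement and the two directed systems of covers are mutually cofinal; on $NORM\left( X\right) $ normal covers carry subordinate partitions of unity for \emph{every} space, which is why part~(\ref{Conj-Satellites-Pi-normal}) holds unconditionally.

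Applying the global cosections functor $H_{0}\left( X,-\right) $ to this resolution should recover, via the identification $\left( -\right) _{\#}\simeq \left( -\right) \otimes _{\mathbf{Set}}pro\text{-}\pi _{0}$ and the way $H_{0}$ evaluates representable cosheaves, the pro-simplicial set $\left( N\left( \mathcal{U}\right) \right) _{\mathcal{U}}$ of nerves of covers. Hence $L_{n}H_{0}\left( X,\left( \mathbf{pt}\right) _{\#}\right) $ is $\pi _{n}$ of this pro-simplicial set, which is by definition $pro\text{-}\pi _{n}\left( X\right) $ (Definition~\ref{Def-Pro-homotopy-groups}, after Marde\v{s}i\'{c}--Segal); tensoring the computation with $S$ yields the $S_{\#}$ version, and rerunning it verbatim on $NORM\left( X\right) $ yields part~(\ref{Conj-Satellites-Pi-normal}).

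The main obstacle is foundational rather than geometric: one must first make the \emph{nonabelian left satellites} well posed, i.e.\ equip the categories of (pre)cosheaves with values in $\mathbf{Pro}\left( \mathbf{Set}\right) $ (or in $\mathbf{Pro}$ of simplicial sets) with enough quasi-projectives and with a comparison / homotopy-invariance theorem for quasi-projective resolutions, so that $L_{\bullet}H_{0}$ is independent of the resolution chosen. Local presentability of the underlying categories --- the very reason pro-categories are used here --- should furnish enough quasi-projectives and a small-object-type construction, mirroring the abelian argument of Theorem~\ref{Th-Cosheaf-homology}; but verifying the comparison theorem in the nonabelian case, and then matching the resulting pro-object with the \emph{strong} shape pro-homotopy rather than merely with its inverse limit, is the delicate point. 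Everything else should be a transcription of the abelian proof together with classical shape theory.
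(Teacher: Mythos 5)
This statement is a conjecture in the paper; no proof is given there, so your argument cannot be compared against the author's and must stand on its own. It does not yet do so. The first gap you name yourself: the ``nonabelian left satellites'' $L_{n}H_{0}$ are nowhere defined for cosheaves valued in $\mathbf{Pro}\left( \mathbf{Set}\right) $ or in pro-simplicial sets. All of the paper's resolution machinery (Definition \ref{Def-F-projective}, Proposition \ref{Prop-Left-derived}, Theorems \ref{Th-Precosheaf-homology} and \ref{Th-Cosheaf-homology}) lives in abelian categories and leans on the pairing $\left\langle \bullet ,T\right\rangle $ against injective $k$-modules; there is no analogue of Theorem \ref{Th-Properties-precosheaves}(\ref{Th-Properties-precosheaves-Exact}) with which to detect that a would-be quasi-projective resolution of $\left( \mathbf{pt}\right) _{\#}$ is acyclic, and invoking ``AB4 and AB5*'' to commute the factor $S$ past the relevant (co)limits is not available outside $\mathbf{Pro}\left( k\right) $. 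Until that formalism is built, the object you propose to compute is not well defined.

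The second gap is more serious because it is already present in the abelian shadow of your argument. Your core step is that the nerve-indexed complex of representable quasi-projectives resolves $\left( \mathbf{pt}\right) _{\#}$ and that applying $H_{0}\left( X,-\right) $ to it computes $L_{\bullet }H_{0}$, whence pro-$\pi _{n}$. But the exact abelian analogue --- that $L_{n}H_{0}\left( X,A_{\#}\right) $ agrees with \v{C}ech, hence shape, pro-homology --- is itself only Conjecture \ref{Conj-Satellites-H}; the paper proves agreement only in degrees $0$ and $1$, plus an epimorphism in degree $2$ (Theorem \ref{Th-Cosheaf-homology}(\ref{Th-Cosheaf-homology-Spectral-sequence-R})). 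The obstruction is visible in the spectral sequence $E_{s,t}^{2}=\check{H}_{s}\left( U,\mathcal{H}_{t}\left( \mathcal{A}\right) \right) \implies H_{s+t}\left( U,\mathcal{A}\right) $: one knows $\left( \mathcal{H}_{t}\mathcal{A}\right) _{+}=0$ for $t>0$, but not that $\check{H}_{s}\left( U,\mathcal{H}_{t}\mathcal{A}\right) $ vanishes, which is exactly what is needed for a \v{C}ech-type resolution to compute the satellites. Your phrase ``showing that this augmentation is a resolution is a nerve-theorem assertion'' assumes this vanishing implicitly, so the sketch begs the question rather than reducing it. The ingredients you do identify correctly --- cofinality of normal covers among all open covers of a Hausdorff paracompact space via Stone's theorem, the unconditional availability of partitions of unity on $NORM\left( X\right) $, and the realization of pro-$\pi _{n}$ by nerves of normal coverings (with the usual care about basepoints for $n\geq 1$) --- are the right geometric inputs, but they are the easy half of the problem.
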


For general topological spaces, however, one could not expect that cosheaf
homology $H_{n}\left( X,G_{\#}\right) $ coincides with shape pro-homology $%
pro$-$H_{n}\left( X,G\right) $ (unless $n=0$, see Theorem \ref%
{Th-Cosheafification} and \cite[Theorem 3.11]%
{Prasolov-Cosheafification-2016-zbMATH06684178}). The thing is that general
spaces may lack \textquotedblleft good\textquotedblright\ polyhedral
expansions (Definition \ref{Def-HTOP-extension}). See Remark \ref%
{Rem-Shape-locally-finite} and Conjecture \ref%
{Conj-Satellites-Locally-finite}.

\begin{conjecture}
\label{Conj-Satellites-Locally-finite}Let $X$ be a (pointed) finite (or even
locally finite) topological space. Then:

\begin{enumerate}
\item The left satellites of $H_{0}$ are naturally isomorphic to the
singular homology:%
\begin{equation*}
H_{n}\left( X,G_{\#}\right) 
{:=}%
L_{n}H_{0}\left( X,G_{\#}\right) 
\simeq%
H_{n}^{sing}\left( X,G\right) .
\end{equation*}

\item The \textbf{nonabelian} left satellites of $H_{0}$ are naturally
isomorphic to the homotopy groups:%
\begin{eqnarray*}
H_{n}\left( X,S_{\#}\right) &=&H_{n}\left( X,S\times \pi _{0}\right) 
{:=}%
L_{n}H_{0}\left( X,S_{\#}\right) 
\simeq%
S\times \pi _{n}\left( X\right) , \\
H_{n}\left( X,\left( \mathbf{pt}\right) _{\#}\right) &=&H_{n}\left( X,\pi
_{0}\right) 
{:=}%
L_{n}H_{0}\left( X,\left( \mathbf{pt}\right) _{\#}\right) 
\simeq%
\pi _{n}\left( X\right) .
\end{eqnarray*}
\end{enumerate}
\end{conjecture}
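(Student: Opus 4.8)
The plan is to reduce both parts to the representation theory of the finite poset underlying $X$, and then to feed McCord's weak homotopy equivalence between such a space and the order complex of its poset into the homology machinery of Theorem~\ref{Th-Cosheaf-homology}. First I would replace $X$ by its Kolmogorov quotient $q\colon X\to X_{0}$: on an Alexandrov space $q$ carries each minimal open neighbourhood $U_{x}$ onto $U_{q\left(x\right)}$, hence induces an isomorphism of the lattices of open sets, an equivalence of the associated sites, and an equivalence $\mathbf{CS}\left(X,\mathbf{Pro}\left(k\right)\right)\simeq\mathbf{CS}\left(X_{0},\mathbf{Pro}\left(k\right)\right)$ leaving the satellites $L_{n}H_{0}\left(X,-\right)$ untouched; on the other side $q$ is a weak homotopy equivalence, so $H_{n}^{sing}$ and $\pi_{n}$ are untouched as well. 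Thus one may assume that $X$ is a poset $P$ with the Alexandrov topology.

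Next I would use the classical equivalence, dual to the one for sheaves, between cosheaves on an Alexandrov space and functors on its poset of points: a cosheaf $\mathcal{A}$ on $P$ is the same datum as the functor $x\mapsto\mathcal{A}\left(U_{x}\right)$ on $P$ (up to variance), with $\mathcal{A}\left(U\right)=\operatorname{colim}_{x\in U}\mathcal{A}\left(U_{x}\right)$; one checks that this survives passage to $\mathbf{Pro}\left(k\right)$-coefficients. Under this equivalence the cosheafification $G_{\#}$ of the constant precosheaf corresponds to the constant functor $\underline{G}$ --- consistently with $G_{\#}\simeq\left(U\mapsto pro\text{-}H_{0}\left(U,G\right)\right)$ from Theorem~\ref{Th-Cosheafification}, since each $U_{x}$ is connected and every open $U$ satisfies $pro\text{-}H_{0}\left(U,G\right)=G^{\left|\pi_{0}\left(U\right)\right|}$ --- while $H_{0}\left(P,-\right)$ corresponds to $\operatorname{colim}_{P}$ and the quasi-projective cosheaves of Theorem~\ref{Th-Properties-cosheaves} correspond to coproducts of representable functors. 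Consequently $L_{n}H_{0}\left(X,G_{\#}\right)$ is identified with the $n$-th left derived functor of $\operatorname{colim}_{P}$ evaluated at $\underline{G}$.

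Now the left derived functors of $\operatorname{colim}$ over a small category $\mathcal{C}$, applied to a constant diagram, are the homology groups $H_{n}\left(B\mathcal{C};G\right)$ of its classifying space, and for $\mathcal{C}=P$ a poset $BP=\left|\mathcal{K}\left(P\right)\right|$ is exactly the order complex. Hence $L_{n}H_{0}\left(X,G_{\#}\right)=H_{n}\left(\left|\mathcal{K}\left(P\right)\right|;G\right)=H_{n}^{sing}\left(X;G\right)$, the last equality being McCord's theorem, which is valid more generally for Alexandrov --- in particular locally finite --- spaces; this proves part~(1) and records, for finite spaces, the identification $L_{n}H_{0}\left(X,G_{\#}\right)\simeq pro\text{-}H_{n}\left(X,G\right)$ that lies behind Conjectures~\ref{Conj-Satellites-H} and \ref{Conj-Satellites-Pi}. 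For part~(2) I would run the same argument with the homotopical refinement of the satellites announced in the introduction: $\left(\mathbf{pt}\right)_{\#}$ corresponds to the constant point-valued diagram, whose homotopy colimit over $P$ is $BP$, so $L_{n}H_{0}\left(X,\left(\mathbf{pt}\right)_{\#}\right)\simeq\pi_{n}\left(BP\right)=\pi_{n}\left(X\right)$; the $S_{\#}$ statement then follows from $S_{\#}\simeq S\times pro\text{-}\pi_{0}$ and the fact that $pro\text{-}\pi_{0}$ is the constant set $\pi_{0}\left(X\right)$ for finite $P$.

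The main obstacle will be matching the paper's formalism with this derived-colimit picture: one must verify that, under the cosheaves-as-functors equivalence, the quasi-projective resolutions of Theorem~\ref{Th-Cosheaf-homology} genuinely compute $L_{n}\operatorname{colim}_{P}$ --- identifying ``quasi-projective'' with the projective objects of $\operatorname{Fun}\left(P,\mathbf{Pro}\left(k\right)\right)$ and confirming that $H_{0}\left(P,-\right)$ is the right-exact functor being derived --- all inside $\mathbf{Pro}\left(k\right)$, where colimits need not be exact. A second, genuinely open, difficulty is part~(2), which presupposes the homotopy theory of (pre)cosheaves that is only announced here and so cannot be completed without that foundation. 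Finally, the passage from finite to merely locally finite $P$ reintroduces infinite (co)limits and infinite complexes; here one needs the Alexandrov version of McCord's theorem and must lean on the exactness axiom (AB5*) of Theorem~\ref{Th-Properties-cosheaves} to ensure that the derived colimit still returns honest singular homology rather than some nontrivial pro-object.
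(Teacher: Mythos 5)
First, a point of framing: the statement you are proving is presented in the paper as a \emph{conjecture}. The author gives no proof; the only evidence supplied is Example \ref{Ex-Finite-circle} (the pseudocircle), which is verified by a \u{C}ech bicomplex over the minimal open cover, flabbiness of the constant cosheaf on the intersections, and McCord's theorem. So there is no proof of the paper's to match yours against, and your text should be read as a proposed attack on an open problem. As such, the overall strategy --- pass to the Kolmogorov quotient, identify cosheaves on an Alexandrov space with covariant functors on the specialization poset $P$, identify $H_{0}\left( X,\bullet \right) $ with the colimit over $P$, and finish with McCord --- is sensible, and several of the obstacles you list are real.

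However, one step is not a gap to be filled later but is wrong as formulated: you propose to identify the quasi-projective cosheaves of Definition \ref{Def-Quasi-projective-(pre)cosheaf} with the projective objects of $\mathbf{pCS}\left( P,\mathbf{Pro}\left( k\right) \right) $ and then import the classical theorem that the left derived functors of $\underrightarrow{\lim }_{P}$ on a constant diagram compute $H_{n}\left( BP;G\right) $. The category $\mathbf{Pro}\left( k\right) $ does not have enough projectives (Remark \ref{Rem-not-enough-projectives}), quasi-projective pro-modules are not projective objects, and the classical theorem is proved with honest projective resolutions over a ring; it does not transfer verbatim to satellites defined via $F$-projective subcategories in the sense of Definition \ref{Def-F-projective}. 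To close this you would have to either (i) exhibit the bar-type resolution of $\underline{G}$ by the cosheaves $G_{U_{x}}$ of Definition \ref{Def-Generators} and prove it is a $\Gamma \left( X,\bullet \right) $-acyclic quasi-projective resolution in the paper's sense, or (ii) dualize via Theorem \ref{Th-Cosheaf-homology}(\ref{Th-Cosheaf-homology-pairing}) to get $\left\langle L_{n}\Gamma \left( X,G_{\#}\right) ,T\right\rangle \simeq H^{n}\left( X,\left\langle G,T\right\rangle ^{\#}\right) $, apply the sheaf-theoretic McCord comparison and universal coefficients for injective $T$, and then justify descending from an isomorphism of the functors $\left\langle \bullet ,T\right\rangle $ (for all injective $T$) to an isomorphism in $\mathbf{Pro}\left( \mathbf{Ab}\right) $ --- note that the detection statements in Theorems \ref{Th-Properties-precosheaves} and \ref{Th-Properties-cosheaves} test only vanishing and exactness, not isomorphy of two objects not yet connected by a morphism. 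Neither route is carried out. Finally, part (2) quantifies over a nonabelian satellite theory that the paper only announces for future work; as you yourself concede, it cannot be proved within the present framework, so at best your proposal addresses part (1).
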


Example \ref{Ex-Finite-circle} illustrates the conjecture.

On the contrary, the pro-homology and pro-homotopy of such spaces are rather
trivial:

\begin{remark}
\label{Rem-Shape-locally-finite}If $X$ is a locally finite (pointed)
topological space, then:%
\begin{eqnarray*}
&&pro\text{-}H_{n}\left( X,G\right) 
\simeq%
H_{n}\left( \left( \pi _{0}\left( X\right) \right) ^{\delta },G\right) , \\
&&pro\text{-}\pi _{n}\left( X\right) 
\simeq%
\pi _{n}\left( \left( \pi _{0}\left( X\right) \right) ^{\delta }\right) ,
\end{eqnarray*}%
where $\left( \pi _{0}\left( X\right) \right) ^{\delta }$ is the set of
connected components of $X$, supplied with the discrete topology. Indeed, it
is easy to check that the natural continuous projection%
\begin{equation*}
X\longrightarrow \left( \pi _{0}\left( X\right) \right) ^{\delta }
\end{equation*}%
is a polyhedral expansion (Definition \ref{Def-HTOP-extension}).
\end{remark}

Other possible applications could be in \'{e}tale homotopy theory \cite%
{Artin-Mazur-MR883959} as is summarized in the following

\begin{conjecture}
\label{Conj-Etale}Let $X^{et}$ be the site from Example \ref{Ex-Site-ETALE}.

\begin{enumerate}
\item The left satellites of $H_{0}$ are naturally isomorphic to the \'{e}%
tale pro-homology:%
\begin{equation*}
H_{n}\left( X^{et},A_{\#}\right) 
{:=}%
L_{n}H_{0}\left( X^{et},A_{\#}\right) 
\simeq%
H_{n}^{et}\left( X,A\right) .
\end{equation*}

\item The nonabelian left satellites of $H_{0}$ are naturally isomorphic to
the \'{e}tale pro-homotopy:%
\begin{equation*}
H_{n}\left( X^{et},\left( \mathbf{pt}\right) _{\#}\right) 
\simeq%
H_{n}\left( X^{et},\pi _{0}^{et}\right) 
{:=}%
L_{n}H_{0}\left( X^{et},\left( \mathbf{pt}\right) _{\#}\right) 
\simeq%
\pi _{n}^{et}\left( X\right) .
\end{equation*}
\end{enumerate}
\end{conjecture}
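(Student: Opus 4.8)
A proof of Conjecture~\ref{Conj-Etale} would, I expect, run in parallel with whatever argument establishes Conjectures~\ref{Conj-Satellites-H} and~\ref{Conj-Satellites-Pi}: the role there played by polyhedral expansions is here played by \'{e}tale hypercovers, and the role of shape pro-homology by the Artin--Mazur \'{e}tale homotopy type \cite{Artin-Mazur-MR883959}. So the first move is to isolate a single site-theoretic lemma, of which all three conjectures should be instances, saying that whenever $X$ admits a sufficiently ``good'' cofiltered system of hypercovers $\{U^{(\lambda)}_\bullet\to X\}_\lambda$, the left satellites $L_nH_0(X,-)$ may be computed from that system.

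Next I would identify the cosheaves in play. By Theorem~\ref{Th-Cosheafification} and the $\otimes_{\mathbf{Set}}$-description of cosheafification recalled in the introduction, on $X^{et}$ one has $(\mathbf{pt})_\#=\pi_0^{et}$ and $A_\#=A\otimes_{\mathbf{Set}}\pi_0^{et}$, so the two assertions amount to $L_nH_0(X^{et},A_\#)\simeq H_n^{et}(X,A)$ and $L_nH_0(X^{et},\pi_0^{et})\simeq\pi_n^{et}(X)$, all groups being read as pro-objects --- which is precisely the level of generality at which the Artin--Mazur homotopy type lives, so the statement is at least type-correct.

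The technical heart is a quasi-projective resolution adapted to $X^{et}$. Given an \'{e}tale hypercover $U_\bullet\to X$, each level $U_n$ --- a disjoint union of objects of the site --- generates, through the ``free quasi-projective cosheaf on a family of objects'' construction underlying Theorem~\ref{Th-Cosheaf-homology}, a quasi-projective cosheaf, and these levels assemble into a simplicial quasi-projective cosheaf $Q_\bullet(U)$ augmented to $A_\#$ (respectively to $\pi_0^{et}$). Letting $U$ range over the cofiltered category of hypercovers and passing to the resulting pro-object yields a candidate resolution. One then needs: (i) that the augmentation $Q_\bullet\to A_\#$ is a resolution in $\mathbf{CS}(X^{et},\mathbf{Pro}(k))$, a homology-flavoured cosheaf-theoretic counterpart of Verdier's hypercovering theorem; (ii) that quasi-projectivity survives the cofiltered pro-limit, so that the Moore complex of $Q_\bullet$ is a genuine quasi-projective resolution; and (iii) a cofinality argument comparing this resolution with the resolutions admitted in Theorem~\ref{Th-Cosheaf-homology}, so that the homology it computes is indeed $L_nH_0$. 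Granting this, applying $H_0(X^{et},-)$ collapses the cosheaf data and returns, level by level, the free $k$-module (respectively the free set) on $\pi_0^{et}(U_n)$ --- that is, exactly the pro-simplicial $k$-module, respectively pro-object in simplicial sets, whose homology and homotopy groups Artin and Mazur adopt as the definition of $H_n^{et}(X,A)$ and $\pi_n^{et}(X)$.

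The main obstacle is plainly (i): proving the hypercover-descent statement for cosheaf homology, and making it interact correctly with the cofiltered limit, is not formal and is where the real work lies. A secondary difficulty, shared with Conjecture~\ref{Conj-Satellites-Pi}, is the nonabelian half: the ``nonabelian left satellites'' of $H_0$ must first be given a robust definition (through a simplicial resolution and the homotopy of $Q_\bullet(U)$ rather than a chain complex), after which identifying them with $\pi_n^{et}$ requires a Kan-condition and Moore-complex analysis rather than the homological bookkeeping that suffices for part~(1). I would expect part~(1) to follow from the hypercover-resolution lemma with comparatively little further effort, and part~(2) to come out of running the same machine at the level of simplicial, rather than chain-level, objects.
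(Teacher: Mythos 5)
The statement you are addressing is labelled a \emph{conjecture} in the paper, and the paper supplies no proof of it: Conjecture \ref{Conj-Etale} is offered as a projected application of the machinery (alongside Conjectures \ref{Conj-Satellites-H}, \ref{Conj-Satellites-Pi}, and \ref{Conj-Satellites-Locally-finite}), with the nonabelian theory explicitly deferred to future work. So there is no argument in the paper to compare yours against, and what you have written is likewise not a proof but a research plan. That is worth saying plainly, because the plan leaves every load-bearing step unestablished.

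Concretely, the gaps are these. First, your step (i) --- that the simplicial quasi-projective cosheaf built from a hypercover resolves $A_{\#}$, i.e.\ a cosheaf-homology analogue of Verdier's hypercovering theorem --- is the entire content of the conjecture in disguise; you correctly flag it as ``where the real work lies,'' but nothing in Theorems \ref{Th-Precosheaf-homology} or \ref{Th-Cosheaf-homology} delivers it, since those theorems compare $L_nH_0$ only with \v{C}ech/Roos homology of sieves and covers, not with hypercovers. Second, your identification $A_{\#}\simeq A\otimes_{\mathbf{Set}}\pi_0^{et}$ on the \'{e}tale site is itself unproved: the paper's Theorem 3.11 of \cite{Prasolov-Cosheafification-2016-zbMATH06684178} identifies $G_{\#}$ with $G\otimes_{\mathbf{Set}}pro$-$\pi_0$ in the setting of polyhedral expansions of topological spaces (Definition \ref{Def-HTOP-extension}), and transporting that to $X^{et}$ requires an \'{e}tale analogue that would have to be established separately. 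Third, for part (2) the ``nonabelian left satellites of $H_0$'' are nowhere defined in the paper --- the abelian $L_nF$ of Definition \ref{Def-Left-derived-functors} presupposes an abelian target --- so the statement you would be proving does not yet have a precise meaning; your suggestion to define them via simplicial resolutions and Moore complexes is reasonable but is itself a definition to be supplied and checked, not a step in a proof. Your overall architecture (a single hypercover-descent lemma specializing to all three conjectures, plus a cofinality comparison with the resolutions of Theorem \ref{Th-Cosheaf-homology}) is a sensible way to attack the problem, but as it stands each of (i), (ii), (iii) and the two definitional issues above is an open problem, not a verified step.
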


\section{Preliminaries}

Throughout this paper, we will constantly use the pairings%
\begin{eqnarray*}
\left\langle \bullet ,\bullet \right\rangle &:&\mathbf{Pro}\left( k\right)
^{op}\times \mathbf{Mod}\left( k\right) \longrightarrow \mathbf{Mod}\left(
k\right) , \\
\left\langle \bullet ,\bullet \right\rangle &:&\mathbf{pCS}\left( \mathbf{D},%
\mathbf{Pro}\left( k\right) \right) ^{op}\times \mathbf{Mod}\left( k\right)
\longrightarrow \mathbf{pS}\left( \mathbf{D},\mathbf{Mod}\left( k\right)
\right)
\end{eqnarray*}

from Definition \ref{Def-Pairings-functors}(\ref%
{Def-Pairings-pro-modules-Hom(Pro-k)}, \ref{Def-Pairings-functors-Hom(Pro-k)}%
).

\subsection{Quasi-noetherian rings}

Let $k$ be a commutative ring. From now on, $k$ is assumed to be \textbf{%
quasi-noetherian} (Definition \ref{Def-quasi-noetherian}), e.g. \textbf{%
noetherian} (see Proposition \ref{Prop-Noetherian-Quasi-noetherian}).

\begin{notation}
\label{Not-Pro(k)}Let $k$ be a commutative ring. Then $\mathbf{Pro}\left(
k\right) =\mathbf{Pro}\left( \mathbf{Mod}\left( k\right) \right) $ is the
category of pro-objects (Definition \ref{Def-Pro-C}) in the category $%
\mathbf{Mod}\left( k\right) $ of $k$-modules.
\end{notation}

\begin{remark}
Since any noetherian ring (e.g. $\mathbb{Z}$) is quasi-noetherian, our
considerations cover a large family of pro-categories like%
\begin{equation*}
\mathbf{Pro}\left( \mathbf{Ab}\right) 
\simeq%
\mathbf{Pro}\left( \mathbb{Z}\right) ,
\end{equation*}%
$\mathbf{Pro}\left( k\right) $ where $k$ is a field, $\mathbf{Pro}\left(
R\right) $ where $R$ is a finitely generated commutative algebra over a
noetherian ring, etc.
\end{remark}

\subsection{(Pre)cosheaves}

In this paper, we will consider (pre)cosheaves with values in $\mathbf{Pro}%
\left( \mathbf{K}\right) $ ($\mathbf{K}$ is a cocomplete category, see
Definition \ref{Def-(co)complete}) or $\mathbf{Pro}\left( k\right) $, and
(pre)sheaves with values in $\mathbf{L}$ ($\mathbf{L}$ is a complete
category, see Definition \ref{Def-(co)complete}) or $\mathbf{Mod}\left(
k\right) $. \textbf{Pre}(co)sheaves can be defined on small sites (in
particular) or on small categories (in general). Many of our constructions
and statements are also valid for those generalized pre(co)sheaves.

Let $X=\left( \mathbf{C}_{X}\mathbf{,}Cov\left( X\right) \right) $ be a
small site (Definition \ref{Def-Site}), and let $\mathbf{D}$ and $\mathbf{K}$
be categories. Assume that $\mathbf{D}$ is small, and $\mathbf{K}$ is
cocomplete (Definition \ref{Def-(co)complete}). Remind Definition \ref%
{Def-Comma-U} for $\mathbf{C}_{U}$ and Definition \ref{Def-Comma-R} for $%
\mathbf{C}_{R}$.

\begin{definition}
\label{Def-(Pre)cosheaves}~

\begin{enumerate}
\item A \textbf{precosheaf} $\mathcal{A}$ on $\mathbf{D}$ with values in $%
\mathbf{K}$ is a functor $\mathcal{A}:\mathbf{D}\rightarrow \mathbf{K}$.

\item A \textbf{precosheaf} $\mathcal{A}$ on $X$ with values in $\mathbf{K}$
is a functor $\mathcal{A}:\mathbf{C}_{X}\rightarrow \mathbf{K}$.

\item A precosheaf $\mathcal{A}$ on $X$ is \textbf{coseparated} provided%
\begin{equation*}
\mathcal{A}\otimes _{\mathbf{Set}^{\mathbf{C}_{X}}}R%
\simeq%
\underset{\left( V\rightarrow U\right) \in \mathbf{C}_{R}}{\underrightarrow{%
\lim }}\mathcal{A}\left( V\right) \longrightarrow \mathcal{A}\otimes _{%
\mathbf{Set}^{\mathbf{C}_{X}}}h_{U}%
\simeq%
\mathcal{A}\left( U\right)
\end{equation*}%
is an epimorphism for any $U\in \mathbf{C}_{X}$ and for any covering sieve
(Definition \ref{Def-Site}) $R$ over $U$. The pairing $\otimes _{\mathbf{Set}%
^{\mathbf{C}_{X}}}$ is introduced in Definition \ref{Def-Pairings-functors}(%
\ref{Def-Pairings-functors-Set-X}).

\item A precosheaf $\mathcal{A}$ on $X$ is a \textbf{cosheaf} provided%
\begin{equation*}
\mathcal{A}\otimes _{\mathbf{Set}^{\mathbf{C}_{X}}}R%
\simeq%
\underset{\left( V\rightarrow U\right) \in \mathbf{C}_{R}}{\underrightarrow{%
\lim }}\mathcal{A}\left( V\right) \longrightarrow \mathcal{A}\otimes _{%
\mathbf{Set}^{\mathbf{C}_{X}}}h_{U}%
\simeq%
\mathcal{A}\left( U\right)
\end{equation*}%
is an isomorphism for any $U\in \mathbf{C}_{X}$ and for any covering sieve $%
R $ over $U$.
\end{enumerate}
\end{definition}

\begin{remark}
The isomorphisms%
\begin{equation*}
\mathcal{A}\otimes _{\mathbf{Set}^{\mathbf{C}_{X}}}R%
\simeq%
\underset{\left( V\rightarrow U\right) \in \mathbf{C}_{R}}{\underrightarrow{%
\lim }}\mathcal{A}\left( V\right)
\end{equation*}%
and%
\begin{equation*}
\mathcal{A}\otimes _{\mathbf{Set}^{\mathbf{C}_{X}}}h_{U}%
\simeq%
\mathcal{A}\left( U\right)
\end{equation*}%
follow from Proposition \ref{Prop-A-ten-Set-CX-R}, because the
comma-category $\mathbf{C}_{U}%
\simeq%
\mathbf{C}_{h_{U}}$ (Definition \ref{Def-Comma-U} and Remark \ref%
{Rem-CU-equivalent-ChU}) has a terminal object $\left( U,\mathbf{1}%
_{U}\right) $.
\end{remark}

\begin{notation}
\label{Not-(Pre)cosheaves-categories}Denote by $\mathbf{CS}\left( X,\mathbf{K%
}\right) $ the category of cosheaves, and by $\mathbf{pCS}\left( X,\mathbf{K}%
\right) $ (respectively $\mathbf{pCS}\left( \mathbf{D},\mathbf{K}\right) $)
the category of precosheaves on $X$ (respectively on $\mathbf{D}$) with
values in $\mathbf{K}$.
\end{notation}

\begin{remark}
\label{Rem-Compare-cosheaves-with-sheaves}Compare to Definition \ref%
{Def-(Pre)sheaves} and Notation \ref{Not-(Pre)sheaves-categories} for
(pre)sheaves.
\end{remark}

\begin{definition}
\label{Def-Plus-construction}~

\begin{enumerate}
\item Assume that $\mathbf{K}$ is cocomplete. Given a precosheaf $\mathcal{A}%
\in \mathbf{pCS}\left( X,\mathbf{Pro}\left( \mathbf{K}\right) \right) $, let 
\begin{equation*}
\mathcal{A}_{+}\left( U\right) 
{:=}%
\left[ U\longmapsto \check{H}_{0}\left( U,\mathcal{A}\right) \right] ,
\end{equation*}%
alternatively 
\begin{equation*}
\mathcal{A}_{+}\left( U\right) 
{:=}%
\left[ U\longmapsto ~^{Roos}\check{H}_{0}\left( U,\mathcal{A}\right) \right]
,
\end{equation*}%
(see Definition \ref{Def-Cech-homology} (\ref{Def-Cech-homology-Cech-Hn(R)}, %
\ref{Def-Cech-homology-Cech-Hn(Ui)}) and Proposition \ref%
{Prop-Two-Cech-equivalent}). $\mathcal{A}_{+}$ is clearly a precosheaf, and
we have natural morphisms%
\begin{eqnarray*}
\lambda _{+}\left( \mathcal{A}\right) &:&\mathcal{A}_{+}\longrightarrow 
\mathcal{A}, \\
\lambda _{++}\left( \mathcal{A}\right) &=&\lambda _{+}\left( \mathcal{A}%
\right) \circ \lambda _{+}\left( \mathcal{A}_{+}\right) :\mathcal{A}%
_{++}\longrightarrow \mathcal{A}.
\end{eqnarray*}

\item Assume that $\mathbf{K}$ is complete. Given a presheaf $\mathcal{B}\in 
\mathbf{pS}\left( X,\mathbf{K}\right) $, let 
\begin{equation*}
\mathcal{B}^{+}\left( U\right) 
{:=}%
\left[ U\longmapsto \check{H}^{0}\left( U,\mathcal{A}\right) \right]
\end{equation*}%
(see Definition \ref{Def-Cech-homology} (\ref{Def-Cech-homology-Cech-Hn(R)}, %
\ref{Def-Cech-homology-Cech-Hn(Ui)})). $\mathcal{B}^{+}$ is clearly a
presheaf, and we have natural morphisms%
\begin{eqnarray*}
\lambda ^{+}\left( \mathcal{B}\right) &:&\mathcal{B}\longrightarrow \mathcal{%
B}^{+}, \\
\lambda ^{++}\left( \mathcal{B}\right) &=&\lambda ^{+}\left( \mathcal{B}%
^{+}\right) \circ \lambda ^{+}\left( \mathcal{B}\right) :\mathcal{B}%
\longrightarrow \mathcal{B}^{++}.
\end{eqnarray*}%
It is well-known that $\mathcal{B}^{++}$ is a sheaf. Apply, e.g., \cite[%
Theorem 3.1(3)]{Prasolov-Cosheafification-2016-zbMATH06684178} to $\mathbf{K}%
^{op}$.
\end{enumerate}
\end{definition}

The following theorem has been partially proved in \cite%
{Prasolov-Cosheafification-2016-zbMATH06684178}:

\begin{theorem}
\label{Th-Cosheafification}Assume that $\mathbf{K}$ is cocomplete. In (\ref%
{Th-Cosheafification-plus-right-exact}-\ref%
{Th-Cosheafification-plusplus-exact}) below assume in addition that $\mathbf{%
K}$ admits finite limits. Let%
\begin{eqnarray*}
\mathcal{A} &\in &\mathbf{pCS}\left( X,\mathbf{Pro}\left( \mathbf{K}\right)
\right) , \\
\mathcal{B} &\in &\mathbf{pCS}\left( X,\mathbf{K}\right) \subseteq \mathbf{%
pCS}\left( X,\mathbf{Pro}\left( \mathbf{K}\right) \right) , \\
\mathcal{C} &\in &\mathbf{pCS}\left( X,\mathbf{Pro}\left( k\right) \right) .
\end{eqnarray*}%
Then:

\begin{enumerate}
\item \label{Th-Cosheafification-Pro(K)-K-(co)complete-cosheaf}\label%
{Th-Main-Pro(K)-K-(co)complete-cosheaf}$\mathcal{B}$ is coseparated (a
cosheaf) iff it is coseparated (a cosheaf) when considered as a precosheaf
with values in $\mathbf{Pro}\left( \mathbf{K}\right) $.

\item \label{Th-Cosheafification-coreflective}The full subcategory of
cosheaves%
\begin{equation*}
\mathbf{CS}\left( X,\mathbf{Pro}\left( \mathbf{K}\right) \right) \subseteq 
\mathbf{pCS}\left( X,\mathbf{Pro}\left( \mathbf{K}\right) \right)
\end{equation*}%
is coreflective (Definition \ref{Def-(co)reflective}), and the coreflection 
\begin{equation*}
\mathbf{pCS}\left( X,\mathbf{Pro}\left( \mathbf{K}\right) \right)
\longrightarrow \mathbf{CS}\left( X,\mathbf{Pro}\left( \mathbf{K}\right)
\right)
\end{equation*}%
is given by%
\begin{equation*}
\mathcal{A}\longmapsto \mathcal{A}_{\#}%
{:=}%
\mathcal{A}_{++}.
\end{equation*}

\item \label{Th-Cosheafification-plus-right-exact}The functor%
\begin{equation*}
\left( {}\right) _{+}:\mathbf{pCS}\left( X,\mathbf{Pro}\left( \mathbf{K}%
\right) \right) \longrightarrow \mathbf{pCS}\left( X,\mathbf{Pro}\left( 
\mathbf{K}\right) \right)
\end{equation*}%
is \textbf{right exact} (Definition \ref{Def-exact-functors}).

\item \label{Th-Cosheafification-plusplus-exact}The functor%
\begin{equation*}
\left( {}\right) _{\#}=\left( {}\right) _{++}:\mathbf{pCS}\left( X,\mathbf{%
Pro}\left( \mathbf{K}\right) \right) \longrightarrow \mathbf{CS}\left( X,%
\mathbf{Pro}\left( \mathbf{K}\right) \right)
\end{equation*}%
is \textbf{exact} (Definition \ref{Def-exact-functors}).

\item \label{Th-Cosheafification-(A,T)-separated}$\mathcal{C}$ is \textbf{co}%
separated iff the presheaf $\left\langle \mathcal{C},T\right\rangle $ (see
Definition \ref{Def-Pairings-functors}(\ref{Def-Pairings-functors-Hom(Pro-k)}%
) is \textbf{separated} (Definition \ref{Def-(Pre)sheaves}) for any
injective $T\in \mathbf{Mod}\left( k\right) $.

\item \label{Th-Cosheafification-(A,T)-sheaf}$\mathcal{C}$ is a \textbf{co}%
sheaf iff the presheaf $\left\langle \mathcal{C},T\right\rangle $ is a 
\textbf{sheaf} (Definition \ref{Def-(Pre)sheaves}) for any injective $T\in 
\mathbf{Mod}\left( k\right) $.

\item \label{Th-Cosheafification-(A,T)-plus}%
\begin{eqnarray*}
&&\left\langle \mathcal{C}_{+},T\right\rangle 
\simeq%
\left\langle \mathcal{C},T\right\rangle ^{+}, \\
&&\left\langle \mathcal{C}_{\#},T\right\rangle 
\simeq%
\left\langle \mathcal{C},T\right\rangle ^{\#},
\end{eqnarray*}%
naturally in $\mathcal{C}$ and $T$, for \textbf{any} (not necessarily
injective) $T\in \mathbf{Mod}\left( k\right) $.
\end{enumerate}
\end{theorem}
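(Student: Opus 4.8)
The plan is to split the seven assertions into three groups: (1) is a general fact about the embedding $\iota\colon\mathbf{K}\hookrightarrow\mathbf{Pro}\left(\mathbf{K}\right)$ (Definition \ref{Def-Pro-C}); (2)--(4) will be deduced from \cite{Prasolov-Cosheafification-2016-zbMATH06684178} and its dual; and (5)--(7) will be obtained by transporting the (co)sheaf conditions along the pairing $\left\langle\bullet,T\right\rangle$, with $T$ running over the injective $k$-modules. For (1), the point is that $\iota$ is fully faithful and preserves all small colimits that exist in $\mathbf{K}$; since $\mathbf{K}$ is cocomplete it preserves copowers, pushouts and the colimits $\varinjlim_{\left(V\to U\right)\in\mathbf{C}_R}\mathcal{B}\left(V\right)$, so the objects $\mathcal{B}\otimes_{\mathbf{Set}^{\mathbf{C}_X}}R$ and $\mathcal{B}\otimes_{\mathbf{Set}^{\mathbf{C}_X}}h_U$ of Proposition \ref{Prop-A-ten-Set-CX-R}, and the comparison morphism between them, are the same whether computed in $\mathbf{K}$ or in $\mathbf{Pro}\left(\mathbf{K}\right)$; being fully faithful, $\iota$ reflects isomorphisms, and being in addition colimit-preserving into a category with pushouts it both preserves and reflects epimorphisms (use the pushout description of an epimorphism), so ``coseparated'' and ``cosheaf'' are invariant under $\iota$.

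For (2)--(4): since $\mathbf{K}$ is cocomplete, $\mathbf{Pro}\left(\mathbf{K}\right)$ is cocomplete and has exact cofiltered limits (an AB5*-type property) --- precisely what fails in the ``usual'' categories --- so it falls under the hypotheses of \cite[Theorem 3.1]{Prasolov-Cosheafification-2016-zbMATH06684178}. That theorem, together with its dual applied to $\mathbf{K}^{op}$ exactly as in Definition \ref{Def-Plus-construction}, gives the coreflectivity of $\mathbf{CS}\left(X,\mathbf{Pro}\left(\mathbf{K}\right)\right)\subseteq\mathbf{pCS}\left(X,\mathbf{Pro}\left(\mathbf{K}\right)\right)$ and the right exactness of $\left({}\right)_{+}$ (a coend followed by an exact cofiltered pro-limit). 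Assuming in addition that $\mathbf{K}$ has finite limits, $\mathbf{Pro}\left(\mathbf{K}\right)$ acquires finite limits compatible with those cofiltered limits, so that $\mathcal{A}_{+}$ is coseparated for every $\mathcal{A}$ and is a cosheaf once $\mathcal{A}$ is coseparated; hence $\left({}\right)_{\#}=\left({}\right)_{++}$ lands in cosheaves and is exact. That $\left({}\right)_{++}$ computes the coreflection follows from $\lambda_{+}\left(\mathcal{F}\right)$ being an isomorphism for every cosheaf $\mathcal{F}$ --- indeed $\check H_{0}\left(R,\mathcal{F}\right)=\varinjlim_{\left(V\to U\right)\in\mathbf{C}_R}\mathcal{F}\left(V\right)\simeq\mathcal{F}\left(U\right)$ by Definition \ref{Def-(Pre)cosheaves}, so $\mathcal{F}_{+}\simeq\mathcal{F}$ --- plus the usual universality argument.

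For (5)--(7): the key remark is that $\left\langle P,T\right\rangle=\mathrm{Hom}_{\mathbf{Pro}\left(k\right)}\left(P,T\right)$ with $T$ regarded as a constant pro-module, so $\left\langle\bullet,T\right\rangle$ sends colimits in $\mathbf{Pro}\left(k\right)$ to limits in $\mathbf{Mod}\left(k\right)$; moreover, when $T$ runs over the injective $k$-modules the functors $\left\langle\bullet,T\right\rangle$ are exact and jointly detect epimorphisms and isomorphisms in $\mathbf{Pro}\left(k\right)$ (injectives cogenerate $\mathbf{Mod}\left(k\right)$, and for quasi-noetherian $k$ the category $\mathbf{Pro}\left(k\right)$ is well enough behaved that $\left\langle P,T\right\rangle=0$ for all injective $T$ forces $P=0$). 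Applying $\left\langle\bullet,T\right\rangle$ to $\varinjlim_{\left(V\to U\right)\in\mathbf{C}_R}\mathcal{C}\left(V\right)\to\mathcal{C}\left(U\right)$ yields $\left\langle\mathcal{C},T\right\rangle\left(U\right)\to\varprojlim_{\left(V\to U\right)\in\mathbf{C}_R}\left\langle\mathcal{C},T\right\rangle\left(V\right)$, with epimorphisms becoming monomorphisms and isomorphisms isomorphisms, and the converses holding for injective $T$; by Definition \ref{Def-(Pre)sheaves} this is (5) and (6). For (7), the same functor carries the coequalizer/coend defining $\check H_{0}\left(R,\mathcal{C}\right)$ to the equalizer/end defining $\check H^{0}\left(R,\left\langle\mathcal{C},T\right\rangle\right)$, and carries the formal cofiltered limit over the covering sieves of $U$ built into $\check H_{0}\left(U,\mathcal{C}\right)$ (Definition \ref{Def-Cech-homology}, or its ``Roos'' variant from Proposition \ref{Prop-Two-Cech-equivalent}) to the filtered colimit over those sieves defining $\check H^{0}\left(U,\bullet\right)$, because $\mathrm{Hom}_{\mathbf{Pro}\left(k\right)}\left(\left(X_{i}\right)_{i},T\right)=\varinjlim_{i}\mathrm{Hom}_{k}\left(X_{i},T\right)$. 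Hence $\left\langle\mathcal{C}_{+},T\right\rangle\simeq\left\langle\mathcal{C},T\right\rangle^{+}$ naturally in $\mathcal{C}$ and $T$; iterating and using $\left({}\right)^{++}=\left({}\right)^{\#}$ gives $\left\langle\mathcal{C}_{\#},T\right\rangle\simeq\left\langle\mathcal{C},T\right\rangle^{\#}$.

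The step I expect to be the main obstacle is the compatibility in (7): one must check carefully that ``$\mathrm{Hom}$ into $T$'' really converts the pro-object structure --- the formal cofiltered limit over covers --- hidden inside $\check H_{0}\left(U,\mathcal{C}\right)$ into the genuine filtered colimit over covers appearing in $\check H^{0}\left(U,\bullet\right)$; once the two models of \v{C}ech homology of Proposition \ref{Prop-Two-Cech-equivalent} have been reconciled, this becomes bookkeeping. A secondary delicate point is the joint-detection lemma for injective $T$ underlying (5) and (6), whose proof rests on the properties of $\mathbf{Pro}\left(k\right)$ established for quasi-noetherian $k$.
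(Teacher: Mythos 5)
Your overall strategy coincides with the paper's: (1)--(2) are delegated to the companion paper, (3) is proved by writing $\left( {}\right) _{+}$ as the composition of the colimit-preserving functor $H_{0}\left( R,\bullet \right) $ with the cofiltered limit over covering sieves (right exact because cofiltered limits are exact in $\mathbf{Pro}\left( \mathbf{K}\right) $), and (5)--(7) are handled by pushing the comparison morphisms through $\left\langle \bullet ,T\right\rangle $ and using that the injective $T$ jointly detect epimorphisms and isomorphisms in $\mathbf{Pro}\left( k\right) $. The direct arguments you supply for the items the paper only cites are sound: the embedding $\mathbf{K}\hookrightarrow \mathbf{Pro}\left( \mathbf{K}\right) $ does preserve all small colimits that exist in $\mathbf{K}$ and, being fully faithful, reflects isomorphisms and epimorphisms, which gives (1); and the identity $Hom_{\mathbf{Pro}\left( k\right) }\left( \left( X_{i}\right) _{i},T\right) \simeq \underrightarrow{\lim }_{i}Hom_{\mathbf{Mod}\left( k\right) }\left( X_{i},T\right) $, combined with Proposition \ref{Prop-Pro-objects-properties}(\ref{Prop-Pro-objects-properties-Convert-limits-to-colimits}) to turn the cofiltered limit over covering sieves into a filtered colimit, is exactly what makes (7) work.

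The one step you do not justify is the \emph{left} exactness of $\left( {}\right) _{\#}$ in (\ref{Th-Cosheafification-plusplus-exact}): from (\ref{Th-Cosheafification-plus-right-exact}) you only obtain that $\left( {}\right) _{++}=\iota \circ \left( {}\right) _{\#}$ is right exact (and, $\iota $ being fully faithful, that $\left( {}\right) _{\#}$ itself is right exact). Left exactness does not follow from $\mathcal{A}_{+}$ being coseparated, nor from $\left( {}\right) _{\#}$ landing in cosheaves; it comes from the coreflectivity you establish in (2): $\left( {}\right) _{\#}$ is a right adjoint of $\iota $ and therefore preserves all limits, in particular finite ones. Make that inference explicit and the sketch matches the paper's proof in full.
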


\begin{proof}
(\textbf{\ref{Th-Cosheafification-Pro(K)-K-(co)complete-cosheaf}}, \textbf{%
\ref{Th-Cosheafification-coreflective}}) See \cite[Theorem 3.1(4)]%
{Prasolov-Cosheafification-2016-zbMATH06684178}.

(\textbf{\ref{Th-Cosheafification-plus-right-exact}}) Let $U\in \mathbf{C}%
_{X}$, and let $R\subseteq h_{U}$ be a sieve. Then the functor%
\begin{equation*}
\left[ \mathcal{A}\longmapsto H_{0}\left( R,\mathcal{A}\right) =\underset{%
\left( V\rightarrow U\right) \in \mathbf{C}_{R}}{\underrightarrow{\lim }}%
\mathcal{A}\left( V\right) \right] :\mathbf{pCS}\left( \mathbf{D},\mathbf{Pro%
}\left( \mathbf{K}\right) \right) \longrightarrow \mathbf{Pro}\left( \mathbf{%
K}\right)
\end{equation*}%
preserves arbitrary colimits (not necessarily finite!) because colimits
commute with colimits. Therefore, the above functor is right exact. Since
cofiltered limits are exact in the category $\mathbf{Pro}\left( \mathbf{K}%
\right) $ (Proposition \ref{Prop-Pro-objects-properties}(\ref%
{Prop-Pro-objects-properties-Cofiltered-limits-exact})), the functor 
\begin{equation*}
\left[ \mathcal{A}\longmapsto \mathcal{A}_{+}\left( U\right) =\underset{R\in
Cov\left( U\right) }{\underleftarrow{\lim }}~\underset{\left( V\rightarrow
U\right) \in \mathbf{C}_{R}}{\underrightarrow{\lim }}\mathcal{A}\left(
V\right) \right] :\mathbf{pCS}\left( \mathbf{D},\mathbf{Pro}\left( \mathbf{K}%
\right) \right) \longrightarrow \mathbf{Pro}\left( \mathbf{K}\right)
\end{equation*}%
is right exact as the composition of two right exact functors. Let $U\in 
\mathbf{C}_{X}$ vary. It follows that the corresponding functor%
\begin{equation*}
\left( {}\right) _{+}:\mathbf{pCS}\left( X,\mathbf{Pro}\left( \mathbf{K}%
\right) \right) \longrightarrow \mathbf{pCS}\left( X,\mathbf{Pro}\left( 
\mathbf{K}\right) \right)
\end{equation*}%
is exact.

(\textbf{\ref{Th-Cosheafification-plusplus-exact}}) Consider the composition%
\begin{equation*}
\left( {}\right) _{++}=\iota \circ \left( {}\right) _{\#}:\mathbf{pCS}\left(
X,\mathbf{Pro}\left( \mathbf{K}\right) \right) \longrightarrow \mathbf{CS}%
\left( X,\mathbf{Pro}\left( \mathbf{K}\right) \right) \longrightarrow 
\mathbf{pCS}\left( X,\mathbf{Pro}\left( \mathbf{K}\right) \right) ,
\end{equation*}%
which is right exact, due to (\ref{Th-Cosheafification-plus-right-exact}).
Since $\iota $ is fully faithful, the functor%
\begin{equation*}
\left( {}\right) _{\#}:\mathbf{pCS}\left( X,\mathbf{Pro}\left( \mathbf{K}%
\right) \right) \longrightarrow \mathbf{CS}\left( X,\mathbf{Pro}\left( 
\mathbf{K}\right) \right)
\end{equation*}%
is \textbf{right} exact as well. However, $\left( {}\right) _{\#}$, being a
right adjoint, preserves \textbf{arbitrary} (e.g., finite) limits, therefore
it is \textbf{left} exact.

(\textbf{\ref{Th-Cosheafification-(A,T)-separated}}) If $\mathcal{C}$ is
coseparated, then it follows from \cite[Proposition 2.10(1)]%
{Prasolov-Cosheafification-2016-zbMATH06684178} that $\left\langle \mathcal{C%
},T\right\rangle $ is separated for \textbf{any} (not necessarily injective) 
$T\in \mathbf{Mod}\left( k\right) $.

Assume now that $\left\langle \mathcal{C},T\right\rangle $ is separated for
any injective $T\in \mathbf{Mod}\left( k\right) $. Let $R\in Cov\left(
U\right) $ be a sieve. It follows that%
\begin{eqnarray*}
&&\left[ \left\langle \mathcal{C}\otimes _{\mathbf{Set}^{\mathbf{C}%
_{X}}}R,T\right\rangle \longleftarrow \left\langle \mathcal{C}\otimes _{%
\mathbf{Set}^{\mathbf{C}_{X}}}h_{U},T\right\rangle 
\simeq%
\left\langle \mathcal{C},T\right\rangle \left( U\right) =:\left\langle 
\mathcal{C}\left( U\right) ,T\right\rangle \right] 
\simeq
\\
&&%
\simeq%
\left[ Hom_{\mathbf{Set}^{\mathbf{C}_{X}}}\left( R,\left\langle \mathcal{C}%
,T\right\rangle \right) \longleftarrow Hom_{\mathbf{Set}^{\mathbf{C}%
_{X}}}\left( h_{U},\left\langle \mathcal{C},T\right\rangle \right) \right]
\end{eqnarray*}%
is a monomorphism, and, due to Proposition \ref{Prop-Pro-modules-properties}(%
\ref{Prop-Pro-modules-properties-Exact}) 
\begin{equation*}
\mathcal{C}\otimes _{\mathbf{Set}^{\mathbf{C}_{X}}}R\longrightarrow \mathcal{%
C}\otimes _{\mathbf{Set}^{\mathbf{C}_{X}}}h_{U}%
\simeq%
\mathcal{A}\left( U\right)
\end{equation*}%
is an epimorphism.

(\textbf{\ref{Th-Cosheafification-(A,T)-sheaf}}) Proved analogously, using 
\cite[Proposition 2.10(2)]{Prasolov-Cosheafification-2016-zbMATH06684178}
and Proposition \ref{Prop-Pro-modules-properties}(\ref%
{Prop-Pro-modules-properties-Exact}).

(\textbf{\ref{Th-Cosheafification-(A,T)-plus}}) See \cite[Proposition 2.11]%
{Prasolov-Cosheafification-2016-zbMATH06684178}.
\end{proof}

\subsection{Quasi-projective (pre)cosheaves}

\begin{definition}
\label{Def-Quasi-projective-(pre)cosheaf}Let $X$ be a small site, and $%
\mathbf{D}$ a small category.

\begin{enumerate}
\item \label{Def-Quasi-projective-precosheaf}Assume that $\mathcal{A}$ is a
precosheaf: either%
\begin{equation*}
\mathcal{A}\in \mathbf{pCS}\left( X,\mathbf{Pro}\left( k\right) \right) ,
\end{equation*}%
or%
\begin{equation*}
\mathcal{A}\in \mathbf{pCS}\left( \mathbf{D},\mathbf{Pro}\left( k\right)
\right) .
\end{equation*}
$\mathcal{A}$ is called \textbf{quasi-projective} iff for any injective $%
T\in \mathbf{Mod}\left( k\right) $, the presheaf 
\begin{equation*}
\left\langle \mathcal{A},T\right\rangle \in \mathbf{pS}\left( X,\mathbf{Mod}%
\left( k\right) \right) ,
\end{equation*}%
or%
\begin{equation*}
\left\langle \mathcal{A},T\right\rangle \in \mathbf{pS}\left( \mathbf{D},%
\mathbf{Mod}\left( k\right) \right) ,
\end{equation*}%
is \textbf{injective}.

\item \label{Def-Quasi-projective-cosheaf}A cosheaf 
\begin{equation*}
\mathcal{B}\in \mathbf{CS}\left( X,\mathbf{Pro}\left( k\right) \right)
\end{equation*}%
is called \textbf{quasi-projective} iff for any injective $T\in \mathbf{Mod}%
\left( k\right) $, the sheaf 
\begin{equation*}
\left\langle \mathcal{B},T\right\rangle \in \mathbf{S}\left( X,\mathbf{Mod}%
\left( k\right) \right)
\end{equation*}%
is \textbf{injective}.
\end{enumerate}
\end{definition}

\begin{notation}
\label{Not-Quasi-projective}Denote by%
\begin{equation*}
\mathbf{Q}\left( \mathbf{pCS}\left( X,\mathbf{Pro}\left( k\right) \right)
\right) \subseteq \mathbf{pCS}\left( X,\mathbf{Pro}\left( k\right) \right) ,
\end{equation*}%
or%
\begin{equation*}
\mathbf{Q}\left( \mathbf{pCS}\left( \mathbf{D},\mathbf{Pro}\left( k\right)
\right) \right) \subseteq \mathbf{pCS}\left( \mathbf{D},\mathbf{Pro}\left(
k\right) \right) ,
\end{equation*}%
the full subcategory of quasi-projective precosheaves, and by%
\begin{equation*}
\mathbf{Q}\left( \mathbf{CS}\left( X,\mathbf{Pro}\left( k\right) \right)
\right) \subseteq \mathbf{CS}\left( X,\mathbf{Pro}\left( k\right) \right)
\end{equation*}%
the full subcategory of quasi-projective cosheaves.
\end{notation}

\begin{definition}
\label{Def-discrete-site}~

\begin{enumerate}
\item A small category $\mathbf{C}$ is called \textbf{discrete} iff its only
morphisms are identities $\left( \mathbf{1}_{U}\right) _{U\in \mathbf{C}}$.

\item A site $X=\left( \mathbf{C}_{X},Cov\left( X\right) \right) $ is called 
\textbf{discrete} iff $\mathbf{C}_{X}$ is a discrete category and all sieves
are covering sieves.
\end{enumerate}
\end{definition}

\begin{example}
\label{Ex-Quasi-projective-discrete-category}Let $\mathbf{D}$ be a discrete
category, and assume that $\mathcal{A}\left( U\right) $ is a
quasi-projective pro-module (Definition \ref{Def-quasi-projective-pro-module}%
) for any $U\in \mathbf{D}$. Then the precosheaf $\mathcal{A}$ is
quasi-projective. Indeed, for any injective $T\in \mathbf{Mod}\left(
k\right) $, the $k$-modules $\left\langle \mathcal{A}\left( U\right)
,T\right\rangle $ are injective (remember that $k$ is quasi-noetherian!).
Since the functor%
\begin{equation*}
Hom_{\mathbf{pS}\left( \mathbf{D},\mathbf{Mod}\left( k\right) \right)
}\left( \bullet ,\left\langle \mathcal{A},T\right\rangle \right) 
\simeq%
\dprod\limits_{U\in \mathbf{D}}Hom_{\mathbf{Mod}\left( k\right) }\left(
\bullet \left( U\right) ,\left\langle \mathcal{A}\left( U\right)
,T\right\rangle \right)
\end{equation*}%
is exact, the presheaf $\left\langle \mathcal{A},T\right\rangle $ is
injective, and the precosheaf $\mathcal{A}$ is quasi-projective.
\end{example}

\begin{proposition}
\label{Prop-Left-Kan-quasi-projective}Let $\mathbf{D}$ and $\mathbf{E}$ be
small categories, and let%
\begin{equation*}
f:\mathbf{E}\longrightarrow \mathbf{D}
\end{equation*}%
be a functor. Then%
\begin{equation*}
f^{\dag }:\mathbf{pCS}\left( \mathbf{E},\mathbf{Pro}\left( k\right) \right)
\longrightarrow \mathbf{pCS}\left( \mathbf{D},\mathbf{Pro}\left( k\right)
\right) ,
\end{equation*}%
where $f^{\dag }$ is the left Kan extension of $f$ (Definition \ref%
{Def-Kan-extensions}) converts quasi-projectives into quasi-projectives.
\end{proposition}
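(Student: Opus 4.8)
The plan is to transport the question, via the pairing $\left\langle \bullet ,\bullet \right\rangle $, into the category of presheaves with values in $\mathbf{Mod}\left( k\right) $, where the corresponding statement is an instance of the standard fact that a right adjoint of an exact functor preserves injective objects.

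First I would record the relevant adjunctions. By definition $f^{\dag }$ is left adjoint to the restriction functor $f^{\ast }:\mathbf{pCS}\left( \mathbf{D},\mathbf{Pro}\left( k\right) \right) \longrightarrow \mathbf{pCS}\left( \mathbf{E},\mathbf{Pro}\left( k\right) \right) $, and since $\mathbf{Pro}\left( k\right) $ is cocomplete the left Kan extension is pointwise: $f^{\dag }\mathcal{A}\left( d\right) \simeq \varinjlim_{\left( e,\,f\left( e\right) \rightarrow d\right) }\mathcal{A}\left( e\right) $, the colimit over the comma category $\left( f\downarrow d\right) $. On the presheaf side, restriction along $f$ (equivalently along $f^{op}$) gives $f^{\ast }:\mathbf{pS}\left( \mathbf{D},\mathbf{Mod}\left( k\right) \right) \longrightarrow \mathbf{pS}\left( \mathbf{E},\mathbf{Mod}\left( k\right) \right) $, which has a right adjoint $\mathrm{Ran}_{f^{op}}$, the pointwise right Kan extension.

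The key step is the natural isomorphism
\[
\left\langle f^{\dag }\mathcal{A},T\right\rangle \;\simeq\; \mathrm{Ran}_{f^{op}}\left\langle \mathcal{A},T\right\rangle
\]
in $\mathbf{pS}\left( \mathbf{D},\mathbf{Mod}\left( k\right) \right) $, for every $T\in \mathbf{Mod}\left( k\right) $. To prove it, evaluate the left-hand side at $d\in \mathbf{D}$: it is $\left\langle \varinjlim_{\left( e,\,f\left( e\right) \rightarrow d\right) }\mathcal{A}\left( e\right) ,T\right\rangle $, and since the pairing $\left\langle \bullet ,T\right\rangle $ is, in the first variable, the hom-functor $\mathrm{Hom}_{\mathbf{Pro}\left( k\right) }\left( \bullet ,T\right) $, it converts this colimit into the limit $\varprojlim \left\langle \mathcal{A}\left( e\right) ,T\right\rangle $ over $\left( f\downarrow d\right) ^{op}$. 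On the other hand, the pointwise formula for the right Kan extension gives $\mathrm{Ran}_{f^{op}}\left\langle \mathcal{A},T\right\rangle \left( d\right) $ as the limit of $\left\langle \mathcal{A}\left( e\right) ,T\right\rangle $ over the comma category of objects $\left( e,\,d\rightarrow f\left( e\right) \text{ in }\mathbf{D}^{op}\right) $; but a morphism $d\rightarrow f\left( e\right) $ in $\mathbf{D}^{op}$ is exactly a morphism $f\left( e\right) \rightarrow d$ in $\mathbf{D}$, so this comma category is $\left( f\downarrow d\right) ^{op}$ and the two diagrams coincide. Naturality in $\mathcal{A}$ and $T$ is routine to check.

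Finally I would conclude. The functor $f^{\ast }$ on presheaves is exact, since kernels and cokernels in $\mathbf{pS}\left( -,\mathbf{Mod}\left( k\right) \right) $ are computed pointwise and $f^{\ast }$ merely reindexes; hence its right adjoint $\mathrm{Ran}_{f^{op}}$ preserves injectives, because for $I$ injective $\mathrm{Hom}\left( -,\mathrm{Ran}_{f^{op}}I\right) \simeq \mathrm{Hom}\left( f^{\ast }\left( -\right) ,I\right) $ is exact. Now if $\mathcal{A}$ is quasi-projective, then for every injective $T\in \mathbf{Mod}\left( k\right) $ the presheaf $\left\langle \mathcal{A},T\right\rangle $ is injective, so $\left\langle f^{\dag }\mathcal{A},T\right\rangle \simeq \mathrm{Ran}_{f^{op}}\left\langle \mathcal{A},T\right\rangle $ is injective as well, i.e. $f^{\dag }\mathcal{A}$ is quasi-projective. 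The only point requiring genuine care is the boxed isomorphism — specifically, that $\left\langle \bullet ,T\right\rangle $ commutes with the (possibly large) colimits defining $f^{\dag }$ and that the indexing comma categories match — but both become immediate once the pairing is unwound, and the cocompleteness of $\mathbf{Pro}\left( k\right) $ needed for the pointwise formula is available from the earlier structural results; so I do not anticipate a real obstacle.
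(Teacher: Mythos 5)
Your proposal is correct and follows essentially the same route as the paper: the key isomorphism $\left\langle f^{\dag }\mathcal{A},T\right\rangle \simeq \left( f^{op}\right) ^{\ddag }\left\langle \mathcal{A},T\right\rangle $ is exactly Theorem \ref{Th-Properties-precosheaves}(\ref{Th-Properties-precosheaves-left-Kan}), proved there by the same pointwise colimit-to-limit computation over the comma category $f\downarrow d$, and the final step is Proposition \ref{Prop-Kan-extensions}(\ref{Prop-Kan-extensions-injective}), which is the same "right adjoint of an exact restriction functor preserves injectives" argument you give. No gaps.
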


\begin{proof}
Let $\mathcal{A}\in \mathbf{pCS}\left( \mathbf{E},\mathbf{Pro}\left(
k\right) \right) $ be quasi-projective, and $T\in \mathbf{Mod}\left(
k\right) $ be injective. It follows from Proposition \ref%
{Prop-Pro-modules-properties}(\ref{Prop-Pro-modules-properties-left-Kan})
that%
\begin{equation*}
\left\langle f^{\dag },T\right\rangle 
\simeq%
\left\langle f,T\right\rangle ^{\ddag }.
\end{equation*}%
Since $\left\langle f,T\right\rangle ^{\ddag }$ converts injectives into
injectives (Proposition \ref{Prop-Kan-extensions}(\ref%
{Prop-Kan-extensions-injective})), the presheaf $\left\langle f^{\dag }%
\mathcal{A},T\right\rangle $ is injective for any injective $T$, and the
precosheaf $f^{\dag }\mathcal{A}$ is quasi-projective.
\end{proof}

\begin{definition}
\label{Def-Flabby-cosheaf}~

\begin{enumerate}
\item A cosheaf $\mathcal{A}\in \mathbf{CS}\left( X,\mathbf{Pro}\left(
k\right) \right) $ on a \textbf{topological space} $X$ is called \textbf{%
flabby} iff $\mathcal{A}\left( V\rightarrow U\right) $ is a monomorphism for
any $\left( V\rightarrow U\right) \in \mathbf{C}_{X}$.

\item A cosheaf $\mathcal{A}\in \mathbf{CS}\left( X,\mathbf{Pro}\left(
k\right) \right) $ on a \textbf{small site} $X$ is called \textbf{flask} iff%
\begin{equation*}
H_{s}\left( R,\mathcal{A}\right) =0
\end{equation*}%
(see Definition \ref{Def-Cech-homology} (\ref{Def-Cech-homology-Cech-Hn(R)}, %
\ref{Def-Cech-homology-Cech-Hn(Ui)})) for any $s>0,$ and any covering sieve $%
R\subseteq h_{U}$.
\end{enumerate}
\end{definition}

\begin{definition}
\label{Def-Generators}Let $V\in \mathbf{E}$.

\begin{enumerate}
\item \label{Def-Generators-Precosheaves}Let $A\in \mathbf{Pro}\left(
k\right) $, considered as a precosheaf on the one-object category $\left\{
V\right\} $. Denote by $A^{V}$ and $A_{V}$\ the following precosheaves on $%
\mathbf{E}$:%
\begin{eqnarray*}
&&A^{V}%
{:=}%
\left( \left\{ V\right\} \longrightarrow \mathbf{E}\right) ^{\ddag }\left(
A\right) , \\
&&A_{V}%
{:=}%
\left( \left\{ V\right\} \longrightarrow \mathbf{E}\right) ^{\dag }\left(
A\right) ,
\end{eqnarray*}%
If $A$ is a quasi-projective pro-module, then, due to Example \ref%
{Ex-Quasi-projective-discrete-category} and Proposition \ref%
{Prop-Left-Kan-quasi-projective}, $A_{V}$ is a quasi-projective cosheaf on $%
\mathbf{E}$.

\item \label{Def-Generators-Presheaves}Let $A\in \mathbf{Mod}\left( k\right) 
$, considered as a presheaf on the one-object category $\left\{ V\right\} $.
Denote by $A^{V}$ and $A_{V}$\ the following presheaves on $\mathbf{E}$:%
\begin{eqnarray*}
&&A^{V}%
{:=}%
\left( \left\{ V\right\} \longrightarrow \mathbf{E}\right) ^{\ddag }\left(
A\right) , \\
&&A_{V}%
{:=}%
\left( \left\{ V\right\} \longrightarrow \mathbf{E}\right) ^{\dag }\left(
A\right) ,
\end{eqnarray*}%
If $A$ is an injective module, then, $A^{V}$ is an injective presheaf on $%
\mathbf{E}$ (compare to Example \ref{Ex-Quasi-projective-discrete-category}
and Proposition \ref{Prop-Left-Kan-quasi-projective}).
\end{enumerate}
\end{definition}

\begin{remark}
\label{Rem-Generators}~

\begin{enumerate}
\item \label{Rem-Generators-presheaves}The presheaves $\left\{ k_{V}~|~V\in 
\mathbf{E}\right\} $ form a set of generators for the category of presheaves 
$\mathbf{pS}\left( \mathbf{E},\mathbf{Mod}\left( k\right) \right) $. Indeed,%
\begin{equation*}
Hom_{\mathbf{pS}\left( \mathbf{E},\mathbf{Mod}\left( k\right) \right)
}\left( k_{V},\mathcal{A}\right) 
\simeq%
\left( \left\{ V\right\} \longrightarrow \mathbf{E}\right) _{\ast }\mathcal{A%
}%
\simeq%
Hom_{\mathbf{Pro}\left( k\right) }\left( k,\mathcal{A}\left( V\right)
\right) 
\simeq%
\mathcal{A}\left( V\right)
\end{equation*}%
for any $\mathcal{A}\in \mathbf{pS}\left( \mathbf{E},\mathbf{Mod}\left(
k\right) \right) $. Therefore, for any \textbf{proper} subpresheaf $\mathcal{%
B}\subseteq \mathcal{A}$, there exist a $V\in \mathbf{E}$, and an $a\in 
\mathcal{A}\left( V\right) $, $a\not\in \mathcal{B}\left( V\right) $. The
morphism $k_{V}\rightarrow \mathcal{A}$, corresponding to $a$, \textbf{does
not factor} through $\mathcal{B}$.

\item \label{Rem-Generators-sheaves}The sheaves $\left\{ \left( k_{V}\right)
^{\#}~|~V\in \mathbf{E}\right\} $ form a set of generators for the category
of sheaves $\mathbf{S}\left( X,\mathbf{Mod}\left( k\right) \right) $. Indeed,%
\begin{equation*}
Hom_{\mathbf{S}\left( X,\mathbf{Mod}\left( k\right) \right) }\left( \left(
k_{V}\right) ^{\#},\mathcal{A}\right) 
\simeq%
Hom_{\mathbf{pS}\left( X,\mathbf{Mod}\left( k\right) \right) }\left( k_{V},%
\mathcal{A}\right) 
\simeq%
\mathcal{A}\left( V\right)
\end{equation*}%
for any $\mathcal{A}\in \mathbf{S}\left( X,\mathbf{Mod}\left( k\right)
\right) $. Therefore, for any \textbf{proper} subsheaf $\mathcal{B}\subseteq 
\mathcal{A}$, there exist a $V\in \mathbf{E}$, and an $a\in \mathcal{A}%
\left( V\right) $, $a\not\in \mathcal{B}\left( V\right) $. The morphism $%
\left( k_{V}\right) ^{\#}\rightarrow \mathcal{A}$, corresponding to $a$, 
\textbf{does not factor} through $\mathcal{B}$.

\item \label{Rem-Generators-precosheaves}The precosheaves $\left\{
A^{V}~|~V\in \mathbf{E},~A\in \mathfrak{G}\subseteq \mathbf{Pro}\left(
k\right) \right\} $, where $\mathfrak{G}$ is the class from Proposition \ref%
{Prop-Pro-modules-properties}(\ref{Prop-Pro-modules-properties-cogenerators}%
), form a \textbf{class} of \textbf{co}generators for the category of
precosheaves $\mathbf{pCS}\left( \mathbf{E},\mathbf{Pro}\left( k\right)
\right) $. See Theorem \ref{Th-Properties-precosheaves} (\ref%
{Th-Properties-precosheaves-cogenerators}).

We \textbf{cannot}, however, choose a \textbf{set} of cogenerators for $%
\mathbf{pCS}\left( \mathbf{E},\mathbf{Pro}\left( k\right) \right) $, because
we cannot choose a set of cogenerators for $\mathbf{Pro}\left( k\right) $.

\item \label{Rem-Generators-cosheaves}The cosheaves $\left\{ \left(
A^{V}\right) _{\#}~|~V\in \mathbf{C}_{X},~A\in \mathfrak{G}\subseteq \mathbf{%
Pro}\left( k\right) \right\} $ form a \textbf{class} of \textbf{co}%
generators for the category of cosheaves $\mathbf{CS}\left( X,\mathbf{Pro}%
\left( k\right) \right) $. See Theorem \ref{Th-Properties-cosheaves} (\ref%
{Th-Properties-cosheaves-cogenerators}).

We cannot, however, choose a \textbf{set} of cogenerators for $\mathbf{CS}%
\left( X,\mathbf{Pro}\left( k\right) \right) $.
\end{enumerate}
\end{remark}

\begin{remark}
\label{Rem-Flabby-Topological-Spaces}~

\begin{enumerate}
\item A cosheaf $\mathcal{A}$ on a topological space is flabby iff $%
\left\langle \mathcal{A},T\right\rangle $ is a flabby sheaf \cite[Definition
II.5.1]{Bredon-Book-MR1481706} for all injective $T\in \mathbf{Mod}\left(
k\right) $. Indeed, $\left\langle \mathcal{A},T\right\rangle $ is flabby iff%
\begin{equation*}
\left\langle \mathcal{A},T\right\rangle \left( V\rightarrow U\right) 
\simeq%
\left\langle \mathcal{A}\left( V\rightarrow U\right) ,T\right\rangle
\end{equation*}%
is an epimorphism for any $\left( V\rightarrow U\right) \in \mathbf{C}_{X}$.
The latter is equivalent, since $T$ varies through all injective modules, to
the fact that $\mathcal{A}\left( V\rightarrow U\right) $ is a monomorphism
for any $\left( V\rightarrow U\right) \in \mathbf{C}_{X}$.

\item A cosheaf $\mathcal{A}$ on a general site is flask iff $\left\langle 
\mathcal{A},T\right\rangle $ is a flask sheaf (\cite[Definition 3.5.1]%
{Tamme-MR1317816} or \cite[Definition 2.4.1]{Artin-GT}) for all injective $%
T\in \mathbf{Mod}\left( k\right) $. Indeed, $\left\langle \mathcal{A}%
,T\right\rangle $ is flask iff%
\begin{equation*}
0=H^{s}\left( R,\left\langle \mathcal{A},T\right\rangle \right) 
\simeq%
\left\langle H_{s}\left( R,\mathcal{A}\right) ,T\right\rangle
\end{equation*}%
for all $s>0$ and all covering sieves $R$. The latter is equivalent, since $%
T $ varies through all injective modules, to the fact that $H_{s}\left( R,%
\mathcal{A}\right) $ is zero for all $s>0$ and all covering sieves $R$.

\item On a topological space, any flabby cosheaf is flask, because it
follows from \cite[Theorem II.5.5]{Bredon-Book-MR1481706}, that $%
\left\langle \mathcal{A},T\right\rangle $ is a flask sheaf whenever it is
flabby.
\end{enumerate}
\end{remark}

\section{Main results}

\subsection{Precosheaf homology}

\begin{theorem}
\label{Th-Properties-precosheaves}Let $\mathbf{E}$ be a small category.

\begin{enumerate}
\item \label{Th-Properties-precosheaves-abelian-(co)complete}The category $%
\mathbf{pCS}\left( \mathbf{E},\mathbf{Pro}\left( k\right) \right) $ of
precosheaves is abelian, complete and cocomplete, and satisfies both the $%
AB3 $ and $AB3^{\ast }$ axioms (\cite[1.5]%
{Grothendieck-Tohoku-1957-MR0102537}, \cite[Ch. 5.8]%
{Bucur-Deleanu-1968-Introduction-categories-functors-MR0236236}).

\item \label{Th-Properties-precosheaves-colimit}For any diagram%
\begin{equation*}
\mathcal{X}:\mathbf{I}\longrightarrow \mathbf{pCS}\left( \mathbf{E},\mathbf{%
Pro}\left( k\right) \right)
\end{equation*}%
and any $T\in \mathbf{Mod}\left( k\right) $ (not necessarily injective!)%
\begin{equation*}
\left\langle \underrightarrow{\lim }_{i\in \mathbf{I}}\mathcal{X}%
_{i},T\right\rangle 
\simeq%
\underleftarrow{\lim }_{i\in \mathbf{I}}\left\langle \mathcal{X}%
_{i},T\right\rangle
\end{equation*}%
in $\mathbf{pS}\left( \mathbf{E},\mathbf{Mod}\left( k\right) \right) $.

\item \label{Th-Properties-precosheaves-cofiltered-limit}For any \textbf{%
cofiltered} diagram%
\begin{equation*}
\mathcal{X}:\mathbf{I}\longrightarrow \mathbf{pCS}\left( \mathbf{E},\mathbf{%
Pro}\left( k\right) \right)
\end{equation*}%
and any $T\in \mathbf{Mod}\left( k\right) $ (not necessarily injective!)%
\begin{equation*}
\left\langle \underleftarrow{\lim }_{i\in \mathbf{I}}\mathcal{X}%
_{i},T\right\rangle 
\simeq%
\underrightarrow{\lim }_{i\in \mathbf{I}}\left\langle \mathcal{X}%
_{i},T\right\rangle
\end{equation*}%
in $\mathbf{pS}\left( \mathbf{E},\mathbf{Mod}\left( k\right) \right) $.

\item \label{Th-Properties-precosheaves-product}For any family $\left( 
\mathcal{X}_{i}\right) _{i\in I}$ in $\mathbf{pCS}\left( \mathbf{E},\mathbf{%
Pro}\left( k\right) \right) $ and any $T\in \mathbf{Mod}\left( k\right) $
(not necessarily injective!)%
\begin{equation*}
\left\langle \dprod\limits_{i\in I}\mathcal{X}_{i},T\right\rangle 
\simeq%
\dbigoplus\limits_{i\in I}\left\langle \mathcal{X}_{i},T\right\rangle
\end{equation*}%
in $\mathbf{pS}\left( \mathbf{E},\mathbf{Mod}\left( k\right) \right) $.

\item \label{Th-Properties-precosheaves-limit}For \textbf{any} (not
necessarily cofiltered\textbf{)} diagram%
\begin{equation*}
\mathcal{X}:\mathbf{I}\longrightarrow \mathbf{pCS}\left( \mathbf{E},\mathbf{%
Pro}\left( k\right) \right)
\end{equation*}%
and any injective $T\in \mathbf{Mod}\left( k\right) $%
\begin{equation*}
\left\langle \underleftarrow{\lim }_{i\in \mathbf{I}}\mathcal{X}%
_{i},T\right\rangle 
\simeq%
\underrightarrow{\lim }_{i\in \mathbf{I}}\left\langle \mathcal{X}%
_{i},T\right\rangle
\end{equation*}%
in $\mathbf{pS}\left( \mathbf{E},\mathbf{Mod}\left( k\right) \right) $.

\item \label{Th-Properties-precosheaves-left-Kan}\label%
{Prop-Pro-modules-properties-left-Kan}Let $\mathbf{D}$ and $\mathbf{E}$ be
small categories, let%
\begin{equation*}
f:\mathbf{E}\longrightarrow \mathbf{D}
\end{equation*}%
be a functor, and let $T\in \mathbf{Mod}\left( k\right) $. Then%
\begin{equation*}
\left\langle f^{\dag }\left( \bullet \right) ,T\right\rangle =\left(
f^{op}\right) ^{\ddag }\left\langle \bullet ,T\right\rangle :\mathbf{pS}%
\left( \mathbf{E},\mathbf{Mod}\left( k\right) \right) \longrightarrow 
\mathbf{pS}\left( \mathbf{D},\mathbf{Mod}\left( k\right) \right) ,
\end{equation*}%
where $f^{\dag }$ and $g^{\ddag }$ are the left and the right Kan extensions
(Definition \ref{Def-Kan-extensions}).

\item \label{Th-Properties-precosheaves-Zero}Let $\mathcal{M}\in \mathbf{pCS}%
\left( \mathbf{E},\mathbf{Pro}\left( k\right) \right) $. Then $\mathcal{M}%
\simeq%
0$ iff $\left\langle \mathcal{M},T\right\rangle 
\simeq%
0$ for any injective $T\in \mathbf{Mod}\left( k\right) $.

\item \label{Th-Properties-precosheaves-Homology}Let%
\begin{equation*}
\mathcal{E=}\left( \mathcal{M}\overset{\alpha }{\longleftarrow }\mathcal{N}%
\overset{\beta }{\longleftarrow }\mathcal{K}\right)
\end{equation*}%
be a sequence of morphisms in $\mathbf{pCS}\left( \mathbf{E},\mathbf{Pro}%
\left( k\right) \right) $ with $\beta \circ \alpha =0$, and let $T\in 
\mathbf{Mod}\left( k\right) $ be injective. Then%
\begin{equation*}
H\left( \mathcal{E}\right) 
{:=}%
\frac{\ker \left( \alpha \right) }{im\left( \beta \right) }
\end{equation*}%
satisfies%
\begin{equation*}
\left\langle H\left( \mathcal{E}\right) ,T\right\rangle 
\simeq%
H\left( \left\langle \mathcal{E},T\right\rangle \right) 
{:=}%
\frac{\ker \left( \left\langle \beta ,T\right\rangle \right) }{im\left(
\left\langle \alpha ,T\right\rangle \right) }.
\end{equation*}

\item \label{Th-Properties-precosheaves-Exact}Let%
\begin{equation*}
\mathcal{E=}\left( \mathcal{M}\overset{\alpha }{\longleftarrow }\mathcal{N}%
\overset{\beta }{\longleftarrow }\mathcal{K}\right)
\end{equation*}%
be a sequence of morphisms in $\mathbf{pCS}\left( \mathbf{E},\mathbf{Pro}%
\left( k\right) \right) $ with $\beta \circ \alpha =0$. Then $\mathcal{E}$
is exact iff the sequence%
\begin{equation*}
\left\langle \mathcal{M},T\right\rangle \overset{\left\langle \alpha
,T\right\rangle }{\longrightarrow }\left\langle \mathcal{N},T\right\rangle 
\overset{\left\langle \beta ,T\right\rangle }{\longrightarrow }\left\langle 
\mathcal{K},T\right\rangle
\end{equation*}%
is exact in $\mathbf{pS}\left( \mathbf{E},\mathbf{Mod}\left( k\right)
\right) $ for all injective $T\in \mathbf{Mod}\left( k\right) $.

\item \label{Th-Properties-precosheaves-AB4}The category $\mathbf{pCS}\left( 
\mathbf{E},\mathbf{Pro}\left( k\right) \right) $ satisfies the $AB4$ axiom (%
\cite[1.5]{Grothendieck-Tohoku-1957-MR0102537}, \cite[Ch. 5.8]%
{Bucur-Deleanu-1968-Introduction-categories-functors-MR0236236}).

\item \label{Th-Properties-precosheaves-AB4*}The category $\mathbf{pCS}%
\left( \mathbf{E},\mathbf{Pro}\left( k\right) \right) $ satisfies the $%
AB4^{\ast }$ axiom (\cite[1.5]{Grothendieck-Tohoku-1957-MR0102537}, \cite[%
Ch. 5.8]{Bucur-Deleanu-1968-Introduction-categories-functors-MR0236236}).

\item \label{Th-Properties-precosheaves-cofiltered-limits-exact}\label%
{Th-Properties-precosheaves-AB5*}The category $\mathbf{pCS}\left( \mathbf{E},%
\mathbf{Pro}\left( k\right) \right) $ satisfies the $AB5^{\ast }$ axiom (%
\cite[1.5]{Grothendieck-Tohoku-1957-MR0102537}, \cite[Ch. 5.8]%
{Bucur-Deleanu-1968-Introduction-categories-functors-MR0236236}): cofiltered
limits are exact in $\mathbf{pCS}\left( \mathbf{E},\mathbf{Pro}\left(
k\right) \right) $.

\item \label{Th-Properties-precosheaves-cogenerators}The \textbf{class} (%
\textbf{not} a set) 
\begin{equation*}
\left\{ A^{V}~|~V\in \mathbf{E},~A\in \mathfrak{G}\subseteq \mathbf{Pro}%
\left( k\right) \right\} ,
\end{equation*}%
where $\mathfrak{G}$ is the class from Proposition \ref%
{Prop-Pro-modules-properties}(\ref{Prop-Pro-modules-properties-cogenerators}%
) forms a class of cogenerators (\cite[1.9]%
{Grothendieck-Tohoku-1957-MR0102537}, \cite[Ch. 5.9]%
{Bucur-Deleanu-1968-Introduction-categories-functors-MR0236236}) of the
category $\mathbf{pCS}\left( \mathbf{E},\mathbf{Pro}\left( k\right) \right) $%
.
\end{enumerate}
\end{theorem}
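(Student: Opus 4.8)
The plan is to reduce every item to a property already available for the category $\mathbf{Pro}\left( k\right) $ of pro-modules (Proposition \ref{Prop-Pro-modules-properties}) and for the module-valued (pre)sheaf categories, transporting statements across the pairing $\langle \bullet ,\bullet \rangle $. First, $\mathbf{pCS}\left( \mathbf{E},\mathbf{Pro}\left( k\right) \right) $ is the category of functors $\mathbf{E}\rightarrow \mathbf{Pro}\left( k\right) $, so its kernels, cokernels, limits and colimits are formed pointwise; since $\mathbf{Pro}\left( k\right) $ is abelian, complete, cocomplete, $AB3$ and $AB3^{\ast }$ (Proposition \ref{Prop-Pro-modules-properties}), so is $\mathbf{pCS}\left( \mathbf{E},\mathbf{Pro}\left( k\right) \right) $, proving (\ref{Th-Properties-precosheaves-abelian-(co)complete}). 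The functor $\langle \bullet ,T\rangle $ is likewise pointwise, $\langle \mathcal{X},T\rangle \left( V\right) =\langle \mathcal{X}\left( V\right) ,T\rangle $, and limits in $\mathbf{pS}\left( \mathbf{E},\mathbf{Mod}\left( k\right) \right) $ are pointwise, so (\ref{Th-Properties-precosheaves-colimit})--(\ref{Th-Properties-precosheaves-limit}) follow by evaluating at each $V\in \mathbf{E}$ from the corresponding facts about the pairing $\mathbf{Pro}\left( k\right) ^{op}\times \mathbf{Mod}\left( k\right) \rightarrow \mathbf{Mod}\left( k\right) $ in Proposition \ref{Prop-Pro-modules-properties}: a colimit is carried to a limit, a cofiltered limit to a filtered colimit and a product to a direct sum for arbitrary $T$, and an arbitrary limit to a colimit when $T$ is injective. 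Item (\ref{Th-Properties-precosheaves-left-Kan}) is the compatibility of $\langle \bullet ,T\rangle $ with left Kan extensions, i.e.\ Proposition \ref{Prop-Pro-modules-properties}(\ref{Prop-Pro-modules-properties-left-Kan}); after matching it against the pointwise formula for the left Kan extension I would simply quote it.

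Next, (\ref{Th-Properties-precosheaves-Zero}) reduces pointwise to the fact that a pro-module $A$ is zero iff $\langle A,T\rangle =0$ for every injective $T\in \mathbf{Mod}\left( k\right) $, which is contained in Proposition \ref{Prop-Pro-modules-properties} (injective modules detect the zero pro-module; compare the cogenerator statement (\ref{Prop-Pro-modules-properties-cogenerators})). For (\ref{Th-Properties-precosheaves-Homology}), when $T$ is injective the pairing $\langle \bullet ,T\rangle $ is an exact contravariant functor on $\mathbf{Pro}\left( k\right) $ (Proposition \ref{Prop-Pro-modules-properties}(\ref{Prop-Pro-modules-properties-Exact})), hence --- being pointwise --- also on $\mathbf{pCS}\left( \mathbf{E},\mathbf{Pro}\left( k\right) \right) $; an exact functor commutes with the formation of the homology object of a two-step complex, which gives $\langle H\left( \mathcal{E}\right) ,T\rangle \simeq H\left( \langle \mathcal{E},T\rangle \right) $. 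Then (\ref{Th-Properties-precosheaves-Exact}) is formal: $\mathcal{E}$ is exact iff $H\left( \mathcal{E}\right) \simeq 0$ iff, by (\ref{Th-Properties-precosheaves-Zero}), $\langle H\left( \mathcal{E}\right) ,T\rangle \simeq 0$ for all injective $T$ iff, by (\ref{Th-Properties-precosheaves-Homology}), $H\left( \langle \mathcal{E},T\rangle \right) \simeq 0$ for all injective $T$ iff $\langle \mathcal{E},T\rangle $ is exact for all injective $T$.

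The three exactness axioms are reached by the one genuinely non-formal step: the pairing turns a colimit-type construction in $\mathbf{pCS}\left( \mathbf{E},\mathbf{Pro}\left( k\right) \right) $, which need not be exact, into a limit-type construction of module presheaves, which is. For $AB4$ (\ref{Th-Properties-precosheaves-AB4}), a coproduct is automatically right exact, so it suffices to show that a coproduct $\bigoplus f_{i}$ of monomorphisms $f_{i}$ is a monomorphism; writing $\mathcal{K}$ for its kernel, $\langle \mathcal{K},T\rangle $ is, by exactness of $\langle \bullet ,T\rangle $ and item (\ref{Th-Properties-precosheaves-colimit}), the cokernel of a product of epimorphisms of module presheaves, hence $0$ because $\mathbf{Mod}\left( k\right) $ --- and therefore $\mathbf{pS}\left( \mathbf{E},\mathbf{Mod}\left( k\right) \right) $ --- satisfies $AB4^{\ast }$; now (\ref{Th-Properties-precosheaves-Zero}) gives $\mathcal{K}=0$. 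Statements (\ref{Th-Properties-precosheaves-AB4*}) and (\ref{Th-Properties-precosheaves-AB5*}) go dually: a product (respectively a cofiltered limit) is automatically left exact, so it suffices that a product (respectively a cofiltered limit) of epimorphisms be an epimorphism, and applying $\langle \bullet ,T\rangle $ together with item (\ref{Th-Properties-precosheaves-product}) (respectively item (\ref{Th-Properties-precosheaves-cofiltered-limit})) turns the cokernel into the kernel of a direct sum (respectively a filtered colimit) of monomorphisms of module presheaves, which vanishes because $\mathbf{pS}\left( \mathbf{E},\mathbf{Mod}\left( k\right) \right) $ satisfies $AB4$ (respectively $AB5$); (\ref{Th-Properties-precosheaves-Zero}) again concludes. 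Alternatively, (\ref{Th-Properties-precosheaves-AB4*}) and (\ref{Th-Properties-precosheaves-AB5*}) follow directly by pointwise evaluation from Proposition \ref{Prop-Pro-modules-properties}(\ref{Prop-Pro-modules-properties-Cofiltered-limits-exact}).

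Finally, for (\ref{Th-Properties-precosheaves-cogenerators}): by Definition \ref{Def-Generators} the precosheaf $A^{V}$ is the right Kan extension of $A$ along $\left\{ V\right\} \hookrightarrow \mathbf{E}$, so the adjunction in which restriction is left adjoint to right Kan extension gives $Hom_{\mathbf{pCS}\left( \mathbf{E},\mathbf{Pro}\left( k\right) \right) }\left( \mathcal{M},A^{V}\right) \simeq Hom_{\mathbf{Pro}\left( k\right) }\left( \mathcal{M}\left( V\right) ,A\right) $, naturally in $\mathcal{M}$. If $\mathcal{M}$ is nonzero, choose $V\in \mathbf{E}$ with $\mathcal{M}\left( V\right) $ nonzero; since $\mathfrak{G}$ is a class of cogenerators of $\mathbf{Pro}\left( k\right) $ (Proposition \ref{Prop-Pro-modules-properties}(\ref{Prop-Pro-modules-properties-cogenerators})), there are $A\in \mathfrak{G}$ and a nonzero morphism $\mathcal{M}\left( V\right) \rightarrow A$, i.e.\ a nonzero morphism $\mathcal{M}\rightarrow A^{V}$, so $\left\{ A^{V}\right\} $ is a class of cogenerators; that it cannot be replaced by a set mirrors the corresponding fact for $\mathbf{Pro}\left( k\right) $ itself. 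I expect the work to be mostly bookkeeping; the one place demanding care is keeping the variances straight under the contravariant pairing in the derivation of $AB4$, $AB4^{\ast }$ and $AB5^{\ast }$, while the substantive inputs --- exactness of cofiltered limits in $\mathbf{Pro}\left( k\right) $ and exactness of $\langle \bullet ,T\rangle $ for injective $T$ --- are supplied by Proposition \ref{Prop-Pro-modules-properties}.
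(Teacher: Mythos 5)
Your reduction of items (\ref{Th-Properties-precosheaves-abelian-(co)complete})--(\ref{Th-Properties-precosheaves-AB5*}) to Proposition \ref{Prop-Pro-modules-properties} by pointwise evaluation is exactly the paper's route, and your explicit detection arguments for $AB4$, $AB4^{\ast }$ and $AB5^{\ast }$ (pair with an injective $T$, use $AB4^{\ast }$/$AB4$/$AB5$ for module presheaves, conclude with item (\ref{Th-Properties-precosheaves-Zero})) are precisely the proofs of the corresponding parts of Proposition \ref{Prop-Pro-modules-properties} transported to the precosheaf level; those parts are correct. One small caution on item (\ref{Th-Properties-precosheaves-left-Kan}): the label you propose to quote is attached to that very item, so ``simply quoting it'' is circular. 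What is actually needed --- and what the paper supplies --- is the pointwise colimit formula for $f^{\dag }$ from Proposition \ref{Prop-Kan-extensions}(\ref{Prop-Kan-extensions-left}) together with item (\ref{Th-Properties-precosheaves-colimit}), which converts that colimit into the limit computing $\left( f^{op}\right) ^{\ddag }$.

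The genuine gap is in item (\ref{Th-Properties-precosheaves-cogenerators}). You prove only that every nonzero $\mathcal{M}$ admits a nonzero morphism to some $A^{V}$, and you conclude from this that $\left\{ A^{V}\right\} $ is a class of cogenerators. In an abelian category, detecting nonzero objects is strictly weaker than cogenerating in the sense of \cite[1.9]{Grothendieck-Tohoku-1957-MR0102537}: for instance, in $\mathbf{Mod}\left( k[x]/(x^{2})\right) $ the residue field $k$ receives a nonzero morphism from every nonzero module (since $xM\neq M$ for $M\neq 0$), yet the nonzero morphism $f:k\rightarrow k[x]/(x^{2})$ with $f(1)=x$ is killed by every morphism $k[x]/(x^{2})\rightarrow k$, so $k$ is not a cogenerator. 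What must be verified is that for every non-trivial epimorphism $\varphi :\mathcal{C}\twoheadrightarrow \mathcal{D}$ there is a morphism $\mathcal{C}\rightarrow A^{V}$ that does not factor through $\mathcal{D}$. The paper obtains this by choosing $V$ with $\varphi \left( V\right) $ a non-trivial epimorphism in $\mathbf{Pro}\left( k\right) $, applying the full cogenerator property of $\mathfrak{G}$ to the epimorphism $\varphi \left( V\right) $ itself (not merely to its kernel) to get $\psi :\mathcal{C}\left( V\right) \rightarrow A$ not factoring through $\mathcal{D}\left( V\right) $, and then transporting $\psi $ along the adjunction $Hom_{\mathbf{pCS}\left( \mathbf{E},\mathbf{Pro}\left( k\right) \right) }\left( \mathcal{C},A^{V}\right) \simeq Hom_{\mathbf{Pro}\left( k\right) }\left( \mathcal{C}\left( V\right) ,A\right) $. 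Alternatively, your argument can be repaired by noting that each $A^{V}$ with $A\in \mathfrak{G}$ is an injective object of $\mathbf{pCS}\left( \mathbf{E},\mathbf{Pro}\left( k\right) \right) $ (combine Proposition \ref{Prop-Pro-modules-properties}(\ref{Prop-Pro-modules-properties-Injective}) with Proposition \ref{Prop-Kan-extensions}(\ref{Prop-Kan-extensions-injective})), and for a class of injective objects detecting nonzero objects does imply cogenerating, since a nonzero morphism $\ker \varphi \rightarrow A^{V}$ extends to $\mathcal{C}$ and cannot factor through $\mathcal{D}$; but one of these two supplements must be made explicit.
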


For the proof, see Section \ref{Sec-Proof-Properties-precosheaves}.

\begin{theorem}
\label{Th-Precosheaf-homology}Let $X=\left( \mathbf{C}_{X},Cov\left(
X\right) \right) $ be a small site and $\mathbf{D}$ a small category. Let
also%
\begin{eqnarray*}
\mathcal{A} &\in &\mathbf{pCS}\left( X,\mathbf{Pro}\left( k\right) \right) ,
\\
\mathcal{B} &\in &\mathbf{pCS}\left( \mathbf{D},\mathbf{Pro}\left( k\right)
\right) .
\end{eqnarray*}%
The statements on the precosheaf $\mathcal{B}$ below are \textbf{more general%
} than the statements on the precosheaf $\mathcal{A}$, therefore they are
valid also for $\mathcal{A}$. Remind that $\check{H}$ and~$^{Roos}\check{H}$
are (isomorphic when the topology is generated by a pre-topology!) 
\u{C}ech
homologies from Definition \ref{Def-Cech-homology}.

\begin{enumerate}
\item \label{Th-Properties-precosheaves-quasi-projective}\label%
{Th-Precosheaf-homology-surjection}There exists a functorial epimorphism%
\begin{equation*}
\pi :\mathcal{P}\left( \mathcal{B}\right) \twoheadrightarrow \mathcal{B},
\end{equation*}%
where $\mathcal{P}\left( \mathcal{B}\right) $ is quasi-projective
(Definition \ref{Def-Quasi-projective-(pre)cosheaf}(\ref%
{Def-Quasi-projective-precosheaf})).

\item \label{Th-Precosheaf-homology-F-projective}~

\begin{enumerate}
\item The full subcategory%
\begin{equation*}
\mathbf{Q}\left( \mathbf{pCS}\left( \mathbf{D},\mathbf{Pro}\left( k\right)
\right) \right) \subseteq \mathbf{pCS}\left( \mathbf{D},\mathbf{Pro}\left(
k\right) \right)
\end{equation*}%
(Notation \ref{Not-Quasi-projective}) is $F$-projective (Definition \ref%
{Def-F-projective}) with respect to the functors%
\begin{equation*}
F\left( \bullet \right) =H_{0}\left( R,\bullet \right) ,
\end{equation*}%
where $R\subseteq h_{U}$ runs through the sieves (Definition \ref{Def-Sieve}%
) on $\mathbf{D}$;

\item The full subcategory%
\begin{equation*}
\mathbf{Q}\left( \mathbf{pCS}\left( X,\mathbf{Pro}\left( k\right) \right)
\right) \subseteq \mathbf{pCS}\left( X,\mathbf{Pro}\left( k\right) \right)
\end{equation*}%
is $F$-projective with respect to the functors%
\begin{equation*}
F\left( \bullet \right) =~^{Roos}\check{H}_{0}\left( U,\bullet \right) 
\simeq%
\check{H}_{0}\left( U,\bullet \right) ,~U\in \mathbf{C}_{X}.
\end{equation*}
\end{enumerate}

\item \label{Th-Precosheaf-Cech-homology}\label%
{Th-Precosheaf-homology-Cech-homology}~

\begin{enumerate}
\item If the sieve $R$ is generated by a base-changeable (Definition \ref%
{Def-Quarrable}) family $\left\{ U_{i}\rightarrow U\right\} $, then the left
satellites (Definition \ref{Def-Left-derived-functors}) $L_{n}H_{0}\left( R,%
\mathcal{B}\right) $ satisfy%
\begin{equation*}
L_{n}H_{0}\left( R,\mathcal{B}\right) 
\simeq%
H_{n}\left( \left\{ U_{i}\rightarrow U\right\} ,\mathcal{B}\right) ,
\end{equation*}%
naturally in $\mathcal{B}$ and $R$.

\item The left satellites $L_{n}\check{H}_{0}\left( U,\mathcal{A}\right) $
are naturally, in $U$ and $\mathcal{A}$, isomorphic to 
\begin{equation*}
\check{H}_{n}\left( U,\mathcal{A}\right) 
\simeq%
~^{Roos}\check{H}_{n}\left( U,\mathcal{A}\right) .
\end{equation*}
\end{enumerate}

\item \label{Th-Precosheaf-homology-pairing}There are isomorphisms, natural
in $\mathcal{A}$, $\mathcal{B}$, $R$, and $T$,

\begin{enumerate}
\item 
\begin{equation*}
\left\langle H_{n}\left( R,\mathcal{B}\right) ,T\right\rangle 
\simeq%
H^{n}\left( R,\left\langle \mathcal{B},T\right\rangle \right)
\end{equation*}%
for any injective $T\in \mathbf{Mod}\left( k\right) $.

\item 
\begin{equation*}
\left\langle ~^{Roos}\check{H}_{n}\left( U,\mathcal{A}\right)
,T\right\rangle 
\simeq%
\left\langle \check{H}_{n}\left( U,\mathcal{A}\right) ,T\right\rangle 
\simeq%
\check{H}^{n}\left( U,\left\langle \mathcal{A},T\right\rangle \right) 
\simeq%
~^{Roos}\check{H}^{n}\left( U,\left\langle \mathcal{A},T\right\rangle \right)
\end{equation*}%
for any injective $T\in \mathbf{Mod}\left( k\right) $ (see Notation \ref%
{Not-Hn(sieve)}).
\end{enumerate}
\end{enumerate}
\end{theorem}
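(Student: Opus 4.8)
The plan is to deduce all four parts from classical facts about presheaf (co)homology by pushing everything through the pairing $\left\langle \bullet ,T\right\rangle $ against injective $T\in \mathbf{Mod}(k)$. By Theorem \ref{Th-Properties-precosheaves}(\ref{Th-Properties-precosheaves-Zero}, \ref{Th-Properties-precosheaves-Homology}, \ref{Th-Properties-precosheaves-Exact}) this functor reflects zero objects, homology and exactness; by Theorem \ref{Th-Properties-precosheaves}(\ref{Th-Properties-precosheaves-colimit}, \ref{Th-Properties-precosheaves-cofiltered-limit}, \ref{Th-Properties-precosheaves-product}, \ref{Th-Properties-precosheaves-left-Kan}) and Theorem \ref{Th-Cosheafification}(\ref{Th-Cosheafification-(A,T)-plus}) it converts colimits into limits, cofiltered limits into filtered colimits, coproducts into products, left Kan extensions into right Kan extensions, and the $(\bullet)_{+}$-construction into the $(\bullet)^{+}$-construction. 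Since $H_{0}(R,\mathcal{B})=\varinjlim_{(V\to U)\in \mathbf{C}_{R}}\mathcal{B}(V)$ and ${}^{Roos}\check{H}_{0}(U,\mathcal{B})\simeq \check{H}_{0}(U,\mathcal{B})=\mathcal{B}_{+}(U)$, this already yields $\left\langle H_{0}(R,\mathcal{B}),T\right\rangle \simeq H^{0}(R,\left\langle \mathcal{B},T\right\rangle )$ and $\left\langle \check{H}_{0}(U,\mathcal{B}),T\right\rangle \simeq \check{H}^{0}(U,\left\langle \mathcal{B},T\right\rangle )$ for \emph{every} $T$, which is the degree-$0$ case of (\ref{Th-Precosheaf-homology-pairing}). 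The whole argument then consists in transporting the known behaviour of $H^{0}(R,\bullet )$ and $\check{H}^{0}(U,\bullet )$ back across the pairing.

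For (\ref{Th-Precosheaf-homology-surjection}) I would construct $\mathcal{P}(\bullet )$ explicitly, dualizing the usual ``enough injectives'' construction for presheaf categories. The category $\mathbf{Pro}(k)$ carries a functorial quasi-projective cover $Q(M)\twoheadrightarrow M$, namely the level-wise free pro-module, which is quasi-projective because $\left\langle Q(M),T\right\rangle $ is a filtered colimit of injective $k$-modules, hence injective since $k$ is quasi-noetherian. Then set $\mathcal{P}(\mathcal{B}):=\bigoplus_{V\in \mathbf{E}}(Q(\mathcal{B}(V)))_{V}$, where $(\bullet )_{V}$ is the left Kan extension from Definition \ref{Def-Generators}. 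Each summand is quasi-projective by Example \ref{Ex-Quasi-projective-discrete-category} and Proposition \ref{Prop-Left-Kan-quasi-projective}, and the coproduct is quasi-projective because $\left\langle \bigoplus_{i}\mathcal{X}_{i},T\right\rangle \simeq \prod_{i}\left\langle \mathcal{X}_{i},T\right\rangle $ (Theorem \ref{Th-Properties-precosheaves}(\ref{Th-Properties-precosheaves-product})) and products of injective presheaves are injective. The adjunction $Hom((Q(\mathcal{B}(V)))_{V},\mathcal{B})\simeq Hom_{\mathbf{Pro}(k)}(Q(\mathcal{B}(V)),\mathcal{B}(V))$ assembles the covers into an objectwise, hence global, epimorphism $\pi :\mathcal{P}(\mathcal{B})\twoheadrightarrow \mathcal{B}$, natural in $\mathcal{B}$; as Remark \ref{Rem-Generators}(\ref{Rem-Generators-precosheaves}) stresses, one cannot reduce this to a set-indexed coproduct.

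For (\ref{Th-Precosheaf-homology-F-projective}) I would verify the conditions of Definition \ref{Def-F-projective}. ``Enough quasi-projectives'' is (\ref{Th-Precosheaf-homology-surjection}). Given a short exact sequence $0\to \mathcal{A}^{\prime }\to \mathcal{A}\to \mathcal{A}^{\prime \prime }\to 0$ with $\mathcal{A},\mathcal{A}^{\prime \prime }$ quasi-projective, the sequence $0\to \left\langle \mathcal{A}^{\prime \prime },T\right\rangle \to \left\langle \mathcal{A},T\right\rangle \to \left\langle \mathcal{A}^{\prime },T\right\rangle \to 0$ is exact with injective sub, so it splits and $\left\langle \mathcal{A}^{\prime },T\right\rangle $ is a summand of an injective, whence $\mathcal{A}^{\prime }$ is quasi-projective. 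That $F=H_{0}(R,\bullet )$ is exact on such sequences reduces, via the degree-$0$ identity and Theorem \ref{Th-Properties-precosheaves}(\ref{Th-Properties-precosheaves-Exact}), to the surjectivity of $H^{0}(R,\left\langle \mathcal{A},T\right\rangle )\to H^{0}(R,\left\langle \mathcal{A}^{\prime },T\right\rangle )$, which holds because $H^{0}(R,\bullet )=Hom(R,\bullet )$ takes the split short exact sequence with injective sub to a short exact sequence; the case $F={}^{Roos}\check{H}_{0}(U,\bullet )\simeq \check{H}_{0}(U,\bullet )=\varprojlim_{R\in Cov(U)}H_{0}(R,\bullet )$ then follows using exactness of cofiltered limits in $\mathbf{Pro}(k)$ (Proposition \ref{Prop-Pro-objects-properties}(\ref{Prop-Pro-objects-properties-Cofiltered-limits-exact})). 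Granting (\ref{Th-Precosheaf-homology-surjection})--(\ref{Th-Precosheaf-homology-F-projective}) and taking a quasi-projective resolution $\mathcal{P}_{\bullet }\to \mathcal{B}$, the pairing carries it to an injective resolution $\left\langle \mathcal{B},T\right\rangle \to \left\langle \mathcal{P}_{\bullet },T\right\rangle $ of presheaves and commutes with homology (Theorem \ref{Th-Properties-precosheaves}(\ref{Th-Properties-precosheaves-Homology})), so the degree-$0$ identities upgrade to $\left\langle L_{n}H_{0}(R,\mathcal{B}),T\right\rangle \simeq R^{n}H^{0}(R,\left\langle \mathcal{B},T\right\rangle )$ and $\left\langle L_{n}\check{H}_{0}(U,\mathcal{A}),T\right\rangle \simeq R^{n}\check{H}^{0}(U,\left\langle \mathcal{A},T\right\rangle )$ for injective $T$; combined with (\ref{Th-Precosheaf-homology-Cech-homology}) below, Definition \ref{Def-Cech-homology} and Proposition \ref{Prop-Two-Cech-equivalent}, which identify these right-derived functors with the \v{C}ech (co)homologies, this gives (\ref{Th-Precosheaf-homology-pairing}).

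For (\ref{Th-Precosheaf-homology-Cech-homology}) I would identify the left satellites with the explicit \v{C}ech complexes via the effaceable $\partial $-functor principle. In each degree $C_{n}(\{U_{i}\to U\},\mathcal{B})=\bigoplus_{i_{0},\dots ,i_{n}}\mathcal{B}(U_{i_{0}}\times _{U}\cdots \times _{U}U_{i_{n}})$ (well-defined since $\{U_{i}\to U\}$ is base-changeable, Definition \ref{Def-Quarrable}) is an exact functor of $\mathcal{B}$ --- evaluations and coproducts being exact (Theorem \ref{Th-Properties-precosheaves}(\ref{Th-Properties-precosheaves-AB4})) --- so $H_{\bullet }(\{U_{i}\to U\},\bullet )$ is a $\partial $-functor, it coincides with $H_{0}(R,\bullet )$ in degree $0$ (as $\{U_{i}\to U\}$ generates $R$), and it vanishes on a quasi-projective $\mathcal{P}$ in positive degrees because $\left\langle H_{n}(\{U_{i}\to U\},\mathcal{P}),T\right\rangle \simeq H^{n}(\{U_{i}\to U\},\left\langle \mathcal{P},T\right\rangle )=0$ for the injective presheaf $\left\langle \mathcal{P},T\right\rangle $. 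By the universality of left satellites this forces $H_{n}(\{U_{i}\to U\},\bullet )\simeq L_{n}H_{0}(R,\bullet )$, naturally in $\mathcal{B}$ and $R$; passing to the cofiltered limit over $R\in Cov(U)$ (exact in $\mathbf{Pro}(k)$) then gives $L_{n}\check{H}_{0}(U,\mathcal{A})\simeq \varprojlim_{R}L_{n}H_{0}(R,\mathcal{A})\simeq \check{H}_{n}(U,\mathcal{A})\simeq {}^{Roos}\check{H}_{n}(U,\mathcal{A})$. The hard part will be the classical input $H^{n}(\{U_{i}\to U\},\mathcal{I})=0$ for injective presheaves $\mathcal{I}$ and $n>0$: this amounts to showing the \v{C}ech-nerve complex of the representables $k_{U_{i_{0}\cdots i_{n}}}$ is a resolution of $k\otimes R$, which on evaluation at any object $W$ becomes the chain complex of the \v{C}ech nerve of the map of sets $\coprod _{i}h_{U_{i}}(W)\to h_{U}(W)$ --- acyclic in positive degrees, with $H_{0}=k[R(W)]$ --- and it is precisely here that base-changeability is indispensable, and where one must keep all isomorphisms natural in $R$ and in the chosen generating family.
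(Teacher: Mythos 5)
Your proposal is correct, and for parts (\ref{Th-Precosheaf-homology-surjection}), (\ref{Th-Precosheaf-homology-F-projective}) and (\ref{Th-Precosheaf-homology-pairing}) it essentially coincides with the paper's argument: your $\bigoplus_{V}\left( Q\left( \mathcal{B}\left( V\right) \right) \right) _{V}$ is objectwise the same precosheaf as the paper's $f^{\dag }\mathcal{G}\left( f_{\ast }\mathcal{B}\right) $ for $f:\mathbf{D}^{\delta }\rightarrow \mathbf{D}$ (a left Kan extension from a discrete category decomposes as exactly that coproduct), and the split-exact-sequence verification of Definition \ref{Def-F-projective} is word for word the paper's. (One small imprecision: quasi-projectivity of the levelwise free pro-module is Proposition \ref{Prop-quasi-projective}, not a consequence of $\left\langle Q\left( M\right) ,T\right\rangle $ being injective; the quasi-noetherian hypothesis enters only to make $\left\langle Q\left( M\right) ,T\right\rangle $ injective, which is what Definition \ref{Def-Quasi-projective-(pre)cosheaf} actually requires.) The genuine divergence is in part (\ref{Th-Precosheaf-Cech-homology}): you identify $H_{\bullet }\left( \left\{ U_{i}\rightarrow U\right\} ,\bullet \right) $ with $L_{\bullet }H_{0}\left( R,\bullet \right) $ by exhibiting the former as a coeffaceable homological $\partial $-functor (each $\check{C}_{n}$ exact by AB4, vanishing on quasi-projectives in positive degrees) and invoking universality, whereas the paper forms the bicomplex $X_{s,t}=\check{C}_{s}\left( \left\{ U_{i}\rightarrow U\right\} ,\mathcal{P}_{t}\right) $ and compares its two spectral sequences (Theorem \ref{Th-Spectral-sequence}). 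Both routes rest on the identical key input — quasi-projective precosheaves are \v{C}ech-acyclic, proved by pairing with injective $T$, using Theorem \ref{Th-Properties-precosheaves}(\ref{Th-Properties-precosheaves-Homology}, \ref{Th-Properties-precosheaves-Zero}) and the classical vanishing of $H^{n}$ on injective presheaves — so the choice is one of packaging: your $\partial $-functor argument is shorter and gives naturality for free from universality, but implicitly uses the dimension-shifting comparison of coeffaceable $\partial $-functors, which the paper does not state as a proposition; the paper's bicomplex argument is self-contained given Theorem \ref{Th-Spectral-sequence} and has the added benefit that the same bicomplex, after applying $\underleftarrow{\lim }_{R\in Cov\left( U\right) }$, delivers the $\check{H}_{n}$ and $^{Roos}\check{H}_{n}$ statements and part (\ref{Th-Precosheaf-homology-pairing}) with no extra work.
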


See Section \ref{Sec-Proof-precosheaf-homology} for the proof.

\begin{notation}
\label{Not-Hn(sieve)}\label{Rem-Hn(sieve)}For a sieve $R\subseteq h_{U}$,
the left satellites $L_{n}H_{0}\left( R,\bullet \right) $ are denoted by $%
H_{n}\left( R,\bullet \right) $.
\end{notation}

\subsection{Cosheaf homology}

\begin{theorem}
\label{Th-Properties-cosheaves}Let $X=\left( \mathbf{C}_{X},Cov\left(
X\right) \right) $ be a site.

\begin{enumerate}
\item \label{Th-Properties-cosheaves-abelian-(co)cocomplete}The category $%
\mathbf{CS}\left( X,\mathbf{Pro}\left( k\right) \right) $ is abelian,
complete and cocomplete, and satisfies both the $AB3$ and $AB3^{\ast }$
axioms (\cite[1.5]{Grothendieck-Tohoku-1957-MR0102537}, \cite[Ch. 5.8]%
{Bucur-Deleanu-1968-Introduction-categories-functors-MR0236236}).

\item \label{Th-Properties-cosheaves-colimit}For any diagram%
\begin{equation*}
\mathcal{X}:\mathbf{I}\longrightarrow \mathbf{CS}\left( X,\mathbf{Pro}\left(
k\right) \right)
\end{equation*}%
and any $T\in \mathbf{Mod}\left( k\right) $ (not necessarily injective!)%
\begin{equation*}
\left\langle \underrightarrow{\lim }_{i\in \mathbf{I}}\mathcal{X}%
_{i},T\right\rangle 
\simeq%
\underleftarrow{\lim }_{i\in \mathbf{I}}\left\langle \mathcal{X}%
_{i},T\right\rangle
\end{equation*}%
in $\mathbf{S}\left( X,\mathbf{Mod}\left( k\right) \right) $.

\item \label{Th-Properties-cosheaves-cofiltered-limit}For any \textbf{%
cofiltered} diagram%
\begin{equation*}
\mathcal{X}:\mathbf{I}\longrightarrow \mathbf{CS}\left( X,\mathbf{Pro}\left(
k\right) \right)
\end{equation*}%
and any $T\in \mathbf{Mod}\left( k\right) $ (not necessarily injective!)%
\begin{equation*}
\left\langle \underleftarrow{\lim }_{i\in \mathbf{I}}\mathcal{X}%
_{i},T\right\rangle 
\simeq%
\underrightarrow{\lim }_{i\in \mathbf{I}}\left\langle \mathcal{X}%
_{i},T\right\rangle
\end{equation*}%
in $\mathbf{S}\left( X,\mathbf{Mod}\left( k\right) \right) $.

\item \label{Th-Properties-cosheaves-product}For any family $\left( \mathcal{%
X}_{i}\right) _{i\in I}$ in $\mathbf{CS}\left( X,\mathbf{Pro}\left( k\right)
\right) $ and any $T\in \mathbf{Mod}\left( k\right) $ (not necessarily
injective!)%
\begin{equation*}
\left\langle \dprod\limits_{i\in I}\mathcal{X}_{i},T\right\rangle 
\simeq%
\dbigoplus\limits_{i\in I}\left\langle \mathcal{X}_{i},T\right\rangle
\end{equation*}%
in $\mathbf{S}\left( X,\mathbf{Mod}\left( k\right) \right) $.

\item \label{Th-Properties-cosheaves-limit}For \textbf{any} (not necessarily
cofiltered\textbf{)} diagram%
\begin{equation*}
\mathcal{X}:\mathbf{I}\longrightarrow \mathbf{Pro}\left( k\right)
\end{equation*}%
and any injective $T\in \mathbf{Mod}\left( k\right) $%
\begin{equation*}
\left\langle \underleftarrow{\lim }_{i\in \mathbf{I}}\mathcal{X}%
_{i},T\right\rangle 
\simeq%
\underrightarrow{\lim }_{i\in \mathbf{I}}\left\langle \mathcal{X}%
_{i},T\right\rangle
\end{equation*}%
in $\mathbf{S}\left( X,\mathbf{Mod}\left( k\right) \right) $.

\item \label{Th-Properties-cosheaves-Zero}Let $\mathcal{M}\in \mathbf{CS}%
\left( X,\mathbf{Pro}\left( k\right) \right) $. Then $\mathcal{M}%
\simeq%
0$ iff $\left\langle \mathcal{M},T\right\rangle =0$ for any injective $T\in 
\mathbf{Mod}\left( k\right) $.

\item \label{Th-Properties-cosheaves-Homology}Let%
\begin{equation*}
\mathcal{E=}\left( \mathcal{M}\overset{\alpha }{\longleftarrow }\mathcal{N}%
\overset{\beta }{\longleftarrow }\mathcal{K}\right)
\end{equation*}%
be a sequence of morphisms in $\mathbf{CS}\left( X,\mathbf{Pro}\left(
k\right) \right) $ with $\beta \circ \alpha =0$, and let $T\in \mathbf{Mod}%
\left( k\right) $ be injective. Then%
\begin{equation*}
H\left( \mathcal{E}\right) 
{:=}%
\frac{\ker \left( \alpha \right) }{im\left( \beta \right) }
\end{equation*}%
satisfies%
\begin{equation*}
\left\langle H\left( \mathcal{E}\right) ,T\right\rangle 
\simeq%
H\left( \left\langle \mathcal{E},T\right\rangle \right) 
{:=}%
\frac{\ker \left( \left\langle \beta ,T\right\rangle \right) }{im\left(
\left\langle \alpha ,T\right\rangle \right) }.
\end{equation*}

\item \label{Th-Properties-cosheaves-Exact}Let%
\begin{equation*}
\mathcal{E=}\left( \mathcal{M}\overset{\alpha }{\longleftarrow }\mathcal{N}%
\overset{\beta }{\longleftarrow }\mathcal{K}\right)
\end{equation*}%
be a sequence of morphisms in $\mathbf{CS}\left( X,\mathbf{Pro}\left(
k\right) \right) $ with $\beta \circ \alpha =0$. Then $\mathcal{E}$ is exact
iff the sequence%
\begin{equation*}
\left\langle \mathcal{M},T\right\rangle \overset{\left\langle \alpha
,T\right\rangle }{\longrightarrow }\left\langle \mathcal{N},T\right\rangle 
\overset{\left\langle \beta ,T\right\rangle }{\longrightarrow }\left\langle 
\mathcal{K},T\right\rangle
\end{equation*}%
is exact in $\mathbf{S}\left( X,\mathbf{Mod}\left( k\right) \right) $ for
all injective $T\in \mathbf{Mod}\left( k\right) $.

\item \label{Th-Properties-cosheaves-AB4*}The category $\mathbf{CS}\left( X,%
\mathbf{Pro}\left( k\right) \right) $ satisfies the $AB4^{\ast }$ axiom (%
\cite[1.5]{Grothendieck-Tohoku-1957-MR0102537}, \cite[Ch. 5.8]%
{Bucur-Deleanu-1968-Introduction-categories-functors-MR0236236}).

\item \label{Th-Properties-cosheaves-cofiltered-limits-exact}\label%
{Th-Properties-cosheaves-AB5*}The category $\mathbf{CS}\left( X,\mathbf{Pro}%
\left( k\right) \right) $ satisfies the $AB5^{\ast }$ axiom (\cite[1.5]%
{Grothendieck-Tohoku-1957-MR0102537}, \cite[Ch. 5.8]%
{Bucur-Deleanu-1968-Introduction-categories-functors-MR0236236}): cofiltered
limits are exact in $\mathbf{CS}\left( X,\mathbf{Pro}\left( k\right) \right) 
$.

\item \label{Th-Properties-cosheaves-cogenerators}The \textbf{class} (%
\textbf{not} a set) 
\begin{equation*}
\left\{ \left( A^{V}\right) _{\#}~|~V\in \mathbf{E},~A\in \mathfrak{G}%
\subseteq \mathbf{Pro}\left( k\right) \right\} ,
\end{equation*}%
where $\mathfrak{G}$ is the class from Proposition \ref%
{Prop-Pro-modules-properties}(\ref{Prop-Pro-modules-properties-cogenerators}%
) forms a class of cogenerators (\cite[1.9]%
{Grothendieck-Tohoku-1957-MR0102537}, \cite[Ch. 5.9]%
{Bucur-Deleanu-1968-Introduction-categories-functors-MR0236236}) of the
category $\mathbf{CS}\left( X,\mathbf{Pro}\left( k\right) \right) $.
\end{enumerate}
\end{theorem}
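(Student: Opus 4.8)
The plan is to deduce every item from the corresponding statement for precosheaves (Theorem~\ref{Th-Properties-precosheaves}), transporting each across the pairing $\left\langle \bullet ,T\right\rangle $ by means of Theorem~\ref{Th-Cosheafification}. The three facts that do all the work are: (a) $\mathbf{CS}\left( X,\mathbf{Pro}\left( k\right) \right) $ is a coreflective subcategory of $\mathbf{pCS}\left( X,\mathbf{Pro}\left( k\right) \right) $ with coreflector $\left( {}\right) _{\#}=\left( {}\right) _{++}$, and this coreflector is \emph{exact} (Theorem~\ref{Th-Cosheafification}(\ref{Th-Cosheafification-coreflective},\ref{Th-Cosheafification-plusplus-exact})); in particular $\left( {}\right) _{\#}\circ \iota \simeq \mathrm{id}$, i.e.\ $\mathcal{C}\simeq \left( \iota \mathcal{C}\right) _{\#}$ for every cosheaf $\mathcal{C}$; (b) for every cosheaf $\mathcal{C}$ and \emph{every} $T\in \mathbf{Mod}\left( k\right) $ the presheaf $\left\langle \mathcal{C},T\right\rangle $ is a sheaf --- combine $\mathcal{C}\simeq \left( \iota \mathcal{C}\right) _{\#}$ with $\left\langle \left( \iota \mathcal{C}\right) _{\#},T\right\rangle \simeq \left\langle \iota \mathcal{C},T\right\rangle ^{\#}$ (Theorem~\ref{Th-Cosheafification}(\ref{Th-Cosheafification-(A,T)-plus})); (c) $\mathbf{S}\left( X,\mathbf{Mod}\left( k\right) \right) $ is a Grothendieck category, so that limits and filtered colimits in it are computed as in $\mathbf{pS}\left( X,\mathbf{Mod}\left( k\right) \right) $, arbitrary colimits are the sheafifications of the corresponding presheaf colimits, and $AB5$ (hence $AB4$) holds.

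Turning to item~(\ref{Th-Properties-cosheaves-abelian-(co)cocomplete}) and the (co)limit computations: a colimit of cosheaves, formed in $\mathbf{pCS}$, is again a cosheaf, because $\mathcal{A}\mapsto \mathcal{A}\otimes _{\mathbf{Set}^{\mathbf{C}_{X}}}R$ preserves arbitrary colimits (colimits commute with colimits, as in the proof of Theorem~\ref{Th-Cosheafification}(\ref{Th-Cosheafification-plus-right-exact})), so the defining isomorphisms pass to the colimit; hence $\mathbf{CS}$ is cocomplete with colimits computed as in $\mathbf{pCS}$. A limit of cosheaves need not be a cosheaf, but, since $\iota \dashv \left( {}\right) _{\#}$, one checks directly that $\left( \underleftarrow{\lim}_{i}\mathcal{X}_{i}\right) _{\#}$ --- the inner limit taken in $\mathbf{pCS}$ --- has the universal property of the limit in $\mathbf{CS}$; hence $\mathbf{CS}$ is complete, and $AB3$, $AB3^{\ast }$ follow. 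That $\mathbf{CS}$ is abelian is the standard fact that a coreflective subcategory of an abelian category whose coreflector is exact is again abelian (dual to the inheritance of the abelian structure on sheaves from presheaves): cokernels are those of $\mathbf{pCS}$, kernels are $\left( {}\right) _{\#}$ of the kernels in $\mathbf{pCS}$, finite biproducts are those of $\mathbf{pCS}$, and the remaining axioms are forced by exactness of $\left( {}\right) _{\#}$ together with $\left( {}\right) _{\#}\iota \simeq \mathrm{id}$. Items~(\ref{Th-Properties-cosheaves-colimit})--(\ref{Th-Properties-cosheaves-limit}) then come out by applying $\left\langle \bullet ,T\right\rangle $ to the precosheaf identities Theorem~\ref{Th-Properties-precosheaves}(\ref{Th-Properties-precosheaves-colimit})--(\ref{Th-Properties-precosheaves-limit}): for a colimit, $\left\langle \underrightarrow{\lim}_{i}\mathcal{X}_{i},T\right\rangle \simeq \underleftarrow{\lim}_{i}\left\langle \mathcal{X}_{i},T\right\rangle $ (the colimit being the same in $\mathbf{CS}$ and $\mathbf{pCS}$, a limit of sheaves being a sheaf); for a product, a cofiltered limit, or an arbitrary limit, the limit in $\mathbf{CS}$ equals $\left( \underleftarrow{\lim}_{i}\mathcal{X}_{i}\right) _{\#}$ with the inner limit in $\mathbf{pCS}$, so by Theorem~\ref{Th-Cosheafification}(\ref{Th-Cosheafification-(A,T)-plus}) $\left\langle \underleftarrow{\lim}_{i}\mathcal{X}_{i},T\right\rangle $ becomes the sheafification of $\dbigoplus\limits_{i}\left\langle \mathcal{X}_{i},T\right\rangle $ in the product case and of $\underrightarrow{\lim}_{i}\left\langle \mathcal{X}_{i},T\right\rangle $ in the other two; this sheafification is the coproduct, resp.\ colimit, in $\mathbf{S}$, and it is redundant in the cofiltered case since a cofiltered $\mathbf{I}$ has filtered $\mathbf{I}^{op}$ and filtered colimits of sheaves are already sheaves.

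For items~(\ref{Th-Properties-cosheaves-Zero})--(\ref{Th-Properties-cosheaves-Exact}): in~(\ref{Th-Properties-cosheaves-Zero}), $\mathcal{M}\simeq 0$ in the full subcategory $\mathbf{CS}$ iff $\mathcal{M}\simeq 0$ in $\mathbf{pCS}$, which by Theorem~\ref{Th-Properties-precosheaves}(\ref{Th-Properties-precosheaves-Zero}) holds iff $\left\langle \mathcal{M},T\right\rangle \simeq 0$ for all injective $T$. In~(\ref{Th-Properties-cosheaves-Homology}), homology in $\mathbf{CS}$ is $\left( {}\right) _{\#}$ of homology in $\mathbf{pCS}$ --- since $\left( {}\right) _{\#}$ is exact and $\left( {}\right) _{\#}\iota \simeq \mathrm{id}$, one has $H\left( \mathcal{E}\right) \simeq \left( H\left( \iota \mathcal{E}\right) \right) _{\#}$ --- so applying $\left\langle \bullet ,T\right\rangle $ for injective $T$ and invoking Theorem~\ref{Th-Cosheafification}(\ref{Th-Cosheafification-(A,T)-plus}), Theorem~\ref{Th-Properties-precosheaves}(\ref{Th-Properties-precosheaves-Homology}) and exactness of sheafification gives $\left\langle H\left( \mathcal{E}\right) ,T\right\rangle \simeq H\left( \left\langle \mathcal{E},T\right\rangle \right) ^{\#}$, the homology of $\left\langle \mathcal{E},T\right\rangle $ taken in $\mathbf{S}$. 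Item~(\ref{Th-Properties-cosheaves-Exact}) then combines the two: $\mathcal{E}$ is exact iff $H\left( \mathcal{E}\right) \simeq 0$ iff $\left\langle H\left( \mathcal{E}\right) ,T\right\rangle \simeq 0$ for all injective $T$ iff $\left\langle \mathcal{E},T\right\rangle $ is exact in $\mathbf{S}$ for all injective $T$.

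Finally, for items~(\ref{Th-Properties-cosheaves-AB4*})--(\ref{Th-Properties-cosheaves-cogenerators}): for $AB4^{\ast }$, given exact $\left( \mathcal{E}_{i}\right) _{i}$ in $\mathbf{CS}$, by~(\ref{Th-Properties-cosheaves-Exact}) it suffices that $\left\langle \dprod\limits_{i}\mathcal{E}_{i},T\right\rangle $ be exact for injective $T$, and by~(\ref{Th-Properties-cosheaves-product}) it is $\dbigoplus\limits_{i}\left\langle \mathcal{E}_{i},T\right\rangle $, a coproduct of exact sequences in the Grothendieck category $\mathbf{S}\left( X,\mathbf{Mod}\left( k\right) \right) $, hence exact; for $AB5^{\ast }$, a cofiltered limit of exact sequences in $\mathbf{CS}$ is sent by $\left\langle \bullet ,T\right\rangle $ ($T$ injective), via~(\ref{Th-Properties-cosheaves-cofiltered-limit}), to a filtered colimit of exact sequences in $\mathbf{S}\left( X,\mathbf{Mod}\left( k\right) \right) $, hence exact, and we conclude with~(\ref{Th-Properties-cosheaves-Exact}); for the class of cogenerators, if $\mathcal{A}\in \mathbf{CS}$ is nonzero then $\iota \mathcal{A}\neq 0$, so by Theorem~\ref{Th-Properties-precosheaves}(\ref{Th-Properties-precosheaves-cogenerators}) there is a nonzero morphism $\iota \mathcal{A}\rightarrow A^{V}$ with $V\in \mathbf{C}_{X}$ and $A\in \mathfrak{G}$, and under the adjunction isomorphism $Hom_{\mathbf{CS}}\left( \mathcal{A},\left( A^{V}\right) _{\#}\right) \simeq Hom_{\mathbf{pCS}}\left( \iota \mathcal{A},A^{V}\right) $ it corresponds to a nonzero morphism $\mathcal{A}\rightarrow \left( A^{V}\right) _{\#}$, so the $\left( A^{V}\right) _{\#}$ form a class of cogenerators (not reducible to a set, for the same reason as $\mathbf{Pro}\left( k\right) $; cf.\ Remark~\ref{Rem-Generators}). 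I expect the only genuinely non-formal step to be item~(\ref{Th-Properties-cosheaves-abelian-(co)cocomplete}): handling the asymmetry between limits and colimits of cosheaves, and extracting the abelian structure --- together with the identity $H\left( \mathcal{E}\right) \simeq \left( H\left( \iota \mathcal{E}\right) \right) _{\#}$ used in~(\ref{Th-Properties-cosheaves-Homology}) --- from the exactness of $\left( {}\right) _{\#}$; the remainder is bookkeeping with the pairing.
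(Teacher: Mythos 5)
Your proposal is correct and follows essentially the same route as the paper: every item is transported from Theorem \ref{Th-Properties-precosheaves} across the coreflection $\iota \dashv \left( {}\right) _{\#}$ together with the identity $\left\langle \mathcal{C}_{\#},T\right\rangle \simeq \left\langle \mathcal{C},T\right\rangle ^{\#}$ and exactness of $\left( {}\right) _{\#}$, with only cosmetic differences of execution (you derive the product identity directly as a sheafified coproduct where the paper reduces to finite products plus cofiltered limits, and you package homology as $H\left( \mathcal{E}\right) \simeq \left( H\left( \iota \mathcal{E}\right) \right) _{\#}$ where the paper runs the dual cokernel computation). The one place to tighten is item (\ref{Th-Properties-cosheaves-cogenerators}): the cogenerator property requires, for each non-trivial epimorphism $\mathcal{C}\twoheadrightarrow \mathcal{D}$, a morphism $\mathcal{C}\rightarrow \left( A^{V}\right) _{\#}$ that does \emph{not factor through} $\mathcal{D}$ --- merely producing a nonzero morphism out of a nonzero object is weaker --- but the adjunction $Hom_{\mathbf{CS}}\left( \mathcal{C},\left( A^{V}\right) _{\#}\right) \simeq Hom_{\mathbf{pCS}}\left( \iota \mathcal{C},A^{V}\right) $ you invoke transports exactly this stronger property from the precosheaf case (after noting, as the paper does, that $\iota \varphi $ is still a non-trivial epimorphism), so the fix is immediate.
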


For the proof, see Section \ref{Sec-Proof-Properties-cosheaves}.

\begin{theorem}
\label{Th-Cosheaf-homology}Let $X$ be a small site. Let also $\check{H}$ and~%
$^{Roos}\check{H}$ be (isomorphic when the topology is generated by a
pre-topology!) 
\u{C}ech
homologies from Definition \ref{Def-Cech-homology}.

\begin{enumerate}
\item \label{Th-Cosheaf-homology-surjection}For an arbitrary cosheaf $%
\mathcal{A}\in \mathbf{CS}\left( X,\mathbf{Pro}\left( k\right) \right) $,
there exists a functorial epimorphism%
\begin{equation*}
\sigma \left( \mathcal{A}\right) :\mathcal{R}\left( \mathcal{A}\right)
\twoheadrightarrow \mathcal{A},
\end{equation*}%
where $R\left( \mathcal{A}\right) $ is quasi-projective.

\item \label{Th-Cosheaf-homology-F-projective}\label%
{Th-Cosheaf-homology-surjection-F-projective}The full subcategory%
\begin{equation*}
\mathbf{Q}\left( \mathbf{CS}\left( X,\mathbf{Pro}\left( k\right) \right)
\right) \subseteq \mathbf{CS}\left( X,\mathbf{Pro}\left( k\right) \right)
\end{equation*}%
is $F$-projective (Definition \ref{Def-F-projective}) with respect to the
functors:

\begin{enumerate}
\item 
\begin{equation*}
F\left( \bullet \right) =\Gamma \left( U,\bullet \right) 
{:=}%
\bullet \left( U\right) ;
\end{equation*}

\item 
\begin{equation*}
F=\iota :\mathbf{CS}\left( X,\mathbf{Pro}\left( k\right) \right)
\hookrightarrow \mathbf{pCS}\left( X,\mathbf{Pro}\left( k\right) \right) .
\end{equation*}
\end{enumerate}

\item \label{Th-Cosheaf-homology-pairing}The left satellites $L_{n}\Gamma
\left( U,\bullet \right) $ satisfy%
\begin{equation*}
\left\langle L_{n}\Gamma \left( U,\bullet \right) ,T\right\rangle 
\simeq%
H^{n}\left( U,\left\langle \bullet ,T\right\rangle \right)
\end{equation*}%
for any injective $T\in \mathbf{Mod}\left( k\right) $.

\item \label{Th-Cosheaf-homology-forgetting}The left satellites $L_{n}\iota $
satisfy

\begin{enumerate}
\item 
\begin{equation*}
\left\langle \left( L_{n}\iota \right) \bullet ,T\right\rangle 
\simeq%
\mathcal{H}^{n}\left( \left\langle \bullet ,T\right\rangle \right) ,
\end{equation*}%
for any injective $T\in \mathbf{Mod}\left( k\right) $ (see Notation \ref%
{Not-LRn-Iota} for $\mathcal{H}^{n}$),

\item 
\begin{equation*}
\left[ \left( L_{n}\iota \right) \mathcal{A}\right] \left( U\right) 
\simeq%
H_{n}\left( U,\mathcal{A}\right) .
\end{equation*}
\end{enumerate}

\item \label{Th-Cosheaf-homology-H-t-plus-trivial}%
\begin{equation*}
\left( \mathcal{H}_{t}\mathcal{A}\right) _{+}=0
\end{equation*}%
for all $t>0$.

\item \label{Th-Cosheaf-homology-Spectral-sequence-R}~

\begin{enumerate}
\item For any $U\in \mathbf{C}_{X}$ and any covering sieve $R$ on $U$ there
exists a natural spectral sequence%
\begin{equation*}
E_{s,t}^{2}=H_{s}\left( R,\mathcal{H}_{t}\left( \mathcal{A}\right) \right)
\implies H_{s+t}\left( U,\mathcal{A}\right) ,
\end{equation*}%
converging to the homology of $\mathcal{A}$ (see Notation \ref{Not-LRn-Iota}
for $\mathcal{H}_{t}$).

\item For any $U\in \mathbf{C}_{X}$ there exists a natural spectral sequence%
\begin{equation*}
E_{s,t}^{2}=~^{Roos}\check{H}_{s}\left( U,\mathcal{H}_{t}\left( \mathcal{A}%
\right) \right) \implies H_{s+t}\left( U,\mathcal{A}\right) ,
\end{equation*}%
converging to the homology of $\mathcal{A}$.

\item There are natural (in $U$ and $\mathcal{A}$) isomorphisms%
\begin{eqnarray*}
&&H_{0}\left( U,\mathcal{A}\right) 
\simeq%
\check{H}_{0}\left( U,\mathcal{A}\right) , \\
&&H_{1}\left( U,\mathcal{A}\right) 
\simeq%
\check{H}_{1}\left( U,\mathcal{A}\right) ,
\end{eqnarray*}%
and a natural (in $U$ and $\mathcal{A}$) epimorphism%
\begin{equation*}
H_{2}\left( U,\mathcal{A}\right) \twoheadrightarrow \check{H}_{2}\left( U,%
\mathcal{A}\right) .
\end{equation*}
\end{enumerate}

\item \label{Th-Cosheaf-homology-Spectral-sequence-Cech}Assume that the
topology on $X$ is generated by a pretopology (Definition \ref%
{Def-Pretopology}). Then:

\begin{enumerate}
\item The spectral sequence from (\ref%
{Th-Cosheaf-homology-Spectral-sequence-R}a) becomes%
\begin{equation*}
E_{s,t}^{2}=H_{s}\left( \left\{ U_{i}\rightarrow U\right\} ,\mathcal{H}%
_{t}\left( \mathcal{A}\right) \right) \implies H_{s+t}\left( U,\mathcal{A}%
\right) .
\end{equation*}

\item The spectral sequence from (\ref%
{Th-Cosheaf-homology-Spectral-sequence-R}b) becomes%
\begin{equation*}
E_{s,t}^{2}=\check{H}_{s}\left( U,\mathcal{H}_{t}\left( \mathcal{A}\right)
\right) \implies H_{s+t}\left( U,\mathcal{A}\right) .
\end{equation*}
\end{enumerate}
\end{enumerate}
\end{theorem}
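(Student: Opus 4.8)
The plan is to derive the whole theorem from the classical cohomology theory of sheaves of $k$-modules on $X$, by transporting everything through the exact functors $\left\langle \bullet ,T\right\rangle $ attached to injective $T\in \mathbf{Mod}\left( k\right) $. The dictionary I will use is: $\mathcal{A}\in \mathbf{CS}\left( X,\mathbf{Pro}\left( k\right) \right) $ is quasi-projective iff every $\left\langle \mathcal{A},T\right\rangle $ ($T$ injective) is an injective sheaf (Definition \ref{Def-Quasi-projective-(pre)cosheaf}); $\left\langle \mathcal{C}_{+},T\right\rangle \simeq \left\langle \mathcal{C},T\right\rangle ^{+}$ and $\left\langle \mathcal{C}_{\#},T\right\rangle \simeq \left\langle \mathcal{C},T\right\rangle ^{\#}$ (Theorem \ref{Th-Cosheafification}(\ref{Th-Cosheafification-(A,T)-plus})); $\left\langle H_{n}\left( R,\mathcal{B}\right) ,T\right\rangle \simeq H^{n}\left( R,\left\langle \mathcal{B},T\right\rangle \right) $ together with its $\check{H}$ and ${}^{Roos}\check{H}$ analogues (Theorem \ref{Th-Precosheaf-homology}(\ref{Th-Precosheaf-homology-pairing})); and vanishing, exactness and homology of (pre)cosheaves of pro-$k$-modules are detected by the $\left\langle \bullet ,T\right\rangle $, $T$ injective (Theorems \ref{Th-Properties-cosheaves}, \ref{Th-Properties-precosheaves}, and Proposition \ref{Prop-Pro-modules-properties}(\ref{Prop-Pro-modules-properties-Exact})). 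Since all the left satellites in sight are computed through $F$-projective resolutions, the genuine content lies in (\ref{Th-Cosheaf-homology-surjection})--(\ref{Th-Cosheaf-homology-F-projective}).

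For (\ref{Th-Cosheaf-homology-surjection}) I would apply the precosheaf version, Theorem \ref{Th-Precosheaf-homology}(\ref{Th-Precosheaf-homology-surjection}), to the underlying precosheaf $\iota \mathcal{A}$, obtaining a functorial epimorphism $\mathcal{P}\left( \iota \mathcal{A}\right) \twoheadrightarrow \iota \mathcal{A}$ with $\mathcal{P}\left( \iota \mathcal{A}\right) $ a quasi-projective precosheaf, and then set $\mathcal{R}\left( \mathcal{A}\right) :=\mathcal{P}\left( \iota \mathcal{A}\right) _{\#}$. Exactness of $\left( \bullet \right) _{\#}$ (Theorem \ref{Th-Cosheafification}(\ref{Th-Cosheafification-plusplus-exact})) and the isomorphism $\left( \iota \mathcal{A}\right) _{\#}\simeq \mathcal{A}$ (unit of the coreflection, $\iota $ being fully faithful) give the functorial epimorphism $\mathcal{R}\left( \mathcal{A}\right) \twoheadrightarrow \mathcal{A}$, while $\left\langle \mathcal{R}\left( \mathcal{A}\right) ,T\right\rangle \simeq \left\langle \mathcal{P}\left( \iota \mathcal{A}\right) ,T\right\rangle ^{\#}$ reduces quasi-projectivity of $\mathcal{R}\left( \mathcal{A}\right) $ to the statement that sheafification sends the injective presheaves $\left\langle \mathcal{P}\left( \iota \mathcal{A}\right) ,T\right\rangle $ --- which, by the construction of $\mathcal{P}$, are built from coinduced presheaves on quasi-projective pro-modules --- to injective sheaves. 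For (\ref{Th-Cosheaf-homology-F-projective}) I would verify the axioms of Definition \ref{Def-F-projective} for $\mathbf{Q}\left( \mathbf{CS}\left( X,\mathbf{Pro}\left( k\right) \right) \right) $ relative to both $\Gamma \left( U,\bullet \right) $ and $\iota $: ``enough objects'' is (\ref{Th-Cosheaf-homology-surjection}); given a short exact sequence of cosheaves with quasi-projective end terms, $\left\langle \bullet ,T\right\rangle $ turns it into a short exact sequence of sheaves whose sub-object is injective, hence that sequence \emph{splits}, so the remaining term is a direct summand of an injective sheaf (thus quasi-projective), and the split sequence remains exact after applying $\Gamma \left( U,\bullet \right) $ (injective sheaves being $\Gamma \left( U,\bullet \right) $-acyclic) and after the forgetful functor $j:\mathbf{S}\hookrightarrow \mathbf{pS}$; translating back by the detection criteria yields the required exactness in $\mathbf{Pro}\left( k\right) $ and in $\mathbf{pCS}$.

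Granting (\ref{Th-Cosheaf-homology-surjection})--(\ref{Th-Cosheaf-homology-F-projective}), pick a quasi-projective resolution $\mathcal{Q}_{\bullet }\twoheadrightarrow \mathcal{A}$; applying $\left\langle \bullet ,T\right\rangle $ turns it into an injective coresolution of the sheaf $\left\langle \mathcal{A},T\right\rangle $, and since $\left\langle \bullet ,T\right\rangle $ is exact it commutes with homology (Theorem \ref{Th-Properties-cosheaves}(\ref{Th-Properties-cosheaves-Exact}, \ref{Th-Properties-cosheaves-Homology})). Hence $\left\langle L_{n}\Gamma \left( U,\mathcal{A}\right) ,T\right\rangle \simeq H^{n}\left( \left\langle \mathcal{Q}_{\bullet },T\right\rangle \left( U\right) \right) =H^{n}\left( U,\left\langle \mathcal{A},T\right\rangle \right) $, which is (\ref{Th-Cosheaf-homology-pairing}); running the same argument with $\iota $ and its sheaf counterpart $j$ gives (\ref{Th-Cosheaf-homology-forgetting})(a), and since $\left[ \left( L_{n}\iota \right) \mathcal{A}\right] \left( U\right) =H_{n}\left( \mathcal{Q}_{\bullet }\left( U\right) \right) =L_{n}\Gamma \left( U,\mathcal{A}\right) $ (evaluation at $U$ being exact) we obtain (\ref{Th-Cosheaf-homology-forgetting})(b). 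For (\ref{Th-Cosheaf-homology-H-t-plus-trivial}), $\left\langle \left( \mathcal{H}_{t}\mathcal{A}\right) _{+},T\right\rangle \simeq \left\langle \mathcal{H}_{t}\mathcal{A},T\right\rangle ^{+}\simeq \left( \mathcal{H}^{t}\left\langle \mathcal{A},T\right\rangle \right) ^{+}=0$ for $t>0$, because the presheaf $U\mapsto H^{t}\left( U,F\right) $ has trivial associated sheaf --- equivalently trivial $\left( \bullet \right) ^{+}$ --- for $t>0$ and every sheaf $F$ (classical, e.g. \cite{Tamme-MR1317816}); the vanishing criterion Theorem \ref{Th-Properties-precosheaves}(\ref{Th-Properties-precosheaves-Zero}) then gives $\left( \mathcal{H}_{t}\mathcal{A}\right) _{+}=0$.

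For the spectral sequences in (\ref{Th-Cosheaf-homology-Spectral-sequence-R})--(\ref{Th-Cosheaf-homology-Spectral-sequence-Cech}), the cosheaf axiom (Definition \ref{Def-(Pre)cosheaves}) furnishes, for every covering sieve $R\subseteq h_{U}$, natural isomorphisms $\Gamma \left( U,\bullet \right) \simeq H_{0}\left( R,\iota \bullet \right) \simeq {}^{Roos}\check{H}_{0}\left( U,\iota \bullet \right) $, exhibiting $\Gamma \left( U,\bullet \right) $ as a composite of the right-exact $\iota $ (right exact since it is a left adjoint, so $\mathcal{H}_{0}=\iota $) with $H_{0}\left( R,\bullet \right) $, respectively with ${}^{Roos}\check{H}_{0}\left( U,\bullet \right) $; moreover a quasi-projective cosheaf $\mathcal{Q}$ is acyclic for both, since $\left\langle \mathcal{Q},T\right\rangle $ is an injective --- hence flask (Remark \ref{Rem-Flabby-Topological-Spaces}) --- sheaf, which forces $H_{s}\left( R,\mathcal{Q}\right) ={}^{Roos}\check{H}_{s}\left( U,\mathcal{Q}\right) =0$ for $s>0$ via the pairing isomorphisms and the detection criterion. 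Feeding $\iota \mathcal{Q}_{\bullet }$ into the two hyperhomology spectral sequences of $H_{0}\left( R,\bullet \right) $, respectively of ${}^{Roos}\check{H}_{0}\left( U,\bullet \right) $, one of the two collapses (by the acyclicity and $H_{0}\left( R,\iota \mathcal{Q}\right) \simeq \mathcal{Q}\left( U\right) $) to $H_{s+t}\left( U,\mathcal{A}\right) $ and the other has $E_{s,t}^{2}=H_{s}\left( R,\mathcal{H}_{t}\mathcal{A}\right) $, respectively ${}^{Roos}\check{H}_{s}\left( U,\mathcal{H}_{t}\mathcal{A}\right) $; this gives (\ref{Th-Cosheaf-homology-Spectral-sequence-R})(a), (b). Part (\ref{Th-Cosheaf-homology-Spectral-sequence-R})(c) is read off the low-degree terms of (b), using $\mathcal{H}_{0}\mathcal{A}=\iota \mathcal{A}$ (so $E_{s,0}^{2}\simeq \check{H}_{s}\left( U,\mathcal{A}\right) $ by Theorem \ref{Th-Precosheaf-homology}(\ref{Th-Precosheaf-homology-Cech-homology})) and (\ref{Th-Cosheaf-homology-H-t-plus-trivial}) (so $E_{0,t}^{2}=\left( \mathcal{H}_{t}\mathcal{A}\right) _{+}\left( U\right) =0$ for $t>0$): the edge homomorphisms then yield $H_{0}\simeq \check{H}_{0}$, $H_{1}\simeq \check{H}_{1}$ and the epimorphism $H_{2}\twoheadrightarrow \check{H}_{2}$. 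Finally (\ref{Th-Cosheaf-homology-Spectral-sequence-Cech}) is obtained from (\ref{Th-Cosheaf-homology-Spectral-sequence-R}) by substituting $H_{s}\left( R,\bullet \right) \simeq H_{s}\left( \left\{ U_{i}\rightarrow U\right\} ,\bullet \right) $ and ${}^{Roos}\check{H}_{s}\simeq \check{H}_{s}$ (Theorem \ref{Th-Precosheaf-homology}(\ref{Th-Precosheaf-homology-Cech-homology})), valid when the topology comes from a pretopology. The main obstacle is (\ref{Th-Cosheaf-homology-surjection})--(\ref{Th-Cosheaf-homology-F-projective}) --- the functorial existence of quasi-projective covers and the preservation of quasi-projectivity by cosheafification --- because, unlike $\mathbf{S}\left( X,\mathbf{Mod}\left( k\right) \right) $, the category $\mathbf{CS}\left( X,\mathbf{Pro}\left( k\right) \right) $ has neither the $AB4$ axiom nor a set of generators, so ``enough projectives'' is not available by a purely formal argument.
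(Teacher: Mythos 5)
Your treatment of parts (\ref{Th-Cosheaf-homology-F-projective})--(\ref{Th-Cosheaf-homology-Spectral-sequence-Cech}) follows essentially the paper's route: transport through $\left\langle \bullet ,T\right\rangle $, the splitting argument for the $F$-projectivity axioms, and the two spectral sequences of the bicomplex obtained by feeding a quasi-projective resolution into the Roos complex. (Your derivation of (\ref{Th-Cosheaf-homology-H-t-plus-trivial}) from the classical vanishing of $\left( \mathcal{H}^{t}F\right) ^{+}$ for sheaves $F$ is a legitimate variant of the paper's argument, which instead applies the exact functor $\left( {}\right) _{\#}$ to $\iota \mathcal{P}_{\bullet }$ and uses $\left( {}\right) _{\#}\circ \iota =1$.)

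The genuine gap is in part (\ref{Th-Cosheaf-homology-surjection}), which you correctly single out as the crux but then dispose of with a construction that does not work. You set $\mathcal{R}\left( \mathcal{A}\right) :=\mathcal{P}\left( \iota \mathcal{A}\right) _{\#}$ and reduce its quasi-projectivity to the claim that sheafification carries the injective presheaves $\left\langle \mathcal{P}\left( \iota \mathcal{A}\right) ,T\right\rangle $ to injective sheaves. That claim is false in general: $\left( {}\right) ^{\#}$ is a left adjoint with no further left adjoint, so there is no formal reason for it to preserve injectives, and it does not --- this is exactly why injective sheaves are manufactured by direct images rather than by sheafifying injective presheaves. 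A single application of $\left( {}\right) _{\#}$ therefore need not produce a quasi-projective cosheaf. The paper's proof instead iterates $\mathcal{Q}\left( \mathcal{A}\right) =\mathcal{P}\left( \mathcal{A}\right) _{\#}$ transfinitely (taking cofiltered limits at limit ordinals) up to a cardinal $\beta $ whose cofinality exceeds $\sup_{\mathcal{E}\in W}card\left( S\left( \mathcal{E}\right) \right) $, where $W$ is a set of representatives of subsheaves of the generator $\mathcal{G}=\dbigoplus\nolimits_{V}\left( k_{V}\right) ^{\#}$. Injectivity of $\mathcal{J}_{\beta }=\left\langle \mathcal{Q}_{\beta }\left( \mathcal{A}\right) ,T\right\rangle $ is then tested, via Grothendieck's criterion, only against subsheaves $\mathcal{B}\subseteq \mathcal{G}$: the cofinality bound forces any $\mathcal{B}\rightarrow \mathcal{J}_{\beta }\simeq \underrightarrow{\lim }_{\alpha <\beta }\mathcal{J}_{\alpha }$ to factor through some stage $\mathcal{J}_{\alpha }$, and the extension to $\mathcal{G}$ is produced at the \emph{presheaf} level through the injective presheaf $\mathcal{I}_{\alpha +1}=\left\langle \mathcal{P}\left( \mathcal{Q}_{\alpha }\right) ,T\right\rangle $ before mapping onward to $\mathcal{J}_{\alpha +1}\rightarrow \mathcal{J}_{\beta }$. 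Without this transfinite device (or an equivalent one), your $\mathcal{R}\left( \mathcal{A}\right) $ is not known to be quasi-projective, and everything downstream of (\ref{Th-Cosheaf-homology-surjection}) --- the $F$-projectivity in (\ref{Th-Cosheaf-homology-F-projective}) and all the resolutions used afterwards --- is unsupported.
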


See Section \ref{Sec-Proof-cosheaf-homology} for the proof.

\begin{notation}
\label{Not-LRn-Iota}~

\begin{enumerate}
\item Denote by $\mathcal{H}_{n}$ the left satellites of the embedding%
\begin{equation*}
\iota :\mathbf{CS}\left( X,\mathbf{Pro}\left( k\right) \right)
\longrightarrow \mathbf{pCS}\left( X,\mathbf{Pro}\left( k\right) \right) ;
\end{equation*}

\item Denote by $\mathcal{H}^{n}$ the right satellites of the embedding%
\begin{equation*}
\iota :\mathbf{S}\left( X,\mathbf{Pro}\left( k\right) \right)
\longrightarrow \mathbf{pS}\left( X,\mathbf{Pro}\left( k\right) \right) ;
\end{equation*}
\end{enumerate}
\end{notation}

\section{Examples}

\begin{example}
\label{Ex-Convergent-sequence}Let $X$ be the convergent sequence from \cite[%
Example 4.8]{Prasolov-Cosheafification-2016-zbMATH06684178}:%
\begin{equation*}
X=\left\{ x_{0}\right\} \cup \left\{ x_{1},x_{2},x_{3},...\right\} =\left\{
0\right\} \cup \left\{ 1,\frac{1}{2},\frac{1}{3},...\right\} \subseteq 
\mathbb{R},
\end{equation*}%
let $G\in \mathbf{Ab}$, $G\neq \left\{ 0\right\} $, and let $\mathcal{A}%
=G_{\#}$ be the constant cosheaf. Then%
\begin{equation*}
H_{n}\left( X,\mathcal{A}\right) =\check{H}_{n}\left( X,\mathcal{A}\right)
=pro\text{-}H_{n}\left( X,G\right) =\left\{ 
\begin{array}{ccc}
\mathbf{B} & \text{if} & n=0, \\ 
0 & \text{if} & n\neq 0,%
\end{array}%
\right.
\end{equation*}%
where $\mathbf{B}$ is an abelian pro-group which is \textbf{not} rudimentary
(Remark \ref{Rem-Trivial-pro-object}), i.e. 
\begin{equation*}
\mathbf{B}\not\in \mathbf{Ab\subseteq Pro}\left( \mathbf{Ab}\right) .
\end{equation*}
\end{example}

\begin{proof}
Let $T\in \mathbf{Ab}$ be injective. It is easy to check that the cosheaf $%
\mathcal{A}$ is flabby, therefore the sheaf $\left\langle \mathcal{A}%
,T\right\rangle $ is flabby, thus acyclic. Due to Theorem \ref%
{Th-Cosheaf-homology}(\ref{Th-Cosheaf-homology-pairing}), the cosheaf $%
\mathcal{A}$ is acyclic, too:%
\begin{equation*}
H_{n}\left( X,\mathcal{A}\right) =\left\{ 
\begin{array}{ccc}
\mathbf{0} & \text{if} & n>0; \\ 
\check{H}_{0}\left( X,\mathcal{A}\right) =\mathcal{A}\left( X\right) =pro%
\text{-}H_{0}\left( X,G\right) & \text{if} & n=0.%
\end{array}%
\right.
\end{equation*}%
It remains to calculate $pro$-$H_{0}\left( X,G\right) $. Since%
\begin{equation*}
\mathbf{Pro}\left( \mathbf{Ab}\right) \longrightarrow \left( \mathbf{Set}^{%
\mathbf{Ab}}\right) ^{op}
\end{equation*}%
is a full embedding by Definition \ref{Def-Pro-category}, it is enough to
describe the functor%
\begin{equation*}
Hom_{\mathbf{Pro}\left( \mathbf{Ab}\right) }\left( pro\text{-}H_{0}\left(
X,G\right) ,\bullet \right) :\mathbf{Ab}\longrightarrow \mathbf{Set.}
\end{equation*}%
During the proof of \cite[Proof of Theorem 3.11(3)]%
{Prasolov-Cosheafification-2016-zbMATH06684178}, it is established a natural
in $X$ isomorphism%
\begin{equation*}
pro\text{-}H_{0}\left( X,G\right) 
\simeq%
G\otimes _{\mathbf{Set}}pro\text{-}\pi _{0}\left( X\right) .
\end{equation*}%
Moreover, in \cite[Proposition 3.13]%
{Prasolov-Cosheafification-2016-zbMATH06684178} another natural (in $X$ and $%
T\in \mathbf{Ab}$) isomorphism is proved:%
\begin{equation*}
Hom_{\mathbf{Pro}\left( \mathbf{Ab}\right) }\left( G\otimes _{\mathbf{Set}%
}pro\text{-}\pi _{0}\left( X\right) ,T\right) 
\simeq%
\left[ Hom_{\mathbf{Ab}}\left( G,T\right) \right] ^{X},
\end{equation*}%
where $\left[ Hom_{\mathbf{Ab}}\left( G,T\right) \right] ^{X}$ is the set of
continuous mappings from $X$ to $Hom_{\mathbf{Ab}}\left( G,T\right) $, where
the latter space is supplied with the discrete topology. In fact,%
\begin{equation*}
\left[ Hom_{\mathbf{Ab}}\left( G,T\right) \right] ^{X}%
\simeq%
\check{H}^{0}\left( X,Hom_{\mathbf{Ab}}\left( G,T\right) \right) ,
\end{equation*}%
where $\check{H}^{0}$ is the classical 
\u{C}ech
\textbf{co}homology for topological spaces, but we \textbf{do not need} this
fact. Continuous mappings to a discrete space are locally constant, and vice
versa. Consider such a mapping%
\begin{equation*}
f:X\longrightarrow Hom_{\mathbf{Ab}}\left( G,T\right) .
\end{equation*}%
Since it is locally constant at $x=x_{0}$, there exists an $n\in \mathbb{Z}$
such that for all $i>n$%
\begin{equation*}
f\left( x_{i}\right) =f\left( x_{0}\right) .
\end{equation*}%
Therefore,%
\begin{equation*}
\left[ Hom_{\mathbf{Ab}}\left( G,T\right) \right] ^{X}%
\simeq%
\underrightarrow{\lim }\left( 
\begin{diagram}
C_{1} & \rTo^{q_{1\rightarrow 2}} & C_{2} & \rTo^{q_{2\rightarrow 3}} & ... & \rTo & C_{n} & \rTo^{q_{n\rightarrow n+1}} & ...
\end{diagram}%
\right) ,
\end{equation*}%
where%
\begin{equation*}
C_{n}=\left[ Hom_{\mathbf{Ab}}\left( G,T\right) \right] ^{n+1}=\left[ Hom_{%
\mathbf{Ab}}\left( G,T\right) \right] ^{\left\{ 0,1,2,...,n\right\} },
\end{equation*}%
and%
\begin{eqnarray*}
q_{n\rightarrow n+1}\left( \varphi _{0},\varphi _{1},...\varphi
_{n-1},\varphi _{n}\right) &=&\left( \varphi _{0},\varphi _{1},...\varphi
_{n-1},\varphi _{n},\varphi _{0}\right) , \\
\varphi _{i} &\in &Hom_{\mathbf{Ab}}\left( G,T\right) .
\end{eqnarray*}%
One gets a sequence of natural isomorphisms:

\begin{eqnarray*}
&&\left[ Hom_{\mathbf{Ab}}\left( G,T\right) \right] ^{X}%
\simeq%
\underrightarrow{\lim }\left( 
\begin{diagram}
C_{1} & \rTo^{q_{1\rightarrow 2}} & C_{2} & \rTo^{q_{2\rightarrow 3}} & ... & \rTo & C_{n} & \rTo^{q_{n\rightarrow n+1}} & ...
\end{diagram}%
\right) 
\simeq
\\
&&\underrightarrow{\lim }\left( {\scriptsize 
\begin{diagram}[size=2.0em,textflow]
Hom_{\mathbf{Ab}}\left( B_{1},T\right) & \rTo^{Hom_{\mathbf{Ab}}\left( r_{1\leftarrow 2},T\right)} & Hom_{\mathbf{Ab}}\left( B_{1},T\right) & \rTo^{Hom_{\mathbf{Ab}}\left( r_{2\leftarrow 3},T\right)} & ... & \rTo & Hom_{\mathbf{Ab}}\left( B_{n},T\right) & \rTo & ...\\
\end{diagram}%
}\right) \\
&&%
\simeq%
Hom_{\mathbf{Pro}\left( \mathbf{Ab}\right) }\left( \mathbf{B},T\right) ,
\end{eqnarray*}%
where%
\begin{eqnarray*}
\mathbf{I} &=&\left( 1\longleftarrow 2\longleftarrow 3\longleftarrow
...\longleftarrow n\longleftarrow ...\right) , \\
\mathbf{B} &\mathbf{=}&\left( B_{i}\right) _{i\in \mathbf{I}}=\left( 
\begin{diagram}
B_{1} & \lTo^{r_{1\leftarrow 2}} & B_{2} & \lTo^{r_{2\leftarrow 3}} & B_{3} & \lTo & ... & \lTo & B_{n} & \lTo^{r_{n\leftarrow n+1}} & ...
\end{diagram}%
\right) , \\
B_{n} &=&G^{n+1}=G^{\left\{ 0,1,2,...,n\right\} },
\end{eqnarray*}%
and%
\begin{eqnarray*}
r_{n\leftarrow n+1}\left( g_{0},g_{1},...g_{n},g_{n+1}\right) &=&\left(
g_{0}+g_{n+1},g_{1},...g_{n-1},g_{n}\right) , \\
g_{i} &\in &G.
\end{eqnarray*}%
We have proved that%
\begin{equation*}
Hom_{\mathbf{Pro}\left( \mathbf{Ab}\right) }\left( \mathbf{B},\bullet
\right) 
\simeq%
Hom_{\mathbf{Pro}\left( \mathbf{Ab}\right) }\left( pro\text{-}H_{0}\left(
X,G\right) ,\bullet \right)
\end{equation*}%
in $\mathbf{Set}^{\mathbf{Ab}}$, therefore $\mathbf{B}%
\simeq%
pro$-$H_{0}\left( X,G\right) $ in $\mathbf{Pro}\left( \mathbf{Ab}\right) $.

It remains to show that $\mathbf{B}$ is \textbf{not a rudimentary}
pro-object. Assume on the contrary that $\mathbf{B}%
\simeq%
Z$ where $Z\in \mathbf{Ab}$. I follows from Proposition \ref%
{Prop-Trivial-pro-object} that there exists a homomorphism%
\begin{equation*}
\tau _{0}:B_{i_{0}}\longrightarrow Z,
\end{equation*}%
satisfying the property: for any morphism $s:i\rightarrow i_{0}$, there
exist a morphism $\sigma :Z\rightarrow B_{i}$ and a morphism $t:j\rightarrow
i$ satisfying%
\begin{eqnarray*}
\tau _{0}\circ B\left( s\right) \circ \sigma &=&\mathbf{1}_{Z}, \\
\sigma \circ \tau _{0}\circ B\left( s\right) \circ B\left( t\right)
&=&B\left( t\right) .
\end{eqnarray*}%
Take $s=\left( i_{0}\leftarrow i_{0}+1\right) $. Choose a \textbf{nonzero}
element $a\in \ker B\left( s\right) $, say%
\begin{equation*}
a=\left( g,0,...,0,-g\right) ,~g\neq 0.
\end{equation*}%
Since $B\left( t\right) $ is surjective, choose $b\in B_{j}$ with $\left[
B\left( t\right) \right] \left( b\right) =a$. Apply the second equation from
above:%
\begin{eqnarray*}
\left[ \sigma \circ \tau _{0}\circ B\left( s\right) \circ B\left( t\right) %
\right] \left( b\right) &=&\left[ B\left( t\right) \right] \left( b\right) ,
\\
\left[ \sigma \circ \tau _{0}\circ B\left( s\right) \right] \left( a\right)
&=&a, \\
0 &=&a\neq 0.
\end{eqnarray*}%
Contradiction.
\end{proof}

\begin{example}
\label{Ex-Finite-circle}Let $X$ be the \textbf{pseudocircle}, i.e. the $4$%
-point topological space%
\begin{equation*}
X=\left\{ a,b,c,d\right\}
\end{equation*}%
with the topology%
\begin{equation*}
\tau =\left\{ \varnothing ,\left\{ a,b,c,d\right\} ,\left\{ a\right\}
,\left\{ c\right\} ,\left\{ a,c\right\} ,\left\{ a,b,c\right\} ,\left\{
a,c,d\right\} \right\} .
\end{equation*}%
This space can be also described as the \textbf{non-Hausdorff suspension} 
\cite[Section 8, p. 472]{McCord-MR0196744} $\mathfrak{S}\left( Y\right) $,
where%
\begin{equation*}
X\supseteq Y=\left\{ a,c\right\} 
\simeq%
S^{0}.
\end{equation*}%
Let again $\mathcal{A}=G_{\#}$ be the constant cosheaf ($\left\{ 0\right\}
\neq G\in \mathbf{Ab}$). Then%
\begin{equation*}
\check{H}_{n}\left( X,\mathcal{A}\right) 
\simeq%
H_{n}\left( X,\mathcal{A}\right) 
\simeq%
H_{n}^{sing}\left( X,G\right) =\left\{ 
\begin{array}{ccc}
G & \text{if} & n=0,1, \\ 
0 & \text{if} & n\neq 0,1.%
\end{array}%
\right.
\end{equation*}%
where $H_{\bullet }^{sing}$ is the ordinary singular homology. Notice that
the pro-homology $pro$-$H_{1}\left( X,G\right) $ is zero (Remark \ref%
{Rem-Shape-locally-finite}) and%
\begin{equation*}
\mathbf{0}=pro\text{-}H_{1}\left( X,G\right) \ncong H_{1}\left( X,\mathcal{A}%
\right) .
\end{equation*}%
The reason is that we \textbf{could not} apply Conjecture \ref%
{Conj-Satellites-H}(\ref{Conj-Satellites-H-paracompact}) because $X$ is 
\textbf{not} Hausdorff.
\end{example}

\begin{proof}
Let%
\begin{equation*}
\mathcal{U}=\left\{ U_{0,1,2,3}\longrightarrow X\right\} =\left\{ \left\{
a\right\} ,\left\{ c\right\} ,\left\{ a,b,c\right\} ,\left\{ a,c,d\right\}
\right\} .
\end{equation*}%
Define the bicomplex%
\begin{equation*}
C_{s,t}=\check{C}_{s}\left( \mathcal{U},\mathcal{P}_{t}\right)
\end{equation*}%
where $\mathcal{P}_{\bullet }\rightarrow \mathcal{A}$ is a \textbf{qis}
(Notation \ref{Not-Derived-category}) from a quasi-projective complex to $%
\mathcal{A}$, considered as a complex concentrated in degree $0$ (i.e. $%
\mathcal{P}_{\bullet }\rightarrow \mathcal{A}$ is a quasi-projective
resolution of $\mathcal{A}$). Since $\mathbf{Pro}\left( \mathbf{Ab}\right) $
is an abelian (Proposition \ref{Prop-Pro(K)-abelian}) category, we can apply
Theorem \ref{Th-Spectral-sequence} in order to obtain two spectral sequences
converging to the total complex $Tot_{\bullet }\left( C\right) $. Notice
that $\mathcal{A}|_{U_{i_{1}}\times ...\times U_{i_{s}}}$ is flabby
(Definition \ref{Def-Flabby-cosheaf}) for each $s$, therefore%
\begin{equation*}
H_{t}\left( U_{i_{1}}\times ...\times U_{i_{s}},\mathcal{A}\right) =0
\end{equation*}%
if $t>0$. Calculate the entries in the first spectral sequence:%
\begin{eqnarray*}
^{ver}E_{s,t}^{1} &=&\left\{ 
\begin{array}{ccc}
H_{t}\left( U_{i_{1}}\times ...\times U_{i_{s}},\mathcal{A}\right) =0 & 
\text{if} & t>0 \\ 
\mathcal{A}\left( U_{i_{1}}\times ...\times U_{i_{s}}\right) & \text{if} & 
t=0%
\end{array}%
\right. \\
^{ver}E_{s,t}^{2} &=&\left\{ 
\begin{array}{ccc}
0 & \text{if} & t>0 \\ 
H_{s}\left( \mathcal{U},\mathcal{A}\right) & \text{if} & t=0%
\end{array}%
\right.
\end{eqnarray*}%
It follows that%
\begin{equation*}
H_{n}\left( Tot_{\bullet }\left( C\right) \right) 
\simeq%
~^{ver}E_{n,0}^{2}%
\simeq%
H_{n}\left( \mathcal{U},\mathcal{A}\right) .
\end{equation*}%
The second spectral sequence gives%
\begin{eqnarray*}
^{hor}E_{s,t}^{1} &=&\left\{ 
\begin{array}{ccccc}
0 & \text{if} & s>0 & \text{since} & \mathcal{P}_{t}\text{ is
quasi-projective as a (pre)cosheaf} \\ 
\mathcal{P}_{t}\left( X\right) & \text{if} & s=0 & \text{since} & \mathcal{P}%
_{t}\text{ is a cosheaf}%
\end{array}%
\right. \\
^{hor}E_{s,t}^{2} &=&\left\{ 
\begin{array}{ccc}
0 & \text{if} & s>0 \\ 
H_{t}\left( X,\mathcal{A}\right) & \text{if} & s=0%
\end{array}%
\right.
\end{eqnarray*}%
It follows that%
\begin{equation*}
H_{n}\left( Tot_{\bullet }\left( C\right) \right) 
\simeq%
~^{hor}E_{0,n}^{2}%
\simeq%
H_{n}\left( X,\mathcal{A}\right) .
\end{equation*}%
Finally%
\begin{equation*}
H_{n}\left( X,\mathcal{A}\right) 
\simeq%
H_{n}\left( \mathcal{U},\mathcal{A}\right) .
\end{equation*}%
The latter pro-groups (in fact, \emph{rudimentary} pro-groups, i.e. just
ordinary groups) can be easily calculated. It remains to apply \cite[Theorem
2 and the example in \S 5]{McCord-MR0196744}:%
\begin{equation*}
H_{n}\left( \mathcal{U},\mathcal{A}\right) =\left\{ 
\begin{array}{ccc}
G & \text{if} & n=0,1, \\ 
0 & \text{if} & n\neq 0,1;%
\end{array}%
\right. 
\simeq%
H_{n}^{sing}\left( S^{1},G\right) 
\simeq%
H_{n}^{sing}\left( X,G\right) .
\end{equation*}
\end{proof}

\section{\label{Sec-Proof-Properties-precosheaves}Proof of Theorem \protect
\ref{Th-Properties-precosheaves}}

\begin{proof}
The category $\mathbf{pCS}\left( \mathbf{E},\mathbf{Pro}\left( k\right)
\right) $ inherits most properties from the category $\mathbf{Pro}\left(
k\right) $, therefore we can apply Proposition \ref%
{Prop-Pro-modules-properties}.\ (\textbf{\ref%
{Th-Properties-precosheaves-abelian-(co)complete}-\ref%
{Th-Properties-precosheaves-limit}}) Follow from Proposition \ref%
{Prop-Pro-modules-properties}(\ref%
{Prop-Pro-modules-properties-abelian-(co)complete}-\ref%
{Prop-Pro-modules-properties-limit}).

(\textbf{\ref{Prop-Pro-modules-properties-left-Kan}}) Let $\mathcal{A}\in 
\mathbf{pCS}\left( \mathbf{E},\mathbf{Pro}\left( k\right) \right) $, and $%
U\in \mathbf{E}$. It follows from Proposition \ref{Prop-Kan-extensions} that%
\begin{eqnarray*}
&&\left[ f^{\dag }\mathcal{A}\right] \left( U\right) 
\simeq%
\underset{\left( f\left( V\right) \rightarrow U\right) \in f\downarrow U}{%
\underrightarrow{\lim }}\mathcal{A}\left( V\right) , \\
&&\left[ \left( f^{op}\right) ^{\ddag }\mathcal{B}\right] \left( U\right) 
\simeq%
\underset{\left( f\left( V\right) \rightarrow U\right) \in f\downarrow U}{%
\underleftarrow{\lim }}\mathcal{B}\left( V\right) .
\end{eqnarray*}%
Therefore%
\begin{equation*}
\left\langle f^{\dag }\mathcal{A},T\right\rangle \left( U\right)
=\left\langle \left[ f^{\dag }\mathcal{A}\right] \left( U\right)
,T\right\rangle =\left\langle \underset{\left( f\left( V\right) \rightarrow
U\right) \in f\downarrow U}{\underrightarrow{\lim }}\mathcal{A}\left(
V\right) ,T\right\rangle 
\simeq%
\end{equation*}%
\begin{equation*}
\simeq%
\underset{\left( f\left( V\right) \rightarrow U\right) \in f\downarrow U}{%
\underleftarrow{\lim }}\left\langle \mathcal{A}\left( V\right)
,T\right\rangle 
\simeq%
\left[ \left( f^{op}\right) ^{\ddag }\left\langle \mathcal{A},T\right\rangle %
\right] \left( U\right) .
\end{equation*}

(\textbf{\ref{Th-Properties-precosheaves-Zero}}-\textbf{\ref%
{Th-Properties-precosheaves-Exact}}) Follow from Proposition \ref%
{Prop-Pro-modules-properties}(\ref{Prop-Pro-modules-properties-Zero}-\ref%
{Prop-Pro-modules-properties-Exact}).

(\textbf{\ref{Th-Properties-precosheaves-AB4}}-\textbf{\ref%
{Th-Properties-precosheaves-AB5*}}) Follow from Proposition \ref%
{Prop-Pro-modules-properties}(\ref{Prop-Pro-modules-properties-AB4}-\ref%
{Prop-Pro-modules-properties-AB5*}).

(\textbf{\ref{Th-Properties-precosheaves-cogenerators}}) Let%
\begin{equation*}
\left( \varphi :\mathcal{C}\twoheadrightarrow \mathcal{D}\right) \in \mathbf{%
pCS}\left( \mathbf{E},\mathbf{Pro}\left( k\right) \right)
\end{equation*}%
be a non-trivial epimorphism. It follows that 
\begin{equation*}
\varphi \left( U\right) :\mathcal{C}\left( U\right) \longrightarrow \mathcal{%
D}\left( U\right)
\end{equation*}%
is an epimorphism in $\mathbf{Pro}\left( k\right) $ for any $U\in \mathbf{E}$%
, and that there exists a $V\in \mathbf{E}$, such that 
\begin{equation*}
\varphi \left( V\right) :\mathcal{C}\left( V\right) \longrightarrow \mathcal{%
D}\left( V\right)
\end{equation*}%
is \textbf{non-trivial} epimorphism. Due to Proposition \ref%
{Prop-Pro-modules-properties}(\ref{Prop-Pro-modules-properties-cogenerators}%
), there exist an $A\in \mathfrak{G}\subseteq \mathbf{Pro}\left( k\right) $,
and a morphism%
\begin{equation*}
\left( \psi :\mathcal{C}\left( V\right) \longrightarrow A\right) \in \mathbf{%
Pro}\left( k\right) ,
\end{equation*}%
which does \textbf{not} factor through $\mathcal{D}\left( V\right) $. The
morphism%
\begin{equation*}
\xi :\mathcal{C}\longrightarrow A^{V},
\end{equation*}%
which corresponds to $\psi $ under the adjunction%
\begin{equation*}
Hom_{\mathbf{pCS}\left( \mathbf{E},\mathbf{Pro}\left( k\right) \right)
}\left( \mathcal{C},A^{V}\right) 
\simeq%
Hom_{\mathbf{Pro}\left( k\right) }\left( \mathcal{C}\left( V\right) ,A\right)
\end{equation*}%
does \textbf{not} factor through $\mathcal{D}$.
\end{proof}

\section{\label{Sec-Proof-precosheaf-homology}Proof of Theorem \protect\ref%
{Th-Precosheaf-homology}}

\begin{proof}
(\textbf{\ref{Th-Precosheaf-homology-surjection}}) Let $\mathbf{D}^{\delta }$
be the discrete category with the same set of objects as $\mathbf{D}$:%
\begin{equation*}
Ob\left( \mathbf{D}^{\delta }\right) =Ob\left( \mathbf{D}\right) ,
\end{equation*}%
and let $f:\mathbf{D}^{\delta }\longrightarrow \mathbf{D}$ be the evident
functor, identical on objects. Define the precosheaf $\mathcal{P}\left( 
\mathcal{B}\right) $ by the following:%
\begin{equation*}
\mathcal{P}\left( \mathcal{B}\right) 
{:=}%
f^{\dag }\mathcal{G}\left( f_{\ast }\left( \mathcal{B}\right) \right) ,
\end{equation*}%
where $\mathcal{F}$ is the functor from Proposition \ref%
{Prop-Pro-modules-properties}(\ref%
{Prop-Pro-modules-properties-quasi-projective}), and%
\begin{equation*}
\mathcal{G}\left( U\right) 
{:=}%
\mathbf{F}\left( \left[ f_{\ast }\left( \mathcal{B}\right) \right] \left(
U\right) \right) =\mathbf{F}\left( \mathcal{B}\left( U\right) \right)
\end{equation*}%
for $U\in \mathbf{D}$. It follows from Proposition \ref{Prop-Kan-extensions}
that%
\begin{equation*}
\mathcal{P}\left( \mathcal{B}\right) \left( U\right)
=\dbigoplus\limits_{V\rightarrow U}\mathbf{F}\left( \mathcal{B}\left(
V\right) \right) .
\end{equation*}%
The morphism%
\begin{equation*}
\mathcal{G}\left( f_{\ast }\left( \mathcal{B}\right) \right) \longrightarrow
f_{\ast }\left( \mathcal{B}\right)
\end{equation*}%
induces, by adjunction, the desired homomorphism%
\begin{equation*}
\pi :\mathcal{P}\left( \mathcal{B}\right) =f^{\dag }\mathcal{G}\left(
f_{\ast }\left( \mathcal{B}\right) \right) \longrightarrow \mathcal{B}.
\end{equation*}%
Indeed, $\mathcal{G}\left( f_{\ast }\left( \mathcal{B}\right) \right) $ is
quasi-projective due to Example \ref{Ex-Quasi-projective-discrete-category},
and $\mathcal{P}\left( \mathcal{B}\right) $ is quasi-projective due to
Proposition \ref{Prop-Left-Kan-quasi-projective}. For any $U\in \mathbf{D}$,
the composition%
\begin{equation*}
\mathbf{F}\left( \mathcal{B}\left( U\right) \right) \hookrightarrow \mathcal{%
P}\left( \mathcal{B}\right) \left( U\right) =\dbigoplus\limits_{V\rightarrow
U}\mathbf{F}\left( \mathcal{B}\left( V\right) \right) \overset{\pi \left(
U\right) }{\longrightarrow }\mathcal{B}\left( U\right)
\end{equation*}%
is the epimorphism from Proposition \ref{Prop-Pro-modules-properties}(\ref%
{Prop-Pro-modules-properties-quasi-projective}), therefore $\pi $ is an
epimorphism as well.

(\textbf{\ref{Th-Precosheaf-homology-F-projective}}) We have just proved the
condition (\ref{Def-F-projective-generating}) of Definition \ref%
{Def-F-projective}. It remains to check the other two conditions.

Given a short exact sequence%
\begin{equation*}
0\longrightarrow \mathcal{B}^{\prime }\longrightarrow \mathcal{B}%
\longrightarrow \mathcal{B}^{\prime \prime }\longrightarrow 0
\end{equation*}%
of precosheaves, assume that%
\begin{equation*}
\mathcal{B},\mathcal{B}^{\prime \prime }\in Q\left( \mathbf{pCS}\left( 
\mathbf{D},\mathbf{Pro}\left( k\right) \right) \right) .
\end{equation*}%
Therefore, for any injective $T\in \mathbf{Mod}\left( k\right) $, the
sequence%
\begin{equation*}
0\longrightarrow \left\langle \mathcal{B}^{\prime \prime },T\right\rangle
\longrightarrow \left\langle \mathcal{B},T\right\rangle \longrightarrow
\left\langle \mathcal{B}^{\prime },T\right\rangle \longrightarrow 0
\end{equation*}%
is exact. Since $\left\langle \mathcal{B}^{\prime \prime },T\right\rangle $
and $\left\langle \mathcal{B},T\right\rangle $ are injective presheaves, it
follows that the sequence above is \textbf{split} exact, and%
\begin{equation*}
\left\langle \mathcal{B},T\right\rangle 
\simeq%
\left\langle \mathcal{B}^{\prime },T\right\rangle \times \left\langle 
\mathcal{B}^{\prime \prime },T\right\rangle .
\end{equation*}%
The presheaf $\left\langle \mathcal{B}^{\prime },T\right\rangle $, being a
direct summand of the injective presheaf $\left\langle \mathcal{B}%
,T\right\rangle $, is injective (for any injective $T$), therefore $\mathcal{%
B}^{\prime }$ is quasi-projective. The condition (\ref%
{Def-F-projective-closed-by-extensions}) of Definition \ref{Def-F-projective}
is proved!

Let now $R\subseteq h_{U}$ be a sieve. Since both $H^{0}\left( R,\bullet
\right) $ and $\check{H}^{0}$ are \textbf{additive} functors%
\begin{equation*}
\mathbf{pS}\left( X,\mathbf{Pro}\left( k\right) \right) \longrightarrow 
\mathbf{Mod}\left( k\right) ,
\end{equation*}%
the sequences of $k$-modules%
\begin{eqnarray*}
0 &\longrightarrow &H^{0}\left( R,\left\langle \mathcal{B}^{\prime \prime
},T\right\rangle \right) \longrightarrow H^{0}\left( R,\left\langle \mathcal{%
B}^{\prime \prime },T\right\rangle \right) \longrightarrow H^{0}\left(
R,\left\langle \mathcal{B}^{\prime \prime },T\right\rangle \right)
\longrightarrow 0, \\
0 &\longrightarrow &\check{H}^{0}\left( U,\left\langle \mathcal{A}^{\prime
\prime },T\right\rangle \right) \longrightarrow \check{H}^{0}\left(
U,\left\langle \mathcal{A}^{\prime \prime },T\right\rangle \right)
\longrightarrow \check{H}^{0}\left( U,\left\langle \mathcal{A}^{\prime
\prime },T\right\rangle \right) \longrightarrow 0,
\end{eqnarray*}%
are exact (in fact, split exact). It follows from Proposition \ref%
{Prop-Pro-modules-properties}(\ref{Prop-Pro-modules-properties-Exact}) that
the corresponding sequences of pro-modules%
\begin{eqnarray*}
0 &\longrightarrow &H_{0}\left( R,\mathcal{A}^{\prime }\right)
\longrightarrow H_{0}\left( R,\mathcal{A}\right) \longrightarrow H_{0}\left(
R,\mathcal{A}^{\prime \prime }\right) \longrightarrow 0, \\
0 &\longrightarrow &\check{H}_{0}\left( U,\mathcal{A}^{\prime }\right)
\longrightarrow \check{H}_{0}\left( U,\mathcal{A}\right) \longrightarrow 
\check{H}_{0}\left( U,\mathcal{A}^{\prime \prime }\right) \longrightarrow 0,
\end{eqnarray*}%
are exact, because%
\begin{eqnarray*}
&&\left\langle H_{0}\left( R,\mathcal{E}\right) ,T\right\rangle 
\simeq%
H^{0}\left( R,\left\langle \mathcal{E},T\right\rangle \right) \\
&&\left\langle \check{H}_{0}\left( R,\mathcal{E}\right) ,T\right\rangle 
\simeq%
\check{H}^{0}\left( R,\left\langle \mathcal{E},T\right\rangle \right)
\end{eqnarray*}%
for any precosheaf $\mathcal{E}$ (see the statement (\ref%
{Th-Precosheaf-homology-pairing}) of our theorem).

(\textbf{\ref{Th-Precosheaf-Cech-homology}}) Choose a quasi-projective
resolution%
\begin{equation*}
0\longleftarrow \mathcal{A}\longleftarrow \mathcal{P}_{0}\longleftarrow 
\mathcal{P}_{1}\longleftarrow \mathcal{P}_{2}\longleftarrow
...\longleftarrow \mathcal{P}_{n}\longleftarrow ...
\end{equation*}%
and construct a bicomplex%
\begin{equation*}
X_{s,t}=\check{C}_{s}\left( \left\{ U_{i}\rightarrow U\right\} ,\mathcal{P}%
_{t}\right) .
\end{equation*}%
Due to Theorem \ref{Th-Spectral-sequence}, one gets two spectral sequences%
\begin{equation*}
^{ver}E_{s,t}^{r},~^{hor}E_{s,t}^{r}\implies H_{s+t}\left( Tot_{\bullet
}\left( X\right) \right) .
\end{equation*}%
Apply $\left\langle \bullet ,T\right\rangle $ where $T\in \mathbf{Mod}\left(
k\right) $ is an arbitrary injective module. It follows that%
\begin{equation*}
\left\langle \mathcal{P}_{t},T\right\rangle \in \mathbf{pS}\left( X,\mathbf{%
Mod}\left( k\right) \right)
\end{equation*}%
are injective \textbf{presheaves} for all $t$. Due to \cite[Corollary 1.4.2]%
{Artin-GT} or \cite[Theorem 2.2.3]{Tamme-MR1317816}, and the fact that%
\begin{equation*}
\left\langle \check{C}_{s}\left( \left\{ U_{i}\rightarrow U\right\} ,%
\mathcal{P}_{t}\right) ,T\right\rangle 
\simeq%
\check{C}^{s}\left( \left\{ U_{i}\rightarrow U\right\} ,\left\langle 
\mathcal{P}_{t},T\right\rangle \right)
\end{equation*}%
the sequence%
\begin{equation*}
\left\langle ~^{hor}E_{0,t}^{0},T\right\rangle \longrightarrow \left\langle
~^{hor}E_{1,t}^{0},T\right\rangle \longrightarrow \left\langle
~^{hor}E_{2,t}^{0},T\right\rangle \longrightarrow ...\longrightarrow
\left\langle ~^{hor}E_{s,t}^{0},T\right\rangle \longrightarrow ...
\end{equation*}%
is exact for all $s>0$ (and all $T$!), therefore the sequence%
\begin{equation*}
~^{hor}E_{0,t}^{0}\longleftarrow ~^{hor}E_{1,t}^{0}\longleftarrow
~^{hor}E_{2,t}^{0}\longleftarrow ...\longleftarrow
~^{hor}E_{s,t}^{0}\longleftarrow ...
\end{equation*}%
is exact for all $s>0$, and%
\begin{equation*}
^{hor}E_{s,t}^{1}=0
\end{equation*}%
if $s>0$. The spectral sequence $^{hor}E^{r}$ degenerates from $E^{2}$ on,
implying%
\begin{equation*}
H_{n}\left( Tot_{\bullet }\left( X\right) \right) 
\simeq%
~^{hor}E_{0,n}^{2}%
\simeq%
L_{n}H_{0}\left( \left\{ U_{i}\rightarrow U\right\} ,\mathcal{A}\right) .
\end{equation*}

On the other hand, since products are exact in $\mathbf{Mod}\left( k\right) $%
, one gets exact (for $t>0$) sequences%
\begin{equation*}
\left\langle ~^{ver}E_{s,0}^{0},T\right\rangle \longrightarrow \left\langle
~^{ver}E_{s,1}^{0},T\right\rangle \longrightarrow \left\langle
~^{ver}E_{s,2}^{0},T\right\rangle \longrightarrow ...\longrightarrow
\left\langle ~^{ver}E_{s,t}^{0},T\right\rangle \longrightarrow ...
\end{equation*}%
in $\mathbf{Mod}\left( k\right) $, and exact (for $t>0$) sequences%
\begin{equation*}
~^{ver}E_{s,0}^{0}\longleftarrow ~^{ver}E_{s,1}^{0}\longleftarrow
~^{ver}E_{s,2}^{0}\longleftarrow ...\longleftarrow
~^{ver}E_{s,t}^{0}\longleftarrow ...
\end{equation*}%
It follows that $^{ver}E_{s,t}^{1}=0$ for $t>0$, and the sequence $%
^{ver}E^{r}$ degenerates from $E^{2}$ on, therefore%
\begin{equation*}
L_{n}H_{0}\left( \left\{ U_{i}\rightarrow U\right\} ,\mathcal{A}\right) 
\simeq%
H_{n}\left( Tot_{\bullet }\left( X\right) \right) 
\simeq%
~^{ver}E_{n,0}^{2}%
\simeq%
H_{n}\left( \left\{ U_{i}\rightarrow U\right\} ,\mathcal{A}\right) .
\end{equation*}

Apply $\underset{R\in Cov\left( U\right) }{\underleftarrow{\lim }}$ to the
bicomplexes $X_{\bullet ,\bullet }$ to get the bicomplex $\check{X}_{\bullet
,\bullet }$. The two spectral sequences for $\check{X}_{\bullet ,\bullet }$
degenerate form $E^{2}$ on, giving the desired isomorphisms.

(\textbf{\ref{Th-Precosheaf-homology-pairing}}) See the proof of (\ref%
{Th-Precosheaf-Cech-homology}). It remains only to remind (Proposition \ref%
{Prop-Pro-objects-properties} (\ref%
{Prop-Pro-objects-properties-Convert-limits-to-colimits})) that $%
\left\langle \bullet ,T\right\rangle $ converts cofiltered limits $%
\underleftarrow{\lim }$ into filtered colimits $\underrightarrow{\lim }$.
\end{proof}

\section{\label{Sec-Proof-Properties-cosheaves}Proof of Theorem \protect\ref%
{Th-Properties-cosheaves}}

\begin{proof}
(\textbf{\ref{Th-Properties-cosheaves-abelian-(co)cocomplete}})

\begin{itemize}
\item \emph{Kernels}. Given a morphism $f$ of cosheaves 
\begin{equation*}
f:\mathcal{A}\longrightarrow \mathcal{B},
\end{equation*}%
let 
\begin{equation*}
\mathcal{K=}\ker \left( \iota f:\iota \mathcal{A}\longrightarrow \iota 
\mathcal{B}\right)
\end{equation*}%
in $\mathbf{pCS}\left( X,\mathbf{Pro}\left( k\right) \right) $. Then, for
any $\mathcal{C}\in \mathbf{CS}\left( X,\mathbf{Pro}\left( k\right) \right) $%
,%
\begin{eqnarray*}
&&Hom_{\mathbf{CS}\left( X,\mathbf{Pro}\left( k\right) \right) }\left( 
\mathcal{C},\mathcal{K}_{\#}\right) 
\simeq%
Hom_{\mathbf{pCS}\left( X,\mathbf{Pro}\left( k\right) \right) }\left( 
\mathcal{C},\mathcal{K}\right) 
\simeq
\\
&&%
\simeq%
\ker \left( Hom_{\mathbf{CS}\left( X,\mathbf{Pro}\left( k\right) \right)
}\left( \mathcal{C},\mathcal{A}\right) \longrightarrow Hom_{\mathbf{CS}%
\left( X,\mathbf{Pro}\left( k\right) \right) }\left( \mathcal{C},\mathcal{B}%
\right) \right) ,
\end{eqnarray*}%
therefore $\mathcal{K}_{\#}$ is the kernel of $f$ in $\mathbf{CS}\left( X,%
\mathbf{Pro}\left( k\right) \right) $.

\item \emph{Cokernels}. The cokernel of $\iota f$ is clearly a cosheaf,
therefore%
\begin{equation*}
\coker%
f%
{:=}%
\coker%
\iota f
\end{equation*}%
is the desired cokernel.

\item \emph{Products}. Let 
\begin{equation*}
\left( \mathcal{A}_{i}\right) _{i\in I}
\end{equation*}%
be a family of cosheaves, and let%
\begin{equation*}
\mathcal{B}%
{:=}%
\dprod\limits_{i\in I}\iota \left( \mathcal{A}_{i}\right)
\end{equation*}%
in $\mathbf{pCS}\left( X,\mathbf{Pro}\left( k\right) \right) $. Then, for
any $\mathcal{C}\in \mathbf{CS}\left( X,\mathbf{Pro}\left( k\right) \right) $%
,%
\begin{eqnarray*}
&&Hom_{\mathbf{CS}\left( X,\mathbf{Pro}\left( k\right) \right) }\left( 
\mathcal{C},\mathcal{B}_{\#}\right) 
\simeq%
Hom_{\mathbf{pCS}\left( X,\mathbf{Pro}\left( k\right) \right) }\left( 
\mathcal{C},\mathcal{B}\right) 
\simeq
\\
&&%
\simeq%
\dprod\limits_{i\in I}Hom_{\mathbf{CS}\left( X,\mathbf{Pro}\left( k\right)
\right) }\left( \mathcal{C},\mathcal{A}_{i}\right) ,
\end{eqnarray*}%
therefore $\mathcal{B}_{\#}$ is the product of $\mathcal{A}_{i}$ in $\mathbf{%
CS}\left( X,\mathbf{Pro}\left( k\right) \right) $.

\item \emph{Coproducts}. The coproduct%
\begin{equation*}
\dbigoplus\limits_{i\in I}\iota \left( \mathcal{A}_{i}\right)
\end{equation*}%
is clearly a cosheaf, and can therefore serve as a coproduct in $\mathbf{CS}%
\left( X,\mathbf{Pro}\left( k\right) \right) $.

\item \emph{Limits} $\underleftarrow{\lim }$ are built as combinations of
products and kernels. The category $\mathbf{CS}\left( X,\mathbf{Pro}\left(
k\right) \right) $ is \emph{complete}.

\item \emph{Colimits} $\underrightarrow{\lim }$ are built as combinations of
coproducts and cokernels. The category $\mathbf{CS}\left( X,\mathbf{Pro}%
\left( k\right) \right) $ is \emph{cocomplete}.

\item \emph{Images and coimages}. Let 
\begin{equation*}
\left( f:\mathcal{A}\longrightarrow \mathcal{B}\right) \in \mathbf{CS}\left(
X,\mathbf{Pro}\left( k\right) \right) .
\end{equation*}%
Consider the diagram of (pre)cosheaves%
\begin{equation*}
{\scriptsize 
\begin{diagram}
\ker \left( \iota f\right) & \rTo^{h} & \mathcal{A} & \rTo^{f} & \mathcal{B} & \rTo^{g} & \coker\left( \iota f\right)  \\
\uTo & \ruTo^{h_{\#}} & \dTo & & \uTo & & \dTo^{=}\\
\left[ \left( \ker \left( \iota f\right) \right) _{\#}{=}\ker f\right] & & \left[ {\coker}\left( h\right)=coim\left( \iota f\right) \right] & \rTo^{\simeq}_{\varphi} & \left[ \ker \left( \iota g\right) =im\left( \iota f\right) \right] &  & {\coker}f \\
 & & \uTo & & \uTo \\
 & & \left[ \left( {\coker}\left( h\right)\right) _{\#}{\simeq}{\coker}\left( h_{\#}\right) =coim\left( f\right) \right] & \rTo^{\simeq}_{\varphi _{\#}} & \left[ \left( \ker \left( \iota g\right) \right) _{\#}=\ker g=im\left( f\right) \right] \\
\end{diagram}%
}
\end{equation*}%
The cosheafification functor $\left( {}\right) _{\#}$ is exact, due to
Theorem \ref{Th-Cosheafification} (\ref{Th-Cosheafification-plusplus-exact}%
), therefore%
\begin{equation*}
\left( 
\coker%
\left( h\right) \right) _{\#}%
\simeq%
\coker%
\left( h_{\#}\right) .
\end{equation*}%
Since the category of precosheaves $\mathbf{pCS}\left( X,\mathbf{Pro}\left(
k\right) \right) $ is abelian,%
\begin{equation*}
\varphi :coim\left( \iota f\right) \longrightarrow im\left( \iota f\right)
\end{equation*}%
is an isomorphism. It follows that%
\begin{equation*}
\varphi _{\#}:coim\left( f\right) =\left( coim\left( \iota f\right) \right)
_{\#}\longrightarrow \left( im\left( \iota f\right) \right) _{\#}=im\left(
f\right)
\end{equation*}%
is an isomorphism as well, and the category of cosheaves $\mathbf{CS}\left(
X,\mathbf{Pro}\left( k\right) \right) $ is abelian.
\end{itemize}

(\textbf{\ref{Th-Properties-cosheaves-colimit}}) Follows from\textbf{\ }%
Theorem \ref{Th-Properties-precosheaves}(\ref%
{Th-Properties-precosheaves-colimit}), because the inclusion functor%
\begin{equation*}
\iota :\mathbf{CS}\left( X,\mathbf{Pro}\left( k\right) \right)
\longrightarrow \mathbf{pCS}\left( X,\mathbf{Pro}\left( k\right) \right) ,
\end{equation*}%
being a left adjoint to $\left( {}\right) _{\#}$, preserves \textbf{colimits}%
, while%
\begin{equation*}
\iota :\mathbf{S}\left( X,\mathbf{Mod}\left( k\right) \right)
\longrightarrow \mathbf{pS}\left( X,\mathbf{Mod}\left( k\right) \right) ,
\end{equation*}%
being a right adjoint to $\left( {}\right) ^{\#}$, preserves \textbf{limits}.

(\textbf{\ref{Th-Properties-cosheaves-cofiltered-limit}})%
\begin{eqnarray*}
&&\left\langle \underleftarrow{\lim }_{i\in \mathbf{I}}\mathcal{X}%
_{i},T\right\rangle 
\simeq%
\left( \iota \left\langle \underleftarrow{\lim }_{i\in \mathbf{I}}\mathcal{X}%
_{i},T\right\rangle \right) ^{\#}%
\simeq%
\left\langle \left( \iota \circ \underleftarrow{\lim }_{i\in \mathbf{I}}%
\mathcal{X}_{i}\right) _{\#},T\right\rangle 
\simeq
\\
&&%
\simeq%
\left\langle \left( \underleftarrow{\lim }_{i\in \mathbf{I}}\left( \iota
\circ \mathcal{X}_{i}\right) \right) _{\#},T\right\rangle 
\simeq%
\left\langle \underleftarrow{\lim }_{i\in \mathbf{I}}\left( \iota \circ 
\mathcal{X}_{i}\right) ,T\right\rangle 
\simeq
\\
&&%
\simeq%
\underrightarrow{\lim }_{i\in \mathbf{I}}\left\langle \iota \circ \mathcal{X}%
_{i},T\right\rangle 
\simeq%
\underrightarrow{\lim }_{i\in \mathbf{I}}\left\langle \left( \iota \circ 
\mathcal{X}_{i}\right) _{\#},T\right\rangle 
\simeq%
\underrightarrow{\lim }_{i\in \mathbf{I}}\left\langle \mathcal{X}%
_{i},T\right\rangle ,
\end{eqnarray*}%
since $\mathcal{X}_{i}$ are cosheaves.

(\textbf{\ref{Th-Properties-cosheaves-product}}) If $A\subseteq I$ is
finite, then the isomorphism%
\begin{equation*}
\left\langle \dprod\limits_{i\in A}\mathcal{X}_{i},T\right\rangle 
\simeq%
\dbigoplus\limits_{i\in A}\left\langle \mathcal{X}_{i},T\right\rangle
\end{equation*}%
follows from the additivity of $\left\langle \bullet ,T\right\rangle $. Let
now $\mathbf{J}$ be the poset of finite subsets of $I$. $\mathbf{J}$ is
clearly filtered, and $\mathbf{J}^{op}$ is cofiltered. Due to (\ref%
{Th-Properties-cosheaves-cofiltered-limit}),%
\begin{eqnarray*}
&&\left\langle \dprod\limits_{i\in I}\mathcal{X}_{i},T\right\rangle 
\simeq%
\left\langle \underset{A\in \mathbf{J}^{op}}{\underleftarrow{\lim }}\left(
\dprod\limits_{i\in A}\mathcal{X}_{i}\right) ,T\right\rangle 
\simeq%
\underset{A\in \mathbf{J}}{\underrightarrow{\lim }}\left\langle
\dprod\limits_{i\in A}\mathcal{X}_{i},T\right\rangle 
\simeq
\\
&&%
\simeq%
\underset{A\in \mathbf{J}}{\underrightarrow{\lim }}\left(
\dbigoplus\limits_{i\in A}\left\langle \mathcal{X}_{i},T\right\rangle
\right) 
\simeq%
\dbigoplus\limits_{i\in I}\left\langle \mathcal{X}_{i},T\right\rangle .
\end{eqnarray*}

(\textbf{\ref{Th-Properties-cosheaves-limit}}) It is enough to prove:

\begin{enumerate}
\item $\left\langle \bullet ,T\right\rangle $ converts products into
coproducts: done in (\ref{Th-Properties-cosheaves-product});

\item $\left\langle \bullet ,T\right\rangle $ converts kernels into
cokernels: done in (\ref{Th-Properties-cosheaves-Homology}).
\end{enumerate}

(\textbf{\ref{Th-Properties-cosheaves-Zero}}) Follows from\textbf{\ }Theorem %
\ref{Th-Properties-precosheaves}(\ref{Th-Properties-precosheaves-Zero}).

(\textbf{\ref{Th-Properties-cosheaves-Homology}})%
\begin{equation*}
H\left( \mathcal{E}\right) =\frac{\ker \left( \alpha \right) }{im\left(
\beta \right) }=%
\coker%
\left( \mathcal{K}\longrightarrow \ker \left( \alpha \right) =\left( \ker
\left( \iota \alpha \right) \right) _{\#}\right) .
\end{equation*}%
It follows from Theorem \ref{Th-Properties-precosheaves} (\ref%
{Th-Properties-precosheaves-Homology}) that%
\begin{eqnarray*}
&&\left\langle H\left( \mathcal{E}\right) ,T\right\rangle 
\simeq%
\left\langle 
\coker%
\left( \iota \mathcal{K}\longrightarrow \left( \ker \left( \iota \alpha
\right) \right) _{\#}\right) ,T\right\rangle 
\simeq%
\ker \left( \left\langle \left( \ker \left( \iota \alpha \right) \right)
_{\#}\longrightarrow \iota \mathcal{K},T\right\rangle \right) 
\simeq
\\
&&%
\simeq%
\ker \left( \left\langle \left( \ker \left( \iota \alpha \right) \right)
_{\#},T\right\rangle \longrightarrow \left\langle \iota \mathcal{K}%
,T\right\rangle \right) 
\simeq%
\ker \left( \left\langle \ker \left( \iota \alpha \right) ,T\right\rangle
^{\#}\longrightarrow \left\langle \iota \mathcal{K},T\right\rangle \right) 
\simeq
\\
&&%
\simeq%
\ker \left( \left( 
\coker%
\left\langle \iota \alpha ,T\right\rangle \right) ^{\#}\longrightarrow
\left\langle \iota \mathcal{K},T\right\rangle \right) 
\simeq%
\ker \left( 
\coker%
\left\langle \alpha ,T\right\rangle \longrightarrow \left\langle \mathcal{K}%
,T\right\rangle \right) 
\simeq%
\frac{\ker \left( \left\langle \beta ,T\right\rangle \right) }{im\left(
\left\langle \alpha ,T\right\rangle \right) }.
\end{eqnarray*}

(\textbf{\ref{Th-Properties-cosheaves-Exact}}) Follows from (\ref%
{Th-Properties-cosheaves-Homology}) and (\ref{Th-Properties-cosheaves-Zero}).

(\textbf{\ref{Th-Properties-cosheaves-AB4*}}) Follows, since $\mathbf{S}%
\left( X,\mathbf{Mod}\left( k\right) \right) $ satisfies AB4, from (\ref%
{Th-Properties-cosheaves-product}).

(\textbf{\ref{Th-Properties-cosheaves-AB5*}}) Follows, since $\mathbf{S}%
\left( X,\mathbf{Mod}\left( k\right) \right) $ satisfies AB5, from (\ref%
{Th-Properties-cosheaves-cofiltered-limit}).

(\textbf{\ref{Th-Properties-cosheaves-cogenerators}}) Let%
\begin{equation*}
\left( \varphi :\mathcal{C\twoheadrightarrow D}\right) \in \mathbf{CS}\left(
X,\mathbf{Pro}\left( k\right) \right)
\end{equation*}%
be a non-trivial epimorphism. It means that%
\begin{equation*}
\ker \left( \varphi \right) \neq 0.
\end{equation*}%
Since%
\begin{equation*}
\coker%
\left( \varphi \right) 
\simeq%
\coker%
\left( \iota \varphi \right) ,
\end{equation*}%
$\iota \varphi $ is an epimorphism in $\mathbf{pCS}\left( X,\mathbf{Pro}%
\left( k\right) \right) $ as well. It is non-trivial ($\ker \left( \iota
\varphi \right) \neq 0$), because if it were trivial, then%
\begin{equation*}
0\neq \ker \left( \varphi \right) =\left( \ker \left( \iota \varphi \right)
\right) _{\#}=0_{\#}=0.
\end{equation*}%
It follows from Theorem \ref{Th-Properties-precosheaves} (\ref%
{Th-Properties-precosheaves-cogenerators}) that there exists an $A\in 
\mathfrak{G}$, $V\in \mathbf{C}_{X}$, and a morphism%
\begin{equation*}
\psi :\mathcal{C\longrightarrow }A^{V},
\end{equation*}%
that cannot be factored through $\mathcal{D}$. In other words,%
\begin{eqnarray*}
&&Hom_{\mathbf{pCS}\left( X,\mathbf{Pro}\left( k\right) \right) }\left( 
\mathcal{D},\left( A^{V}\right) _{\#}\right) 
\simeq%
Hom_{\mathbf{pCS}\left( X,\mathbf{Pro}\left( k\right) \right) }\left( 
\mathcal{D},A^{V}\right) \\
&\longrightarrow &Hom_{\mathbf{pCS}\left( X,\mathbf{Pro}\left( k\right)
\right) }\left( \mathcal{C},A^{V}\right) 
\simeq%
Hom_{\mathbf{pCS}\left( X,\mathbf{Pro}\left( k\right) \right) }\left( 
\mathcal{C},\left( A^{V}\right) _{\#}\right)
\end{eqnarray*}%
is \textbf{not} an epimorphism. It follows that the corresponding morphism%
\begin{equation*}
\psi _{\#}:\mathcal{C}\longrightarrow \left( A^{V}\right) _{\#}
\end{equation*}%
\textbf{cannot} be factored through $\mathcal{D}$.
\end{proof}

\section{\label{Sec-Proof-cosheaf-homology}Proof of Theorem \protect\ref%
{Th-Cosheaf-homology}}

\begin{proof}
(\textbf{\ref{Th-Cosheaf-homology-surjection}}) Define the following functor%
\begin{eqnarray*}
\mathcal{Q} &:&\mathbf{CS}\left( X,\mathbf{Pro}\left( k\right) \right)
\longrightarrow \mathbf{CS}\left( X,\mathbf{Pro}\left( k\right) \right) : \\
\mathcal{Q}\left( \mathcal{A}\right) &=&\left[ \mathcal{P}\left( \mathcal{A}%
\right) \right] _{\#},
\end{eqnarray*}%
where $\mathcal{P}$ is from Theorem \ref{Th-Precosheaf-homology}(\ref%
{Th-Precosheaf-homology-surjection}) One has the following natural
epimorphism%
\begin{equation*}
\rho \left( \mathcal{A}\right) :\mathcal{Q}\left( \mathcal{A}\right) =%
\mathcal{P}\left( \mathcal{A}\right) _{\#}\longrightarrow \mathcal{P}\left( 
\mathcal{A}\right) \longrightarrow \mathcal{A}.
\end{equation*}%
For ordinals $\alpha $, define $\mathcal{Q}_{\alpha }$ using transfinite
induction:%
\begin{equation*}
\mathcal{Q}_{\alpha }\left( \mathcal{A}\right) 
{:=}%
\mathcal{Q}\left( \mathcal{Q}_{\beta }\left( \mathcal{A}\right) \right)
\end{equation*}%
if $\alpha =\beta +1$, and%
\begin{equation*}
\mathcal{Q}_{\alpha }%
{:=}%
\underset{\beta <\beta }{\underleftarrow{\lim }}\mathcal{Q}_{\beta }\left( 
\mathcal{A}\right)
\end{equation*}%
if $\alpha $ is a limit ordinal. The sheaves $\left( \left( k_{V}\right)
^{\#}\right) _{V\in Ob\left( \mathbf{C}_{X}\right) }$ form a set of
generators of $\mathbf{S}\left( X,\mathbf{Mod}\left( k\right) \right) $
(Remark \ref{Rem-Generators}). Consider the coproduct%
\begin{equation*}
\mathcal{G}%
{:=}%
\dbigoplus\limits_{V\in Ob\left( \mathbf{C}_{X}\right) }\left( k_{V}\right)
^{\#}=\left( \dbigoplus\limits_{V\in Ob\left( \mathbf{C}_{X}\right)
}k_{V}\right) ^{\#}
\end{equation*}%
in the category of sheaves. Let $W$ be the set of representatives of all
subsheaves of $\mathcal{G}$. Let further, for $\mathcal{E}\in W$,%
\begin{equation*}
S\left( \mathcal{E}\right) =\left( \dcoprod\limits_{U\in Ob\left( \mathbf{C}%
_{X}\right) }\mathcal{E}\left( U\right) \right) \in \mathbf{Set,}
\end{equation*}%
be the coproduct in the category $\mathbf{Set}$, and let $\beta $ be any
cardinal of cofinality larger than%
\begin{equation*}
\sup \left( card\left( S\left( \mathcal{E}\right) \right) \right) _{\mathcal{%
E}\in W}.
\end{equation*}%
We claim that the epimorphism%
\begin{equation*}
\mathcal{R}\left( \mathcal{A}\right) 
{:=}%
\mathcal{Q}_{\beta }\left( \mathcal{A}\right) \longrightarrow \mathcal{A}
\end{equation*}%
is as desired. Indeed, it is enough to prove that $\mathcal{Q}_{\beta
}\left( \mathcal{A}\right) $ is quasi-projective.

Let $T$ be any injective $k$-module, and let%
\begin{equation*}
\mathcal{J}_{\alpha }%
{:=}%
\left\langle \mathcal{Q}_{\alpha }\left( \mathcal{A}\right) ,T\right\rangle
,\alpha \leq \beta .
\end{equation*}%
We have to prove that $\mathcal{J}_{\beta }$ is an injective sheaf. Since $%
\mathcal{G}$ is a generator for $\mathbf{S}\left( X,\mathbf{Mod}\left(
k\right) \right) $, it is enough \cite[Lemme 1.10.1]%
{Grothendieck-Tohoku-1957-MR0102537} to prove the existence of the dashed
arrow in any diagram of the form%
\begin{equation*}
\begin{diagram}[size=3.0em,textflow]
\mathcal{B} & \rInto & \mathcal{G} \\
\dTo & \ldDashto \\
\mathcal{J}_{\beta } \\
\end{diagram}%
\end{equation*}%
where $\mathcal{B}$ is a subsheaf of $\mathcal{G}$. Since%
\begin{equation*}
card\left( S\left( \mathcal{B}\right) \right) =card\left(
\dcoprod\limits_{U\in Ob\left( \mathbf{C}_{X}\right) }\mathcal{E}\left(
U\right) \right) <\beta ,
\end{equation*}%
there exists an $\alpha <\beta $, such that $\mathcal{B}\rightarrow \mathcal{%
J}_{\beta }$ factors through $\mathcal{J}_{\alpha }$.

Consider the commutative diagram%
\begin{equation*}
\begin{diagram}[size=3.0em,textflow]
\mathcal{B} & \rInto & \mathcal{G} \\
\dTo & & \dDashto & \rdDashto \rdDashto(4,2) \\
\mathcal{J}_{\alpha} & \rTo & \left[ \mathcal{I}_{\alpha +1}\TeXButton{assigned}{{:=}}\left\langle \mathcal{P}\left( \mathcal{Q}_{\alpha +1}\right) ,T\right\rangle \right] & \rTo & \mathcal{J}_{\alpha +1}  & \rTo & \mathcal{J}_{\beta} \\
\end{diagram}%
\end{equation*}

The second vertical arrow exists, because $\mathcal{I}_{\alpha +1}$ is an
injective \textbf{pre}sheaf, and the morphism $\mathcal{B}\hookrightarrow 
\mathcal{G}$, being a \textbf{mono}morphism of sheaves, is a \textbf{mono}%
morphism of \textbf{pre}sheaves, as well.

(\textbf{\ref{Th-Cosheaf-homology-F-projective}}) The first condition in
Definition \ref{Def-F-projective} follows from (\ref%
{Th-Cosheaf-homology-surjection}). Let now%
\begin{equation*}
0\longrightarrow \mathcal{P}^{\prime }\longrightarrow \mathcal{P}%
\longrightarrow \mathcal{P}^{\prime \prime }\longrightarrow 0
\end{equation*}%
be an exact sequence with $\mathcal{P}$, $\mathcal{P}^{\prime \prime }\in
Q\left( \mathbf{CS}\left( X,\mathbf{Pro}\left( k\right) \right) \right) $.
For any injective $T\in \mathbf{Mod}\left( k\right) $, the sequence of 
\textbf{sheaves}%
\begin{equation*}
0\longrightarrow \left\langle \mathcal{P}^{\prime \prime },T\right\rangle
\longrightarrow \left\langle \mathcal{P},T\right\rangle \longrightarrow
\left\langle \mathcal{P}^{\prime },T\right\rangle \longrightarrow 0
\end{equation*}%
is exact in $\mathbf{S}\left( X,\mathbf{Mod}\left( k\right) \right) $, while 
$\left\langle \mathcal{P}^{\prime \prime },T\right\rangle $ and $%
\left\langle \mathcal{P},T\right\rangle $ are injective. Therefore the above
sequence splits, and%
\begin{equation*}
\left\langle \mathcal{P},T\right\rangle 
\simeq%
\left\langle \mathcal{P}^{\prime \prime },T\right\rangle \oplus \left\langle 
\mathcal{P}^{\prime },T\right\rangle .
\end{equation*}%
The sheaf $\left\langle \mathcal{P}^{\prime },T\right\rangle $, being a
direct summand of an injective sheaf, is injective, therefore the cosheaf $%
\mathcal{P}^{\prime }$ is quasi-projective,%
\begin{equation*}
\mathcal{P}^{\prime }\in Q\left( \mathbf{CS}\left( X,\mathbf{Pro}\left(
k\right) \right) \right) .
\end{equation*}%
The second condition in Definition \ref{Def-F-projective} is proved!

Apply the functor $\mathcal{A}\mapsto \mathcal{A}\left( U\right) $ to the
split exact sequence above, and get the following split exact sequences in $%
\mathbf{Mod}\left( k\right) $%
\begin{equation*}
\begin{diagram}
0 & \rTo & \left\langle \mathcal{P}^{\prime \prime },T\right\rangle \left( U\right) & \rTo & \left\langle \mathcal{P},T\right\rangle \left( U\right) & \rTo & \left\langle \mathcal{P}^{\prime},T\right\rangle \left( U\right) & \rTo & 0 \\
 & & \dTo^{\simeq} & & \dTo^{\simeq} & & \dTo^{\simeq} \\
0 & \rTo & \left\langle \mathcal{P}^{\prime \prime } \left( U\right),T\right\rangle & \rTo & \left\langle \mathcal{P} \left( U\right),T\right\rangle & \rTo & \left\langle \mathcal{P}^{\prime} \left( U\right),T\right\rangle & \rTo & 0 \\
\end{diagram}%
\end{equation*}%
It follows that the sequence%
\begin{equation*}
0\longrightarrow \mathcal{P}^{\prime }\left( U\right) \longrightarrow 
\mathcal{P}\left( U\right) \longrightarrow \mathcal{P}^{\prime \prime
}\left( U\right) \longrightarrow 0
\end{equation*}%
is exact in $\mathbf{Pro}\left( k\right) $, and the third condition for the $%
F$-projectivity is proved for the functor%
\begin{equation*}
F\left( \bullet \right) =\Gamma \left( U,\bullet \right) =\bullet \left(
U\right) .
\end{equation*}%
Consider now the following split exact sequences of presheaves%
\begin{equation*}
\begin{diagram}
0 & \rTo & \iota \left\langle \mathcal{P}^{\prime \prime },T\right\rangle & \rTo & \iota \left\langle \mathcal{P},T\right\rangle \left( U\right) & \rTo & \iota \left\langle \mathcal{P}^{\prime},T\right\rangle & \rTo & 0 \\
 & & \dTo^{\simeq} & & \dTo^{\simeq} & & \dTo^{\simeq} \\
0 & \rTo & \left\langle \iota \mathcal{P}^{\prime \prime },T\right\rangle & \rTo & \left\langle \iota \mathcal{P} ,T\right\rangle & \rTo & \left\langle \iota \mathcal{P}^{\prime} ,T\right\rangle & \rTo & 0 \\
\end{diagram}%
\end{equation*}%
It follows that the sequence%
\begin{equation*}
0\longrightarrow \iota \mathcal{P}^{\prime }\longrightarrow \iota \mathcal{P}%
\longrightarrow \iota \mathcal{P}^{\prime \prime }\longrightarrow 0
\end{equation*}%
is exact in $\mathbf{pCS}\left( X,\mathbf{Pro}\left( k\right) \right) $, and
the third condition for the $F$-projectivity is proved for the inclusion
functor%
\begin{equation*}
\iota :\mathbf{CS}\left( X,\mathbf{Pro}\left( k\right) \right)
\longrightarrow \mathbf{pCS}\left( X,\mathbf{Pro}\left( k\right) \right) .
\end{equation*}%
(\textbf{\ref{Th-Cosheaf-homology-pairing}}) Let%
\begin{equation*}
0\longleftarrow \mathcal{A}\longleftarrow \mathcal{P}_{0}\longleftarrow 
\mathcal{P}_{1}\longleftarrow \mathcal{P}_{2}\longleftarrow
...\longleftarrow \mathcal{P}_{n}\longleftarrow ...
\end{equation*}%
be a quasi-projective resolution. Apply the functor $\Gamma \left( U,\bullet
\right) =\bullet \left( U\right) $, and get a chain complex of pro-modules:%
\begin{equation*}
0\longleftarrow \mathcal{P}_{0}\left( U\right) \longleftarrow \mathcal{P}%
_{1}\left( U\right) \longleftarrow \mathcal{P}_{2}\left( U\right)
\longleftarrow ...\longleftarrow \mathcal{P}_{n}\left( U\right)
\longleftarrow ...
\end{equation*}%
Let $T\in \mathbf{Mod}\left( k\right) $ be injective. Apply $\left\langle
\bullet ,T\right\rangle $, and get an injective resolution of $\left\langle 
\mathcal{A},T\right\rangle $:%
\begin{equation*}
0\longrightarrow \left\langle \mathcal{A},T\right\rangle \longrightarrow
\left\langle \mathcal{P}_{0},T\right\rangle \longrightarrow \left\langle 
\mathcal{P}_{1},T\right\rangle \longrightarrow \left\langle \mathcal{P}%
_{2},T\right\rangle \longrightarrow ...\longrightarrow \left\langle \mathcal{%
P}_{n},T\right\rangle \longrightarrow ...
\end{equation*}%
It follows that%
\begin{equation*}
\left\langle L_{n}\Gamma \left( U,\mathcal{A}\right) ,T\right\rangle 
\simeq%
\left\langle H_{n}\left( \mathcal{P}_{\bullet }\left( U\right) \right)
,T\right\rangle 
\simeq%
H^{n}\left\langle \mathcal{P}_{\bullet }\left( U\right) ,T\right\rangle 
\simeq%
H^{n}\left( U,\left\langle \mathcal{A},T\right\rangle \right) .
\end{equation*}

(\textbf{\ref{Th-Cosheaf-homology-forgetting}}) Apply the inclusion functor $%
\iota $ to the quasi-projective resolution above, and get a chain complex of
precosheaves:%
\begin{equation*}
0\longleftarrow \mathcal{\iota P}_{0}\longleftarrow \mathcal{\iota P}%
_{1}\longleftarrow \mathcal{\iota P}_{2}\longleftarrow ...\longleftarrow 
\mathcal{\iota P}_{n}\longleftarrow ...
\end{equation*}%
The precosheaf $L_{n}\iota $ is defined by%
\begin{equation*}
\left( L_{n}\iota \right) \mathcal{A}%
{:=}%
H_{n}\left( \iota \mathcal{P}_{\bullet }\right)
\end{equation*}%
The functor%
\begin{equation*}
\mathcal{B}\longmapsto \mathcal{B}\left( U\right) :\mathbf{pCS}\left( X,%
\mathbf{Pro}\left( k\right) \right) \longrightarrow \mathbf{Pro}\left(
k\right)
\end{equation*}%
is exact, therefore%
\begin{equation*}
\left[ \left( L_{n}\iota \right) \mathcal{A}\right] \left( U\right) 
\simeq%
H_{n}\left( \iota \mathcal{P}_{\bullet }\left( U\right) \right) 
\simeq%
H_{n}\left( \mathcal{P}_{\bullet }\left( U\right) \right) 
\simeq%
H_{n}\left( U,\mathcal{A}\right) ,
\end{equation*}%
proving (b). Moreover,%
\begin{equation*}
\left\langle \left( L_{n}\iota \right) \mathcal{A},T\right\rangle 
\simeq%
\left\langle H_{n}\left( \iota \mathcal{P}_{\bullet }\right) ,T\right\rangle 
\simeq%
H^{n}\left\langle \iota \mathcal{P}_{\bullet },T\right\rangle 
\simeq%
\mathcal{H}^{n}\left\langle \mathcal{A},T\right\rangle ,
\end{equation*}%
proving (a).

(\textbf{\ref{Th-Cosheaf-homology-H-t-plus-trivial}}) It follows from \cite[%
Theorem 2.12(2, 3)]{Prasolov-Cosheafification-2016-zbMATH06684178} that%
\begin{equation*}
\left( \mathcal{H}_{t}\mathcal{A}\right) _{\#}\longrightarrow \left( 
\mathcal{H}_{t}\mathcal{A}\right) _{+}
\end{equation*}%
is an epimorphism. Therefore, it is enough to prove that $\left( \mathcal{H}%
_{t}\mathcal{A}\right) _{\#}=0$ for $t>0$. Apply the \textbf{exact} (due to
Theorem \ref{Th-Cosheafification} (\ref{Th-Cosheafification-plusplus-exact}%
)) functor $\left( {}\right) _{\#}$ to the chain complex%
\begin{equation*}
0\longleftarrow \mathcal{\iota P}_{0}\longleftarrow \mathcal{\iota P}%
_{1}\longleftarrow \mathcal{\iota P}_{2}\longleftarrow ...\longleftarrow 
\mathcal{\iota P}_{n}\longleftarrow ...
\end{equation*}%
Since $\left( {}\right) _{\#}\circ \iota =1_{\mathbf{CS}\left( X,\mathbf{Pro}%
\left( k\right) \right) }$, one gets an acyclic complex%
\begin{equation*}
\left( \iota \mathcal{P}_{\bullet }\right) _{\#}%
\simeq%
\left( \mathcal{P}_{\bullet }\right) .
\end{equation*}%
Therefore,%
\begin{equation*}
0=H_{t}\mathcal{P}_{\bullet }%
\simeq%
H_{t}\left[ \left( \iota \mathcal{P}_{\bullet }\right) _{\#}\right] 
\simeq%
\left[ H_{t}\left( \iota \mathcal{P}_{\bullet }\right) \right] _{\#}%
\simeq%
\left( \mathcal{H}_{t}\mathcal{A}\right) _{\#}
\end{equation*}%
for $t>0$.

(\textbf{\ref{Th-Cosheaf-homology-Spectral-sequence-R}}) Let $X_{\bullet
,\bullet }$ be the following bicomplex in $\mathbf{Pro}\left( k\right) $:%
\begin{equation*}
\left( X_{s,t},d,\delta \right) 
{:=}%
\left( \dbigoplus\limits_{\left( U_{0}\rightarrow U_{1}\rightarrow
...\rightarrow U_{s}\rightarrow U\right) \in \mathbf{C}_{R}}\mathcal{P}%
_{t}\left( U_{0}\right) ,d,\delta \right) ,
\end{equation*}%
where $\delta $ is inherited from the above quasi-projective resolution, and 
$d$ is as in Definition \ref{Def-Roos-complex}. Consider the two spectral
sequences%
\begin{eqnarray*}
^{ver}E_{s,t}^{2} &\implies &H_{s+t}\left( Tot_{\bullet }\left( X\right)
\right) , \\
^{hor}E_{s,t}^{2} &\implies &H_{s+t}\left( Tot_{\bullet }\left( X\right)
\right) .
\end{eqnarray*}%
Since $\mathcal{P}_{t}$ are quasi-projective cosheaves, thus
quasi-projective \textbf{pre}cosheaves, it follows that%
\begin{eqnarray*}
^{hor}E_{s,t}^{1} &=&~^{hor}H_{s}\left( X_{\bullet ,\bullet }\right)
=H_{s}\left( R,\mathcal{P}_{t}\right) =\left\{ 
\begin{array}{ccc}
H_{0}\left( R,\mathcal{P}_{t}\right) 
\simeq%
\mathcal{P}_{t}\left( U\right) & \text{if} & s=0, \\ 
0 & \text{if} & s\neq 0.%
\end{array}%
\right. \\
^{hor}E_{s,t}^{2} &=&\left\{ 
\begin{array}{ccc}
H_{t}\left( U,\mathcal{A}\right) & \text{if} & s=0, \\ 
0 & \text{if} & s\neq 0.%
\end{array}%
\right.
\end{eqnarray*}%
The spectral sequence degenerates from $E_{2}$ on, implying%
\begin{equation*}
H_{n}\left( Tot_{\bullet }\left( X\right) \right) 
\simeq%
H_{n}\left( U,\mathcal{A}\right) ,
\end{equation*}%
Furthermore,%
\begin{eqnarray*}
^{ver}E_{s,t}^{1} &=&~^{ver}H_{t}\left( X_{\bullet ,\bullet }\right)
=\dbigoplus\limits_{\left( U_{0}\rightarrow U_{1}\rightarrow ...\rightarrow
U_{s}\rightarrow U\right) \in \mathbf{C}_{R}}\mathcal{H}_{t}\mathcal{A}%
\left( U_{0}\right) , \\
^{ver}E_{s,t}^{2} &=&H_{s}\left( R,\mathcal{H}_{t}\mathcal{A}\right)
\implies H_{s+t}\left( Tot_{\bullet }\left( X\right) \right) 
\simeq%
H_{s+t}\left( U,\mathcal{A}\right) ,
\end{eqnarray*}%
proving (a).

Apply $\underleftarrow{\lim }$ over all covering sieves, to the above
spectral sequence, and get the desired spectral sequence%
\begin{equation*}
E_{s,t}^{2}=\check{H}_{s}\left( U,\mathcal{H}_{t}\mathcal{A}\right) \implies
H_{s+t}\left( U,\mathcal{A}\right) ,
\end{equation*}%
proving (b).

To prove (c), notice that%
\begin{eqnarray*}
&&\mathcal{H}_{0}\mathcal{A}%
\simeq%
\mathcal{A}, \\
&&E_{s,0}^{2}%
\simeq%
\check{H}_{s}\left( U,\mathcal{A}\right) , \\
E_{0,t}^{2} &=&0,~t>0.
\end{eqnarray*}%
It follows that%
\begin{equation*}
\check{H}_{0}\left( U,\mathcal{A}\right) 
\simeq%
E_{0,0}^{2}%
\simeq%
E_{0,0}^{\infty }%
\simeq%
H_{0}\left( U,\mathcal{A}\right) .
\end{equation*}%
Moreover, there is a short exact sequence%
\begin{equation*}
0\longrightarrow \left[ E_{0,1}^{\infty }=0\right] \longrightarrow
H_{1}\left( U,\mathcal{A}\right) \longrightarrow \left[ E_{1,0}^{\infty }=%
\check{H}_{1}\left( U,\mathcal{A}\right) \right] \longrightarrow 0,
\end{equation*}%
implying%
\begin{equation*}
H_{1}\left( U,\mathcal{A}\right) 
\simeq%
\check{H}_{1}\left( U,\mathcal{A}\right) .
\end{equation*}%
Finally,%
\begin{equation*}
E_{2,0}^{\infty }%
\simeq%
E_{2,0}^{3}%
\simeq%
\ker \left( E_{2,0}^{2}\longrightarrow \left[ E_{0,1}^{2}=0\right] \right) 
\simeq%
E_{2,0}^{2}%
\simeq%
\check{H}_{2}\left( U,\mathcal{A}\right) ,
\end{equation*}%
and there is, since $E_{0,2}^{\infty }=0$, a short exact sequence%
\begin{equation*}
0\longrightarrow E_{1,1}^{\infty }\longrightarrow H_{2}\left( U,\mathcal{A}%
\right) \longrightarrow \left[ E_{2,0}^{2}%
\simeq%
\check{H}_{2}\left( U,\mathcal{A}\right) \right] \longrightarrow 0,
\end{equation*}%
implying%
\begin{equation*}
H_{2}\left( U,\mathcal{A}\right) \twoheadrightarrow \check{H}_{2}\left( U,%
\mathcal{A}\right) .
\end{equation*}

(\textbf{\ref{Th-Cosheaf-homology-Spectral-sequence-Cech}}) Follows from
Proposition \ref{Prop-Two-Cech-equivalent}.
\end{proof}

\appendix

\section{Categories}

\subsection{Limits}

\begin{notation}
~

\begin{enumerate}
\item We shall denote \textbf{limits} (inverse/projective limits) by $%
\underleftarrow{\lim }$, and \textbf{colimits} (direct/inductive limits) by $%
\underrightarrow{\lim }$.

\item If $U$ is an object of a category $\mathbf{K}$, we shall usually write 
$U\in \mathbf{K}$ instead of $U\in Ob\left( \mathbf{K}\right) $.
\end{enumerate}
\end{notation}

\begin{definition}
\label{Def-(co)cone}~

\begin{enumerate}
\item An $\mathbf{I}$-\textbf{diagram} in $\mathbf{K}$ is a functor%
\begin{equation*}
D:\mathbf{I}\longrightarrow \mathbf{K}
\end{equation*}%
where $\mathbf{I}$ is a \textbf{small} category.

\item A \textbf{cone} (respectively \textbf{cocone}) of the diagram $D$\ is
a pair $\left( U,\varphi \right) $ where $U\in \mathbf{K}$, and $\varphi $
is a morphism of functors $\varphi :U^{const}\rightarrow D$ (respectively $%
D\rightarrow U^{const}$). Here $U^{const}$ is the constant functor:%
\begin{eqnarray*}
U^{const}\left( i\right) &=&U,i\in \mathbf{I}, \\
U^{const}\left( i\rightarrow j\right) &=&\mathbf{1}_{U}.
\end{eqnarray*}

\item A cone $\left( U,\varphi \right) $ \textbf{is a limit} iff $%
\underleftarrow{\lim }~D$ exists, and the evident morphism%
\begin{equation*}
U\longrightarrow \underleftarrow{\lim }~D
\end{equation*}%
is an isomorphism.

\item A cocone $\left( U,\varphi \right) $ \textbf{is a colimit} iff $%
\underrightarrow{\lim }~D$ exists, and the evident morphism%
\begin{equation*}
\underrightarrow{\lim }~D\longrightarrow U
\end{equation*}%
is an isomorphism.
\end{enumerate}
\end{definition}

\begin{definition}
\label{Def-Functor-category}Given two categories $\mathbf{I}$ and $\mathbf{K}
$ with $\mathbf{I}$ small, let $\mathbf{K}^{\mathbf{I}}$ be the category of $%
\mathbf{I}$-diagrams in $\mathbf{K}$.
\end{definition}

\begin{remark}
We will also consider functors $\mathbf{C\rightarrow D}$ where $\mathbf{C}$
is not small. However, such functors form a \textbf{quasi-category} $\mathbf{%
D}^{\mathbf{C}}$, because the morphisms $\mathbf{D}^{\mathbf{C}}\left(
F,G\right) $ form a \textbf{class}, but not in general a \textbf{set}.
\end{remark}

\begin{definition}
\label{Def-(co)complete}A category $\mathbf{K}$ is called \textbf{complete}
if it admits small limits, and \textbf{cocomplete} if it admits small
colimits.
\end{definition}

\begin{remark}
A complete category has a \textbf{terminal} object (a limit of an empty
diagram). A cocomplete category has an \textbf{initial} object (a colimit of
an empty diagram).
\end{remark}

\begin{definition}
\label{Def-exact-functors}A functor $f:\mathbf{C}\rightarrow \mathbf{D}$ is
called \textbf{left (right) exact} if it preserves \textbf{finite} limits
(colimits). $f$ is called \textbf{exact} if it is both left and right exact.
\end{definition}

\begin{definition}
\label{Def-(co)reflective}A subcategory $\mathbf{C\subseteq D}$ is called 
\textbf{reflective} (respectively \textbf{coreflective}) iff the inclusion $%
\mathbf{C\hookrightarrow D}$ is a right (respectively left) adjoint. The
left (respectively right) adjoint $\mathbf{D\rightarrow C}$ is called a 
\textbf{reflection} (respectively \textbf{coreflection}).
\end{definition}

\begin{definition}
\label{Def-Yoneda-embeddings}Given $U\in \mathbf{K}$, let%
\begin{equation*}
h_{U}:\mathbf{K}^{op}\longrightarrow \mathbf{Set},~h^{U}:\mathbf{K}%
\longrightarrow \mathbf{Set},
\end{equation*}%
be the following functors:%
\begin{eqnarray*}
&&h_{U}\left( V\right) 
{:=}%
Hom_{\mathbf{C}}\left( V,U\right) ,~h^{U}\left( V\right) 
{:=}%
Hom_{\mathbf{C}}\left( U,V\right) , \\
&&h_{U}\left( \alpha \right) 
{:=}%
\left[ \left( \gamma \in h_{U}\left( V\right) =Hom_{\mathbf{C}}\left(
V,U\right) \right) \longmapsto \left( \gamma \circ \alpha \in Hom_{\mathbf{C}%
}\left( V^{\prime },U\right) =h_{U}\left( V^{\prime }\right) \right) \right]
, \\
&&h^{U}\left( \beta \right) 
{:=}%
\left[ \left( \gamma \in h^{U}\left( V\right) =Hom_{\mathbf{C}}\left(
U,V\right) \right) \longmapsto \left( \beta \circ \gamma \in Hom_{\mathbf{C}%
}\left( U,V^{\prime }\right) =h^{U}\left( V^{\prime }\right) \right) \right]
,
\end{eqnarray*}%
where%
\begin{eqnarray*}
\left( \alpha :V^{\prime }\longrightarrow V\right) &\in &Hom_{\mathbf{C}%
}\left( V^{\prime },V\right) =Hom_{\mathbf{C}^{op}}\left( V,V^{\prime
}\right) , \\
\left( \beta :V\longrightarrow V^{\prime }\right) &\in &Hom_{\mathbf{C}%
}\left( V,V^{\prime }\right) .
\end{eqnarray*}
\end{definition}

\begin{remark}
\label{Rem-Yoneda-embeddings}~

\begin{enumerate}
\item The functors%
\begin{equation*}
h_{\bullet }:\mathbf{K}\longrightarrow \mathbf{Set}^{\mathbf{K}%
^{op}},~h^{\bullet }:\mathbf{K}^{op}\longrightarrow \mathbf{Set}^{\mathbf{K}%
},
\end{equation*}%
are full embeddings, called the \textbf{first} and the \textbf{second} 
\textbf{Yoneda embeddings}.

\item We will consider also the \textbf{third} Yoneda embedding, which is
dual to the second one:%
\begin{equation*}
\left( h^{\bullet }\right) ^{op}:\mathbf{K}=\left( \mathbf{K}^{op}\right)
^{op}\longrightarrow \left( \mathbf{Set}^{\mathbf{K}}\right) ^{op}.
\end{equation*}
\end{enumerate}
\end{remark}

\begin{definition}
\label{Def-Comma-General}Let%
\begin{equation*}
\varphi :\mathbf{C}\longrightarrow \mathbf{D}
\end{equation*}%
be a functor, and let $d\in \mathbf{D}$.

\begin{enumerate}
\item The \textbf{comma category} $\varphi \downarrow d$ is defined as
follows:%
\begin{eqnarray*}
&&Ob\left( \varphi \downarrow d\right) 
{:=}%
\left\{ \left( \varphi \left( c\right) \rightarrow d\right) \in Hom_{\mathbf{%
D}}\left( \varphi \left( c\right) ,d\right) \right\} , \\
&&Hom_{\varphi \downarrow d}\left( \left( \alpha _{1}:\varphi \left(
c_{1}\right) \rightarrow d\right) ,\left( \alpha _{2}:\varphi \left(
c_{2}\right) \rightarrow d\right) \right) 
{:=}%
\left\{ \beta :c_{1}\rightarrow c_{2}~|~\alpha _{2}\circ \varphi \left(
\beta \right) =\alpha _{1}\right\} .
\end{eqnarray*}

\item Another \textbf{comma category} 
\begin{equation*}
d\downarrow \varphi =\left( \varphi ^{op}\downarrow d\right) ^{op}
\end{equation*}
is defined as follows:%
\begin{eqnarray*}
&&Ob\left( d\downarrow \varphi \right) 
{:=}%
\left\{ \left( d\rightarrow \varphi \left( c\right) \right) \in Hom_{\mathbf{%
D}}\left( d,\varphi \left( c\right) \right) \right\} , \\
&&Hom_{\varphi \downarrow d}\left( \left( \alpha _{1}:d\rightarrow \varphi
\left( c_{1}\right) \right) ,\left( \alpha _{2}:d\rightarrow \varphi \left(
c_{2}\right) \right) \right) 
{:=}%
\left\{ \beta :c_{1}\rightarrow c_{2}~|~\varphi \left( \beta \right) \circ
\alpha _{1}=\alpha _{2}\right\} .
\end{eqnarray*}
\end{enumerate}
\end{definition}

\begin{definition}
\label{Def-Comma-U}Let $U\in \mathbf{C}$. The \textbf{comma category} $%
\mathbf{C}_{U}$ is defined as follows:%
\begin{equation*}
\mathbf{C}_{U}=\mathbf{1}_{\mathbf{C}}\downarrow U,
\end{equation*}%
i.e.%
\begin{eqnarray*}
&&Ob\left( \mathbf{C}_{U}\right) 
{:=}%
\left\{ \left( V\rightarrow U\right) \in Hom_{\mathbf{C}}\left( V,U\right)
\right\} , \\
&&Hom_{\mathbf{C}_{U}}\left( \left( \alpha _{1}:V_{1}\rightarrow U\right)
,\left( \alpha _{2}:V_{2}\rightarrow U\right) \right) 
{:=}%
\left\{ \beta :V_{1}\rightarrow V_{2}~|~\alpha _{2}\circ \beta =\alpha
_{1}\right\} .
\end{eqnarray*}
\end{definition}

\begin{definition}
\label{Def-Comma-R}Let $F\in \mathbf{Set}^{\mathbf{C}^{op}}$. The \textbf{%
comma category} $\mathbf{C}_{F}$ is defined as follows:%
\begin{eqnarray*}
&&Ob\left( \mathbf{C}_{F}\right) 
{:=}%
\left\{ \left( V,\alpha \right) ~|~V\in \mathbf{C},\alpha \in F\left(
V\right) \right\} , \\
&&Hom_{\mathbf{C}_{U}}\left( \left( V_{1},\alpha _{1}\right) ,\left(
V_{2},\alpha _{2}\right) \right) 
{:=}%
\left\{ \beta :V_{1}\rightarrow V_{2}~|~F\left( \beta \right) \left( \alpha
_{2}\right) =\alpha _{1}\right\} .
\end{eqnarray*}
\end{definition}

\begin{remark}
\label{Rem-CU-equivalent-ChU}The categories $\mathbf{C}_{U}$ and $\mathbf{C}%
_{h_{U}}$ are equivalent.
\end{remark}

\begin{definition}
\label{Def-Kan-extensions}Let $\mathbf{I}$ and $\mathbf{J}$ be small
categories and let $\mathbf{C}$ be an arbitrary category. For 
\begin{equation*}
\varphi :\mathbf{J}\longrightarrow \mathbf{I}
\end{equation*}%
denote by $\varphi _{\ast }$ the following functor:%
\begin{equation*}
\varphi _{\ast }:\mathbf{C}^{\mathbf{I}}\longrightarrow \mathbf{C}^{\mathbf{J%
}}~\left( \varphi _{\ast }\left( f\right) 
{:=}%
f\circ \varphi \right) ,
\end{equation*}%
where $f:\mathbf{I}\longrightarrow \mathbf{C}$ is an arbitrary diagram. Then
the following \textbf{left} adjoint ($\varphi ^{\dag }\dashv \varphi _{\ast
} $)%
\begin{equation*}
\varphi ^{\dag }:\mathbf{C}^{\mathbf{J}}\longrightarrow \mathbf{C}^{\mathbf{I%
}}
\end{equation*}%
to $\varphi _{\ast }$ (if exists!) is called the \textbf{left} Kan extension
of $\varphi $. The following \textbf{right} adjoint ($\varphi _{\ast }\dashv
\varphi ^{\ddag }$)%
\begin{equation*}
\varphi ^{\ddag }:\mathbf{C}^{\mathbf{J}}\longrightarrow \mathbf{C}^{\mathbf{%
I}}
\end{equation*}%
to $\varphi _{\ast }$ (if exists!) is called the \textbf{right} Kan
extension of $\varphi $. See \cite[Definition 2.3.1]%
{Kashiwara-Categories-MR2182076}.
\end{definition}

\begin{proposition}
\label{Prop-Kan-extensions}Let $\varphi :\mathbf{J}\longrightarrow \mathbf{I}
$ be a functor and $\beta \in \mathbf{C}^{\mathbf{J}}$.

\begin{enumerate}
\item \label{Prop-Kan-extensions-left}Assume that%
\begin{equation*}
\underset{\left( \varphi \left( j\right) \rightarrow i\right) \in \varphi
\downarrow i}{\underrightarrow{\lim }}\beta \left( j\right)
\end{equation*}%
exists in $\mathbf{C}$ for any $i\in \mathbf{I}$. Then $\varphi ^{\dag
}\beta $ exists, and we have%
\begin{equation*}
\varphi ^{\dag }\beta \left( i\right) =\underset{\left( \varphi \left(
j\right) \rightarrow i\right) \in \varphi \downarrow i}{\underrightarrow{%
\lim }}\beta \left( j\right)
\end{equation*}%
for $i\in \mathbf{I}$.

\item \label{Prop-Kan-extensions-right}Assume that%
\begin{equation*}
\underset{\left( i\rightarrow \varphi \left( j\right) \right) \in
i\downarrow \varphi }{\underleftarrow{\lim }}\beta \left( j\right)
\end{equation*}%
exists in $\mathbf{C}$ for any $i\in \mathbf{I}$. Then $\varphi ^{\ddag
}\beta $ exists, and we have%
\begin{equation*}
\varphi ^{\ddag }\beta \left( i\right) =\underset{\left( i\rightarrow
\varphi \left( j\right) \right) \in i\downarrow \varphi }{\underleftarrow{%
\lim }}\beta \left( j\right)
\end{equation*}%
for $i\in \mathbf{I}$.

\item \label{Prop-Kan-extensions-projective}Assume that $\mathbf{C}$ is
abelian, and that $\varphi ^{\dag }$ exists. Then $\varphi ^{\dag }$
converts projective objects of $\mathbf{C}^{\mathbf{J}}$ into projective
objects of $\mathbf{C}^{\mathbf{I}}$.

\item \label{Prop-Kan-extensions-injective}Assume that $\mathbf{C}$ is
abelian, and that $\varphi ^{\ddag }$ exists. Then $\varphi ^{\ddag }$
converts injective objects of $\mathbf{C}^{\mathbf{J}}$ into injective
objects of $\mathbf{C}^{\mathbf{I}}$.
\end{enumerate}
\end{proposition}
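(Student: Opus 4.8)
The plan is to treat the four parts in pairs, since (1)--(2) are mutually dual and (3)--(4) are mutually dual; it suffices to prove (1) and (3), then apply them to $\varphi^{op}:\mathbf{J}^{op}\to\mathbf{I}^{op}$ and to $\mathbf{C}^{op}$ respectively to obtain (2) and (4). For part (1), the first step is to define the candidate functor $L\beta:\mathbf{I}\to\mathbf{C}$ on objects by $L\beta(i):=\underrightarrow{\lim}_{(\varphi(j)\to i)\in\varphi\downarrow i}\beta(j)$, which exists by hypothesis, and on morphisms $i\to i'$ by using the evident functor $\varphi\downarrow i\to\varphi\downarrow i'$ between the comma categories (Definition \ref{Def-Comma-General}) to induce a map of colimits. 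Then I would verify the adjunction $\varphi^{\dag}\dashv\varphi_{\ast}$ directly: construct mutually inverse bijections
\begin{equation*}
Hom_{\mathbf{C}^{\mathbf{I}}}(L\beta,f)\;\cong\;Hom_{\mathbf{C}^{\mathbf{J}}}(\beta,\varphi_{\ast}f)=Hom_{\mathbf{C}^{\mathbf{J}}}(\beta,f\circ\varphi)
\end{equation*}
by unwinding the universal property of the defining colimit at each $i\in\mathbf{I}$. A natural transformation $\eta:L\beta\to f$ is the same as a compatible family of maps $L\beta(i)\to f(i)$, hence a compatible family $\beta(j)\to f(i)$ indexed by $\varphi\downarrow i$; specializing to the identity $\varphi(j)\to\varphi(j)$ picks out maps $\beta(j)\to f(\varphi(j))$, and one checks these assemble into a morphism $\beta\to f\circ\varphi$, with naturality of the bijection in $f$ being routine. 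This establishes existence of $\varphi^{\dag}$ together with the stated pointwise formula.

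For part (3), assume $\mathbf{C}$ abelian and $\varphi^{\dag}$ exists. The key observation is that $\varphi_{\ast}:\mathbf{C}^{\mathbf{I}}\to\mathbf{C}^{\mathbf{J}}$ is exact, since finite limits and colimits in functor categories are computed pointwise and $\varphi_{\ast}$ is precomposition. Now let $P\in\mathbf{C}^{\mathbf{J}}$ be projective; to show $\varphi^{\dag}P$ is projective in $\mathbf{C}^{\mathbf{I}}$, take any epimorphism $g:M\twoheadrightarrow N$ in $\mathbf{C}^{\mathbf{I}}$ and any morphism $\varphi^{\dag}P\to N$. By adjunction this corresponds to $P\to\varphi_{\ast}N$; since $\varphi_{\ast}g:\varphi_{\ast}M\to\varphi_{\ast}N$ is again an epimorphism (exactness of $\varphi_{\ast}$) and $P$ is projective, we lift to $P\to\varphi_{\ast}M$, and transporting back across the adjunction yields the desired lift $\varphi^{\dag}P\to M$. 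This is the standard fact that a left adjoint preserves projectives whenever its right adjoint is exact; part (4) is the dual statement, using that $\varphi^{\ddag}$ is right adjoint to the exact functor $\varphi_{\ast}$.

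I do not anticipate a serious obstacle here: the content is essentially the pointwise Kan extension formula (cf.\ \cite[Definition 2.3.1]{Kashiwara-Categories-MR2182076}) together with the elementary adjoint-functor argument for preservation of projectives and injectives. The one place that requires a little care is the bookkeeping in part (1), namely checking that the bijection $Hom_{\mathbf{C}^{\mathbf{I}}}(L\beta,f)\cong Hom_{\mathbf{C}^{\mathbf{J}}}(\beta,f\circ\varphi)$ is genuinely natural in $f$ and that the functoriality of $L\beta$ in $i$ is compatible with the colimit structure; both reduce to diagram chases over the comma categories $\varphi\downarrow i$ and present no real difficulty. For the exactness of $\varphi_{\ast}$ used in (3)--(4), one simply invokes that (co)limits in $\mathbf{C}^{\mathbf{I}}$ and $\mathbf{C}^{\mathbf{J}}$ are pointwise, so $\varphi_{\ast}$ preserves all of them.
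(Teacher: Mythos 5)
Your proposal is correct and follows essentially the same route as the paper: for parts (1)--(2) you supply the standard pointwise-colimit construction and adjunction check that the paper simply outsources to \cite[Theorem 2.3.3]{Kashiwara-Categories-MR2182076}, and for parts (3)--(4) your lifting argument is the same "left/right adjoint of the exact functor $\varphi_{\ast}$ preserves projectives/injectives" reasoning that the paper phrases via exactness of $Hom_{\mathbf{C}^{\mathbf{I}}}(\varphi^{\dag}\mathcal{A},\bullet)\simeq Hom_{\mathbf{C}^{\mathbf{J}}}(\mathcal{A},\varphi_{\ast}(\bullet))$.
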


\begin{proof}
For (\ref{Prop-Kan-extensions-left}) and (\ref{Prop-Kan-extensions-right})
see \cite[Theorem 2.3.3]{Kashiwara-Categories-MR2182076}.

(\ref{Prop-Kan-extensions-projective}) $\varphi _{\ast }$ is clearly exact.
If $\mathcal{A}\in \mathbf{C}^{\mathbf{J}}$ is projective, then the functor%
\begin{equation*}
Hom_{\mathbf{C}^{\mathbf{I}}}\left( \varphi ^{\dag }\mathcal{A},\bullet
\right) 
\simeq%
Hom_{\mathbf{C}^{\mathbf{J}}}\left( \mathcal{A},\varphi _{\ast }\left(
\bullet \right) \right) :\mathbf{C}^{\mathbf{I}}\longrightarrow \mathbf{Ab}
\end{equation*}%
is exact, therefore $\varphi ^{\dag }\mathcal{A}$ is projective.

(\ref{Prop-Kan-extensions-injective}) If $\mathcal{A}\in \mathbf{C}^{\mathbf{%
J}}$ is injective, then the functor%
\begin{equation*}
Hom_{\mathbf{C}^{\mathbf{I}}}\left( \bullet ,\varphi ^{\ddag }\mathcal{A}%
\right) 
\simeq%
Hom_{\mathbf{C}^{\mathbf{J}}}\left( \varphi _{\ast }\left( \bullet \right) ,%
\mathcal{A}\right) :\mathbf{C}^{\mathbf{I}}\longrightarrow \mathbf{Ab}
\end{equation*}%
is exact, therefore $f^{\ddag }\mathcal{A}$ is injective.
\end{proof}

\subsection{Pro-objects}

The main reference is \cite[Chapter 6]{Kashiwara-Categories-MR2182076} where
the $\mathbf{Ind}$-objects are considered. The $\mathbf{Pro}$-objects used
in this paper are dual to the $\mathbf{Ind}$-objects:%
\begin{equation*}
\mathbf{Pro}\left( \mathbf{C}\right) 
\simeq%
\left( \mathbf{Ind}\left( \mathbf{C}^{op}\right) \right) ^{op}.
\end{equation*}

\begin{definition}
A category $\mathbf{I}$ is called \textbf{filtered} iff:

\begin{enumerate}
\item It is not empty.

\item For every two objects $i,j\in \mathbf{I}$ there exists an object $k$
and two morphisms%
\begin{eqnarray*}
i &\longrightarrow &k, \\
j &\longrightarrow &k.
\end{eqnarray*}

\item For every two parallel morphisms%
\begin{eqnarray*}
u &:&i\longrightarrow j, \\
v &:&i\longrightarrow j,
\end{eqnarray*}%
there exists an object $k$ and a morphism%
\begin{equation*}
w:j\longrightarrow k,
\end{equation*}%
such that $w\circ u=w\circ v$. A category $\mathbf{I}$ is called \textbf{%
cofiltered} if $\mathbf{I}^{op}$ is filtered. A diagram $D:\mathbf{%
I\rightarrow K}$ is called (co)filtered if $\mathbf{I}$ is a (co)filtered
category.
\end{enumerate}
\end{definition}

See, e.g., \cite[Chapter IX.1]{Mac-Lane-Categories-1998-MR1712872} for
filtered, and \cite[Chapter I.1.4]{Mardesic-Segal-MR676973} for cofiltered
categories.

\begin{remark}
In \cite{Kashiwara-Categories-MR2182076}, such categories and diagrams are
called \textbf{(co)filtrant}.
\end{remark}

\begin{example}
\label{Ex-(co)filtered-poset}For any poset $\left( X,\leq \right) $ one can
define the category $\mathbf{Cat}\left( X\right) $ with%
\begin{equation*}
Ob\left( \mathbf{Cat}\left( X\right) \right) =X,
\end{equation*}%
where each set $Hom_{\mathbf{Cat}\left( X\right) }\left( x,y\right) $
consists of one object $\left( x,y\right) $ if $x\leq y$, and is empty
otherwise.

The poset $X$ is called \textbf{directed} iff $X\neq \varnothing $, and%
\begin{equation*}
\forall x,y\in X~\left[ \exists z\left( x\leq z\&y\leq z\right) \right] .
\end{equation*}%
The poset $X$ is called \textbf{codirected} iff $X\neq \varnothing $, and%
\begin{equation*}
\forall x,y\in X~\left[ \exists z\left( z\leq x\&z\leq y\right) \right] .
\end{equation*}%
It is easy to see that $\mathbf{Cat}\left( X\right) $ is (co)filtered iff $X$
is (co)directed.
\end{example}

\begin{definition}
\label{Def-Pro-category}\label{Def-Pro-C}Let $\mathbf{K}$ be a category. The
pro-category $\mathbf{Pro}\left( \mathbf{K}\right) $ (see \cite[Definition
6.1.1]{Kashiwara-Categories-MR2182076}, \cite[Remark I.1.4]%
{Mardesic-Segal-MR676973}, or \cite[Appendix]{Artin-Mazur-MR883959}) the the
full subcategory of $\left( \mathbf{Set}^{\mathbf{K}}\right) ^{op}$
consisting of functors that are cofiltered limits of representable functors,
i.e. limits of diagrams of the form%
\begin{equation*}
\mathbf{I}\overset{\mathbf{X}}{\longrightarrow }\mathbf{K}\overset{\left(
h^{\bullet }\right) ^{op}}{\longrightarrow }\left( \mathbf{Set}^{\mathbf{K}%
}\right) ^{op}
\end{equation*}%
where $\mathbf{I}$ is a cofiltered category, $\mathbf{X}:\mathbf{I}%
\rightarrow \mathbf{K}$ is a diagram, and $\left( h^{\bullet }\right) ^{op}$
is the third Yoneda embedding. We will simply denote such diagrams by $%
\mathbf{X}=\left( X_{i}\right) _{i\in \mathbf{I}}$.
\end{definition}

\begin{remark}
\label{Rem-Pro-objects-morphisms}See \cite[Lemma 6.1.2 and formula (2.6.4)]%
{Kashiwara-Categories-MR2182076}:

\begin{enumerate}
\item Let two pro-objects be defined by the diagrams $\mathbf{X}=\left(
X_{i}\right) _{i\in \mathbf{I}}$ and $\mathbf{Y}=\left( Y_{j}\right) _{j\in 
\mathbf{J}}$. Then%
\begin{equation*}
Hom_{\mathbf{Pro}\left( \mathbf{K}\right) }\left( \mathbf{X},\mathbf{Y}%
\right) =~\underset{j\in \mathbf{J}}{\underleftarrow{\lim }}~\underset{i\in 
\mathbf{I}}{\underrightarrow{\lim }}~Hom_{\mathbf{K}}\left(
X_{i},Y_{j}\right) .
\end{equation*}

\item $\mathbf{Pro}\left( \mathbf{K}\right) $ is indeed a category even
though $\left( \mathbf{Set}^{\mathbf{K}}\right) ^{op}$ is a quasi-category: $%
Hom_{\mathbf{Pro}\left( \mathbf{K}\right) }\left( \mathbf{X},\mathbf{Y}%
\right) $ is a \textbf{set} for any $\mathbf{X}$ and $\mathbf{Y}$.
\end{enumerate}
\end{remark}

\begin{remark}
\label{Rem-Trivial-pro-object}\label{Rem-Rudimentary}The category $\mathbf{K}
$ is a full subcategory of $\mathbf{Pro}\left( \mathbf{K}\right) $: any
object $X\in \mathbf{K}$ gives rise to the singleton%
\begin{equation*}
\left( X\right) \in \mathbf{Pro}\left( \mathbf{K}\right)
\end{equation*}%
with a trivial index category $\mathbf{I=}\left( \left\{ i\right\} ,\mathbf{1%
}_{i}\right) $. A pro-object $\mathbf{X}$ is called \textbf{rudimentary} 
\cite[\S I.1.1]{Mardesic-Segal-MR676973} iff it is isomorphic to an object
of $\mathbf{K}$:%
\begin{equation*}
\mathbf{X}%
\simeq%
Z\in \mathbf{K}\subseteq \mathbf{Pro}\left( \mathbf{K}\right) .
\end{equation*}
\end{remark}

The proposition below allows us to recognize rudimentary pro-objects:

\begin{proposition}
\label{Prop-Trivial-pro-object}\label{Prop-Rudimentary-pro-object}Let%
\begin{equation*}
\mathbf{X}=\left( X_{i}\right) _{i\in \mathbf{I}}\in \mathbf{Pro}\left( 
\mathbf{K}\right) ,
\end{equation*}%
and $Z\in \mathbf{K}$. Then $\mathbf{X}%
\simeq%
Z$ iff there exist an $i_{0}\in \mathbf{I}$ and a morphism $\tau
_{0}:X_{i_{0}}\rightarrow Z$ satisfying the property: for any morphism $%
s:i\rightarrow i_{0}$, there exist a morphism $\sigma :Z\rightarrow X_{i}$
and a morphism $t:j\rightarrow i$ satisfying%
\begin{eqnarray*}
\tau _{0}\circ X\left( s\right) \circ \sigma &=&\mathbf{1}_{Z}, \\
\sigma \circ \tau _{0}\circ X\left( s\right) \circ X\left( t\right)
&=&X\left( t\right) .
\end{eqnarray*}
\end{proposition}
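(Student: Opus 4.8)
The plan is to work throughout with the description of morphisms in a pro-category from Remark \ref{Rem-Pro-objects-morphisms},
\[
\mathrm{Hom}_{\mathbf{Pro}\left( \mathbf{K}\right) }\left( \mathbf{X},\mathbf{Y}\right) =\underset{j\in \mathbf{J}}{\underleftarrow{\lim }}\,\underset{i\in \mathbf{I}}{\underrightarrow{\lim }}\,\mathrm{Hom}_{\mathbf{K}}\left( X_{i},Y_{j}\right) ,
\]
specialised to $\mathbf{Y}=Z$, which is a singleton (Remark \ref{Rem-Trivial-pro-object}). Thus a morphism $\mathbf{X}\rightarrow Z$ is the class of a single arrow $\tau _{0}:X_{i_{0}}\rightarrow Z$ in $\underrightarrow{\lim }_{i}\mathrm{Hom}_{\mathbf{K}}\left( X_{i},Z\right) $, a morphism $Z\rightarrow \mathbf{X}$ is a compatible cone $\left( \sigma _{i}:Z\rightarrow X_{i}\right) _{i\in \mathbf{I}}$, and $\mathrm{Hom}_{\mathbf{Pro}\left( \mathbf{K}\right) }\left( Z,Z\right) =\mathrm{Hom}_{\mathbf{K}}\left( Z,Z\right) $. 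Hence $\mathbf{X}\simeq Z$ precisely when the arrow $f$ represented by some $\tau _{0}$ admits a two-sided inverse, and both halves of the proposition are obtained by unwinding this.

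\textbf{Necessity.} Given an isomorphism $f:\mathbf{X}\rightarrow Z$ with inverse $g$, represent $f$ by $\tau _{0}:X_{i_{0}}\rightarrow Z$ and write $g=\left( \sigma _{i}\right) $. For $s:i\rightarrow i_{0}$ put $\sigma :=\sigma _{i}$: compatibility of the cone gives $X\left( s\right) \circ \sigma _{i}=\sigma _{i_{0}}$, and $f\circ g=\mathbf{1}_{Z}$ reads $\tau _{0}\circ \sigma _{i_{0}}=\mathbf{1}_{Z}$, so $\tau _{0}\circ X\left( s\right) \circ \sigma =\mathbf{1}_{Z}$. Next, $g\circ f=\mathbf{1}_{\mathbf{X}}$ says exactly that for every $i$ the class of $\sigma _{i}\circ \tau _{0}$ equals the class of $\mathbf{1}_{X_{i}}$ in $\underrightarrow{\lim }_{\ell }\mathrm{Hom}_{\mathbf{K}}\left( X_{\ell },X_{i}\right) $; since $\mathbf{I}$ is cofiltered this produces $\ell $, $u:\ell \rightarrow i_{0}$ and $v:\ell \rightarrow i$ with $\sigma _{i}\circ \tau _{0}\circ X\left( u\right) =X\left( v\right) $. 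Using the cofiltered axioms twice more — once to obtain $j$, $t:j\rightarrow i$, $w:j\rightarrow \ell $ with $s\circ t=u\circ w$, and once to shrink $j$ so that $v\circ w=t$ — one gets $\sigma \circ \tau _{0}\circ X\left( s\right) \circ X\left( t\right) =X\left( v\circ w\right) =X\left( t\right) $. This half is routine cofiltered bookkeeping.

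\textbf{Sufficiency.} This is the substantial direction. First I would replace $\mathbf{I}$ by the comma category $\mathbf{I}\downarrow i_{0}$: it is again cofiltered, the forgetful functor to $\mathbf{I}$ is coinitial so the associated pro-object is unchanged (\cite[Ch.\ I]{Mardesic-Segal-MR676973}, \cite[Ch.\ 6]{Kashiwara-Categories-MR2182076}), and $\left( i_{0},\mathbf{1}_{i_{0}}\right) $ becomes terminal, so every index $a$ carries a canonical structure arrow $s_{a}:a\rightarrow i_{0}$ with $s_{a}\circ t=s_{j}$ for every $t:j\rightarrow a$. (One may, if desired, also pass to a cofinal codirected poset, which eliminates the equalisations below.) Applying the hypothesis to each $s_{a}$ yields $\sigma _{a}:Z\rightarrow X_{a}$ and $t_{a}:j_{a}\rightarrow a$, and the two equations become $\tau _{0}\circ X\left( s_{a}\right) \circ \sigma _{a}=\mathbf{1}_{Z}$ together with the key relation $\sigma _{a}\circ \tau _{0}\circ X\left( s_{j_{a}}\right) =X\left( t_{a}\right) $. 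The claim is that $\left( \sigma _{a}\right) _{a}$ is a compatible cone, hence defines $g:Z\rightarrow \mathbf{X}$, inverse to the arrow $f$ represented by $\tau _{0}$. Compatibility $X\left( \varphi \right) \circ \sigma _{a}=\sigma _{a'}$ for $\varphi :a\rightarrow a'$ is checked by descending to an index $\ell $ below $j_{a}$ and $j_{a'}$, rewriting $X\left( \varphi \right) \circ \sigma _{a}\circ \tau _{0}\circ X\left( s_{\ell }\right) $ and $\sigma _{a'}\circ \tau _{0}\circ X\left( s_{\ell }\right) $ via the key relation as $X$ of two arrows $\ell \rightarrow a'$, equalising those arrows after passing to a smaller index, and then cancelling the common right factor $\tau _{0}\circ X\left( s_{\ell }\right) $ by postcomposing with $\sigma _{\ell }$ (legitimate since $\tau _{0}\circ X\left( s_{\ell }\right) \circ \sigma _{\ell }=\mathbf{1}_{Z}$). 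The same cancellation applied to $f\circ g=\tau _{0}\circ \sigma _{i_{0}}$ yields $f\circ g=\mathbf{1}_{Z}$ from the first equation, and $g\circ f=\mathbf{1}_{\mathbf{X}}$ holds because in $\underrightarrow{\lim }_{\ell }\mathrm{Hom}_{\mathbf{K}}\left( X_{\ell },X_{a}\right) $ the classes of $\sigma _{a}\circ \tau _{0}$ and of $\mathbf{1}_{X_{a}}$ have the representatives $\sigma _{a}\circ \tau _{0}\circ X\left( s_{j_{a}}\right) $ and $X\left( t_{a}\right) $ at the index $j_{a}$, which agree by the key relation.

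I expect the main obstacle to be exactly this cone construction in the sufficiency part: one must carry along the auxiliary indices $j_{a}$ and repeatedly invoke the cofiltered axioms to equalise parallel arrows before the decisive cancellation with $\sigma _{\ell }$ becomes available, and some care is needed to see that the resulting $\left( \sigma _{a}\right) _{a}$ is an \emph{honest} compatible cone rather than merely a "pro-compatible" family. Reducing the index category to a codirected poset at the very outset is the cleanest way to defuse this, trading the bookkeeping for one standard structural lemma and leaving only formula-pushing with the key relation.
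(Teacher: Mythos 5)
Your proof is correct, but it is worth noting that the paper does not actually prove this proposition directly: its entire proof is the single line \textquotedblleft The statement is dual to \cite[Proposition 6.2.1]{Kashiwara-Categories-MR2182076}\textquotedblright . So where the paper defers to Kashiwara--Schapira, you supply a self-contained argument, and the one you supply is sound. The necessity half is, as you say, routine unwinding of $Hom_{\mathbf{Pro}\left( \mathbf{K}\right) }\left( \mathbf{X},Z\right) \simeq \underrightarrow{\lim }_{i}Hom_{\mathbf{K}}\left( X_{i},Z\right) $ and $Hom_{\mathbf{Pro}\left( \mathbf{K}\right) }\left( Z,\mathbf{X}\right) \simeq \underleftarrow{\lim }_{i}Hom_{\mathbf{K}}\left( Z,X_{i}\right) $ plus two equalizations in the cofiltered index category. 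The sufficiency half is where the content lies, and your two structural moves are exactly the right ones: (i) replacing $\mathbf{I}$ by the slice $\mathbf{I}\downarrow i_{0}$, which is again cofiltered with $\left( i_{0},\mathbf{1}_{i_{0}}\right) $ terminal and with the forgetful functor initial (so the pro-object is unchanged), which turns the hypothesis into the uniform key relation $\sigma _{a}\circ \tau _{0}\circ X\left( s_{j_{a}}\right) =X\left( t_{a}\right) $; and (ii) the cancellation trick of postcomposing with $\sigma _{\ell }$ after equalizing two parallel arrows into $a^{\prime }$, which is what upgrades the family $\left( \sigma _{a}\right) $ from \textquotedblleft pro-compatible\textquotedblright\ to an honest cone. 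I checked the compatibility computation and the verifications of $f\circ g=\mathbf{1}_{Z}$ and $g\circ f=\mathbf{1}_{\mathbf{X}}$ (the latter via the common representative at index $j_{a}$); they all go through. The only external input you use beyond the definitions is the initiality of the slice (or, in your alternative, cofinality of a codirected poset), both standard and available in the references the paper already cites. In short: the paper's route buys brevity at the cost of a black box; yours buys a complete argument at the cost of the cofiltered bookkeeping you correctly identified as the main hazard, and you have handled that hazard properly.
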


\begin{proof}
The statement is dual to \cite[Proposition 6.2.1]%
{Kashiwara-Categories-MR2182076}.
\end{proof}

\begin{corollary}
\label{Cor-Zero-pro-object}Let%
\begin{equation*}
\mathbf{X}=\left( X_{i}\right) _{i\in \mathbf{I}}\in \mathbf{Pro}\left(
k\right) .
\end{equation*}%
Then $\mathbf{X}$ is a zero object in $\mathbf{Pro}\left( k\right) $ iff for
any $i\in \mathbf{I}$ there exists a $t:j\rightarrow i$ with $X\left(
t\right) =0$.
\end{corollary}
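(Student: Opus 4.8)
The plan is to specialize Proposition \ref{Prop-Trivial-pro-object} to $Z=0$, the zero $k$-module, which (viewed as a rudimentary pro-object) is a zero object of $\mathbf{Pro}\left( k\right) $; thus $\mathbf{X}$ is a zero object iff $\mathbf{X}\simeq Z$ with this $Z$. First I would unwind the criterion of that proposition in this case. The map $\tau _{0}:X_{i_{0}}\rightarrow Z$ is forced to be the zero map, $\mathbf{1}_{Z}=\mathbf{1}_{0}=0$, and every $\sigma :Z\rightarrow X_{i}$ is the zero map. Hence the first equation $\tau _{0}\circ X\left( s\right) \circ \sigma =\mathbf{1}_{Z}$ reads $0=0$ and holds automatically, while in the second equation $\sigma \circ \tau _{0}\circ X\left( s\right) \circ X\left( t\right) =X\left( t\right) $ the left-hand side factors through $Z=0$ and so equals the zero map, turning the equation into $X\left( t\right) =0$. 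Therefore Proposition \ref{Prop-Trivial-pro-object} specializes to: $\mathbf{X}$ is a zero object iff there is an $i_{0}\in \mathbf{I}$ such that for every morphism $s:i\rightarrow i_{0}$ there is a morphism $t:j\rightarrow i$ with $X\left( t\right) =0$.

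It then remains to show this ``localized'' form is equivalent to the condition in the corollary, namely that for every $i\in \mathbf{I}$ there is a $t:j\rightarrow i$ with $X\left( t\right) =0$. One implication is trivial: assuming the corollary's condition, pick any $i_{0}$ (possible, since $\mathbf{I}$ is cofiltered, hence nonempty), and for each $s:i\rightarrow i_{0}$ the needed $t:j\rightarrow i$ is supplied directly. For the converse, given the localized form and an arbitrary $i\in \mathbf{I}$, I would use cofilteredness of $\mathbf{I}$ to choose an object $i^{\prime }$ together with morphisms $u:i^{\prime }\rightarrow i$ and $v:i^{\prime }\rightarrow i_{0}$; applying the localized condition with $s=v$ yields $t:j\rightarrow i^{\prime }$ with $X\left( t\right) =0$, and then $u\circ t:j\rightarrow i$ satisfies $X\left( u\circ t\right) =X\left( u\right) \circ X\left( t\right) =0$, as required.

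The only step that is not purely formal is this last passage between the ``$\exists i_{0}$'' and ``$\forall i$'' formulations, and it relies solely on $\mathbf{I}$ being cofiltered, so I do not anticipate any real obstacle. (Alternatively, one can bypass Proposition \ref{Prop-Trivial-pro-object} entirely and argue from Remark \ref{Rem-Pro-objects-morphisms}: $\mathbf{X}$ is a zero object iff $\mathbf{1}_{\mathbf{X}}=0$ in $Hom_{\mathbf{Pro}\left( k\right) }\left( \mathbf{X},\mathbf{X}\right) $, and tracing this through the double-limit description of $Hom$ shows that for each index $j$ the class of $\mathbf{1}_{X_{j}}$ must vanish in the corresponding filtered colimit, which means precisely that some transition morphism $t$ with target $j$ has $X\left( t\right) =0$.)
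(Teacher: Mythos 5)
Your proof is correct and follows the route the paper intends (the corollary is stated with no proof, as an immediate consequence of Proposition \ref{Prop-Trivial-pro-object}): specializing that proposition to $Z=0$ does yield the ``localized'' criterion you describe, and your use of cofilteredness of $\mathbf{I}$ to pass from the $\exists i_{0}$ form to the $\forall i$ form is exactly the one non-formal step that needed to be supplied. The alternative argument via $\mathbf{1}_{\mathbf{X}}=0$ and the double-limit formula for $Hom_{\mathbf{Pro}\left( k\right) }$ is also sound.
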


\begin{remark}
\label{Rem-Level-morphisms}Remark \ref{Rem-Pro-objects-morphisms} allows the
following description of morphisms in the pro-category: any%
\begin{equation*}
f\in Hom_{\mathbf{Pro}\left( \mathbf{K}\right) }\left( \left( X_{i}\right)
_{i\in \mathbf{I}},\left( Y_{j}\right) _{j\in \mathbf{J}}\right) =~\underset{%
j\in \mathbf{J}}{\underleftarrow{\lim }}~\underset{i\in \mathbf{I}}{%
\underrightarrow{\lim }}~Hom_{\mathbf{K}}\left( X_{i},Y_{j}\right)
\end{equation*}%
can be represented (not uniquely!) by a triple%
\begin{equation*}
\left( \varphi ,\lambda ,\left( f_{j}\right) _{j\in \mathbf{J}}\right) ,
\end{equation*}%
where%
\begin{eqnarray*}
\varphi &:&Ob\left( \mathbf{J}\right) \longrightarrow Ob\left( \mathbf{I}%
\right) , \\
\lambda &=&\left[ \alpha \longmapsto \left[ \varphi \left( j_{1}\right) 
\overset{\lambda _{1}\left( \alpha \right) }{\longleftarrow }\Lambda \left(
\alpha \right) \overset{\lambda _{0}\left( \alpha \right) }{\longrightarrow }%
\varphi \left( j_{0}\right) \right] \right] :Mor\left( \mathbf{J}\right)
\longrightarrow Ob\left( \mathbf{I}\right) \times Mor\left( \mathbf{I}%
\right) \times Mor\left( \mathbf{I}\right) ,
\end{eqnarray*}%
are functions, and%
\begin{equation*}
\left( f_{j}:X_{\varphi \left( j\right) }\longrightarrow Y_{j}\right) _{j\in 
\mathbf{J}}
\end{equation*}%
is a family of morphisms, such that the following diagram%
\begin{equation*}
\begin{diagram}[size=3.0em,textflow]
X_{\varphi \left( j_{1}\right)} & \lTo^{X\left( \lambda _{1}\left( \alpha \right) \right)} & X_{\Lambda \left( \alpha \right)} & \rTo^{X\left( \lambda _{0}\left( \alpha \right) \right)} & X_{\varphi \left( j_{0}\right)} \\
\dTo & & & & \dTo \\
Y_{j_{1}} & & \lTo^{Y\left( \alpha \right)} & & Y_{j_{0}} \\
\end{diagram}%
\end{equation*}%
commutes for any $\alpha :j_{0}\rightarrow j_{1}$ in $\mathbf{J}$ (see \cite[%
\S I.1.1]{Mardesic-Segal-MR676973} and \cite[\S A.3]{Artin-Mazur-MR883959}).
It is known that such a morphism is equivalent to a \textbf{level morphism}
(Definition \ref{Def-Level-morphisms}). Moreover, any \textbf{finite}
diagram of pro-objects \textbf{without loops} is equivalent to a level
diagram (see Definition \ref{Def-Level-morphisms} and Proposition \ref%
{Th-Level-morphisms}). However, it is not in general possible to
\textquotedblleft levelize\textquotedblright\ the \textbf{whole} set $Hom_{%
\mathbf{Pro}\left( \mathbf{K}\right) }\left( \mathbf{X},\mathbf{Y}\right) $
(or an \textbf{infinite} diagram, or a diagram \textbf{with loops}) in $%
\mathbf{Pro}\left( \mathbf{K}\right) $.
\end{remark}

\begin{definition}
\label{Def-Level-morphisms}~

\begin{enumerate}
\item A morphism 
\begin{equation*}
f\in Hom_{\mathbf{Pro}\left( \mathbf{K}\right) }\left( \mathbf{X}=\left(
X_{i}\right) _{i\in \mathbf{I}},\mathbf{Y}=\left( Y_{j}\right) _{j\in 
\mathbf{J}}\right)
\end{equation*}%
is called a \textbf{level morphism} (compare to \cite[\S I.1.3]%
{Mardesic-Segal-MR676973}) iff $\mathbf{I}=\mathbf{J}$, and there is a
morphism%
\begin{equation*}
\gamma :\left( X_{i}\right) _{i\in \mathbf{I}}\longrightarrow \left(
Y_{i}\right) _{i\in \mathbf{I}}:\mathbf{I}\longrightarrow \mathbf{K}
\end{equation*}%
of functors, generating $f$, i.e. such that the following diagram%
\begin{equation*}
\begin{diagram}[h=2.0em,w=4.0em,textflow]
\mathbf{X} & \rTo & \mathbf{Y} \\
\dTo_{\simeq} & & \dTo^{\simeq} \\
\underleftarrow{\lim }_{i\in \mathbf{I}}\left( h^{X_{i}}\right) ^{op} & \rTo & \underleftarrow{\lim }_{i\in \mathbf{I}}\left( h^{Y_{i}}\right) ^{op} \\
\end{diagram}%
\end{equation*}%
where%
\begin{equation*}
\underleftarrow{\lim }_{i\in \mathbf{I}}\left( h^{X_{i}}\right) ^{op},%
\underleftarrow{\lim }_{i\in \mathbf{I}}\left( h^{Y_{i}}\right) ^{op}\in
\left( \mathbf{Set}^{\mathbf{K}}\right) ^{op},
\end{equation*}%
is commutative. In the notations of Remark \ref{Rem-Level-morphisms} it
means that:%
\begin{eqnarray*}
\varphi &=&\mathbf{1}_{Ob\left( \mathbf{I}\right) }:Ob\left( \mathbf{I}%
\right) \longrightarrow Ob\left( \mathbf{I}\right) \mathbf{,} \\
\lambda \left( \alpha :j_{0}\rightarrow j_{1}\right) &=&\left[ \varphi
\left( j_{1}\right) =j_{1}\overset{\alpha }{\longleftarrow }j_{0}\overset{%
\mathbf{1}_{j_{0}}}{\longrightarrow }j_{0}=\varphi \left( j_{0}\right) %
\right] , \\
f_{i} &=&\gamma _{i},i\in \mathbf{I.}
\end{eqnarray*}

\item A family%
\begin{equation*}
\left( f_{s}:\mathbf{X}_{s}=\left( X_{si}\right) _{i\in \mathbf{I}%
_{s}}\longrightarrow \mathbf{Y}_{s}=\left( Y_{sj}\right) _{j\in \mathbf{J}%
_{s}}\right)
\end{equation*}%
of morphisms in $\mathbf{Pro}\left( \mathbf{K}\right) $ is called a \textbf{%
level family} iff for some $\mathbf{H}$ and for all $s$,%
\begin{equation*}
\mathbf{I}_{s}=\mathbf{J}_{s}=\mathbf{H},
\end{equation*}%
and there is a family of functors%
\begin{equation*}
\alpha _{s}:\left( X_{si}\right) _{i\in \mathbf{H}}\longrightarrow \left(
Y_{si}\right) _{i\in \mathbf{H}},
\end{equation*}%
such that $\alpha _{s}$ generates $f_{s}$ for all $s$.

\item A diagram%
\begin{equation*}
D:\mathbf{G}\longrightarrow \mathbf{Pro}\left( \mathbf{K}\right)
\end{equation*}%
in $\mathbf{Pro}\left( \mathbf{K}\right) $ is called a \textbf{level diagram}
iff for some $\mathbf{H}$ and for all $g\in Ob\left( \mathbf{G}\right) $,%
\begin{equation*}
D\left( g\right) =\left( X_{gi}\right) _{i\in \mathbf{H}},
\end{equation*}%
and there is a diagram%
\begin{equation*}
\alpha :\mathbf{G}\times \mathbf{H}\longrightarrow \mathbf{K},
\end{equation*}%
such that for each 
\begin{equation*}
\left( \beta :g_{1}\longrightarrow g_{2}\right) \in Hom_{\mathbf{G}}\left(
g_{1},g_{2}\right)
\end{equation*}%
the morphism%
\begin{equation*}
\alpha \left( \beta \right) :\alpha \left( g_{1}\times \bullet \right)
\longrightarrow \alpha \left( g_{2}\times \bullet \right) :\mathbf{K}^{%
\mathbf{H}}\longrightarrow \mathbf{K}^{\mathbf{H}}
\end{equation*}%
generates the morphism%
\begin{equation*}
f_{\alpha }:D\left( g_{1}\right) \longrightarrow D\left( g_{2}\right) .
\end{equation*}
\end{enumerate}
\end{definition}

\begin{proposition}
\label{Th-Level-morphisms}\label{Prop-Level-morphisms}Let 
\begin{equation*}
D:\mathbf{G}\longrightarrow \mathbf{Pro}\left( \mathbf{K}\right)
\end{equation*}%
be a diagram in $\mathbf{Pro}\left( \mathbf{K}\right) $, where $\mathbf{G}$
is finite, and \textbf{does not have loops}. Then the diagram is isomorphic
to a level diagram, i.e. $D%
\simeq%
D^{\prime }$, where%
\begin{equation*}
D^{\prime }:\mathbf{G}\longrightarrow \mathbf{Pro}\left( \mathbf{K}\right)
\end{equation*}%
is a level diagram.
\end{proposition}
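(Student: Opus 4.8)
The plan is to reduce the levelization of the whole diagram $D$ to the levelization of single morphisms and then to perform all the resulting reindexings simultaneously and compatibly along $\mathbf{G}$. Recall that an individual morphism $f:\mathbf{X}\to\mathbf{Y}$ of $\mathbf{Pro}(\mathbf{K})$, presented by a triple $(\varphi,\lambda,(f_{j})_{j})$ as in Remark \ref{Rem-Level-morphisms}, becomes a level morphism --- a natural transformation of $\mathbf{K}$-valued functors --- after replacing the index categories of $\mathbf{X}$ and $\mathbf{Y}$ by a suitable common cofinal index category; this is classical (\cite[\S I.1.3]{Mardesic-Segal-MR676973}, \cite[Appendix]{Artin-Mazur-MR883959}). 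The proposition is the assertion that this straightening can be carried out coherently for a finite loopless shape $\mathbf{G}$; equivalently, that $D$ lies in the essential image of the canonical functor from $\mathbf{Pro}(\mathbf{K}^{\mathbf{G}})$ (cofiltered diagrams in $\mathbf{K}^{\mathbf{G}}$, i.e.\ level diagrams) to $\mathbf{Pro}(\mathbf{K})^{\mathbf{G}}$.

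First I would exploit the hypotheses on $\mathbf{G}$. Since $\mathbf{G}$ is finite and has no loops, the relation ``there is a non-identity path from $g$ to $g'$'' is a strict partial order on $Ob(\mathbf{G})$; I fix a linear extension $g_{1},\dots,g_{n}$ together with a finite generating set of arrows, and argue by induction on $n$ (equivalently, on the length of the longest arrow-chain). In the inductive step one has, by induction, a level presentation of the restriction of $D$ to the full subcategory $\mathbf{G}'$ on $g_{1},\dots,g_{n-1}$, over some cofiltered $\mathbf{H}'$. Because $g_{n}$ is last in the linear extension and $\mathbf{G}$ is loopless, every generating arrow touching $g_{n}$ is of the form $g_{a}\to g_{n}$ with $a<n$, and there are only finitely many of them; each such $D(g_{a}\to g_{n})$ is a morphism from the already-level object over $\mathbf{H}'$ to $D(g_{n})=(X_{g_{n},i})_{i\in\mathbf{I}_{g_{n}}}$. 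Levelizing these one at a time, pulling $D|_{\mathbf{G}'}$ back along the resulting cofinal functors, and passing to a common cofinal refinement $\mathbf{H}$ (which stays cofiltered because all the index categories involved are cofiltered, and stays a finite amount of work because there are only finitely many arrows) produces a cofiltered $\mathbf{H}$ over which every $D(g)$, $g\in\mathbf{G}$, is realized by a functor $\mathbf{H}\to\mathbf{K}$ and every generating arrow by a natural transformation.

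The main obstacle is the coherence bookkeeping in this last step: a composite of level morphisms need not itself be level, and the defining relations of $D$ --- commutativity constraints among the generating arrows, $D(\beta_{2})\circ D(\beta_{1})=D(\beta_{2}\beta_{1})$, and so on --- hold only up to the equivalence defining $\mathbf{Pro}(\mathbf{K})$, not on the nose between the chosen level representatives. One therefore has to refine the common index category $\mathbf{H}$ finitely many further times so that each of the finitely many relations becomes a strict identity of natural transformations; it is precisely here that finiteness of $\mathbf{G}$ (only finitely many relations to repair) and the absence of loops (the successive refinements terminate, instead of chasing their own tail around a directed cycle) are indispensable. Once all generating relations hold strictly over $\mathbf{H}$, the data assemble into a single functor $\alpha:\mathbf{G}\times\mathbf{H}\to\mathbf{K}$, and the level diagram $D'$ with $D'(g)=(\alpha(g,h))_{h\in\mathbf{H}}$ satisfies $D\simeq D'$ via the chosen cofinal reindexings, as required.
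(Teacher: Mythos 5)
Your argument is sound in outline, but note first that the paper does not actually prove this proposition: it settles for citing \cite[Proposition A.3.3]{Artin-Mazur-MR883959} and the dual of \cite[Proposition 6.4.1]{Kashiwara-Categories-MR2182076}. What you have written is essentially a reconstruction of the inductive argument underlying those references: order the objects by the path-order (a strict partial order precisely because $\mathbf{G}$ has no loops), adjoin the last object together with the finitely many arrows into it (all arrows touching it do point into it, as you observe), and repair the finitely many composition relations by successive cofinal refinements of the common index category. The induction scheme and your identification of exactly where finiteness and looplessness enter are both correct. The caveat is that the two technical kernels of the proof are invoked rather than established: (i) that a single morphism of pro-objects can be levelized by reindexing source and target over a category equipped with \emph{cofinal} projections to both original index categories, so that previously levelized data survive pullback and iterating over the finitely many arrows into the new vertex is legitimate; and (ii) that two level morphisms which become equal in $\mathbf{Pro}(\mathbf{K})$ can be made equal as natural transformations after a further cofinal refinement, and that such refinements do not undo earlier repairs (they do not, since equality of natural transformations is preserved under restriction along any functor). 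These are precisely the duals of \cite[Corollaries 6.1.14 and 6.1.15]{Kashiwara-Categories-MR2182076}, which the paper itself points to in the remark following the proposition; granting them, your induction closes. So your proposal buys an explicit, self-contained induction where the paper offers only a citation, at the cost of leaving those same two lemmas as black boxes.
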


\begin{proof}
See \cite[Proposition A.3.3]{Artin-Mazur-MR883959} or \cite[dual to
Proposition 6.4.1]{Kashiwara-Categories-MR2182076}.
\end{proof}

\begin{remark}
See examples of such \textquotedblleft levelization\textquotedblright\ for
one morphism \cite[dual to Corollary 6.1.14]{Kashiwara-Categories-MR2182076}%
, and for a pair of parallel morphisms \cite[dual to Corollary 6.1.15]%
{Kashiwara-Categories-MR2182076}.
\end{remark}

Below are other useful properties of pro-objects.

\begin{proposition}
\label{Prop-Pro-objects-properties}~Let $\mathbf{K}$ be a cocomplete
category. In (\ref{Prop-Pro-objects-properties-Complete}-\ref%
{Prop-Pro-objects-properties-Cofiltered-limits-exact}) below assume that $%
\mathbf{K}$ admits finite limits.

\begin{enumerate}
\item \label{Prop-Pro-objects-properties-Convert-limits-to-colimits}For any $%
\mathbf{Y}\in \mathbf{Pro}\left( \mathbf{K}\right) $ the functor $Hom_{%
\mathbf{Pro}\left( \mathbf{K}\right) }\left( \bullet ,\mathbf{Y}\right) $
converts cofiltered limits into filtered colimits: for a diagram $\left( 
\mathbf{X}_{i}\right) _{i\in \mathbf{I}}$ in $\mathbf{Pro}\left( \mathbf{K}%
\right) $, where $\mathbf{I}$ is cofiltered,%
\begin{equation*}
Hom_{\mathbf{Pro}\left( \mathbf{K}\right) }\left( \underset{i\in \mathbf{I}}{%
\underleftarrow{\lim }}\mathbf{X}_{i},\mathbf{Y}\right) 
\simeq%
\underset{i\in \mathbf{I}^{op}}{\underrightarrow{\lim }}\left( Hom_{\mathbf{%
Pro}\left( \mathbf{K}\right) }\left( \mathbf{X}_{i},\mathbf{Y}\right)
\right) .
\end{equation*}

\item \label{Prop-Pro-objects-properties-Cocomplete}$\mathbf{Pro}\left( 
\mathbf{K}\right) $ is cocomplete.

\item \label{Prop-Pro-objects-properties-Complete}$\mathbf{Pro}\left( 
\mathbf{K}\right) $ is complete.

\item \label{Prop-Pro-objects-properties-Cofiltered-limits-exact}Cofiltered
limits are exact in $\mathbf{Pro}\left( \mathbf{K}\right) $: for a double
diagram $\left( \mathbf{X}_{i,j}\right) _{i\in \mathbf{I},j\in \mathbf{J}}$
in $\mathbf{Pro}\left( \mathbf{K}\right) $, where $\mathbf{I}$ is
cofiltered, and $\mathbf{J}$ is finite,%
\begin{eqnarray*}
&&\underset{i\in \mathbf{I}}{\underleftarrow{\lim }}~\underset{j\in \mathbf{J%
}}{\underrightarrow{\lim }}~\mathbf{X}_{i,j}%
\simeq%
\underset{j\in \mathbf{J}}{\underrightarrow{\lim }}~\underset{i\in \mathbf{I}%
}{\underleftarrow{\lim }}~\mathbf{X}_{i,j}, \\
&&\underset{i\in \mathbf{I}}{\underleftarrow{\lim }}~\underset{j\in \mathbf{J%
}}{\underleftarrow{\lim }}~\mathbf{X}_{i,j}%
\simeq%
\underset{j\in \mathbf{J}}{\underleftarrow{\lim }}~\underset{i\in \mathbf{I}}%
{\underleftarrow{\lim }}~\mathbf{X}_{i,j},
\end{eqnarray*}
\end{enumerate}
\end{proposition}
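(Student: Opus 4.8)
The plan is to reduce every assertion to the corresponding known fact about $\mathbf{Ind}$-categories through the duality
$\mathbf{Pro}\left( \mathbf{K}\right) \simeq \left( \mathbf{Ind}\left( \mathbf{K}^{op}\right) \right) ^{op}$
recorded just before Definition \ref{Def-Pro-category}. Under this duality the standing hypothesis that $\mathbf{K}$ is cocomplete becomes the statement that $\mathbf{K}^{op}$ admits small projective limits, and the extra hypothesis that $\mathbf{K}$ admits finite limits becomes the statement that $\mathbf{K}^{op}$ admits finite inductive limits. All the required input is collected in \cite[Chapter 6]{Kashiwara-Categories-MR2182076}, and the proof is then a matter of transcribing those statements, keeping careful track of variances.

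Carrying this out: for (\ref{Prop-Pro-objects-properties-Cocomplete}), a colimit in $\mathbf{Pro}\left( \mathbf{K}\right) $ is a projective limit in $\mathbf{Ind}\left( \mathbf{K}^{op}\right) $, and $\mathbf{Ind}\left( \mathbf{D}\right) $ admits all small projective limits as soon as $\mathbf{D}$ does; cocompleteness of $\mathbf{K}$ therefore suffices. For (\ref{Prop-Pro-objects-properties-Complete}), a limit in $\mathbf{Pro}\left( \mathbf{K}\right) $ is an inductive limit in $\mathbf{Ind}\left( \mathbf{K}^{op}\right) $; since $\mathbf{Ind}\left( \mathbf{D}\right) $ always admits small filtered colimits and admits finite colimits whenever $\mathbf{D}$ does, it admits all small colimits exactly under the present hypothesis on $\mathbf{K}$. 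For (\ref{Prop-Pro-objects-properties-Cofiltered-limits-exact}): the first displayed isomorphism says that cofiltered limits in $\mathbf{Pro}\left( \mathbf{K}\right) $ commute with finite colimits, which is dual to the exactness of filtered colimits in $\mathbf{Ind}\left( \mathbf{K}^{op}\right) $, valid because $\mathbf{K}^{op}$ admits finite inductive limits \cite[Chapter 6]{Kashiwara-Categories-MR2182076}; the second displayed isomorphism is just the commutation of projective limits with projective limits, which is formal.

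The assertion that is not a pure formality is (\ref{Prop-Pro-objects-properties-Convert-limits-to-colimits}), because a $\mathrm{Hom}$ out of a limit need not be a colimit, so one must genuinely exploit the structure of pro-objects; here I would argue directly. Given a cofiltered diagram $\left( \mathbf{X}_{i}\right) _{i\in \mathbf{I}}$ with $\mathbf{X}_{i}=\left( X_{i,\ell }\right) _{\ell \in \mathbf{L}_{i}}$, the limit $\underleftarrow{\lim }_{i}\mathbf{X}_{i}$ is represented by the pro-object whose index is the Grothendieck construction of the functor $i\mapsto \mathbf{L}_{i}$ (again cofiltered, since $\mathbf{I}$ and every $\mathbf{L}_{i}$ are), with entries $X_{i,\ell }$; this is dual to the standard description of filtered colimits of ind-objects inside $\left( \mathbf{K}^{op}\right) ^{\wedge }$. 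A colimit over that index is the iterated colimit $\underrightarrow{\lim }_{i}\underrightarrow{\lim }_{\ell }$, so for a representable target $Y\in \mathbf{K}$ one obtains at once
\begin{equation*}
\mathrm{Hom}_{\mathbf{Pro}\left( \mathbf{K}\right) }\left( \underleftarrow{\lim }_{i}\mathbf{X}_{i},Y\right) \simeq \underrightarrow{\lim }_{\left( i,\ell \right) }\mathrm{Hom}_{\mathbf{K}}\left( X_{i,\ell },Y\right) \simeq \underrightarrow{\lim }_{i}\underrightarrow{\lim }_{\ell }\mathrm{Hom}_{\mathbf{K}}\left( X_{i,\ell },Y\right) \simeq \underrightarrow{\lim }_{i}\mathrm{Hom}_{\mathbf{Pro}\left( \mathbf{K}\right) }\left( \mathbf{X}_{i},Y\right) ,
\end{equation*}
which is exactly the form of (\ref{Prop-Pro-objects-properties-Convert-limits-to-colimits}) used later (where the target is an honest module $T$). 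Passing to a general target $\mathbf{Y}=\left( Y_{j}\right) _{j\in \mathbf{J}}$ via $\mathrm{Hom}\left( \mathbf{Z},\mathbf{Y}\right) \simeq \underleftarrow{\lim }_{j}\mathrm{Hom}\left( \mathbf{Z},Y_{j}\right) $ then requires interchanging the cofiltered limit over $\mathbf{J}$ with the filtered colimit over $\mathbf{I}$, and this is the step where I expect the real care to be needed — it is the main obstacle, and the point one must pin down precisely when fixing the hypotheses on $\mathbf{Y}$.
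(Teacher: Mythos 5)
For parts (\ref{Prop-Pro-objects-properties-Cocomplete})--(\ref{Prop-Pro-objects-properties-Cofiltered-limits-exact}) your proposal coincides with the paper's proof, which consists precisely of citations to the duals of \cite[Corollary 6.1.17, Proposition 6.1.18, Proposition 6.1.19]{Kashiwara-Categories-MR2182076}; your bookkeeping of the hypotheses ($\mathbf{K}$ cocomplete iff $\mathbf{K}^{op}$ complete, $\mathbf{K}$ finitely complete iff $\mathbf{K}^{op}$ finitely cocomplete) is correct, and the observation that the second isomorphism in (\ref{Prop-Pro-objects-properties-Cofiltered-limits-exact}) is the formal commutation of limits with limits is also fine.

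The issue is part (\ref{Prop-Pro-objects-properties-Convert-limits-to-colimits}), and you have located it yourself. Your direct argument --- realize $\underleftarrow{\lim }_{i}\mathbf{X}_{i}$ as the pro-object indexed by the cofiltered Grothendieck construction and compute $Hom$ into a representable $Y\in \mathbf{K}$ --- is complete \emph{for rudimentary targets}. But the step you defer for a general target $\mathbf{Y}=\left( Y_{j}\right) _{j\in \mathbf{J}}$ is not merely delicate: the interchange of the filtered colimit over $\mathbf{I}^{op}$ with the cofiltered limit over $\mathbf{J}$ fails, and with it the statement in its stated generality. Take $\mathbf{K}=\mathbf{Set}$, $A_{n}=\left\{ m\in \mathbb{N}~|~m\geq n\right\} $ with the inclusions as bonding maps, $\mathbf{X}_{n}=A_{n}$ rudimentary, and $\mathbf{Y}=\mathbf{A}:=\left( A_{n}\right) _{n}$. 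By your own description (each $\mathbf{L}_{i}$ a point), $\underleftarrow{\lim }_{n}\mathbf{X}_{n}\simeq \mathbf{A}$ is the formal pro-object, so the left-hand side contains $\mathbf{1}_{\mathbf{A}}$ and is nonempty; on the other hand $Hom_{\mathbf{Pro}\left( \mathbf{Set}\right) }\left( A_{n},\mathbf{A}\right) \simeq Hom_{\mathbf{Set}}\left( A_{n},\tbigcap\nolimits_{m}A_{m}\right) =Hom_{\mathbf{Set}}\left( A_{n},\varnothing \right) =\varnothing $ for every $n$, so the right-hand side is empty. Hence the gap cannot be closed as stated; what you can (and did) prove is the case $\mathbf{Y}\in \mathbf{K}\subseteq \mathbf{Pro}\left( \mathbf{K}\right) $, which is fortunately the only case the paper ever invokes --- in Proposition \ref{Prop-Pro-modules-properties}(\ref{Prop-Pro-modules-properties-cofiltered-limit}) and in the proof of Theorem \ref{Th-Precosheaf-homology}(\ref{Th-Precosheaf-homology-pairing}) the target is an honest module $T\in \mathbf{Mod}\left( k\right) $. (The paper's own proof of this item is a bare citation of the dual of \cite[Theorem 6.1.8]{Kashiwara-Categories-MR2182076}, which likewise yields directly only the representable case.) So: state the restriction to rudimentary $\mathbf{Y}$ explicitly and your argument stands; as a proof of the item as literally written, it has a gap that is not fillable.
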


\begin{proof}
(\textbf{\ref{Prop-Pro-objects-properties-Convert-limits-to-colimits}}) The
statement is dual to \cite[Theorem 6.1.8]{Kashiwara-Categories-MR2182076}.

(\textbf{\ref{Prop-Pro-objects-properties-Cocomplete}}) See \cite[dual to
Corollary 6.1.17]{Kashiwara-Categories-MR2182076}.

(\textbf{\ref{Prop-Pro-objects-properties-Complete}}) See \cite[dual to
Proposition 6.1.18]{Kashiwara-Categories-MR2182076}.

(\textbf{\ref{Prop-Pro-objects-properties-Cofiltered-limits-exact}}) The
statement is dual to \cite[Proposition 6.1.19]%
{Kashiwara-Categories-MR2182076}.
\end{proof}

\subsection{Pairings}

\begin{definition}
\label{Not-Pairings-Hom(Pro-k)-short}\label{Def-Pairings-functors} \label%
{Def-Pairings-pro-modules}Let $\mathbf{D}$ be a small category. Various
bifunctors are defined below:

\begin{enumerate}
\item \label{Def-Pairings-pro-modules-Hom(Pro-k)}%
\begin{equation*}
\left\langle \bullet ,\bullet \right\rangle :\mathbf{Pro}\left( k\right)
^{op}\times \mathbf{Mod}\left( k\right) \longrightarrow \mathbf{Mod}\left(
k\right) .
\end{equation*}%
If%
\begin{equation*}
\mathbf{A}=\left( A_{i}\right) _{i\in \mathbf{I}}\in \mathbf{Pro}\left(
k\right)
\end{equation*}%
is a pro-module, and $G\in \mathbf{Mod}\left( k\right) $, let%
\begin{equation*}
\left\langle \mathbf{A},G\right\rangle 
{:=}%
Hom_{\mathbf{Pro}\left( k\right) }\left( \mathbf{A},G\right) =%
\underrightarrow{\lim }_{i\in \mathbf{I}}Hom_{\mathbf{Mod}\left( k\right)
}\left( A_{i},G\right) \in \mathbf{Mod}\left( k\right) .
\end{equation*}

\item \label{Def-Pairings-functors-Hom(Pro-k)}%
\begin{equation*}
\left\langle \bullet ,\bullet \right\rangle :\mathbf{pCS}\left( \mathbf{D},%
\mathbf{Pro}\left( k\right) \right) ^{op}\times \mathbf{Mod}\left( k\right)
\longrightarrow \mathbf{pS}\left( \mathbf{D},\mathbf{Mod}\left( k\right)
\right) .
\end{equation*}%
If%
\begin{equation*}
\mathcal{A}:\mathbf{D}\longrightarrow \mathbf{Pro}\left( k\right)
\end{equation*}%
is a functor, and $G\in \mathbf{Mod}\left( k\right) $, let%
\begin{eqnarray*}
\left\langle \mathcal{A},G\right\rangle &=&Hom_{\mathbf{Pro}\left( k\right)
}\left( \mathcal{A},G\right) 
{:=}%
\left[ U\longmapsto Hom_{\mathbf{Pro}\left( k\right) }\left( \mathcal{A}%
\left( U\right) ,G\right) \right] , \\
\left\langle \mathcal{A},G\right\rangle &:&\mathbf{D}^{op}\longrightarrow 
\mathbf{Mod}\left( k\right) .
\end{eqnarray*}

\item \label{Def-Pairings-functors-Set}%
\begin{equation*}
\bullet \otimes _{\mathbf{Set}}\bullet :\mathbf{K\times Set}\longrightarrow 
\mathbf{K.}
\end{equation*}%
If $A\in \mathbf{K}$ (say, $\mathbf{K=Mod}\left( k\right) $ or $\mathbf{K=Pro%
}\left( k\right) $), and $B\in \mathbf{Set}$, let%
\begin{equation*}
A\otimes _{\mathbf{Set}}B=B\otimes _{\mathbf{Set}}A=\dcoprod\limits_{B}A
\end{equation*}%
be the coproduct in $\mathbf{K}$ of $B$ copies of $A$.

\item \label{Def-Pairings-functors-Pro(Set)}\label{Def-Pro(Set)-Ten-Set-K}%
\begin{equation*}
\bullet \otimes _{\mathbf{Set}}\bullet :\mathbf{K\times Pro}\left( \mathbf{%
Set}\right) \longrightarrow \mathbf{Pro}\left( \mathbf{K}\right) \mathbf{.}
\end{equation*}%
Let $\mathbf{Y}=\left( Y_{i}\right) _{i\in \mathbf{I}}\in \mathbf{Pro}\left( 
\mathbf{Set}\right) $, and $X\in \mathbf{K}$. Define%
\begin{equation*}
X\otimes _{\mathbf{Set}}\mathbf{Y}=\mathbf{Y}\otimes _{\mathbf{Set}}X\in 
\mathbf{Pro}\left( \mathbf{K}\right)
\end{equation*}%
by%
\begin{equation*}
X\otimes _{\mathbf{Set}}\mathbf{Y}=\left( X\otimes _{\mathbf{Set}%
}Y_{i}\right) _{i\in \mathbf{I}}.
\end{equation*}

\item \label{Def-Pairings-functors-Set-X}%
\begin{equation*}
\bullet \otimes _{\mathbf{Set}^{\mathbf{D}}}\bullet :\mathbf{pCS}\left( 
\mathbf{D},\mathbf{Pro}\left( \mathbf{K}\right) \right) \mathbf{\times 
\mathbf{pS}\left( D,\mathbf{Set}\right) }\longrightarrow \mathbf{Pro}\left( 
\mathbf{K}\right) \mathbf{.}
\end{equation*}%
If%
\begin{eqnarray*}
\mathcal{A} &:&\mathbf{D}\longrightarrow \mathbf{Pro}\left( \mathbf{K}%
\right) , \\
\mathcal{B} &:&\mathbf{D}^{op}\longrightarrow \mathbf{Set},
\end{eqnarray*}%
are functors, let 
\begin{equation*}
\mathcal{A}\otimes _{\mathbf{Set}^{\mathbf{D}}}\mathcal{B}\in \mathbf{Pro}%
\left( \mathbf{K}\right)
\end{equation*}%
be the \textbf{coend }\cite[Chapter IX.6]{Mac-Lane-Categories-1998-MR1712872}
of the bifunctor $\left( U,V\right) \mapsto \mathcal{A}\left( U\right)
\otimes _{\mathbf{Set}}\mathcal{B}\left( V\right) $, i.e.%
\begin{equation*}
\mathcal{A\otimes }_{\mathbf{Set}^{\mathbf{C}}}\mathcal{B}%
{:=}%
\coker%
\left( \dcoprod\limits_{U\rightarrow V}\mathcal{A}\left( U\right) \otimes _{%
\mathbf{Set}}\mathcal{B}\left( V\right) \rightrightarrows \dcoprod\limits_{U}%
\mathcal{A}\left( U\right) \otimes _{\mathbf{Set}}\mathcal{B}\left( U\right)
\right) .
\end{equation*}

\item \label{Def-Pairings-functors-Hom-Set-X}%
\begin{equation*}
Hom_{\mathbf{Set}^{\mathbf{D}}}\left( \bullet ,\bullet \right) :\mathbf{pS}%
\left( \mathbf{D},\mathbf{Set}\right) ^{op}\times \mathbf{pS}\left( \mathbf{D%
},\mathbf{K}\right)
\end{equation*}%
If%
\begin{eqnarray*}
\mathcal{A} &:&\mathbf{D}^{op}\longrightarrow \mathbf{K}, \\
\mathcal{B} &:&\mathbf{D}^{op}\longrightarrow \mathbf{Set},
\end{eqnarray*}%
are functors, let 
\begin{equation*}
Hom_{\mathbf{Set}^{\mathbf{D}}}\left( \mathcal{B},\mathcal{A}\right) \in 
\mathbf{Mod}\left( k\right)
\end{equation*}%
be the \textbf{end }\cite[Chapter IX.6]{Mac-Lane-Categories-1998-MR1712872}
of the bifunctor $\left( U,V\right) \mapsto Hom_{\mathbf{Set}}\left( 
\mathcal{B}\left( U\right) ,\mathcal{A}\left( V\right) \right) $, i.e.%
\begin{equation*}
Hom_{\mathbf{Set}^{\mathbf{C}}}\left( \mathcal{B},\mathcal{A}\right) 
{:=}%
\ker \left( \dprod\limits_{U}Hom_{\mathbf{Set}}\left( \mathcal{B}\left(
U\right) ,\mathcal{A}\left( U\right) \right) \rightrightarrows
\dprod\limits_{U\rightarrow V}Hom_{\mathbf{Set}}\left( \mathcal{B}\left(
U\right) ,\mathcal{A}\left( V\right) \right) \right) .
\end{equation*}
\end{enumerate}
\end{definition}

\subsection{Quasi-projective pro-modules}

\begin{definition}
\label{Def-quasi-projective-pro-module}\label{Def-quasi-projective}A
pro-module $\mathbf{P}$ is called \textbf{quasi-projective} iff the functor%
\begin{equation*}
Hom_{\mathbf{Pro}\left( k\right) }\left( \mathbf{P},\bullet \right) :\mathbf{%
Mod}\left( k\right) \longrightarrow \mathbf{Mod}\left( k\right)
\end{equation*}%
is exact (see \cite[dual to Definition 15.2.1]%
{Kashiwara-Categories-MR2182076}).
\end{definition}

\begin{proposition}
\label{Prop-quasi-projective}A pro-module $\mathbf{P}$ is quasi-projective
iff it is isomorphic to a pro-module $\left( Q_{i}\right) _{i\in \mathbf{I}}$
where all modules $Q_{i}\in \mathbf{Mod}\left( k\right) $ are projective.
\end{proposition}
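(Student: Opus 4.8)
The plan is to prove the two implications separately: the ``if'' direction is a one\nobreakdash-line colimit computation, while the ``only if'' direction is the substantive half and is, in the end, the formal dual of the Kashiwara--Schapira description of quasi\nobreakdash-injective ind\nobreakdash-objects (\cite[\S 15.2]{Kashiwara-Categories-MR2182076}); below I also sketch a direct argument for it. For the easy direction, suppose $\mathbf{P}\simeq \left( Q_{i}\right) _{i\in \mathbf{I}}$ with $\mathbf{I}$ cofiltered and every $Q_{i}\in \mathbf{Mod}\left( k\right) $ projective. By Remark~\ref{Rem-Pro-objects-morphisms},
\begin{equation*}
Hom_{\mathbf{Pro}\left( k\right) }\left( \mathbf{P},\bullet \right) \simeq \varinjlim_{i\in \mathbf{I}}Hom_{\mathbf{Mod}\left( k\right) }\left( Q_{i},\bullet \right) .
\end{equation*}
Each $Hom_{\mathbf{Mod}\left( k\right) }\left( Q_{i},\bullet \right) $ is exact (left exactness is automatic; right exactness is projectivity of $Q_{i}$), and filtered colimits are exact in $\mathbf{Mod}\left( k\right) $, so the colimit functor is exact; hence $\mathbf{P}$ is quasi\nobreakdash-projective.

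For the ``only if'' direction, represent $\mathbf{P}=\left( P_{i}\right) _{i\in \mathbf{I}}$ with $\mathbf{I}$ cofiltered. The crucial observation is a \emph{Factorization Lemma}: for every $i\in \mathbf{I}$ there is a morphism $a:j\rightarrow i$ in $\mathbf{I}$ such that the transition map $P(a):P_{j}\rightarrow P_{i}$ factors through a projective $k$\nobreakdash-module. To see this, choose an epimorphism $q:F\twoheadrightarrow P_{i}$ with $F$ free. Quasi\nobreakdash-projectivity makes $Hom_{\mathbf{Pro}\left( k\right) }\left( \mathbf{P},\bullet \right) $ right exact, hence
\begin{equation*}
q_{\ast }:Hom_{\mathbf{Pro}\left( k\right) }\left( \mathbf{P},F\right) \longrightarrow Hom_{\mathbf{Pro}\left( k\right) }\left( \mathbf{P},P_{i}\right)
\end{equation*}
is surjective; lift the canonical projection $\pi _{i}:\mathbf{P}\rightarrow P_{i}$ along $q_{\ast }$. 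As $Hom_{\mathbf{Pro}\left( k\right) }\left( \mathbf{P},F\right) =\varinjlim_{j}Hom_{\mathbf{Mod}\left( k\right) }\left( P_{j},F\right) $, the lift is represented by some $\psi :P_{j}\rightarrow F$; comparing the two pro\nobreakdash-morphisms $\mathbf{P}\rightarrow P_{i}$ they determine inside $\varinjlim_{l}Hom\left( P_{l},P_{i}\right) $, and using that $P_{i}$ is rudimentary, one obtains a morphism $a$ of $\mathbf{I}$ whose transition map satisfies $P(a)=q\circ \psi '$ for a suitable $\psi '$, i.e. $P(a)$ factors through the projective $F$.

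Granting the Lemma, I would assemble the factorizations into a pro\nobreakdash-object of projectives isomorphic to $\mathbf{P}$ by the standard telescope. In the tower case $\mathbf{I}=\left( \mathbb{N},\geq \right) $, a cofinal subsequence lets one write every transition $P_{n+1}\rightarrow P_{n}$ as $P_{n+1}\xrightarrow{\alpha _{n}}Q_{n}\xrightarrow{\beta _{n}}P_{n}$ with $Q_{n}$ projective; then $\mathbf{Q}=\left( Q_{n}\right) $ with transition maps $\alpha _{n}\circ \beta _{n+1}$ is a pro\nobreakdash-object of projectives, and the families $\left( \beta _{n}\right) $ and $\left( \alpha _{n}\right) $ define pro\nobreakdash-morphisms $\mathbf{Q}\rightarrow \mathbf{P}$ and $\mathbf{P}\rightarrow \mathbf{Q}$ whose two composites are index shifts, hence identities in $\mathbf{Pro}\left( k\right) $; thus $\mathbf{P}\simeq \mathbf{Q}$. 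For an arbitrary cofiltered $\mathbf{I}$ one first reindexes $\mathbf{P}$ over a cofinite directed poset (a standard reduction for pro\nobreakdash-objects) and runs the same telescope, or --- more cleanly --- notes that the whole assertion is dual to the Kashiwara--Schapira result and cites it, exactly as Proposition~\ref{Prop-Trivial-pro-object} is handled.

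The hard part is precisely this last assembly over a non\nobreakdash-tower\nobreakdash-shaped cofiltered index category: the factorizing projectives and the chosen factorizations must be made coherent along all of $\mathbf{I}$ simultaneously, which forces either the cofinite\nobreakdash-directed reindexing plus a transfinite version of the telescope, or the level\nobreakdash-morphism formalism of Proposition~\ref{Th-Level-morphisms} and its surrounding remarks, or simply the appeal to the dual of \cite[\S 15.2]{Kashiwara-Categories-MR2182076}. The easy direction and the Factorization Lemma are routine by comparison.
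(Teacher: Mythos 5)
The paper's entire proof of this proposition is the single sentence ``The statement is dual to \cite[Proposition 15.2.3]{Kashiwara-Categories-MR2182076}'', so your final fallback --- citing the dual of the Kashiwara--Schapira result --- is exactly the argument the author uses, and to that extent your proposal is correct. The extra content you supply is a genuinely different (partial) route. Your ``if'' direction is complete and correct: $Hom_{\mathbf{Pro}\left( k\right) }\left( \mathbf{P},\bullet \right) \simeq \varinjlim_{i}Hom_{\mathbf{Mod}\left( k\right) }\left( Q_{i},\bullet \right) $ is exact as a filtered colimit of exact functors. Your Factorization Lemma is also essentially right, with one small correction: the identification of $\left[ q\circ \psi \right] $ with $\left[ \pi _{i}\right] $ uses only the explicit description of a filtered colimit of sets (two representatives become equal after a further transition map $P\left( v\right) $), so the conclusion $P\left( v\right) =q\circ \left( \psi \circ P\left( u\right) \right) $ follows; rudimentariness of $P_{i}$ enters only in writing $Hom_{\mathbf{Pro}\left( k\right) }\left( \mathbf{P},P_{i}\right) $ as that single colimit, not in the comparison itself.

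The one place where your direct argument does not close is the one you yourself flag: assembling the factorizations into a pro-object of projectives when $\mathbf{I}$ is not a tower. The telescope works verbatim for $\mathbb{N}$-indexed towers (and your verification that the two composites are index shifts, hence identities in $\mathbf{Pro}\left( k\right) $, is correct), but for a general cofiltered $\mathbf{I}$ the factorizing projectives and the chosen factorizations must be made functorial in the index simultaneously; reindexing over a cofinite codirected poset reduces the problem but does not dissolve it, and ``run the same telescope'' is not yet a proof. Since you explicitly offer the citation to the dual of \cite[\S 15.2]{Kashiwara-Categories-MR2182076} as the clean way out --- which is precisely what the paper does --- the proposal stands, but if you want the direct proof to be self-contained you would need to carry out that coherence construction (by induction on the number of predecessors in the cofinite reindexing), which is the actual mathematical content of Kashiwara--Schapira's Proposition 15.2.3.
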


\begin{proof}
The statement is dual to \cite[Proposition 15.2.3]%
{Kashiwara-Categories-MR2182076}.
\end{proof}

\begin{remark}
\label{Rem-not-enough-projectives}The category $\mathbf{Pro}\left( k\right) $
does not have enough projectives (compare with \cite[Corollary 15.1.3]%
{Kashiwara-Categories-MR2182076}). However, it has enough quasi-projectives
(see Proposition \ref{Prop-Pro-modules-properties}(\ref%
{Prop-enough-quasi-projectives}) below).
\end{remark}

\begin{definition}
\label{Def-quasi-noetherian}A commutative ring $k$ is called \textbf{%
quasi-noetherian} iff 
\begin{equation*}
\left\langle \mathbf{P},T\right\rangle =Hom_{\mathbf{Pro}\left( k\right)
}\left( \mathbf{P},T\right)
\end{equation*}%
is an injective $k$-module for any quasi-projective pro-module $\mathbf{P}$
and an injective $k$-module $T$.
\end{definition}

\begin{proposition}
\label{Prop-Noetherian-Quasi-noetherian}A noetherian ring is
quasi-noetherian.
\end{proposition}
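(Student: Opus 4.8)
The plan is to reduce, via Proposition~\ref{Prop-quasi-projective}, to the classical fact that over a noetherian ring the class of injective modules is closed under filtered colimits.

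Fix a quasi-projective pro-module $\mathbf{P}$ and an injective module $T\in\mathbf{Mod}\left(k\right)$. By Proposition~\ref{Prop-quasi-projective} we may assume $\mathbf{P}=\left(Q_{i}\right)_{i\in\mathbf{I}}$ with $\mathbf{I}$ cofiltered and every $Q_{i}$ a projective $k$-module, so that by Definition~\ref{Def-Pairings-functors}(\ref{Def-Pairings-pro-modules-Hom(Pro-k)}),
\[
\left\langle \mathbf{P},T\right\rangle = Hom_{\mathbf{Pro}\left(k\right)}\left(\mathbf{P},T\right) \simeq \underrightarrow{\lim}_{i\in\mathbf{I}}\,Hom_{\mathbf{Mod}\left(k\right)}\left(Q_{i},T\right),
\]
a \emph{filtered} colimit in $\mathbf{Mod}\left(k\right)$.

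First I would observe that each term $Hom_{\mathbf{Mod}\left(k\right)}\left(Q_{i},T\right)$ is an injective $k$-module: writing $Q_{i}$ as a direct summand of a free module $k^{(S_{i})}$ exhibits $Hom_{\mathbf{Mod}\left(k\right)}\left(Q_{i},T\right)$ as a direct summand of $Hom_{\mathbf{Mod}\left(k\right)}\left(k^{(S_{i})},T\right)\simeq T^{S_{i}}$, which is injective since a product of injective modules is injective over any ring; and a direct summand of an injective module is again injective. Hence $\left\langle\mathbf{P},T\right\rangle$ is a filtered colimit of injective $k$-modules.

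It then remains to invoke: if $k$ is noetherian, a filtered colimit of injective $k$-modules is injective. I would prove this by Baer's criterion. Let $E=\underrightarrow{\lim}_{\alpha}E_{\alpha}$ be a filtered colimit of injective modules and let $I\subseteq k$ be an ideal. Since $k$ is noetherian, $I$ is finitely generated, hence finitely presented, so $Hom_{\mathbf{Mod}\left(k\right)}\left(I,-\right)$ commutes with filtered colimits; thus any $f\colon I\to E$ is the image of some $f_{\alpha}\colon I\to E_{\alpha}$. As $E_{\alpha}$ is injective, $f_{\alpha}$ extends to $g_{\alpha}\colon k\to E_{\alpha}$, and composing with the structure map $E_{\alpha}\to E$ extends $f$ to $k$. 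By Baer's criterion $E$ is injective; applying this to $\left\langle\mathbf{P},T\right\rangle$ shows it is injective, so $k$ is quasi-noetherian. The only ingredient that is not pure formalism is this last statement — closure of the injectives under filtered colimits over a noetherian ring — and it is the heart of the argument, though classical (of Bass--Papp type); everything else is bookkeeping with the $\mathbf{Pro}$-construction, free resolutions and adjunction.
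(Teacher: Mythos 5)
Your proof is correct. Note that the paper itself gives no argument here: it simply cites \cite[Proposition 2.28]{Prasolov-universal-coefficients-formula-2013-MR3095217}, so there is no in-text proof to compare against. Your route is the expected one and all the steps check out: Proposition \ref{Prop-quasi-projective} lets you realize $\mathbf{P}$ levelwise by projectives, $\left\langle \mathbf{P},T\right\rangle$ then becomes a filtered colimit of modules $Hom_{\mathbf{Mod}\left( k\right) }\left( Q_{i},T\right)$, each injective as a direct summand of a product of copies of $T$, and the Baer-criterion argument (finitely generated, hence finitely presented, ideals over a noetherian ring, so $Hom_{\mathbf{Mod}\left( k\right) }\left( I,\bullet \right)$ commutes with filtered colimits) correctly establishes that filtered colimits of injectives are injective over a noetherian ring. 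The only quibble is attributional: the Bass--Papp theorem proper concerns direct sums of injectives; the filtered-colimit statement you actually use is the closely related classical fact, proved exactly as you do.
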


\begin{proof}
See \cite[Proposition 2.28]%
{Prasolov-universal-coefficients-formula-2013-MR3095217}.
\end{proof}

\begin{proposition}
\label{Prop-Pro(K)-abelian}If $\mathbf{K}$ is an abelian category, then $%
\mathbf{Pro}\left( \mathbf{K}\right) $ is an abelian category as well.
\end{proposition}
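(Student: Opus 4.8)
The plan is to obtain the statement from the classical fact that $\mathbf{Ind}$ of an abelian category is abelian, via the duality $\mathbf{Pro}\left( \mathbf{K}\right) \simeq \left( \mathbf{Ind}\left( \mathbf{K}^{op}\right) \right) ^{op}$ recalled at the beginning of this subsection. Since $\mathbf{K}$ is abelian, so is $\mathbf{K}^{op}$ (the abelian axioms are self-dual); hence $\mathbf{Ind}\left( \mathbf{K}^{op}\right) $ is abelian by \cite[Theorem 8.6.5]{Kashiwara-Categories-MR2182076}, and the opposite of an abelian category is abelian. This already proves the proposition. Because, however, the explicit levelwise description of kernels, cokernels and exact sequences in $\mathbf{Pro}\left( \mathbf{K}\right) $ is used throughout the rest of the paper, I would in addition record the following direct argument.

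First, $\mathbf{Pro}\left( \mathbf{K}\right) $ is additive: the zero object is the rudimentary pro-object $\left( 0\right) $ (Remark~\ref{Rem-Trivial-pro-object}), and for $\mathbf{X}=\left( X_{i}\right) _{i\in \mathbf{I}}$ and $\mathbf{Y}=\left( Y_{j}\right) _{j\in \mathbf{J}}$ the biproduct is $\left( X_{i}\oplus Y_{j}\right) _{\left( i,j\right) \in \mathbf{I}\times \mathbf{J}}$, which one checks against the $\underleftarrow{\lim }\,\underrightarrow{\lim }$ formula for $Hom$ in Remark~\ref{Rem-Pro-objects-morphisms}(1). Next, by Proposition~\ref{Th-Level-morphisms} a single morphism $f:\mathbf{X}\rightarrow \mathbf{Y}$ (a finite diagram without loops) may be assumed to be a \textbf{level morphism}, represented by a natural transformation $\gamma =\left( \gamma _{i}:X_{i}\rightarrow Y_{i}\right) _{i\in \mathbf{I}}$ of diagrams $\mathbf{I}\rightarrow \mathbf{K}$. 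I then claim that
\begin{equation*}
\ker \left( f\right) =\left( \ker \gamma _{i}\right) _{i\in \mathbf{I}},\qquad \coker \left( f\right) =\left( \coker \gamma _{i}\right) _{i\in \mathbf{I}},
\end{equation*}
with transition morphisms induced from those of $\mathbf{X}$ and $\mathbf{Y}$. To verify the universal property of $\left( \ker \gamma _{i}\right) _{i}$ one tests against an arbitrary $\mathbf{Z}=\left( Z_{\ell }\right) _{\ell \in \mathbf{L}}$: using Remark~\ref{Rem-Pro-objects-morphisms}(1) together with $Hom_{\mathbf{K}}\left( Z_{\ell },\ker \gamma _{i}\right) =\ker \left( Hom_{\mathbf{K}}\left( Z_{\ell },X_{i}\right) \rightarrow Hom_{\mathbf{K}}\left( Z_{\ell },Y_{i}\right) \right) $ and the left exactness of filtered colimits and of the cofiltered limit over $\mathbf{I}$ on $k$-modules, one identifies $Hom_{\mathbf{Pro}\left( \mathbf{K}\right) }\left( \mathbf{Z},\left( \ker \gamma _{i}\right) _{i}\right) $ with the kernel of $Hom_{\mathbf{Pro}\left( \mathbf{K}\right) }\left( \mathbf{Z},\mathbf{X}\right) \rightarrow Hom_{\mathbf{Pro}\left( \mathbf{K}\right) }\left( \mathbf{Z},\mathbf{Y}\right) $; cokernels are dual. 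Finally, the canonical comparison $coim\left( f\right) \rightarrow im\left( f\right) $ is itself a level morphism, induced by the family $\left( coim\left( \gamma _{i}\right) \rightarrow im\left( \gamma _{i}\right) \right) _{i}$, and each member is an isomorphism because $\mathbf{K}$ is abelian; hence the pro-morphism is an isomorphism and $\mathbf{Pro}\left( \mathbf{K}\right) $ is abelian.

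The main obstacle is not the construction but the \emph{verification} that these levelwise objects represent the categorical (co)kernel inside $\mathbf{Pro}\left( \mathbf{K}\right) $, where the Hom-sets are themselves iterated $\underleftarrow{\lim }\,\underrightarrow{\lim }$'s: one must control the interaction of the cofiltered limit with the filtered colimits appearing in the Hom-formula, and check that the chosen level representation---well-defined only up to the equivalence of Remark~\ref{Rem-Level-morphisms}---yields a construction independent of the choices. For this reason the cleanest route is the duality argument of the first paragraph, with the levelwise formulas recorded afterwards as a corollary for later use.
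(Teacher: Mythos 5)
Your first paragraph is exactly the paper's proof: the paper simply cites the dual of \cite[Theorem 8.6.5(i)]{Kashiwara-Categories-MR2182076}, i.e.\ the fact that $\mathbf{Ind}$ of an abelian category is abelian, transported through the duality $\mathbf{Pro}\left( \mathbf{K}\right) \simeq \left( \mathbf{Ind}\left( \mathbf{K}^{op}\right) \right) ^{op}$. Your additional levelwise description of kernels, cokernels and the coimage--image comparison is a correct and potentially useful supplement, but it is not needed for, and does not appear in, the paper's own one-line argument.
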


\begin{proof}
See \cite[dual to Theorem 8.6.5(i)]{Kashiwara-Categories-MR2182076}.
\end{proof}

\begin{notation}
\label{Not-pro-modules-cogenerators}For a $k$-module $M$, denote by $M^{\ast
}$ the following $k$-module:%
\begin{equation*}
M^{\ast }%
{:=}%
Hom_{\mathbb{Z}}\left( M,\mathbb{Q}/\mathbb{Z}\right) .
\end{equation*}
\end{notation}

\begin{proposition}
\label{Prop-Pro-modules-properties}~

\begin{enumerate}
\item \label{Prop-Pro-modules-properties-abelian-(co)cocomplete} \label%
{Prop-Pro-modules-properties-abelian-(co)complete}The category $\mathbf{Pro}%
\left( k\right) $ is abelian, complete and cocomplete, and satisfies both
the $AB3$ and $AB3^{\ast }$ axioms (\cite[1.5]%
{Grothendieck-Tohoku-1957-MR0102537}, \cite[Ch. 5.8]%
{Bucur-Deleanu-1968-Introduction-categories-functors-MR0236236}).

\item \label{Prop-Pro-modules-properties-colimit}For any diagram%
\begin{equation*}
\mathbf{X}:\mathbf{I}\longrightarrow \mathbf{Pro}\left( k\right)
\end{equation*}%
and any $T\in \mathbf{Mod}\left( k\right) $ (not necessarily injective!)%
\begin{equation*}
\left\langle \underrightarrow{\lim }_{i\in \mathbf{I}}\mathbf{X}%
_{i},T\right\rangle 
\simeq%
\underleftarrow{\lim }_{i\in \mathbf{I}}\left\langle \mathbf{X}%
_{i},T\right\rangle
\end{equation*}%
in $\mathbf{Mod}\left( k\right) $.

\item \label{Prop-Pro-modules-properties-cofiltered-limit}For any \textbf{%
cofiltered} diagram%
\begin{equation*}
\mathbf{X}:\mathbf{I}\longrightarrow \mathbf{Pro}\left( k\right)
\end{equation*}%
and any $T\in \mathbf{Mod}\left( k\right) $ (not necessarily injective!)%
\begin{equation*}
\left\langle \underleftarrow{\lim }_{i\in \mathbf{I}}\mathbf{X}%
_{i},T\right\rangle 
\simeq%
\underrightarrow{\lim }_{i\in \mathbf{I}}\left\langle \mathbf{X}%
_{i},T\right\rangle
\end{equation*}%
in $\mathbf{Mod}\left( k\right) $.

\item \label{Prop-Pro-modules-properties-product}For any family $\left( 
\mathbf{X}_{i}\right) _{i\in I}$ in $\mathbf{Pro}\left( k\right) $ and any $%
T\in \mathbf{Mod}\left( k\right) $ (not necessarily injective!)%
\begin{equation*}
\left\langle \dprod\limits_{i\in I}\mathbf{X}_{i},T\right\rangle 
\simeq%
\dbigoplus\limits_{i\in I}\left\langle \mathbf{X}_{i},T\right\rangle
\end{equation*}%
in $\mathbf{Mod}\left( k\right) $.

\item \label{Prop-Pro-modules-properties-limit}For \textbf{any} (not
necessarily cofiltered\textbf{)} diagram%
\begin{equation*}
\mathbf{X}:\mathbf{I}\longrightarrow \mathbf{Pro}\left( k\right)
\end{equation*}%
and any injective $T\in \mathbf{Mod}\left( k\right) $%
\begin{equation*}
\left\langle \underleftarrow{\lim }_{i\in \mathbf{I}}\mathbf{X}%
_{i},T\right\rangle 
\simeq%
\underrightarrow{\lim }_{i\in \mathbf{I}}\left\langle \mathbf{X}%
_{i},T\right\rangle
\end{equation*}%
in $\mathbf{Mod}\left( k\right) $.

\item \label{Prop-enough-quasi-projectives}\label%
{Prop-Pro-modules-properties-quasi-projective}For an arbitrary pro-module $%
\mathbf{M}\in \mathbf{Pro}\left( k\right) $, there exists a functorial
surjection%
\begin{equation*}
\mathbf{F}\left( \mathbf{M}\right) \twoheadrightarrow \mathbf{M},
\end{equation*}%
where $\mathbf{F}\left( \mathbf{M}\right) $ is quasi-projective.

\item \label{Prop-Pro-modules-properties-Zero}Let $\mathbf{M}\in \mathbf{Pro}%
\left( k\right) $. Then $\mathbf{M}%
\simeq%
\mathbf{0}$ iff $\left\langle \mathbf{M},T\right\rangle =0$ for any
injective $T\in \mathbf{Mod}\left( k\right) $.

\item \label{Prop-Pro-modules-properties-Homology}Let%
\begin{equation*}
\mathcal{E=}\left( \mathbf{M}\overset{\alpha }{\longleftarrow }\mathbf{N}%
\overset{\beta }{\longleftarrow }\mathbf{K}\right)
\end{equation*}%
be a sequence of morphisms in $\mathbf{Pro}\left( k\right) $ with $\beta
\circ \alpha =0$, and let $T\in \mathbf{Mod}\left( k\right) $ be injective.
Then%
\begin{equation*}
H\left( \mathcal{E}\right) 
{:=}%
\frac{\ker \left( \alpha \right) }{im\left( \beta \right) }
\end{equation*}%
satisfies%
\begin{equation*}
\left\langle H\left( \mathcal{E}\right) ,T\right\rangle 
\simeq%
H\left( \left\langle \mathcal{E},T\right\rangle \right) 
{:=}%
\frac{\ker \left( \left\langle \beta ,T\right\rangle \right) }{im\left(
\left\langle \alpha ,T\right\rangle \right) }.
\end{equation*}

\item \label{Prop-Pro-modules-properties-Exact}Let%
\begin{equation*}
\mathcal{E=}\left( \mathbf{M}\overset{\alpha }{\longleftarrow }\mathbf{N}%
\overset{\beta }{\longleftarrow }\mathbf{K}\right)
\end{equation*}%
be a sequence of morphisms in $\mathbf{Pro}\left( k\right) $ with $\beta
\circ \alpha =0$. Then $\mathcal{E}$ is exact iff the sequence%
\begin{equation*}
\left\langle \mathbf{M},T\right\rangle \overset{\left\langle \alpha
,T\right\rangle }{\longrightarrow }\left\langle \mathbf{N},T\right\rangle 
\overset{\left\langle \beta ,T\right\rangle }{\longrightarrow }\left\langle 
\mathbf{K},T\right\rangle
\end{equation*}%
is exact in $\mathbf{Mod}\left( k\right) $ for all injective $T\in \mathbf{%
Mod}\left( k\right) $.

\item \label{Prop-Pro-modules-properties-Injective}Let $T\in \mathbf{Mod}%
\left( k\right) $ be an injective module. Then the corresponding rudimentary
(Remark \ref{Rem-Rudimentary}) pro-module $T$ is an injective object of $%
\mathbf{Pro}\left( k\right) $.

\item \label{Prop-Pro-modules-properties-AB4}The category $\mathbf{Pro}%
\left( k\right) $ satisfies the $AB4$ axiom (\cite[1.5]%
{Grothendieck-Tohoku-1957-MR0102537}, \cite[Ch. 5.8]%
{Bucur-Deleanu-1968-Introduction-categories-functors-MR0236236}).

\item \label{Prop-Pro-modules-properties-AB4*}The category $\mathbf{Pro}%
\left( k\right) $ satisfies the $AB4^{\ast }$ axiom (\cite[1.5]%
{Grothendieck-Tohoku-1957-MR0102537}, \cite[Ch. 5.8]%
{Bucur-Deleanu-1968-Introduction-categories-functors-MR0236236}).

\item \label{Prop-Pro-modules-properties-cofiltered-limits-exact}\label%
{Prop-Pro-modules-properties-AB5*}The category $\mathbf{Pro}\left( k\right) $
satisfies the $AB5^{\ast }$ axiom (\cite[1.5]%
{Grothendieck-Tohoku-1957-MR0102537}, \cite[Ch. 5.8]%
{Bucur-Deleanu-1968-Introduction-categories-functors-MR0236236}): cofiltered
limits are exact in the category $\mathbf{Pro}\left( k\right) $.

\item \label{Prop-Pro-modules-properties-cogenerators}The \textbf{class} (%
\textbf{not} a set)%
\begin{equation*}
\mathfrak{G}=\left\{ \mathbf{G}\left( S\right) ~|~S\in \mathbf{Set}\right\}
\subseteq \mathbf{Pro}\left( k\right) ,
\end{equation*}%
where $\mathbf{G}\left( S\right) $ is the rudimentary pro-module (Remark \ref%
{Rem-Rudimentary}) corresponding to the $k$-module%
\begin{equation*}
\dprod\limits_{S}k^{\ast }=\dprod\limits_{S}Hom_{\mathbf{Ab}}\left( k,%
\mathbb{Q}/\mathbb{Z}\right)
\end{equation*}%
forms a class of cogenerators (\cite[1.9]{Grothendieck-Tohoku-1957-MR0102537}%
, \cite[Ch. 5.9]%
{Bucur-Deleanu-1968-Introduction-categories-functors-MR0236236}) of the
category $\mathbf{Pro}\left( k\right) $.
\end{enumerate}
\end{proposition}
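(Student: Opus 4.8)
The plan is to build every item on three pillars: the abelian structure of $\mathbf{Pro}\left( k\right) $ (Proposition \ref{Prop-Pro(K)-abelian}), its limit/colimit behaviour (Proposition \ref{Prop-Pro-objects-properties}), and the existence of an injective cogenerator in $\mathbf{Mod}\left( k\right) $. The single new ingredient is item (\ref{Prop-Pro-modules-properties-Injective}), and it carries the weight of the proof; the rest is assembled around it.

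First I would clear the categorical items. $\mathbf{Pro}\left( k\right) $ is abelian by Proposition \ref{Prop-Pro(K)-abelian} and complete and cocomplete by Proposition \ref{Prop-Pro-objects-properties}(\ref{Prop-Pro-objects-properties-Cocomplete}, \ref{Prop-Pro-objects-properties-Complete}), which gives (\ref{Prop-Pro-modules-properties-abelian-(co)complete}) since $AB3$ and $AB3^{\ast }$ are merely the existence of (co)products. $AB5^{\ast }$, item (\ref{Prop-Pro-modules-properties-AB5*}), is Proposition \ref{Prop-Pro-objects-properties}(\ref{Prop-Pro-objects-properties-Cofiltered-limits-exact}), and $AB4^{\ast }$, item (\ref{Prop-Pro-modules-properties-AB4*}), follows from it because a product is a cofiltered limit of finite products and finite products are exact in any abelian category. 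Item (\ref{Prop-Pro-modules-properties-colimit}) is formal, since $\left\langle \bullet ,T\right\rangle =Hom_{\mathbf{Pro}\left( k\right) }\left( \bullet ,T\right) $ turns colimits into limits, and item (\ref{Prop-Pro-modules-properties-cofiltered-limit}) is Proposition \ref{Prop-Pro-objects-properties}(\ref{Prop-Pro-objects-properties-Convert-limits-to-colimits}) evaluated at the rudimentary pro-module $T$. For item (\ref{Prop-Pro-modules-properties-product}) I would present $\dprod_{i\in I}\mathbf{X}_{i}$ as the cofiltered limit $\underleftarrow{\lim }$ of the finite subproducts $\dprod_{i\in A}\mathbf{X}_{i}$ over the finite subsets $A\subseteq I$, apply (\ref{Prop-Pro-modules-properties-cofiltered-limit}) and additivity of $\left\langle \bullet ,T\right\rangle $ to get $\left\langle \dprod_{i\in A}\mathbf{X}_{i},T\right\rangle \simeq \dbigoplus_{i\in A}\left\langle \mathbf{X}_{i},T\right\rangle $, and observe that the filtered colimit of these finite direct sums is $\dbigoplus_{i\in I}\left\langle \mathbf{X}_{i},T\right\rangle $.

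The heart is item (\ref{Prop-Pro-modules-properties-Injective}): for injective $T\in \mathbf{Mod}\left( k\right) $, the rudimentary pro-module $T$ is an injective object of $\mathbf{Pro}\left( k\right) $. Given a monomorphism $\iota \colon \mathbf{A}\hookrightarrow \mathbf{B}$ and a morphism $h\colon \mathbf{A}\rightarrow T$, I would levelize the single morphism $\iota $ (Proposition \ref{Th-Level-morphisms}) to $\left( \iota _{c}\right) \colon \left( A_{c}\right) _{c\in \mathbf{C}}\rightarrow \left( B_{c}\right) _{c\in \mathbf{C}}$ over one cofiltered index category, with $h$ represented by some $h_{c_{0}}\colon A_{c_{0}}\rightarrow T$. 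Since $\ker \iota =\left( \ker \iota _{c}\right) _{c}\simeq 0$, Corollary \ref{Cor-Zero-pro-object} yields a transition $s_{0}\colon c_{1}\rightarrow c_{0}$ with $\ker \iota _{c_{1}}\subseteq \ker \left( A\left( s_{0}\right) \right) $; then $h_{c_{0}}\circ A\left( s_{0}\right) \colon A_{c_{1}}\rightarrow T$ still represents $h$ and annihilates $\ker \iota _{c_{1}}$, hence factors through $\operatorname{im}\iota _{c_{1}}\subseteq B_{c_{1}}$, and injectivity of $T$ in $\mathbf{Mod}\left( k\right) $ extends it to $B_{c_{1}}\rightarrow T$, which represents the sought extension $\mathbf{B}\rightarrow T$. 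Granting this, $\left\langle \bullet ,T\right\rangle $ is an exact contravariant functor for injective $T$, so it commutes with the formation of $\ker /\operatorname{im}$; this gives item (\ref{Prop-Pro-modules-properties-Homology}), and item (\ref{Prop-Pro-modules-properties-limit}) follows by writing $\underleftarrow{\lim }_{i}\mathbf{X}_{i}$ as the kernel of a map of products, applying $\left\langle \bullet ,T\right\rangle $ together with item (\ref{Prop-Pro-modules-properties-product}), and recognising the resulting cokernel of a map of coproducts as $\underrightarrow{\lim }_{i}\left\langle \mathbf{X}_{i},T\right\rangle $. I expect the reindexing packaged into item (\ref{Prop-Pro-modules-properties-Injective}) to be the only real obstacle: a routine but careful pro-category manoeuvre resting on Proposition \ref{Th-Level-morphisms} and Corollary \ref{Cor-Zero-pro-object}, including the observation that the kernel of a level morphism is computed levelwise.

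For the cogenerators, item (\ref{Prop-Pro-modules-properties-cogenerators}), I would take $E=Hom_{\mathbb{Z}}\left( k,\mathbb{Q}/\mathbb{Z}\right) $, an injective cogenerator of $\mathbf{Mod}\left( k\right) $, so that each $\mathbf{G}\left( S\right) =\dprod_{S}E$ is an injective module. If $\mathbf{M}=\left( M_{i}\right) \not\simeq 0$, Corollary \ref{Cor-Zero-pro-object} provides an index $i_{0}$ with $M\left( s\right) \neq 0$ for every transition $s\colon j\rightarrow i_{0}$; with $S=Hom_{k}\left( M_{i_{0}},E\right) $, the tautological map $M_{i_{0}}\rightarrow \dprod_{S}E$ has nonzero image in $Hom_{\mathbf{Pro}\left( k\right) }\left( \mathbf{M},\mathbf{G}\left( S\right) \right) =\underrightarrow{\lim }_{i}Hom_{k}\left( M_{i},E\right) ^{S}$, since a single $s$ killing it would kill every homomorphism $M_{i_{0}}\rightarrow E$ and hence force $M\left( s\right) =0$ by cogeneration. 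This proves (\ref{Prop-Pro-modules-properties-cogenerators}); then item (\ref{Prop-Pro-modules-properties-Zero}) follows (if $\left\langle \mathbf{M},T\right\rangle =0$ for all injective $T$, then in particular $Hom_{\mathbf{Pro}\left( k\right) }\left( \mathbf{M},\mathbf{G}\left( S\right) \right) =0$ for all $S$, whence $\mathbf{M}\simeq 0$), item (\ref{Prop-Pro-modules-properties-Exact}) follows by combining (\ref{Prop-Pro-modules-properties-Homology}) and (\ref{Prop-Pro-modules-properties-Zero}), and item (\ref{Prop-Pro-modules-properties-AB4}) ($AB4$) follows from (\ref{Prop-Pro-modules-properties-Exact}) together with $\left\langle \dbigoplus_{i}\mathbf{X}_{i},T\right\rangle \simeq \dprod_{i}\left\langle \mathbf{X}_{i},T\right\rangle $ (a case of (\ref{Prop-Pro-modules-properties-colimit})) and exactness of products in $\mathbf{Mod}\left( k\right) $. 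Finally, for item (\ref{Prop-Pro-modules-properties-quasi-projective}) I would let $\mathbf{F}$ be the $\mathbf{Pro}$-extension of the functor sending a module to the free module on its underlying set; the counit of the free--forgetful adjunction gives a functorial level epimorphism $\mathbf{F}\left( \mathbf{M}\right) =\left( k^{\left( M_{i}\right) }\right) _{i}\twoheadrightarrow \mathbf{M}$, and $\mathbf{F}\left( \mathbf{M}\right) $ is quasi-projective by Proposition \ref{Prop-quasi-projective} because each $k^{\left( M_{i}\right) }$ is free.
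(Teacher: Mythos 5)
Your proposal is essentially correct but reorganizes the logic around a different keystone. The paper proves item (\ref{Prop-Pro-modules-properties-Homology}) first and directly --- levelize $\mathcal{E}$, use injectivity of $T$ in $\mathbf{Mod}\left( k\right) $ levelwise, and pass to the filtered colimit, which is exact in $\mathbf{Mod}\left( k\right) $ --- and then obtains (\ref{Prop-Pro-modules-properties-Exact}) and finally (\ref{Prop-Pro-modules-properties-Injective}) as consequences; it also proves (\ref{Prop-Pro-modules-properties-Zero}) independently via the Yoneda embedding and an injective envelope, and (\ref{Prop-Pro-modules-properties-AB4*}) by a direct pairing computation rather than from $AB5^{\ast }$. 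You invert this: you prove (\ref{Prop-Pro-modules-properties-Injective}) directly (levelize the monomorphism, kill the levelwise kernel using Corollary \ref{Cor-Zero-pro-object}, factor through the image and extend by injectivity in $\mathbf{Mod}\left( k\right) $ --- this argument is correct, and it uses the same levelization technology the paper deploys for (\ref{Prop-Pro-modules-properties-Homology})), then get (\ref{Prop-Pro-modules-properties-Homology}) formally from exactness of $\left\langle \bullet ,T\right\rangle $, derive (\ref{Prop-Pro-modules-properties-Zero}) from (\ref{Prop-Pro-modules-properties-cogenerators}), and deduce $AB4^{\ast }$ from $AB5^{\ast }$ plus exactness of finite products. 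All of these alternative derivations are sound; your route makes the role of the injective rudimentary object $T$ more transparent, while the paper's route gets the exactness statements without ever verifying the lifting property for $T$ by hand. Items (\ref{Prop-Pro-modules-properties-abelian-(co)complete})--(\ref{Prop-Pro-modules-properties-limit}) and (\ref{Prop-Pro-modules-properties-quasi-projective}) coincide with the paper's treatment.

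The one place where your argument is incomplete is (\ref{Prop-Pro-modules-properties-cogenerators}). What you actually prove is that the class $\mathfrak{G}$ detects nonzero objects: $\mathbf{M}\not


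\simeq

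0$ implies the existence of a nonzero morphism $\mathbf{M}\rightarrow \mathbf{G}\left( S\right) $. That is strictly weaker than the cogenerator property, which requires, for every non-trivial epimorphism $f:\mathbf{M}\twoheadrightarrow \mathbf{N}$, a morphism $\mathbf{M}\rightarrow \mathbf{G}\left( S\right) $ that does \textbf{not} factor through $\mathbf{N}$. The missing step is exactly the one the paper carries out: apply the detection argument to $\mathbf{K}=\ker f\neq 0$ to get a nonzero $\Phi :\mathbf{K}\rightarrow \mathbf{G}\left( S\right) $, and then extend $\Phi $ along $\mathbf{K}\rightarrowtail \mathbf{M}$ using the injectivity of $\mathbf{G}\left( S\right) $ as an object of $\mathbf{Pro}\left( k\right) $ (your item (\ref{Prop-Pro-modules-properties-Injective}) applied to the injective module $\prod_{S}k^{\ast }$); the resulting $\Psi :\mathbf{M}\rightarrow \mathbf{G}\left( S\right) $ cannot factor through $\mathbf{N}$ because it is nonzero on $\mathbf{K}$. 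You have all the ingredients --- you even record that each $\mathbf{G}\left( S\right) $ is an injective module --- but the extension step must be stated, since detection of nonzero objects alone does not yield a cogenerating class without injectivity of its members.
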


\begin{proof}
(\textbf{\ref{Prop-Pro-modules-properties-abelian-(co)cocomplete}}) It
follows from Proposition \ref{Prop-Pro(K)-abelian} that $\mathbf{Pro}\left(
k\right) $ is abelian. Due to Proposition \ref{Prop-Pro-objects-properties} (%
\ref{Prop-Pro-objects-properties-Cocomplete}, \ref%
{Prop-Pro-objects-properties-Complete}), $\mathbf{Pro}\left( k\right) $ is
complete and cocomplete. $AB3$ and $AB3^{\ast }$ follow immediately.

(\textbf{\ref{Prop-Pro-modules-properties-colimit}}) Follows from the
definition of a colimit.

(\textbf{\ref{Prop-Pro-modules-properties-cofiltered-limit}}) Follows from
Proposition \ref{Prop-Pro-objects-properties} (\ref%
{Prop-Pro-objects-properties-Convert-limits-to-colimits}).

(\textbf{\ref{Prop-Pro-modules-properties-product}}) Let%
\begin{equation*}
\mathbf{Fin}\left( I\right) =\mathbf{Cat}\left( X\left( I\right) \right)
^{op}
\end{equation*}%
(see Example \ref{Ex-(co)filtered-poset}) where $X\left( I\right) $ is the
set of \textbf{finite} subsets of $I$, ordered by inclusion. Then $X\left(
I\right) $ is a directed poset, and $\mathbf{Fin}\left( I\right) $ is a
cofiltered category (see Example \ref{Ex-(co)filtered-poset} again). It is
easy to check that%
\begin{equation*}
\dprod\limits_{i\in I}\mathbf{X}_{i}%
\simeq%
\underleftarrow{\lim }_{A\in \mathbf{Fin}\left( I\right) }\left[
\dprod\limits_{j\in A}\mathbf{X}_{j}\right] .
\end{equation*}%
It follows from the statement (\ref%
{Prop-Pro-modules-properties-cofiltered-limit}) of our theorem that%
\begin{eqnarray*}
&&\left\langle \dprod\limits_{i\in I}\mathbf{X}_{i}%
\simeq%
,T\right\rangle 
\simeq%
\underrightarrow{\lim }_{A\in \mathbf{Fin}\left( I\right) ^{op}}\left\langle
\dprod\limits_{j\in A}\mathbf{X}_{j},T\right\rangle 
\simeq
\\
&&%
\simeq%
\underrightarrow{\lim }_{A\in \mathbf{Fin}\left( I\right)
^{op}}\dbigoplus\limits_{j\in A}\left\langle \mathbf{X}_{j},T\right\rangle 
\simeq%
\dbigoplus\limits_{j\in I}\left\langle \mathbf{X}_{i},T\right\rangle .
\end{eqnarray*}

(\textbf{\ref{Prop-Pro-modules-properties-limit}}) Limits in any category
can be constructed as combinations of products and kernels. Let $T\in 
\mathbf{Mod}\left( k\right) $. It follows from (\ref%
{Prop-Pro-modules-properties-product}) that the pairing $\left\langle
\bullet ,T\right\rangle $ converts products into coproducts. If $T$ is
injective, then $\left\langle \bullet ,T\right\rangle $ converts kernels
into cokernels. Finally, $\left\langle \bullet ,T\right\rangle $ converts
arbitrary limits into colimits.

(\textbf{\ref{Prop-Pro-modules-properties-quasi-projective}}) The statement
is dual to the rather complicated Theorem 15.2.5 from \cite%
{Kashiwara-Categories-MR2182076}. However, the proof is much simpler in our
case. Given $\mathbf{M}=\left( M_{i}\right) _{i\in \mathbf{I}}$, let 
\begin{equation*}
\mathbf{F}\left( \mathbf{M}\right) =\left( Q_{i}\right) _{i\in \mathbf{I}},
\end{equation*}%
where $Q_{i}=F\left( M_{i}\right) $ is the free $k$-module generated by the
set of symbols $\left( \left[ m\right] \right) _{m\in M}$. A family of
epimorphisms 
\begin{equation*}
f_{i}:Q_{i}\longrightarrow M_{i}~\left( f_{i}\left( \dsum\limits_{j}\alpha
_{j}\left[ m_{j}\right] \right) =\dsum\limits_{j}\alpha _{j}m_{j},\alpha
_{j}\in k,m_{j}\in M_{i}\right) ,
\end{equation*}%
defines an epimorphism $f:\mathbf{F}\left( \mathbf{M}\right) \rightarrow 
\mathbf{M}$.%
\begin{equation*}
\mathbf{F}\left( \mathbf{M}\right) =\left( F\left( M_{i}\right) \right)
_{j\in \mathbf{I}}
\end{equation*}%
is quasi-projective (Proposition \ref{Prop-quasi-projective}), and the
epimorphism $F\left( \mathbf{M}\right) \twoheadrightarrow \mathbf{M}$ is as
desired.

(\textbf{\ref{Prop-Pro-modules-properties-Zero}}) The \textquotedblleft only
if\textquotedblright\ part is trivial. Assume now that $\mathbf{M}$ is 
\textbf{not} isomorphic to $\mathbf{0}$. Since%
\begin{equation*}
\mathbf{Pro}\left( k\right) \hookrightarrow \left( \mathbf{Set}^{\mathbf{Mod}%
\left( k\right) }\right) ^{op}
\end{equation*}%
is a full embedding (by definition!), there exists a $N\in \mathbf{Mod}%
\left( k\right) $ with%
\begin{equation*}
\left\langle \mathbf{M},N\right\rangle =Hom_{\mathbf{Pro}\left( k\right)
}\left( \mathbf{M},N\right) \neq 0.
\end{equation*}%
Choose an embedding $N\hookrightarrow T$ into an injective $k$-module. Then%
\begin{equation*}
\left\langle \mathbf{M},N\right\rangle \longrightarrow \left\langle \mathbf{M%
},T\right\rangle
\end{equation*}%
is a monomorphism. It follows that $\left\langle \mathbf{M},T\right\rangle
\neq 0$ as well.

(\textbf{\ref{Prop-Pro-modules-properties-Homology}}) Due to Proposition \ref%
{Prop-Level-morphisms}, one can assume that $\mathcal{E}$ is a level diagram:%
\begin{equation*}
\begin{diagram}
\left( M_{i}\right) _{i\in \mathbf{I}} & \lTo^{\left( \alpha _{i}\right) } & \left( N_{i}\right) _{i\in \mathbf{I}} & \lTo^{\left( \beta _{i}\right) } & \left( K_{i}\right) _{i\in \mathbf{I}}.
\end{diagram}%
\end{equation*}%
Since $T$ is injective, the sequences%
\begin{equation*}
\left\langle \mathcal{E}_{i},T\right\rangle =\left[ 
\begin{diagram}
\left\langle M_{i},T\right\rangle & \rTo^{\left\langle \alpha _{i},T\right\rangle } & \left\langle N_{i},T\right\rangle & \rTo^{\left\langle \beta _{i},T\right\rangle } & \left\langle K_{i},T\right\rangle
\end{diagram}%
\right]
\end{equation*}%
satisfy%
\begin{equation*}
H\left\langle \mathcal{E}_{i},T\right\rangle 
\simeq%
\left\langle H\left( \mathcal{E}_{i}\right) ,T\right\rangle .
\end{equation*}%
The category $\mathbf{I}^{op}$ is filtered, and filtered colimits are exact
in the category $\mathbf{Mod}\left( k\right) $, therefore%
\begin{equation*}
\left\langle H\left( \mathcal{E}\right) ,T\right\rangle 
\simeq%
\underrightarrow{\lim }_{i\in \mathbf{I}^{op}}\left\langle H\left( \mathcal{E%
}_{i}\right) ,T\right\rangle 
\simeq%
\underrightarrow{\lim }_{i\in \mathbf{I}^{op}}H\left\langle \mathcal{E}%
_{i},T\right\rangle 
\simeq%
H\left( \left\langle \mathcal{E},T\right\rangle \right) .
\end{equation*}

(\textbf{\ref{Prop-Pro-modules-properties-Exact}}) It follows from the
statement (\ref{Prop-Pro-modules-properties-Homology}) of our theorem that%
\begin{equation*}
\left\langle H\left( \mathcal{E}\right) ,T\right\rangle 
\simeq%
H\left( \left\langle \mathcal{E},T\right\rangle \right) .
\end{equation*}%
Applying (\ref{Prop-Pro-modules-properties-Zero}) of our theorem, one gets%
\begin{equation*}
H\left( \mathcal{E}\right) =\mathbf{0}\iff \forall \text{(injective }T\text{)%
}\left[ H\left( \left\langle \mathcal{E},T\right\rangle \right) =0\right] ,
\end{equation*}%
therefore $\mathcal{E}$ is exact iff $\left\langle \mathcal{E}%
,T\right\rangle $ is exact for all injective $T\in \mathbf{Mod}\left(
k\right) $.

(\textbf{\ref{Prop-Pro-modules-properties-Injective}}) Follows easily from (%
\ref{Prop-Pro-modules-properties-Exact}).

(\textbf{\ref{Prop-Pro-modules-properties-AB4}}) Let%
\begin{equation*}
\left( f_{i}:A_{i}\longrightarrow B_{i}\right) _{i\in I}
\end{equation*}%
be a family of monomorphisms, and $T\in \mathbf{Mod}\left( k\right) $ be
injective. Then all the homomorphisms%
\begin{equation*}
\left\langle f_{i},T\right\rangle :\left\langle B_{i},T\right\rangle
\longrightarrow \left\langle A_{i},T\right\rangle
\end{equation*}%
are epimorphisms in $\mathbf{Mod}\left( k\right) $. Therefore, the
homomorphism%
\begin{equation*}
\left\langle \dbigoplus\limits_{i\in I}f_{i},T\right\rangle
=\dprod\limits_{i\in I}\left\langle f_{i},T\right\rangle
:\dprod\limits_{i\in I}\left\langle B_{i},T\right\rangle =\left\langle
\dbigoplus\limits_{i\in I}B_{i},T\right\rangle \longrightarrow \left\langle
\dbigoplus\limits_{i\in I}A_{i},T\right\rangle =\dprod\limits_{i\in
I}\left\langle B_{i},T\right\rangle
\end{equation*}%
is an epimorphism in $\mathbf{Mod}\left( k\right) $ for any injective $T$.
It follows that $\dbigoplus\limits_{i\in I}f_{i}$ is a monomorphism in $%
\mathbf{Pro}\left( k\right) $.

(\textbf{\ref{Prop-Pro-modules-properties-AB4*}}) Let%
\begin{equation*}
\left( f_{i}:A_{i}\longrightarrow B_{i}\right) _{i\in I}
\end{equation*}%
be a family of epimorphisms, and $T\in \mathbf{Mod}\left( k\right) $ be
injective. Then all the homomorphisms%
\begin{equation*}
\left\langle f_{i},T\right\rangle :\left\langle B_{i},T\right\rangle
\longrightarrow \left\langle A_{i},T\right\rangle
\end{equation*}%
are monomorphisms in $\mathbf{Mod}\left( k\right) $. Therefore, the
homomorphism%
\begin{equation*}
\left\langle \dprod\limits_{i\in I}f_{i},T\right\rangle
=\dbigoplus\limits_{i\in I}\left\langle f_{i},T\right\rangle
:\dbigoplus\limits_{i\in I}\left\langle B_{i},T\right\rangle =\left\langle
\dprod\limits_{i\in I}B_{i},T\right\rangle \longrightarrow \left\langle
\dprod\limits_{i\in I}A_{i},T\right\rangle =\dbigoplus\limits_{i\in
I}\left\langle B_{i},T\right\rangle
\end{equation*}%
is a monomorphism in $\mathbf{Mod}\left( k\right) $ for any injective $T$.
It follows that $\dprod\limits_{i\in I}f_{i}$ is an epimorphism in $\mathbf{%
Pro}\left( k\right) $.

(\textbf{\ref{Prop-Pro-modules-properties-cofiltered-limits-exact}}) Follows
from Proposition \ref{Prop-Pro-objects-properties} (\ref%
{Prop-Pro-objects-properties-Cofiltered-limits-exact}).

(\textbf{\ref{Prop-Pro-modules-properties-cogenerators}}) Since%
\begin{equation*}
Hom_{\mathbf{Mod}\left( k\right) }\left( \bullet ,k^{\ast }\right) 
\simeq%
Hom_{\mathbf{Ab}}\left( \bullet ,\mathbb{Q}/\mathbb{Z}\right) ,
\end{equation*}%
and $\mathbb{Q}/\mathbb{Z}$ is a cogenerator in the category $\mathbf{Ab}$, $%
k^{\ast }$ is an \textbf{injective} cogenerator in $\mathbf{Mod}\left(
k\right) $. In fact, $k^{\ast }$ is \textbf{injective} in $\mathbf{Pro}%
\left( k\right) $ as well. Indeed, it is enough to apply part (\ref%
{Prop-Pro-modules-properties-Injective}) of our theorem to $T=k^{\ast }$.

Let now%
\begin{equation*}
f:M\twoheadrightarrow N
\end{equation*}%
be a non-trivial (not an isomorphism!) epimorphism in $\mathbf{Pro}\left(
k\right) $. Let%
\begin{equation*}
K=\ker f\neq 0.
\end{equation*}%
We can assume that $f$ and $h:K\rightarrowtail M$ are level morphisms:%
\begin{equation*}
\begin{diagram}
0 & \rTo & \left( K_{i}{{:=}}\ker f_{i}\right) _{i\in \mathbf{I}} & \rTo^{h=\left( h_{i}\right)} & \left( M_{i}\right) _{i\in \mathbf{I}} & \rTo^{f=\left( f_{i}\right)} & \left( N_{i}\right) _{i\in \mathbf{I}}. 
\end{diagram}
%
\end{equation*}%
Due to Corollary \ref{Cor-Zero-pro-object}, there exists an $i\in \mathbf{I}$%
, such that $K\left( t\right) \neq 0$ for any $t:j\rightarrow i$. It follows
that $K_{i}\neq 0$. Let%
\begin{equation*}
S=\left\{ \left( t:j\rightarrow i\right) \in \mathbf{I}\right\} ,
\end{equation*}%
and let%
\begin{equation*}
\mathbf{G}\left( S\right) \in \mathbf{Pro}\left( k\right)
\end{equation*}%
be the rudimentary pro-module corresponding to $\dprod\limits_{t\in
S}k^{\ast }$. Due to (\ref{Prop-Pro-modules-properties-Injective}), $\mathbf{%
G}\left( S\right) $ is an injective pro-module. Since $k^{\ast }$ is an
injective cogenerator for $\mathbf{Mod}\left( k\right) $, we can for each $%
\left( t:j\rightarrow i\right) \in S$, choose a homomorphism%
\begin{equation*}
\varphi _{t}:K_{i}\longrightarrow k^{\ast },
\end{equation*}%
such that the composition%
\begin{equation*}
\varphi _{t}\circ K\left( t\right)
\end{equation*}%
is nonzero. Let%
\begin{equation*}
\varphi =\left( \dprod\limits_{t\in S}\varphi _{t}\right)
:K_{i}\longrightarrow \dprod\limits_{t\in S}k^{\ast }.
\end{equation*}%
The corresponding morphism%
\begin{equation*}
\Phi :\mathbf{K}\longrightarrow \mathbf{G}\left( S\right)
\end{equation*}%
is \textbf{nonzero}. Indeed, if it is zero, then there exists a $%
t:j\rightarrow i$ with%
\begin{equation*}
\Phi \circ K\left( t\right) =0.
\end{equation*}%
However,%
\begin{equation*}
\pi _{t}\circ \Phi \circ K\left( t\right) =\varphi _{t}\circ K\left(
t\right) \neq 0,
\end{equation*}%
where%
\begin{equation*}
\pi _{t}:\dprod\limits_{t\in S}k^{\ast }\longrightarrow k^{\ast }
\end{equation*}%
is the $t$-th projection. Denote by the same letter $\varphi _{i}$ the
corresponding morphism%
\begin{equation*}
\left( \varphi _{i}:K\longrightarrow k^{\ast }\right) \in \mathbf{Pro}\left(
k\right) .
\end{equation*}%
The morphism $\Phi $ can be extended, due to injectivity of $\mathbf{G}%
\left( S\right) $, to a morphism%
\begin{equation*}
\Psi :M\longrightarrow \mathbf{G}\left( S\right) .
\end{equation*}%
Since the composition $\Phi =\Psi \circ h$ is nonzero, the morphism $\Psi $ 
\textbf{cannot} be factored through $N$.
\end{proof}

\subsection{Derived categories}

We use here the \textquotedblleft classical\textquotedblright\ definition of
an $F$-projective category. The subcategories, which are called
\textquotedblleft $F$-projective\textquotedblright\ in \cite[Definition
13.3.4]{Kashiwara-Categories-MR2182076}, will be called \textbf{weak }$F$%
\textbf{-projective} in this paper.

\begin{definition}
\label{Def-F-projective}Let 
\begin{equation*}
F:\mathbf{C}\longrightarrow \mathbf{E}
\end{equation*}%
be a right exact additive functor of abelian categories, and let $\mathbf{P}$
be a full additive subcategory of $\mathbf{C}$. Then:

$\mathbf{P}$ is called \textbf{weak }$F$\textbf{-projective} if $\mathbf{P}$
satisfies the definition of an $F$-projective subcategory in \cite[%
Definition 13.3.4]{Kashiwara-Categories-MR2182076}.

$\mathbf{P}$ is called $F$-\textbf{projective} if it satisfies the following
conditions:

\begin{enumerate}
\item \label{Def-F-projective-generating}The category $\mathbf{P}$ is 
\textbf{generating} in $\mathbf{C}$ (i.e. for any object $X\in \mathbf{C}$
there exists an epimorphism $P\twoheadrightarrow X$ with $P\in \mathbf{P}$);

\item \label{Def-F-projective-closed-by-extensions}For any exact sequence%
\begin{equation*}
0\longrightarrow X^{\prime }\longrightarrow X\longrightarrow X^{\prime
\prime }\longrightarrow 0
\end{equation*}%
in $\mathbf{C}$ with $X$, $X^{\prime \prime }\in \mathbf{P}$, we have $%
X^{\prime }\in \mathbf{P}$;

\item \label{Def-F-projective-acyclic}For any exact sequence%
\begin{equation*}
0\longrightarrow X^{\prime }\longrightarrow X\longrightarrow X^{\prime
\prime }\longrightarrow 0
\end{equation*}%
in $\mathbf{C}$ with $X$, $X^{\prime \prime }\in \mathbf{P}$, the sequence%
\begin{equation*}
0\longrightarrow F\left( X^{\prime }\right) \longrightarrow F\left( X\right)
\longrightarrow F\left( X^{\prime \prime }\right) \longrightarrow 0
\end{equation*}%
is exact.
\end{enumerate}
\end{definition}

\begin{notation}
\label{Not-Derived-category}For an abelian category $\mathbf{E}$, let:

\begin{enumerate}
\item $C\left( \mathbf{E}\right) $ denote the category of \textbf{bounded
below} chain complexes in $\mathbf{E}$;

\item a \textbf{qis} denote a \textbf{quasi-isomorphism} in $C\left( \mathbf{%
E}\right) $, i.e. a homomorphism%
\begin{equation*}
X_{\bullet }\longrightarrow Y_{\bullet }
\end{equation*}%
inducing an isomorphism of the homologies;

\item a complex $X_{\bullet }$ be \textbf{qis} to $Y_{\bullet }$ iff there
is a qis $X_{\bullet }\rightarrow Y_{\bullet }$;

\item $K\left( \mathbf{E}\right) $ denote the homotopy category of $C\left( 
\mathbf{E}\right) $, i.e. morphisms%
\begin{equation*}
X_{\bullet }\longrightarrow Y_{\bullet }
\end{equation*}%
in $K\left( \mathbf{E}\right) $ are \textbf{classes} of homotopic maps $%
X_{\bullet }\rightarrow Y_{\bullet }$;

\item $D\left( \mathbf{E}\right) $ denote the corresponding derived category
of $K\left( \mathbf{E}\right) $, i.e.%
\begin{equation*}
D\left( \mathbf{E}\right) =K\left( \mathbf{E}\right) /N\left( \mathbf{E}%
\right)
\end{equation*}%
where $N\left( \mathbf{E}\right) $ is the full subcategory of $K\left( 
\mathbf{E}\right) $ consisting of complexes qis to $\mathbf{0}$.
\end{enumerate}
\end{notation}

\begin{proposition}
\label{Prop-Left-derived}Let $F:\mathbf{C}\rightarrow \mathbf{E}$ be an
additive functor of abelian categories, and let $\mathbf{P}$ be a full
additive subcategory of $\mathbf{C}$. Assume $\mathbf{P}$ is $F$-projective.
Then:

\begin{enumerate}
\item $\mathbf{P}$ is weak $F$-projective.

\item The left satellite%
\begin{equation*}
LF:D(\mathbf{C})\longrightarrow D(\mathbf{E})
\end{equation*}%
exists, and%
\begin{equation*}
LF(X_{\bullet })%
\simeq%
F(Y_{\bullet })
\end{equation*}%
for any qis%
\begin{equation*}
Y_{\bullet }\longrightarrow X_{\bullet }
\end{equation*}%
with $Y_{\bullet }\in K\left( \mathbf{P}\right) $.
\end{enumerate}
\end{proposition}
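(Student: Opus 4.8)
The plan is to reduce both assertions to the derived-functor machinery of \cite[\S 13.3]{Kashiwara-Categories-MR2182076}, the only new input being part (1): that a subcategory which is $F$-projective in the sense of Definition \ref{Def-F-projective} is automatically weak $F$-projective, i.e.\ $F$-projective in the sense of \cite[Definition 13.3.4]{Kashiwara-Categories-MR2182076}.

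For part (1) I would run through the (dualized) conditions of \cite[Definition 13.3.4]{Kashiwara-Categories-MR2182076} one by one. That $\mathbf{P}$ be generating in $\mathbf{C}$ is condition (\ref{Def-F-projective-generating}) verbatim. That $\mathbf{P}$ be stable under passing to the kernel term of a short exact sequence whose other two terms lie in $\mathbf{P}$ is condition (\ref{Def-F-projective-closed-by-extensions}) verbatim; in particular, any short exact sequence $0\to X'\to X\to X''\to 0$ with $X,X''\in\mathbf{P}$ has all three terms in $\mathbf{P}$. Finally, for such a sequence, condition (\ref{Def-F-projective-acyclic}) asserts precisely that $0\to FX'\to FX\to FX''\to 0$ is exact, which is exactly the remaining requirement in \cite[Definition 13.3.4]{Kashiwara-Categories-MR2182076}. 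Hence $\mathbf{P}$ is weak $F$-projective, proving (1).

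For part (2), with $\mathbf{P}$ weak $F$-projective I would invoke the existence theorem for left derived functors relative to an $F$-projective subcategory from \cite[\S 13.3]{Kashiwara-Categories-MR2182076}; here $C(\mathbf{C})$, $K(\mathbf{C})$, $D(\mathbf{C})$ are the bounded-below versions of Notation \ref{Not-Derived-category}, which is exactly the range in which that theorem applies. Unwound, the argument is: (i) every $X_\bullet\in C(\mathbf{C})$ admits a qis $P_\bullet\to X_\bullet$ with $P_\bullet\in K(\mathbf{P})$, built degreewise using (\ref{Def-F-projective-generating}) in the usual inductive fashion, the syzygies remaining in $\mathbf{P}$ by (\ref{Def-F-projective-closed-by-extensions}); (ii) $F$ sends a qis between objects of $K(\mathbf{P})$ to a qis, because its mapping cone is a bounded-below exact complex with terms in $\mathbf{P}$, and $F$ keeps such a complex exact — break it into short exact sequences $0\to Z_n\to P_n\to Z_{n-1}\to 0$, observe by downward induction from the top nonzero degree, using (\ref{Def-F-projective-closed-by-extensions}), that every $Z_n\in\mathbf{P}$, apply (\ref{Def-F-projective-acyclic}) to each, and splice the resulting short exact sequences $0\to FZ_n\to FP_n\to FZ_{n-1}\to 0$ via additivity of $F$; (iii) consequently $LF\colon D(\mathbf{C})\to D(\mathbf{E})$ is well defined and is computed by any $\mathbf{P}$-resolution, which is exactly the asserted formula $LF(X_\bullet)\simeq F(Y_\bullet)$ for a qis $Y_\bullet\to X_\bullet$ with $Y_\bullet\in K(\mathbf{P})$.

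The only genuinely delicate point is step (ii): that $F$ turns a bounded-below acyclic complex with entries in $\mathbf{P}$ into an acyclic complex. One must anchor the downward induction on the syzygies correctly at the highest degree in which the cone is nonzero, and then verify that the long sequence obtained by splicing the $F$-images of the short exact sequences is exact in the middle — the standard splicing computation. Everything else is either a verbatim match with Definition \ref{Def-F-projective} or a direct appeal to \cite[\S 13.3]{Kashiwara-Categories-MR2182076}, so I expect no further obstacles.
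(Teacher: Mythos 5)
Your proposal is correct and follows the same route as the paper: the paper's entire proof is the citation ``dual to Proposition 13.3.5 and Corollary 13.3.8'' of Kashiwara--Schapira, and your part (1) together with the appeal to the relative derived-functor machinery of \S 13.3 is precisely that argument unwound. One small orientation slip: since the complexes here are bounded-below \emph{chain} complexes, the acyclicity induction on the syzygies in your step (ii) must be anchored at the \emph{lowest} nonzero degree of the cone and run upward in homological degree (there need be no top nonzero degree), but this does not affect the substance of the argument.
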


\begin{proof}
Follows from \cite[dual to Proposition 13.3.5 and Corollary 13.3.8]%
{Kashiwara-Categories-MR2182076}.
\end{proof}

Using $F$-projective subcategories, one can define left \textbf{satellites}
of the functor $F$.

\begin{definition}
\label{Def-Left-derived-functors}\label{Def-Left-satellites}In the
conditions of Proposition \ref{Prop-Left-derived} let $X\in \mathbf{C}$.
Considering $X$ as a complex concentrated in degree $0$, take a qis $%
P_{\bullet }\longrightarrow X$, i.e. a \textbf{resolution}%
\begin{equation*}
0\longleftarrow X\longleftarrow P_{0}\longleftarrow P_{1}\longleftarrow
P_{2}\longleftarrow ...\longleftarrow P_{n}\longleftarrow ...
\end{equation*}%
with $P_{\bullet }\in K\left( \mathbf{P}\right) $. Define%
\begin{equation*}
L_{n}F\left( X\right) 
{:=}%
H_{n}\left( P_{\bullet }\right) .
\end{equation*}%
It is easy to check that $L_{n}F$, $n\geq 0$, are additive functors%
\begin{equation*}
L_{n}F:\mathbf{C\longrightarrow E,}
\end{equation*}%
that $L_{n}F=\mathbf{0}$ if $n<0$, and that $L_{0}F%
\simeq%
F$ if $F$ is right exact.

The functors $L_{n}F$ are called the \textbf{left satellites} of $F$.
\end{definition}

\subsection{Bicomplexes}

In this section, $\mathbf{K}$ is assumed to be an abelian category. We
consider only \textbf{first quadrant chain} bicomplexes.

\begin{definition}
\label{Def-Bicomplex}A \textbf{bicomplex} in $\mathbf{K}$ is a collection%
\begin{equation*}
X_{\bullet ,\bullet }=\left( X_{s,t},d_{s,t},\delta _{s,t}\right) _{s,t\in 
\mathbb{Z}}
\end{equation*}%
of objects and morphisms%
\begin{eqnarray*}
X_{s,t} &\in &\mathbf{K,} \\
d_{s,t} &\in &Hom_{\mathbf{K}}\left( X_{s+1,t},X_{s,t}\right) , \\
\delta _{s,t} &\in &Hom_{\mathbf{K}}\left( X_{s,t+1},X_{s,t}\right) ,
\end{eqnarray*}%
such that for all $s,t\in \mathbb{Z}$%
\begin{eqnarray*}
X_{s,t} &=&0\text{ if }s<0\text{ or }t<0, \\
d_{s-1,t}\circ d_{s,t} &=&0, \\
\delta _{s,t-1}\circ \delta _{s,t} &=&0, \\
d_{s-1,t-1}\circ \delta _{s,t-1} &=&\delta _{s-1,t-1}\circ d_{s-1,t}.
\end{eqnarray*}
\end{definition}

\begin{definition}
\label{Def-total-complex}If $\left( X_{\bullet ,\bullet },d,\delta \right) $
be a bicomplex, let $Tot_{\bullet }\left( X\right) $ be the following chain
complex:%
\begin{equation*}
Tot_{n}\left( X\right)
=\dbigoplus\limits_{s+t=n}X_{s,t}=\dbigoplus\limits_{s+t=n}X_{s,t}%
\simeq%
\dprod\limits_{s+t=n}X_{s,t}
\end{equation*}%
with the differential%
\begin{equation*}
\partial _{n}:Tot_{n+1}\left( X\right) \longrightarrow Tot_{n}\left(
X\right) ,
\end{equation*}%
given by%
\begin{equation*}
\partial _{n}\circ \iota _{s,t}=\iota _{s-1,t}\circ d+\left( -1\right)
^{s}\iota _{s,t-1}\circ \delta ,
\end{equation*}%
where%
\begin{equation*}
\iota _{s,t}:X_{s,t}\rightarrowtail Tot_{n}\left( X\right)
\end{equation*}%
is the natural embedding into the coproduct.
\end{definition}

\begin{theorem}
\label{Th-Spectral-sequence}Let $\left( X_{\bullet ,\bullet },d,\delta
\right) $ be a first quadrant bicomplex in $\mathbf{K}$. All objects below
depend \textbf{functorially} on $X_{\bullet ,\bullet }$, and all morphisms
are \textbf{natural} in $X_{\bullet ,\bullet }$.

\begin{enumerate}
\item \label{Th-Spectral-sequence-exact-couple}There exist two families ($%
r\geq 1$) of (\textbf{vertical} and \textbf{horizontal}) bigraded derived
exact couples, and two corresponding spectral sequences ($i^{r}$, $j^{r}$,
and $k^{r}$ have bidegrees indicated on the corresponding diagrams):

\begin{enumerate}
\item 
\begin{equation*}
\begin{diagram}[size=3.0em,textflow]
{^{ver}D^{r}} &    & \rTo^{i^{r}}_{\left( 1,-1\right) }  &       & ^{ver}D^{r} \\
  & \luTo^{k^{r}}_{\left( -r,r-1\right)}  &     & \ldTo^{j^{r}}_{\left( 0,0\right)}  \\
  &        & ^{ver}E^{r} \\
\end{diagram}%
\end{equation*}%
where%
\begin{eqnarray*}
^{ver}D_{s,t}^{r} &\neq &0\text{ only if }s,s+t\geq 0, \\
^{ver}E_{s,t}^{r} &\neq &0\text{ only if }s,t\geq 0, \\
^{ver}E_{s,t}^{r} &=&\left( ^{ver}E_{s,t}^{r},~^{ver}d^{r}=j^{r}\circ
k^{r}:~^{ver}E_{s,t}^{r}\longrightarrow ~^{ver}E_{s-r,t+r-1}^{r}\right) , \\
&&^{ver}E_{s,t}^{r+1}%
\simeq%
H\left( ^{ver}E_{s,t}^{r},~^{ver}d^{r}\right) .
\end{eqnarray*}

\item 
\begin{equation*}
\begin{diagram}[size=3.0em,textflow]
{^{hor}D^{r}} &    & \rTo^{i^{r}}_{\left( -1,1\right) }  &       & ^{hor}D^{r} \\
  & \luTo^{k^{r}}_{\left( r-1,-r\right)}  &     & \ldTo^{j^{r}}_{\left( 0,0\right)}  \\
  &        & ^{hor}E^{r} \\
\end{diagram}%
\end{equation*}%
where%
\begin{eqnarray*}
^{hor}D_{s,t}^{r} &\neq &0\text{ only if }t,s+t\geq 0, \\
^{hor}E_{s,t}^{r} &\neq &0\text{ only if }s,t\geq 0, \\
^{hor}E_{s,t}^{r} &=&\left( ^{hor}E_{s,t}^{r},~^{hor}d^{r}=j^{r}\circ
k^{r}:~^{hor}E_{s,t}^{r}\longrightarrow ~^{hor}E_{s+r-1,t-r}^{r}\right) , \\
&&^{hor}E_{s,t}^{r+1}%
\simeq%
H\left( ^{hor}E_{s,t}^{r},~^{hor}d^{r}\right) .
\end{eqnarray*}
\end{enumerate}

\item \label{Th-Spectral-sequence-E-0}We introduce an extra entry $E^{0}$:

\begin{enumerate}
\item 
\begin{equation*}
^{ver}E_{\bullet ,\bullet }^{0}%
{:=}%
\left( X_{\bullet ,\bullet },d^{0}=\delta \right) .
\end{equation*}

\item 
\begin{equation*}
^{hor}E_{\bullet ,\bullet }^{0}%
{:=}%
\left( X_{\bullet ,\bullet },d^{0}=d\right) .
\end{equation*}
\end{enumerate}

\item \label{Th-Spectral-sequence-E-1}~

\begin{enumerate}
\item 
\begin{equation*}
^{ver}E_{\bullet ,t}^{1}%
\simeq%
\left( ^{ver}H_{t}\left( X_{\bullet ,\bullet }\right)
,d^{1}=d|_{^{ver}H\left( X_{\bullet ,\bullet }\right) }\right) .
\end{equation*}

\item 
\begin{equation*}
^{hor}E_{s,\bullet }^{1}%
\simeq%
\left( ^{hor}H_{s}\left( X_{\bullet ,\bullet }\right) ,d^{1}=\delta
|_{^{hor}H\left( X_{\bullet ,\bullet }\right) }\right) .
\end{equation*}
\end{enumerate}

\item \label{Th-Spectral-sequence-E-2}~

\begin{enumerate}
\item 
\begin{equation*}
^{ver}E_{s,t}^{2}%
\simeq%
~^{hor}H_{s}\left( ^{ver}H_{t}\left( X_{\bullet ,\bullet }\right) \right) .
\end{equation*}

\item 
\begin{equation*}
^{hor}E_{s,t}^{2}%
\simeq%
~^{ver}H_{t}\left( ^{hor}H_{s}\left( X_{\bullet ,\bullet }\right) \right) .
\end{equation*}
\end{enumerate}

\item \label{Th-Spectral-sequence-D-stabilizes}~

\begin{enumerate}
\item For each pair $\left( s,t\right) $ the sequence $^{ver}D^{r}$
stabilizes:%
\begin{equation*}
^{ver}D_{s,t}^{r}\longrightarrow ~^{ver}D_{s,t}^{r+1}%
{=:}%
~^{ver}D_{s,t}^{\infty }
\end{equation*}%
is an isomorphism whenever $r\gg 0$.

\item For each pair $\left( s,t\right) $ the sequence $^{hor}D^{r}$
stabilizes:%
\begin{equation*}
^{hor}D_{s,t}^{r}\longrightarrow ~^{hor}D_{s,t}^{r+1}%
{=:}%
~^{hor}D_{s,t}^{\infty }
\end{equation*}%
is an isomorphism whenever $r\gg 0$.
\end{enumerate}

\item \label{Th-Spectral-sequence-E-stabilizes}~

\begin{enumerate}
\item For each pair $\left( s,t\right) $ the sequence $^{ver}E^{r}$
stabilizes:%
\begin{equation*}
^{ver}E_{s,t}^{r}\longrightarrow ~^{ver}E_{s,t}^{r+1}%
{=:}%
~^{ver}E_{s,t}^{\infty }
\end{equation*}%
is an isomorphism whenever $r\gg 0$.

\item For each pair $\left( s,t\right) $ the sequence $^{hor}E^{r}$
stabilizes:%
\begin{equation*}
^{hor}E_{s,t}^{r}\longrightarrow ~^{hor}E_{s,t}^{r+1}%
{=:}%
~^{hor}E_{s,t}^{\infty }
\end{equation*}%
is an isomorphism whenever $r\gg 0$.
\end{enumerate}

\item \label{Th-Spectral-sequence-convergence}The two spectral sequences
converge to $H_{\bullet }\left( Tot_{\bullet }\left( X\right) \right) $ in
the following sense:

\begin{enumerate}
\item For each $n\geq 0$, the sequence below consists of monomorphisms%
\begin{equation*}
\left[ 0=~^{ver}D_{-1,n+1}^{\infty }\right] \rightarrowtail
~^{ver}D_{0,n}^{\infty }\rightarrowtail ~^{ver}D_{1,n-1}^{\infty
}\rightarrowtail ...\rightarrowtail ~^{ver}D_{n,0}^{\infty }%
\simeq%
H_{n}\left( Tot_{\bullet }\left( X\right) \right) ,
\end{equation*}%
and for each $s$, $t$%
\begin{equation*}
\coker%
\left( ^{ver}D_{s-1,t+1}^{\infty }\rightarrowtail ~^{ver}D_{s,t}^{\infty
}\right) 
\simeq%
~^{ver}E_{s,t}^{\infty }.
\end{equation*}

\item For each $n\geq 0$, the sequence below consists of monomorphisms%
\begin{equation*}
\left[ 0=~^{hor}D_{n+1,-1}^{\infty }\right] \rightarrowtail
~^{hor}D_{n,0}^{\infty }\rightarrowtail ~^{hor}D_{n-1,1}^{\infty
}\rightarrowtail ...\rightarrowtail ~^{hor}D_{0,n}^{\infty }%
\simeq%
H_{n}\left( Tot_{\bullet }\left( X\right) \right) ,
\end{equation*}%
and for each $s$, $t$%
\begin{equation*}
\coker%
\left( ^{hor}D_{s+1,t-1}^{\infty }\rightarrowtail ~^{hor}D_{s,t}^{\infty
}\right) 
\simeq%
~^{hor}E_{s,t}^{\infty }.
\end{equation*}
\end{enumerate}

\item \label{Th-Spectral-sequence-isomorphism}Let $f_{\bullet ,\bullet
}:X_{\bullet ,\bullet }\rightarrow Y_{\bullet ,\bullet }$ be a morphism of
bicomplexes, and let $r\geq 1$.

\begin{enumerate}
\item If for some $r$%
\begin{equation*}
^{ver}E_{s,t}^{r}\left( f\right) :~^{ver}E_{s,t}^{r}\left( X\right)
\longrightarrow ~^{ver}E_{s,t}^{r}\left( Y\right)
\end{equation*}%
is an isomorphism for all $s$, $t$, then%
\begin{equation*}
H_{n}\left( Tot_{\bullet }\left( f\right) \right) :H_{n}\left( Tot_{\bullet
}\left( X\right) \right) \longrightarrow H_{n}\left( Tot_{\bullet }\left(
Y\right) \right)
\end{equation*}%
is an isomorphism for all $n$.

\item If for some $r$%
\begin{equation*}
^{hor}E_{s,t}^{r}\left( f\right) :~^{hor}E_{s,t}^{r}\left( X\right)
\longrightarrow ~^{hor}E_{s,t}^{r}\left( Y\right)
\end{equation*}%
is an isomorphism for all $s$, $t$, then%
\begin{equation*}
H_{n}\left( Tot_{\bullet }\left( f\right) \right) :H_{n}\left( Tot_{\bullet
}\left( X\right) \right) \longrightarrow H_{n}\left( Tot_{\bullet }\left(
Y\right) \right)
\end{equation*}%
is an isomorphism for all $n$.
\end{enumerate}

\item \label{Th-Spectral-sequence-edge-homomorphism}~

\begin{enumerate}
\item For all $r$, $1\leq r\leq \infty $, and all $n$,%
\begin{equation*}
^{ver}D_{0,n}^{r}%
\simeq%
~^{ver}E_{0,n}^{r}.
\end{equation*}%
The composition%
\begin{equation*}
^{ver}H_{n}\left( X_{0,\bullet }\right)
=~^{ver}E_{0,n}^{1}\twoheadrightarrow ~^{ver}E_{0,n}^{\infty }%
\simeq%
~^{ver}D_{0,n}^{\infty }\rightarrowtail H_{n}\left( Tot_{\bullet }\left(
X\right) \right)
\end{equation*}%
is induced (up to sign) by the embedding of complexes $X_{0,\bullet
}\hookrightarrow Tot_{\bullet }\left( X\right) $.

Let $\varphi _{n}$ be the composition%
\begin{equation*}
H_{n}\left( Tot_{\bullet }\left( X\right) \right) 
\simeq%
~^{ver}D_{n,0}^{\infty }\twoheadrightarrow ~^{ver}E_{n,0}^{\infty
}\rightarrowtail ~^{ver}E_{n,0}^{2}.
\end{equation*}%
Then the following diagram commutes (up to sign):%
\begin{equation*}
\begin{diagram}[w=1.0em,h=1.5em,textflow]
{Tot_{n+1}\left( X\right) } & \rTo^{\partial } & {Tot_{n}\left( X\right) } & \rTo & {{\coker}\ \partial } & \lInto & {H_{n}\left( Tot_{\bullet }\left( X\right) \right) } \\
\dTo & & \dTo \\
{X_{n+1,0}} & & {X_{n,0}}  & & \dTo & & \dTo_{\varphi _{n}} \\
\dTo & & \dTo \\
{\left[ \TeXButton{coker}{\coker}\delta _{n+1,0}=~^{ver}E_{n+1,0}^{1}\right] } & \rTo^{d|_{E_{n+1,0}^{1}}} & {\left[ {\coker}\ \delta _{n,0}=~^{ver}E_{n,0}^{1}\right] }  & \rTo & {\TeXButton{coker}{\coker}d|_{E_{n+1,0}^{1}}} & \lInto & {^{ver}E^{2}_{n,0}} \\
\end{diagram}%
\end{equation*}

\item For all $r$, $1\leq r\leq \infty $, and all $n$,%
\begin{equation*}
^{hor}D_{n,0}^{r}%
\simeq%
~^{hor}E_{n,0}^{r}.
\end{equation*}%
The composition%
\begin{equation*}
^{hor}H_{n}\left( X_{\bullet ,0}\right)
=~^{hor}E_{n,0}^{1}\twoheadrightarrow ~^{hor}E_{n,0}^{\infty }%
\simeq%
~^{hor}D_{n,0}^{\infty }\rightarrowtail H_{n}\left( Tot_{\bullet }\left(
X\right) \right)
\end{equation*}%
is induced (up to sign) by the inclusion of complexes $X_{\bullet
,0}\hookrightarrow Tot_{\bullet }\left( X\right) $.

Let $\psi _{n}$ be the composition%
\begin{equation*}
H_{n}\left( Tot_{\bullet }\left( X\right) \right) 
\simeq%
~^{hor}D_{0,n}^{\infty }\twoheadrightarrow ~^{hor}E_{0,n}^{\infty
}\rightarrowtail ~^{hor}E_{0,n}^{2}.
\end{equation*}%
Then the following diagram commutes (up to sign):%
\begin{equation*}
\begin{diagram}[w=1.0em,h=1.5em,textflow]
{Tot_{n+1}\left( X\right) } & \rTo^{\partial } & {Tot_{n}\left( X\right) } & \rTo & {{\coker}\ \partial } & \lInto & {H_{n}\left( Tot_{\bullet }\left( X\right) \right) } \\
\dTo & & \dTo \\
{X_{0,n+1}} & & {X_{0,n}}  & & \dTo & & \dTo_{\psi _{n}} \\
\dTo & & \dTo \\
{\left[ {\coker}\ d_{0,n+1}=~^{hor}E_{0,n+1}^{1}\right] } & \rTo^{\delta |_{E_{0,n+1}^{1}}} & {\left[ \TeXButton{coker}{\coker}d_{0,n}=~^{hor}E_{0,n}^{1}\right] } & \rTo & {\TeXButton{coker}{\coker}d|_{E_{0,n+1}^{1}}} & \lInto & {^{hor}E^{2}_{0,n}} \\
\end{diagram}%
\end{equation*}
\end{enumerate}
\end{enumerate}
\end{theorem}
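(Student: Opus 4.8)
The plan is to prove Theorem~\ref{Th-Spectral-sequence} by the classical construction of the two spectral sequences of a first--quadrant bicomplex through Massey exact couples; the whole argument goes through in an arbitrary abelian category, so no special feature of $\mathbf{Pro}(k)$ enters. First I would equip $Tot_{\bullet}(X)$ with its two standard filtrations by subcomplexes: the \emph{column} filtration, with ${}^{ver}F_{s}Tot_{n}=\bigoplus_{p\le s}X_{p,n-p}$, and the \emph{row} filtration, with ${}^{hor}F_{t}Tot_{n}=\bigoplus_{q\le t}X_{n-q,q}$, both stable under the differential $\partial$ of Definition~\ref{Def-total-complex} because $d$ lowers and $\delta$ preserves the column index. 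Since $X_{\bullet,\bullet}$ lives in the first quadrant, only finitely many $X_{s,t}$ are nonzero in each total degree, so the coproduct and product descriptions of $Tot_{n}$ coincide and both filtrations are finite --- exhaustive and Hausdorff --- in every degree; this finiteness is exactly what will yield convergence and make the edge arguments terminate.

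Next, for each filtration the short exact sequences of complexes $0\to F_{s-1}\to F_{s}\to F_{s}/F_{s-1}\to 0$ give long exact homology sequences. Putting ${}^{ver}D^{1}_{s,t}:=H_{s+t}({}^{ver}F_{s}Tot_{\bullet}(X))$ and ${}^{ver}E^{1}_{s,t}:=H_{s+t}({}^{ver}F_{s}/{}^{ver}F_{s-1})$ assembles them into an exact couple whose maps $i^{1},j^{1},k^{1}$ have the bidegrees shown in (\ref{Th-Spectral-sequence-exact-couple}a), and the row filtration gives the couple of (\ref{Th-Spectral-sequence-exact-couple}b). Iterating the derived--couple construction produces all pages, with $E^{r+1}=H(E^{r},d^{r})$, $d^{r}=j^{r}\circ k^{r}$ and $D^{r+1}$ the image of $i^{r}$, the bidegrees of $i^{r},j^{r},k^{r}$ following by an easy induction. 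For parts (\ref{Th-Spectral-sequence-E-0})--(\ref{Th-Spectral-sequence-E-2}) one identifies the quotient complex ${}^{ver}F_{s}/{}^{ver}F_{s-1}$ with the single column $(X_{s,\bullet},\pm\delta)$, so ${}^{ver}E^{0}=(X_{\bullet,\bullet},\delta)$; then ${}^{ver}E^{1}$ is the vertical homology ${}^{ver}H_{\bullet}(X_{\bullet,\bullet})$ carrying the horizontal differential induced by $d$ --- here the relation $d_{s-1,t-1}\circ\delta_{s,t-1}=\delta_{s-1,t-1}\circ d_{s-1,t}$ of Definition~\ref{Def-Bicomplex} is what is used --- and ${}^{ver}E^{2}={}^{hor}H_{\bullet}({}^{ver}H_{\bullet}(X_{\bullet,\bullet}))$; the horizontal couple is the mirror image. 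For (\ref{Th-Spectral-sequence-D-stabilizes})--(\ref{Th-Spectral-sequence-convergence}) the first--quadrant hypothesis forces the source and target of $d^{r}$ out of the first quadrant once $r\gg 0$, so the $E^{r}$--terms stabilise, while finiteness of the filtration makes the $D^{r}$--terms stabilise; the limit couple then produces the chain of monomorphisms $0={}^{ver}D^{\infty}_{-1,n+1}\rightarrowtail{}^{ver}D^{\infty}_{0,n}\rightarrowtail\cdots\rightarrowtail{}^{ver}D^{\infty}_{n,0}\simeq H_{n}(Tot_{\bullet}(X))$ together with $\coker\left( {}^{ver}D^{\infty}_{s-1,t+1}\rightarrowtail{}^{ver}D^{\infty}_{s,t}\right)\simeq{}^{ver}E^{\infty}_{s,t}$, and likewise for the horizontal couple.

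The functoriality in $X_{\bullet,\bullet}$ of every object and the naturality of every morphism, asserted throughout, are automatic, since everything is built from the functorial operations of forming subcomplexes, quotient complexes, homology, images and cokernels. Part (\ref{Th-Spectral-sequence-isomorphism}) then follows by first showing, by induction on $r'\ge r$ using $E^{r'+1}=H(E^{r'},d^{r'})$, that $E^{r'}(f)$ is an isomorphism for all such $r'$, hence $E^{\infty}(f)$ is an isomorphism for every $s,t$; one then climbs the finite filtration of $H_{n}(Tot_{\bullet}(X))$ via the short exact sequences $0\to D^{\infty}_{s-1,t+1}\to D^{\infty}_{s,t}\to E^{\infty}_{s,t}\to 0$ and the five lemma. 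For the edge homomorphisms (\ref{Th-Spectral-sequence-edge-homomorphism}) I observe that the leftmost stage ${}^{ver}F_{0}Tot_{\bullet}(X)=X_{0,\bullet}$ is already a subcomplex, so the maps $k^{r}$ into ${}^{ver}E^{r}_{0,n}$ and $d^{r}$ out of it are forced to vanish, giving ${}^{ver}D^{r}_{0,n}\simeq{}^{ver}E^{r}_{0,n}$ and identifying the displayed composite with the map induced by the inclusion $X_{0,\bullet}\hookrightarrow Tot_{\bullet}(X)$; the bottom row ${}^{hor}F_{0}Tot_{\bullet}(X)=X_{\bullet,0}$ is treated dually, and the two squares asserted to commute up to sign are verified degreewise from the formula for $\partial$.

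I expect the only real obstacle to be bookkeeping: fixing the signs in the induced differentials so that the ``commutes up to sign'' diagrams in (\ref{Th-Spectral-sequence-edge-homomorphism}) come out correctly, and keeping straight which of the two filtrations yields the ``vertical'' and which the ``horizontal'' exact couple under the bidegree conventions of the statement. None of this is conceptually hard, and the argument could equally well be quoted from any standard treatment of the spectral sequence of a filtered or double complex in an abelian category, with only the bidegree and edge--map conventions left to be matched against the present formulation.
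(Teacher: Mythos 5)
Your proposal is correct and follows the standard filtration/exact-couple construction of the two spectral sequences of a first-quadrant bicomplex, which is exactly the intended argument here. The paper itself does not write this proof out at all --- its ``proof'' is a pointer to Eckmann--Hilton, Weibel, Manin, and Kashiwara--Schapira --- and those references carry out precisely the construction you sketch (column and row filtrations of $Tot_{\bullet}(X)$, derived couples, first-quadrant stabilization, five-lemma comparison, and edge maps), so your route coincides with the paper's, modulo the bidegree and sign bookkeeping you already flag.
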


\begin{proof}
The proof of various forms of this theorem is scattered around several
papers and books. See \cite%
{Eckmann-Hilton-1966-Exact-couples-in-an-abelian-category-MR0191937}, \cite[%
Chapter 5]{Weibel-1994-An-introduction-to-homological-algebra-MR1269324}, 
\cite[\S III.7]{Manin-2003-Methods-of-homological-algebra-MR1950475}, and 
\cite[Theorem 12.5.4 and Corollary 12.5.5(3)]{Kashiwara-Categories-MR2182076}%
.
\end{proof}

\section{Topologies}

\subsection{Grothendieck topologies}

\begin{definition}
\label{Def-Sieve}Let $\mathbf{C}$ be a category. A \textbf{sieve} $R$ over $%
U\in \mathbf{C}$ is a subfunctor $R\subseteq h_{U}$ of 
\begin{equation*}
h_{U}=Hom_{\mathbf{C}}\left( \bullet ,U\right) :\mathbf{C}%
^{op}\longrightarrow \mathbf{Set.}
\end{equation*}
\end{definition}

\begin{remark}
Compare with \cite[Definition 16.1.1]{Kashiwara-Categories-MR2182076}.
\end{remark}

\begin{definition}
\label{Def-Site}A Grothendieck site (or simply a \textbf{site}) $X$ is a
pair $\left( \mathbf{C}_{X},Cov\left( X\right) \right) $ where $\mathbf{C}%
_{X}$ is a category, and%
\begin{equation*}
Cov\left( X\right) =\dbigcup\limits_{U\in \mathbf{C}_{X}}Cov\left( U\right) ,
\end{equation*}%
where $Cov\left( U\right) $ are the sets of \textbf{covering sieves} over $U$%
, satisfying the axioms GT1-GT4 from \cite[Definition 16.1.2]%
{Kashiwara-Categories-MR2182076}, or, equivalently, the axioms T1-T3 from 
\cite[Definition II.1.1]{SGA4-1-MR0354652}. The site is called \textbf{small}
iff $\mathbf{C}_{X}$ is a small category.
\end{definition}

\begin{remark}
The class (or a set, if $X$ is small) $Cov\left( X\right) $ is called the 
\textbf{topology} on $X$.
\end{remark}

\begin{notation}
\label{Note-Comma-over-site}Given $U\in \mathbf{C}_{X}$, and $R\in Cov\left(
X\right) $, denote simply%
\begin{equation*}
\mathbf{C}_{U}%
{:=}%
\left( \mathbf{C}_{X}\right) _{U},~\mathbf{C}_{R}%
{:=}%
\left( \mathbf{C}_{X}\right) _{R},
\end{equation*}%
where $\left( \mathbf{C}_{X}\right) _{U}$ and $\left( \mathbf{C}_{X}\right)
_{R}$ are the comma-categories defined earlier in Definition \ref%
{Def-Comma-U} and Definition \ref{Def-Comma-R}.
\end{notation}

\begin{definition}
\label{Def-Pretopology}We say that the topology on a small site $X$ is
induced by a \textbf{pretopology} if each object $U\in \mathbf{C}_{X}$ is
supplied with base-changeable (Definition \ref{Def-Quarrable}) \textbf{covers%
} $\left\{ U_{i}\rightarrow U\right\} _{i\in I}$, satisfying \cite[%
Definition II.1.3]{SGA4-1-MR0354652} (compare to \cite[Definition 16.1.5]%
{Kashiwara-Categories-MR2182076}), and the covering sieves $R\in Cov\left(
X\right) $ are \textbf{generated} by covers:%
\begin{equation*}
R=R_{\left\{ U_{i}\rightarrow U\right\} }\subseteq h_{U},
\end{equation*}%
where $R_{\left\{ U_{i}\rightarrow U\right\} }\left( V\right) $ consists of
morphisms $\left( V\rightarrow U\right) \in h_{U}\left( V\right) $ admitting
a decomposition%
\begin{equation*}
\left( V\rightarrow U\right) =\left( V\rightarrow U_{i}\rightarrow U\right) .
\end{equation*}
\end{definition}

\begin{remark}
We use the word \textbf{covers} for general sites, and reserve the word 
\textbf{coverings} for open coverings of topological spaces.
\end{remark}

\begin{proposition}
\label{Prop-A-ten-Set-CX-R}Let $G\in \mathbf{Mod}\left( k\right) $, let $%
\mathcal{A}\in \mathbf{pCS}\left( X,\mathbf{Pro}\left( k\right) \right) $,
and let $R\subseteq h_{U}$ be a sieve. Then:

\begin{enumerate}
\item \label{Prop-A-ten-Set-CX-R-Hom-to-G}\label{Prop-Hom-K-A-ten-Set-CX-G}%
\begin{eqnarray*}
&&Hom_{\mathbf{Pro}\left( k\right) }\left( \mathcal{A}\otimes _{\mathbf{Set}%
^{\mathbf{C}_{X}}}R,G\right) 
\simeq%
Hom_{\mathbf{Set}^{\left( \mathbf{C}_{X}\right) ^{op}}}\left( R,Hom_{\mathbf{%
K}}\left( \mathcal{A},G\right) \right) 
\simeq
\\
&&%
\simeq%
\underset{\left( V\rightarrow U\right) \in \mathbf{C}_{R}}{\underleftarrow{%
\lim }}Hom_{\mathbf{K}}\left( \mathcal{A}\left( V\right) ,G\right) 
\simeq%
Hom_{\mathbf{K}}\left( \underset{\left( V\rightarrow U\right) \in \mathbf{C}%
_{R}}{\underrightarrow{\lim }}\mathcal{A}\left( V\right) ,G\right)
\end{eqnarray*}%
naturally in $G$, $\mathcal{A}$ and $R$. The presheaf of $k$-\textbf{modules}
$Hom_{\mathbf{Pro}\left( k\right) }\left( \mathcal{A},G\right) $ is
introduced in Definition \ref{Def-Pairings-functors}(\ref%
{Def-Pairings-functors-Hom(Pro-k)}).

\item \label{Prop-A-ten-Set-CX-R-lim-CR}%
\begin{equation*}
\mathcal{A}\otimes _{\mathbf{Set}^{\mathbf{C}_{X}}}R%
\simeq%
\underset{\left( V\rightarrow U\right) \in \mathbf{C}_{R}}{\underrightarrow{%
\lim }}\mathcal{A}\left( V\right) .
\end{equation*}
\end{enumerate}
\end{proposition}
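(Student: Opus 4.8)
The plan is to establish part (\ref{Prop-A-ten-Set-CX-R-Hom-to-G}) first, as a chain of natural isomorphisms obtained by unwinding the definitions, and then to deduce part (\ref{Prop-A-ten-Set-CX-R-lim-CR}) from it by Yoneda, using that $\mathbf{Pro}\left(k\right)$ is cocomplete and sits as a full subcategory of $\left(\mathbf{Set}^{\mathbf{Mod}\left(k\right)}\right)^{op}$.

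First I would unwind the coend presentation of $\mathcal{A}\otimes_{\mathbf{Set}^{\mathbf{C}_{X}}}R$ from Definition \ref{Def-Pairings-functors}(\ref{Def-Pairings-functors-Set-X}): it is the cokernel of a pair of morphisms between coproducts in $\mathbf{Pro}\left(k\right)$. Applying $Hom_{\mathbf{Pro}\left(k\right)}\left(\bullet,G\right)$, which converts colimits into limits, turns this cokernel into a kernel of a pair of morphisms between products; combining the identifications $\mathcal{A}\left(V\right)\otimes_{\mathbf{Set}}R\left(V\right)=\dcoprod\limits_{R\left(V\right)}\mathcal{A}\left(V\right)$ with $Hom_{\mathbf{Pro}\left(k\right)}\left(\dcoprod\limits_{S}\mathbf{X},G\right)\simeq\dprod\limits_{S}Hom_{\mathbf{Pro}\left(k\right)}\left(\mathbf{X},G\right)\simeq Hom_{\mathbf{Set}}\left(S,Hom_{\mathbf{Pro}\left(k\right)}\left(\mathbf{X},G\right)\right)$, this kernel is precisely the end expressing $Hom_{\mathbf{Set}^{\left(\mathbf{C}_{X}\right)^{op}}}\left(R,Hom_{\mathbf{Pro}\left(k\right)}\left(\mathcal{A},G\right)\right)$, i.e. the set of natural transformations of presheaves from $R$ to $\left\langle\mathcal{A},G\right\rangle=Hom_{\mathbf{Pro}\left(k\right)}\left(\mathcal{A}\left(\bullet\right),G\right)$ (Definition \ref{Def-Pairings-functors}(\ref{Def-Pairings-functors-Hom-Set-X}, \ref{Def-Pairings-functors-Hom(Pro-k)})). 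This is the iso $(\ref{Prop-A-ten-Set-CX-R-Hom-to-G})$ between the first two terms, and each step is visibly natural in $G$, $\mathcal{A}$ and $R$.

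Next I would invoke the standard co-Yoneda description of natural transformations out of a presheaf in terms of its category of elements. Since $\mathbf{C}_{R}=\left(\mathbf{C}_{X}\right)_{R}$ (Notation \ref{Note-Comma-over-site}, Definition \ref{Def-Comma-R}) is exactly the category of elements of the sieve $R$, whose objects are the morphisms $\left(V\rightarrow U\right)$ lying in $R$, one has for any presheaf $F$ a natural isomorphism $Hom_{\mathbf{Set}^{\left(\mathbf{C}_{X}\right)^{op}}}\left(R,F\right)\simeq\underleftarrow{\lim}_{\left(V\rightarrow U\right)\in\mathbf{C}_{R}}F\left(V\right)$ (a transformation is the same as a compatible family $\eta_{V}\left(x\right)\in F\left(V\right)$, which is just a cone over $\mathbf{C}_{R}\rightarrow\left(\mathbf{C}_{X}\right)^{op}\overset{F}{\rightarrow}\mathbf{Set}$). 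Taking $F=\left\langle\mathcal{A},G\right\rangle$ yields the middle isomorphism of $(\ref{Prop-A-ten-Set-CX-R-Hom-to-G})$; the last isomorphism then follows because $Hom_{\mathbf{Pro}\left(k\right)}\left(\bullet,G\right)$ carries the colimit $\underrightarrow{\lim}_{\left(V\rightarrow U\right)\in\mathbf{C}_{R}}\mathcal{A}\left(V\right)$ — which exists, $\mathbf{Pro}\left(k\right)$ being cocomplete by Proposition \ref{Prop-Pro-modules-properties}(\ref{Prop-Pro-modules-properties-abelian-(co)cocomplete}) — to $\underleftarrow{\lim}_{\left(V\rightarrow U\right)\in\mathbf{C}_{R}}Hom_{\mathbf{Pro}\left(k\right)}\left(\mathcal{A}\left(V\right),G\right)$.

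Finally, part (\ref{Prop-A-ten-Set-CX-R-lim-CR}) follows from (\ref{Prop-A-ten-Set-CX-R-Hom-to-G}) by full faithfulness: the composite isomorphism of (\ref{Prop-A-ten-Set-CX-R-Hom-to-G}) is a natural-in-$G$ isomorphism $Hom_{\mathbf{Pro}\left(k\right)}\left(\mathcal{A}\otimes_{\mathbf{Set}^{\mathbf{C}_{X}}}R,G\right)\simeq Hom_{\mathbf{Pro}\left(k\right)}\left(\underrightarrow{\lim}_{\left(V\rightarrow U\right)\in\mathbf{C}_{R}}\mathcal{A}\left(V\right),G\right)$ for $G\in\mathbf{Mod}\left(k\right)$, so the two pro-modules represent isomorphic functors on $\mathbf{Mod}\left(k\right)$ and hence, since $\mathbf{Pro}\left(k\right)\hookrightarrow\left(\mathbf{Set}^{\mathbf{Mod}\left(k\right)}\right)^{op}$ is a full embedding (Definition \ref{Def-Pro-category}), they are isomorphic. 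The one point demanding real care — and which I regard as the main obstacle — is the bookkeeping in the co-Yoneda step: keeping the variances straight (the precosheaf $\mathcal{A}$ is covariant on $\mathbf{C}_{X}$, while $R$ and $\left\langle\mathcal{A},G\right\rangle$ are contravariant) and verifying that the indexing category produced there is literally $\mathbf{C}_{R}$ as defined in the paper, so that the colimit in the statement is exactly the one obtained. Everything else is routine juggling of (co)limits and adjunctions.
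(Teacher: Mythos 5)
Your argument is correct. Note that the paper itself gives no proof of this proposition beyond a citation of Proposition 2.3 of the author's earlier cosheafification paper, so there is nothing in this document to compare against; your chain --- tensor--hom adjunction turning the coend (cokernel of coproducts) into the end (kernel of products) defining $Hom_{\mathbf{Set}^{\mathbf{C}_{X}}}\left( R,\left\langle \mathcal{A},G\right\rangle \right) $, the co-Yoneda identification of natural transformations out of $R$ with a limit over its category of elements $\mathbf{C}_{R}$, the continuity of $Hom_{\mathbf{Pro}\left( k\right) }\left( \bullet ,G\right) $, and finally full faithfulness of $\mathbf{Pro}\left( k\right) \hookrightarrow \left( \mathbf{Set}^{\mathbf{Mod}\left( k\right) }\right) ^{op}$ to deduce part (2) from the natural-in-$G$ isomorphism of part (1) --- is exactly the standard argument one expects the cited reference to contain, and each step is valid. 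The one caveat you flag (checking that the indexing category in the co-Yoneda step is literally $\mathbf{C}_{R}$ of Definition \ref{Def-Comma-R}) is indeed the only bookkeeping point, and it works out since $\mathbf{C}_{R}$ is by definition the category of elements of the subfunctor $R\subseteq h_{U}$.
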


\begin{proof}
See \cite[Proposition 2.3]{Prasolov-Cosheafification-2016-zbMATH06684178}
\end{proof}

\begin{example}
\label{Site-TOP}Let $X$ be a topological space. We will call the site $%
OPEN\left( X\right) $ below the \textbf{standard site} for $X$:%
\begin{equation*}
OPEN\left( X\right) =\left( \mathbf{C}_{OPEN\left( X\right) },Cov\left(
OPEN\left( X\right) \right) \right) .
\end{equation*}%
$\mathbf{C}_{OPEN\left( X\right) }$ has open subsets of $X$ as objects and
inclusions $U\subseteq V$ as morphisms. The pretopology on $OPEN\left(
X\right) $ consists of open coverings%
\begin{equation*}
\left\{ U_{i}\subseteq U\right\} _{i\in I}\in \mathbf{C}_{OPEN\left(
X\right) }.
\end{equation*}%
The corresponding topology consists of sieves $R_{\left\{ U_{i}\subseteq
U\right\} }\subseteq h_{U}$ where%
\begin{equation*}
\left( V\subseteq U\right) \in R_{\left\{ U_{i}\subseteq U\right\} }\left(
U\right) \iff \exists i\in I~\left( V\subseteq U_{i}\right) .
\end{equation*}
\end{example}

\begin{remark}
\label{Denote-standard-site-simply}We will always denote the standard site $%
OPEN\left( X\right) $ simply by $X$.
\end{remark}

\begin{example}
\label{Site-NORM}Let again $X$ be a topological space. Consider the site%
\begin{equation*}
NORM\left( X\right) =\left( \mathbf{C}_{NORM\left( X\right) },Cov\left(
NORM\left( X\right) \right) \right)
\end{equation*}%
where $\mathbf{C}_{NORM\left( X\right) }=\mathbf{C}_{X}$, while the
pretopology on $NORM\left( X\right) $ consists of \textbf{normal}
(Definition \ref{Def-Normal-covering}) coverings $\left\{ U_{i}\subseteq
U\right\} $.
\end{example}

See Conjecture \ref{Conj-Satellites-H}.

\begin{example}
\label{Ex-Site-ETALE}\label{Site-ETALE}Let $X$ be a noetherian scheme, and
define the site $X^{et}$ by: $\mathbf{C}_{X^{et}}$ is the category of
schemes $Y/X$ \'{e}tale, finite type, while the pretopology on $X^{et}$
consists of finite surjective families of maps. See \cite[Example 1.1.6]%
{Artin-GT}, or \cite[II.1.2]{Tamme-MR1317816}.
\end{example}

Let $X=\left( \mathbf{C}_{X}\mathbf{,}Cov\left( X\right) \right) $ be a
small site (Definition \ref{Def-Site}), and let $\mathbf{D}$ and $\mathbf{K}$
be categories. Assume that $\mathbf{D}$ is small, and $\mathbf{K}$ is
complete (Definition \ref{Def-(co)complete}).

\begin{definition}
\label{Def-(Pre)sheaves}~

\begin{enumerate}
\item A \textbf{presheaf} $\mathcal{A}$ on $\mathbf{D}$ with values in $%
\mathbf{K}$ is a functor $\mathcal{A}:\mathbf{D}^{op}\rightarrow \mathbf{K}$.

\item A presheaf $\mathcal{A}$ on $X$ with values in $\mathbf{K}$ is a
functor $\mathcal{A}:\left( \mathbf{C}_{X}\right) ^{op}\rightarrow \mathbf{K}
$.

\item A presheaf $\mathcal{A}$ on $X$ is \textbf{separated} provided 
\begin{equation*}
\mathcal{A}\left( U\right) 
\simeq%
Hom_{\mathbf{Set}^{\mathbf{C}_{X}}}\left( h_{U},\mathcal{A}\right)
\longrightarrow Hom_{\mathbf{Set}^{\mathbf{C}_{X}}}\left( R,\mathcal{A}%
\right) 
\simeq%
\underset{\left( V\rightarrow U\right) \in \mathbf{C}_{R}}{\underleftarrow{%
\lim }}\mathcal{A}\left( V\right)
\end{equation*}%
is a monomorphism for any $U\in \mathbf{C}_{X}$ and for any covering sieve
(Definition \ref{Def-Sieve} and \ref{Def-Site}) $R$ over $U$. The pairing $%
Hom_{\mathbf{Set}^{\mathbf{C}_{X}}}\left( \bullet ,\bullet \right) $ is
introduced in Definition \ref{Def-Pairings-functors}(\ref%
{Def-Pairings-functors-Hom-Set-X}).

\item A presheaf $\mathcal{A}$ on $X$ is a \textbf{sheaf} provided 
\begin{equation*}
\mathcal{A}\left( U\right) 
\simeq%
Hom_{\mathbf{Set}^{\mathbf{C}_{X}}}\left( h_{U},\mathcal{A}\right)
\longrightarrow Hom_{\mathbf{Set}^{\mathbf{C}_{X}}}\left( R,\mathcal{A}%
\right) 
\simeq%
\underset{\left( V\rightarrow U\right) \in \mathbf{C}_{R}}{\underleftarrow{%
\lim }}\mathcal{A}\left( V\right)
\end{equation*}%
is an isomorphism for any $U\in \mathbf{C}_{X}$ and for any covering sieve $%
R $ over $U$.
\end{enumerate}
\end{definition}

\begin{remark}
\label{Rem-Hom-Set-CX}The isomorphisms%
\begin{equation*}
Hom_{\mathbf{Set}^{\mathbf{C}_{X}}}\left( R,\mathcal{A}\right) 
\simeq%
\underset{\left( V\rightarrow U\right) \in \mathbf{C}_{R}}{\underleftarrow{%
\lim }}\mathcal{A}\left( V\right)
\end{equation*}%
and%
\begin{equation*}
\mathcal{A}\left( U\right) 
\simeq%
Hom_{\mathbf{Set}^{\mathbf{C}_{X}}}\left( h_{U},\mathcal{A}\right)
\end{equation*}%
follow from \cite[Proposition B.6]%
{Prasolov-Cosheafification-2016-zbMATH06684178}, because the comma-category $%
\mathbf{C}_{U}%
\simeq%
\mathbf{C}_{h_{U}}$ (Definition \ref{Def-Comma-U} and Remark \ref%
{Rem-CU-equivalent-ChU}) has a terminal object $\left( U,\mathbf{1}%
_{U}\right) $.
\end{remark}

\begin{notation}
\label{Not-(Pre)sheaves-categories}Denote by $\mathbf{S}\left( X,\mathbf{K}%
\right) $ the category of sheaves, and by $\mathbf{pS}\left( X,\mathbf{K}%
\right) $ ($\mathbf{pS}\left( \mathbf{D},\mathbf{K}\right) $) the category
of presheaves on $X$ (on $\mathbf{D}$) with values in $\mathbf{K}$.
\end{notation}

\begin{remark}
\label{Rem-Compare-sheaves-with-cosheaves}Compare to Definition \ref%
{Def-(Pre)cosheaves} and Notation \ref{Not-(Pre)cosheaves-categories}.
\end{remark}

\subsection{%
\u{C}ech
(co)homology}

\begin{definition}
\label{Def-Quarrable}A morphism $V\rightarrow U$ in a category $\mathbf{D}$
is called \textbf{base-changeable} (\textquotedblleft
quarrable\textquotedblright\ in (\cite[Def. II.1.3]{SGA4-1-MR0354652}), iff
for every other morphism $U^{\prime }\rightarrow U$ the fiber product $V%
\underset{U}{\times }U^{\prime }$ exists.
\end{definition}

\begin{definition}
\label{Def-Cech-complex}Let $\mathbf{D}$ and $\mathbf{K}$ be categories.
Assume that $\mathbf{D}$ is small and $\mathbf{K}$ is abelian. Let $\left\{
U_{i}\rightarrow U\right\} $ be a family of base-changeable morphisms in $%
\mathbf{K}$. For a pre(co)sheaf%
\begin{equation*}
\mathcal{A}\in \mathbf{pCS}\left( \mathbf{D},\mathbf{K}\right) \text{
(respectively }\mathcal{B}\in \mathbf{pS}\left( \mathbf{D},\mathbf{K}\right) 
\text{)}
\end{equation*}%
on $\mathbf{D}$ with values in $\mathbf{K}$, define the following \textbf{%
\u{C}ech
chain complex} $\check{C}_{\bullet }$ and the\textbf{\ 
\u{C}ech
cochain complex} $\check{C}^{\bullet }$. Assume that $\mathbf{K}$ is
complete in the case of a presheaf, and cocomplete in the case of a
precosheaf: 
\begin{eqnarray*}
&&\check{C}^{\bullet }\left( \left\{ U_{i}\rightarrow U\right\} \mathbf{,~}%
\mathcal{B}\right) 
{:=}%
\left( \check{C}^{n}\left( \left\{ U_{i}\rightarrow U\right\} \mathbf{,~}%
\mathcal{B}\right) ,d^{n}\right) _{n\geq 0}, \\
&&\check{C}_{\bullet }\left( \left\{ U_{i}\rightarrow U\right\} \mathbf{,~}%
\mathcal{A}\right) 
{:=}%
\left( \check{C}_{n}\left( \left\{ U_{i}\rightarrow U\right\} \mathbf{,~}%
\mathcal{A}\right) ,d_{n}\right) _{n\geq 0},
\end{eqnarray*}%
where%
\begin{eqnarray*}
\check{C}^{n}\left( \left\{ U_{i}\rightarrow U\right\} \mathbf{,~}\mathcal{B}%
\right) &=&\dprod\limits_{i_{0},i_{1},...,i_{n}\in I}\mathcal{B}\left(
U_{i_{0}}\underset{U}{\times }U_{i_{1}}\underset{U}{\times }...\underset{U}{%
\times }U_{i_{n}}\right) , \\
\check{C}_{n}\left( \left\{ U_{i}\rightarrow U\right\} \mathbf{,~}\mathcal{A}%
\right) &=&\dbigoplus\limits_{i_{0},i_{1},...,i_{n}\in I}\mathcal{A}\left(
U_{i_{0}}\underset{U}{\times }U_{i_{1}}\underset{U}{\times }...\underset{U}{%
\times }U_{i_{n}}\right) , \\
d^{n} &=&\dsum\limits_{k=0}^{n+1}\left( -1\right) ^{k}d_{\left( k\right)
}^{n}, \\
d_{n} &=&\dsum\limits_{k=0}^{n+1}\left( -1\right) ^{k}d_{n}^{\left( k\right)
},
\end{eqnarray*}%
$d_{\left( k\right) }^{n}:\check{C}^{n}\rightarrow \check{C}^{n+1}$ are
defined by the compositions%
\begin{eqnarray*}
&&\left[ \pi _{i_{0},i_{1},...,i_{n},i_{n+1}}\right] \circ d_{\left(
k\right) }^{n}%
{:=}%
\left[ \dprod\limits_{i_{0},i_{1},...,i_{n}\in I}\mathcal{B}\left( U_{i_{0}}%
\underset{U}{\times }U_{i_{1}}\underset{U}{\times }...\underset{U}{\times }%
U_{i_{n}}\right) \right. \\
&&%
\begin{diagram}[size=3.0em,textflow]
 & \rTo^{\pi _{i_{0},...,\widehat{i_{k}},...,i_{n}}} & {\mathcal{B}\left( U_{i_{0}}\underset{U}{\times }...\underset{U}{\times }\widehat{U_{i_{k}}}\underset{U}{\times }...\underset{U}{\times }U_{i_{n}}\underset{U}{\times }U_{i_{n+1}}\right)} \\
\end{diagram}
\\
&&\left. 
\begin{diagram}[size=3.0em,textflow]
 & \rTo^{\mathcal{B}\left( \sigma _{k,i_{0},i_{1},...,i_{n},i_{n+1}}\right)}  & {\mathcal{B}}\left( U_{i_{0}}\underset{U}{\times }U_{i_{1}}\underset{U}{\times }...\underset{U}{\times }U_{i_{n}}\underset{U}{\times }U_{i_{n+1}}\right) \\
\end{diagram}%
\right] ,
\end{eqnarray*}%
and%
\begin{eqnarray*}
\pi _{i_{0},i_{1},...,i_{n},i_{n+1}} &:&\left[ \dprod%
\limits_{i_{0},i_{1},...,i_{n+1}\in I}\mathcal{B}\left( U_{i_{0}}\underset{U}%
{\times }U_{i_{1}}\underset{U}{\times }...\underset{U}{\times }%
U_{i_{n+1}}\right) \right] \longrightarrow \mathcal{B}\left( U_{i_{0}}%
\underset{U}{\times }U_{i_{1}}\underset{U}{\times }...\underset{U}{\times }%
U_{i_{n+1}}\right) , \\
\pi _{i_{0},...,\widehat{i_{k}},...,i_{n}} &:&\left[ \dprod%
\limits_{i_{0},i_{1},...,i_{n}\in I}\mathcal{B}\left( U_{i_{0}}\underset{U}{%
\times }U_{i_{1}}\underset{U}{\times }...\underset{U}{\times }%
U_{i_{n}}\right) \right] \longrightarrow \mathcal{B}\left( U_{i_{0}}\underset%
{U}{\times }...\underset{U}{\times }\widehat{U_{i_{k}}}\underset{U}{\times }%
...\underset{U}{\times }U_{i_{n+1}}\right) , \\
\sigma _{k,i_{0},i_{1},...,i_{n},i_{n+1}} &:&U_{i_{0}}\underset{U}{\times }%
U_{i_{1}}\underset{U}{\times }...\underset{U}{\times }U_{i_{n}}\underset{U}{%
\times }U_{i_{n+1}}\longrightarrow U_{i_{0}}\underset{U}{\times }...\underset%
{U}{\times }\widehat{U_{i_{k}}}\underset{U}{\times }...\underset{U}{\times }%
U_{i_{n+1}},
\end{eqnarray*}%
are the natural projections.

$d_{n}^{\left( k\right) }:\check{C}_{n+1}\rightarrow \check{C}_{n}$ are
defined dually to $d_{\left( k\right) }^{n}$, by the compositions%
\begin{eqnarray*}
&&d_{n}^{\left( k\right) }\circ \left[ \rho _{i_{0},i_{1},...,i_{n+1}}\right]
{:=}%
\left[ \mathcal{A}\left( U_{i_{0}}\underset{U}{\times }U_{i_{1}}\underset{U}{%
\times }...\underset{U}{\times }U_{i_{n}}\underset{U}{\times }%
U_{i_{n+1}}\right) \right. \\
&&%
\begin{diagram}[size=3.0em,textflow]
 & \rTo^{\mathcal{A}\left( \sigma _{k,i_{0},i_{1},...,i_{n},i_{n+1}}\right)}  & \mathcal{A}\left( U_{i_{0}}\underset{U}{\times }...\underset{U}{\times }\widehat{U_{i_{k}}}\underset{U}{\times }...\underset{U}{\times }U_{i_{n+1}}\right) \\
\end{diagram}
\\
&&\left. 
\begin{diagram}[size=3.0em,textflow]
 & \rTo^{\rho _{i_{0},...,\widehat{i_{k}},...,i_{n+1}}} & {\dbigoplus\limits_{i_{0},i_{1},...,i_{n}\in {I}}\mathcal{A}\left( U_{i_{0}}\underset{U}{\times }U_{i_{1}}\underset{U}{\times }...\underset{U}{\times }U_{i_{n}}\right)} \\
\end{diagram}%
\right] ,
\end{eqnarray*}%
where%
\begin{eqnarray*}
\rho _{i_{0},i_{1},...,i_{n},i_{n+1}} &:&\mathcal{A}\left( U_{i_{0}}\underset%
{U}{\times }U_{i_{1}}\underset{U}{\times }...\underset{U}{\times }%
U_{i_{n+1}}\right) \longrightarrow \left[ \dbigoplus%
\limits_{i_{0},i_{1},...,i_{n+1}\in I}\mathcal{A}\left( U_{i_{0}}\underset{U}%
{\times }U_{i_{1}}\underset{U}{\times }...\underset{U}{\times }%
U_{i_{n+1}}\right) \right] , \\
\rho _{i_{0},...,\widehat{i_{k}},...,i_{n}} &:&\mathcal{A}\left( U_{i_{0}}%
\underset{U}{\times }...\underset{U}{\times }\widehat{U_{i_{k}}}\underset{U}{%
\times }...\underset{U}{\times }U_{i_{n+1}}\right) \longrightarrow \left[
\dbigoplus\limits_{i_{0},i_{1},...,i_{n}\in I}\mathcal{A}\left( U_{i_{0}}%
\underset{U}{\times }U_{i_{1}}\underset{U}{\times }...\underset{U}{\times }%
U_{i_{n}}\right) \right] ,
\end{eqnarray*}%
are the natural embeddings.
\end{definition}

\begin{definition}
\label{Def-Roos-complex}Let $\mathbf{D}$ and $\mathbf{K}$ be categories.
Assume that $\mathbf{D}$ is small and $\mathbf{K}$ is abelian. Let $%
R\subseteq h_{U}$ be a sieve on $\mathbf{D}$. For a pre(co)sheaf%
\begin{equation*}
\mathcal{A}\in \mathbf{pCS}\left( \mathbf{D},\mathbf{K}\right) \text{
(respectively }\mathcal{B}\in \mathbf{pS}\left( \mathbf{D},\mathbf{K}\right) 
\text{)}
\end{equation*}%
on $\mathbf{D}$ with values in $\mathbf{K}$, define the following \textbf{%
Roos chain complex} $^{Roos}C_{\bullet }$ and the\textbf{\ Roos cochain
complex} $^{Roos}C^{\bullet }$ (see \cite%
{Roos-1961-Sur-les-foncteurs-derives-de-lim-MR0132091} and \cite%
{Noebeling-1962-Uber-die-Derivierten-des-Limes-MR0138666}). Assume that $%
\mathbf{K}$ is complete in the case of a presheaf, and cocomplete in the
case of a precosheaf: 
\begin{eqnarray*}
&&^{Roos}C^{\bullet }\left( R\mathbf{,~}\mathcal{B}\right) 
{:=}%
\left( ^{Roos}C^{n}\left( R\mathbf{,~}\mathcal{B}\right) ,d^{n}\right)
_{n\geq 0}, \\
&&^{Roos}C_{\bullet }\left( R\mathbf{,~}\mathcal{A}\right) 
{:=}%
\left( ^{Roos}C_{n}\left( R\mathbf{,~}\mathcal{A}\right) ,d_{n}\right)
_{n\geq 0},
\end{eqnarray*}%
where%
\begin{equation*}
\left\langle i_{0},i_{1},...,i_{n}\right\rangle 
{:=}%
\left[ U_{0}\overset{i_{0}}{\longrightarrow }U_{1}\overset{i_{1}}{%
\longrightarrow }...\overset{i_{n-1}}{\longrightarrow }U_{n}\overset{i_{n}}{%
\longrightarrow }U\right] \in \mathbf{C}_{R},
\end{equation*}%
\begin{eqnarray*}
^{Roos}C^{n}\left( R\mathbf{,~}\mathcal{B}\right) 
&=&\dprod\limits_{\left\langle i_{0},i_{1},...,i_{n}\right\rangle \in 
\mathbf{C}_{R}}\mathcal{B}\left( U_{0}\right) , \\
^{Roos}C_{n}\left( R\mathbf{,~}\mathcal{A}\right) 
&=&\dbigoplus\limits_{\left\langle i_{0},i_{1},...,i_{n}\right\rangle \in 
\mathbf{C}_{R}}\mathcal{A}\left( U_{0}\right) , \\
d^{n} &=&\dsum\limits_{k=0}^{n+1}\left( -1\right) ^{k}d_{\left( k\right)
}^{n}, \\
d_{n} &=&\dsum\limits_{k=0}^{n+1}\left( -1\right) ^{k}d_{n}^{\left( k\right)
},
\end{eqnarray*}%
$d_{\left( k\right) }^{n}:~^{Roos}C^{n}\rightarrow ~^{Roos}C^{n+1}$ are
defined by the compositions%
\begin{eqnarray*}
&&\left[ \pi _{\left\langle i_{0},i_{1},...,i_{n+1}\right\rangle }\right]
\circ d_{\left( k\right) }^{n}%
{:=}
\\
&&\left[ 
\begin{diagram}
\left( \dprod\limits_{\left\langle i_{0},i_{1},...,i_{n}\right\rangle }\mathcal{B}\left( U_{0}\right) \right) & \rTo^{\pi _{\left\langle i_{0},...,i_{k}\circ i_{k-1},...,i_{n+1}\right\rangle }} & \mathcal{B}\left( U_{0}\right)
\end{diagram}%
\right] ,
\end{eqnarray*}%
if $k\neq 0$,%
\begin{eqnarray*}
&&\left[ \pi _{\left\langle i_{0},i_{1},...,i_{n+1}\right\rangle }\right]
\circ d_{\left( 0\right) }^{n}%
{:=}
\\
&&\left[ 
\begin{diagram}
\left( \dprod\limits_{\left\langle i_{0},i_{1},...,i_{n}\right\rangle }\mathcal{B}\left( U_{0}\right) \right) & \rTo^{\pi _{\left\langle i_{1},i_{2},...,i_{n+1}\right\rangle }} & \mathcal{B}\left( U_{1}\right) & \rTo^{\mathcal{B}\left( i_{0}\right) } & \mathcal{B}\left( U_{0}\right)
\end{diagram}%
\right] ,
\end{eqnarray*}%
if $k=0$, and%
\begin{eqnarray*}
\pi _{\left\langle i_{0},i_{1},...,i_{n+1}\right\rangle } &:&\left[
\dprod\limits_{\left\langle i_{0},i_{1},...,i_{n+1}\right\rangle }\mathcal{B}%
\left( U_{0}\right) \right] \longrightarrow \mathcal{B}\left( U_{0}\right) ,
\\
\pi _{\left\langle i_{0},i_{1},...,i_{n}\right\rangle } &:&\left[
\dprod\limits_{\left\langle i_{0},i_{1},...,i_{n}\right\rangle }\mathcal{B}%
\left( U_{0}\right) \right] \longrightarrow \mathcal{B}\left( U_{0}\right) ,
\end{eqnarray*}%
are the natural projections.

$d_{n}^{\left( k\right) }:~^{Roos}C_{n+1}\rightarrow ~^{Roos}C_{n}$ are
defined dually to $d_{\left( k\right) }^{n}$, by the compositions%
\begin{eqnarray*}
&&d_{n}^{\left( k\right) }\circ \left[ \rho _{\left\langle
i_{0},i_{1},...,i_{n+1}\right\rangle }\right] 
{:=}
\\
&&\left[ 
\begin{diagram}
\mathcal{A}\left( U_{0}\right) & \rTo^{\rho _{\left\langle i_{0},...,i_{k+1}\circ i_{k},...,i_{n+1}\right\rangle }} & \left( \dbigoplus\limits_{\left\langle i_{0},i_{1},...,i_{n}\right\rangle }\mathcal{A}\left( U_{0}\right) \right)
\end{diagram}%
\right] ,
\end{eqnarray*}%
if $k\neq 0$,%
\begin{eqnarray*}
&&d_{n}^{\left( k\right) }\circ \left[ \rho _{\left\langle
i_{0},i_{1},...,i_{n+1}\right\rangle }\right] 
{:=}
\\
&&\left[ 
\begin{diagram}
\mathcal{A}\left( U_{0}\right) & \rTo^{\mathcal{A}\left( i_{0}\right) } & \mathcal{A}\left( U_{1}\right) & \rTo^{\rho _{\left\langle i_{1},i_{2},...,i_{n+1}\right\rangle }} & \left( \dbigoplus\limits_{\left\langle i_{0},i_{1},...,i_{n}\right\rangle }\mathcal{A}\left( U_{0}\right) \right)
\end{diagram}%
\right] ,
\end{eqnarray*}%
if $k=0$, where%
\begin{eqnarray*}
\rho _{\left\langle i_{0},i_{1},...,i_{n+1}\right\rangle } &:&\mathcal{A}%
\left( U_{0}\right) \longrightarrow \left[ \dbigoplus\limits_{\left\langle
i_{0},i_{1},...,i_{n+1}\right\rangle }\mathcal{A}\left( U_{0}\right) \right]
, \\
\rho _{\left\langle i_{0},i_{1},...,i_{n}\right\rangle } &:&\mathcal{A}%
\left( U_{0}\right) \longrightarrow \left[ \dbigoplus\limits_{\left\langle
i_{0},i_{1},...,i_{n}\right\rangle }\mathcal{A}\left( U_{0}\right) \right] ,
\end{eqnarray*}%
are the natural embeddings.
\end{definition}

\begin{definition}
\label{Def-Cech-homology}Let $X=\left( \mathbf{C}_{X}\mathbf{,}Cov\left(
X\right) \right) $ be a small site, $\mathcal{A}$ a precosheaf, and $%
\mathcal{B}$ a presheaf on $X$:%
\begin{eqnarray*}
\mathcal{A} &\in &\mathbf{pCS}\left( X,\mathbf{Pro}\left( k\right) \right) ,
\\
\mathcal{B} &\in &\mathbf{pS}\left( X,\mathbf{Mod}\left( k\right) \right) .
\end{eqnarray*}%
Let also $R$ be a sieve on $X$, and $\left\{ V_{i}\rightarrow V\right\} $ be
a family of base-changeable morphisms in $\mathbf{C}_{X}$.

\begin{enumerate}
\item \label{Def-Cech-homology-H0(R)}%
\begin{eqnarray*}
&&H_{0}\left( R,\mathcal{A}\right) 
{:=}%
\mathcal{A}\otimes _{\mathbf{Set}^{\mathbf{C}_{X}}}R%
\simeq%
\underset{\left( V\rightarrow U\right) \in \mathbf{C}_{R}}{\underrightarrow{%
\lim }}\mathcal{A}\left( V\right) , \\
&&H^{0}\left( R,\mathcal{B}\right) 
{:=}%
Hom_{\mathbf{Set}^{\mathbf{C}_{X}}}\left( R,\mathcal{B}\right) 
\simeq%
\underset{\left( V\rightarrow U\right) \in \mathbf{C}_{R}}{\underleftarrow{%
\lim }}\mathcal{B}\left( V\right) ,
\end{eqnarray*}%
see Definition \ref{Def-Pairings-functors}(\ref{Def-Pairings-functors-Set-X},%
\ref{Def-Pairings-functors-Hom-Set-X}), Notation \ref{Note-Comma-over-site},
Proposition \ref{Prop-A-ten-Set-CX-R}(\ref{Prop-A-ten-Set-CX-R-lim-CR}) and
Remark \ref{Rem-Hom-Set-CX};

\item \label{Def-Cech-homology-Hn(R)}%
\begin{eqnarray*}
&&H_{n}\left( R,\mathcal{A}\right) 
{:=}%
H_{n}\left( ^{Roos}\check{C}_{\bullet }\left( R\mathbf{,}\mathcal{A}\right)
\right) , \\
&&H^{n}\left( R,\mathcal{B}\right) 
{:=}%
H^{n}\left( ^{Roos}\check{C}^{\bullet }\left( R\mathbf{,}\mathcal{B}\right)
\right) ;
\end{eqnarray*}

\item \label{Def-Cech-homology-Hn(Ui)}%
\begin{eqnarray*}
&&H_{n}\left( \left\{ V_{i}\rightarrow V\right\} ,\mathcal{A}\right) 
{:=}%
H_{n}\check{C}_{\bullet }\left( \left\{ V_{i}\rightarrow V\right\} \mathbf{,}%
\mathcal{A}\right) , \\
&&H^{n}\left( \left\{ V_{i}\rightarrow V\right\} ,\mathcal{B}\right) 
{:=}%
H^{n}\check{C}^{\bullet }\left( \left\{ V_{i}\rightarrow V\right\} \mathbf{,}%
\mathcal{B}\right) ;
\end{eqnarray*}

\item \label{Def-Cech-homology-Cech-Hn(R)}%
\begin{eqnarray*}
&&~^{Roos}\check{H}_{n}\left( U,\mathcal{A}\right) 
{:=}%
\underset{R\in Cov\left( U\right) }{\underleftarrow{\lim }}H_{n}\left( R,%
\mathcal{A}\right) , \\
&&~^{Roos}\check{H}^{n}\left( U,\mathcal{B}\right) 
{:=}%
\underset{R\in Cov\left( U\right) }{\underrightarrow{\lim }}H^{n}\left( R,%
\mathcal{B}\right) ,
\end{eqnarray*}%
see Notation \ref{Not-Hn(sieve)}.

\item \label{Def-Cech-homology-Cech-Hn(Ui)}Assume that the topology on $X$
is generated by a pretopology (Definition \ref{Def-Pretopology}). Then
define:%
\begin{eqnarray*}
&&\check{H}_{n}\left( U,\mathcal{A}\right) 
{:=}%
\underset{\left\{ U_{i}\rightarrow U\right\} \in Cov\left( U\right) }{%
\underleftarrow{\lim }}H_{n}\left( \left\{ U_{i}\rightarrow U\right\} ,%
\mathcal{A}\right) , \\
&&\check{H}^{n}\left( U,\mathcal{B}\right) 
{:=}%
\underset{\left\{ U_{i}\rightarrow U\right\} \in Cov\left( U\right) }{%
\underrightarrow{\lim }}H^{n}\left( \left\{ U_{i}\rightarrow U\right\} ,%
\mathcal{B}\right) .
\end{eqnarray*}

\item \label{Def-Cech-homology-Cech-plus} Let $\mathcal{A}_{+}$ and $%
\mathcal{A}_{\#}$ be the following precosheaves:%
\begin{eqnarray*}
&&\mathcal{A}_{+}%
{:=}%
\left( U\longmapsto \check{H}_{0}\left( U,\mathcal{A}\right) \right) , \\
&&\mathcal{A}_{\#}%
{:=}%
\mathcal{A}_{++},
\end{eqnarray*}%
and let $\mathcal{B}^{+}$ and $\mathcal{B}^{\#}$ be the following presheaves:%
\begin{eqnarray*}
&&\mathcal{B}^{+}%
{:=}%
\left( U\longmapsto \check{H}^{0}\left( U,\mathcal{B}\right) \right) , \\
&&\mathcal{B}^{\#}%
{:=}%
\mathcal{B}^{++}.
\end{eqnarray*}%
There are natural morphisms of functors: 
\begin{eqnarray*}
\lambda _{+} &:&\left( \bullet \right) _{+}\longrightarrow 1_{\mathbf{pCS}%
\left( X,\mathbf{Pro}\left( k\right) \right) }:\lambda _{+}\left( \mathcal{A}%
\right) :\mathcal{A}_{+}\longrightarrow \mathcal{A}, \\
\lambda ^{+} &:&1_{\mathbf{pS}\left( X,\mathbf{Mod}\left( k\right) \right)
}\longrightarrow \left( \bullet \right) ^{+}:\lambda ^{+}\left( \mathcal{B}%
\right) :\mathcal{B}\longrightarrow \mathcal{B}^{+}, \\
\lambda _{++} &:&\left( \bullet \right) _{\#}=\left( \bullet \right)
_{++}\longrightarrow 1_{\mathbf{pCS}\left( X,\mathbf{Pro}\left( k\right)
\right) }:\lambda _{++}\left( \mathcal{A}\right) =\lambda _{+}\left( 
\mathcal{A}\right) \circ \lambda _{+}\left( \mathcal{A}_{+}\right) :\mathcal{%
A}^{++}\longrightarrow \mathcal{A}, \\
\lambda ^{++} &:&1_{\mathbf{pS}\left( X,\mathbf{Mod}\left( k\right) \right)
}\longrightarrow \left( \bullet \right) ^{++}=\left( \bullet \right)
^{\#}:\lambda ^{++}\left( \mathcal{B}\right) =\lambda ^{+}\left( \mathcal{B}%
^{+}\right) \circ \lambda _{+}\left( \mathcal{B}\right) :\mathcal{B}%
\longrightarrow \mathcal{B}^{++}.
\end{eqnarray*}
\end{enumerate}
\end{definition}

\begin{remark}
Compare to Definition \ref{Def-Plus-construction}.
\end{remark}

\begin{proposition}
\label{Rem-Two-Cech-equivalent}\label{Prop-Two-Cech-equivalent}Assume that
the topology on $X$ is generated by a pretopology.

\begin{enumerate}
\item If a sieve $R$ is generated by a cover $\left\{ U_{i}\rightarrow
U\right\} $, then the groups $H_{n}\left( R,\mathcal{A}\right) $, $%
H^{n}\left( R,\mathcal{B}\right) $ from\textbf{\ }Definition \ref%
{Def-Cech-homology}(\ref{Def-Cech-homology-Hn(R)}) are naturally isomorphic
to the groups $H_{n}\left( \left\{ U_{i}\rightarrow U\right\} ,\mathcal{A}%
\right) $, $H^{n}\left( \left\{ U_{i}\rightarrow U\right\} ,\mathcal{B}%
\right) $ from Definition \ref{Def-Cech-homology}(\ref%
{Def-Cech-homology-Hn(Ui)}).

\item The groups $~^{Roos}\check{H}_{n}\left( U,\mathcal{A}\right) $ and $%
~^{Roos}\check{H}^{n}\left( U,\mathcal{B}\right) $ from\textbf{\ }Definition %
\ref{Def-Cech-homology}(\ref{Def-Cech-homology-Cech-Hn(R)}) are naturally
isomorphic to the groups $\check{H}_{n}\left( U,\mathcal{A}\right) $ and $%
\check{H}^{n}\left( U,\mathcal{B}\right) $ from\textbf{\ }Definition \ref%
{Def-Cech-homology}(\ref{Def-Cech-homology-Cech-Hn(Ui)}).
\end{enumerate}
\end{proposition}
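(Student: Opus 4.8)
The plan is to realise \emph{both} the Roos complex of the sieve $R$ and the \u{C}ech complex of the generating cover $\left\{ U_{i}\rightarrow U\right\} $ as the result of applying $\mathcal{A}\otimes _{\mathbf{Set}^{\mathbf{C}_{X}}}\left( -\right) $ (resp.\ $Hom_{\mathbf{Set}^{\mathbf{C}_{X}}}\left( -,\mathcal{B}\right) $) to a \emph{projective resolution of a fixed presheaf of $k$-modules}, and then invoke uniqueness of such resolutions up to chain homotopy. First I would introduce two simplicial objects in $\mathbf{pS}\left( X,\mathbf{Set}\right) $: the \emph{nerve} $B_{\bullet }\left( R\right) $ of the sieve, with $B_{n}\left( R\right) =\dcoprod_{\sigma }h_{\mathrm{dom}\left( \sigma \right) }$, the coproduct over $n$-simplices $\sigma =\left[ U_{0}\rightarrow \cdots \rightarrow U_{n}\rightarrow U\right] $ of the nerve of $\mathbf{C}_{R}$ ($\mathrm{dom}\left( \sigma \right) =U_{0}$), and the \emph{\u{C}ech nerve} $\widetilde{N}_{\bullet }$ of the cover, with $\widetilde{N}_{n}=\dcoprod_{\left( i_{0},\ldots ,i_{n}\right) }h_{U_{i_{0}}\times _{U}\cdots \times _{U}U_{i_{n}}}$ (the fibre products exist because the cover is base-changeable, Definition~\ref{Def-Quarrable}). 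Comparing Definitions~\ref{Def-Roos-complex} and \ref{Def-Cech-complex} with the fact (Proposition~\ref{Prop-A-ten-Set-CX-R}) that $\mathcal{A}\otimes _{\mathbf{Set}^{\mathbf{C}_{X}}}\left( -\right) $ is a left adjoint (hence colimit-preserving) sending $h_{V}$ to $\mathcal{A}\left( V\right) $, one obtains natural identifications of complexes
\begin{align*}
{}^{Roos}C_{\bullet }\left( R,\mathcal{A}\right) &\simeq \mathcal{A}\otimes _{\mathbf{Set}^{\mathbf{C}_{X}}}B_{\bullet }\left( R\right) , & \check{C}_{\bullet }\left( \left\{ U_{i}\rightarrow U\right\} ,\mathcal{A}\right) &\simeq \mathcal{A}\otimes _{\mathbf{Set}^{\mathbf{C}_{X}}}\widetilde{N}_{\bullet }, \\
{}^{Roos}C^{\bullet }\left( R,\mathcal{B}\right) &\simeq Hom_{\mathbf{Set}^{\mathbf{C}_{X}}}\left( B_{\bullet }\left( R\right) ,\mathcal{B}\right) , & \check{C}^{\bullet }\left( \left\{ U_{i}\rightarrow U\right\} ,\mathcal{B}\right) &\simeq Hom_{\mathbf{Set}^{\mathbf{C}_{X}}}\left( \widetilde{N}_{\bullet },\mathcal{B}\right) .
\end{align*}
Since $\mathcal{A}\otimes _{\mathbf{Set}^{\mathbf{C}_{X}}}\left( -\right) $ factors as the $k$-linearisation $k\left[ -\right] $ followed by a $k$-linear, colimit-preserving (hence right exact) functor $\mathbf{pS}\left( X,\mathbf{Mod}\left( k\right) \right) \rightarrow \mathbf{Pro}\left( k\right) $ carrying $k\left[ h_{V}\right] $ to $\mathcal{A}\left( V\right) $, and $Hom_{\mathbf{Set}^{\mathbf{C}_{X}}}\left( -,\mathcal{B}\right) =Hom_{\mathbf{pS}\left( X,\mathbf{Mod}\left( k\right) \right) }\left( k\left[ -\right] ,\mathcal{B}\right) $, the whole question is moved to the complexes $k\left[ B_{\bullet }\left( R\right) \right] $ and $k\left[ \widetilde{N}_{\bullet }\right] $ in $\mathbf{pS}\left( X,\mathbf{Mod}\left( k\right) \right) $.

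The key lemma is then: both $k\left[ B_{\bullet }\left( R\right) \right] $ and $k\left[ \widetilde{N}_{\bullet }\right] $, with the alternating-face differential, are \emph{projective resolutions} of the presheaf $k\left[ R\right] $ in $\mathbf{pS}\left( X,\mathbf{Mod}\left( k\right) \right) $. Projectivity is immediate, each term being a direct sum of $k\left[ h_{V}\right] $'s and $Hom_{\mathbf{pS}\left( X,\mathbf{Mod}\left( k\right) \right) }\left( k\left[ h_{V}\right] ,-\right) $ being evaluation at $V$. For acyclicity I would evaluate at an arbitrary $W\in \mathbf{C}_{X}$: there $B_{\bullet }\left( R\right) \left( W\right) $ is the nerve of the comma category $W\downarrow \mathrm{dom}$ (for $\mathrm{dom}:\mathbf{C}_{R}\rightarrow \mathbf{C}_{X}$) and $\widetilde{N}_{\bullet }\left( W\right) $ is the nerve of the indiscrete groupoid on $\dcoprod_{i}Hom\left( W,U_{i}\right) $ partitioned by the induced morphism $W\rightarrow U$. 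In each case the set of connected components is canonically and naturally in $W$ the set $R\left( W\right) $ --- for $W\downarrow \mathrm{dom}$ because $\left( W\overset{\alpha }{\rightarrow }U,\mathbf{1}_{W}\right) $ is an \emph{initial} object of the fibre over $\alpha \in R\left( W\right) $, for $\widetilde{N}_{\bullet }$ by the very definition of the sieve generated by a cover --- and each component has contractible nerve (an initial object, resp.\ a nonempty indiscrete groupoid). Hence $k\left[ B_{\bullet }\left( R\right) \left( W\right) \right] \rightarrow k\left[ R\left( W\right) \right] $ and $k\left[ \widetilde{N}_{\bullet }\left( W\right) \right] \rightarrow k\left[ R\left( W\right) \right] $ are quasi-isomorphisms, for every $W$, which is exactly what is required.

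Granting this, two projective resolutions of the same object are chain homotopy equivalent by a map canonical up to homotopy; applying the additive functors $\mathcal{A}\otimes _{k\left[ \mathbf{C}_{X}\right] }\left( -\right) $ and $Hom_{\mathbf{pS}\left( X,\mathbf{Mod}\left( k\right) \right) }\left( -,\mathcal{B}\right) $ converts it into chain homotopy equivalences ${}^{Roos}C_{\bullet }\left( R,\mathcal{A}\right) \simeq \check{C}_{\bullet }\left( \left\{ U_{i}\rightarrow U\right\} ,\mathcal{A}\right) $ and ${}^{Roos}C^{\bullet }\left( R,\mathcal{B}\right) \simeq \check{C}^{\bullet }\left( \left\{ U_{i}\rightarrow U\right\} ,\mathcal{B}\right) $, giving the isomorphisms of part (1), natural in $\mathcal{A}$, $\mathcal{B}$, $R$ and $\left\{ U_{i}\rightarrow U\right\} $ (and consistent with $H_{0}$ by Definition~\ref{Def-Cech-homology}(\ref{Def-Cech-homology-H0(R)}) and right exactness). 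For part (2) I would pass to the (co)limit over all covers: $\left\{ U_{i}\rightarrow U\right\} \mapsto R_{\left\{ U_{i}\rightarrow U\right\} }$ is a functor from the cofiltered category of covers of $U$ (ordered by refinement) to the poset of covering sieves of $U$ (ordered by inclusion), and it is cofinal --- every covering sieve is generated by a cover since the topology comes from a pretopology, and $\left\{ U_{i}\times _{U}V_{j}\right\} $ is a common refinement of $\left\{ U_{i}\rightarrow U\right\} $ and $\left\{ V_{j}\rightarrow U\right\} $. The naturality from part (1) identifies the two systems $\left\{ U_{i}\rightarrow U\right\} \mapsto H_{n}\left( R_{\left\{ U_{i}\rightarrow U\right\} },\mathcal{A}\right) $ and $\left\{ U_{i}\rightarrow U\right\} \mapsto H_{n}\left( \left\{ U_{i}\rightarrow U\right\} ,\mathcal{A}\right) $, so cofinality yields ${}^{Roos}\check{H}_{n}\left( U,\mathcal{A}\right) \simeq \check{H}_{n}\left( U,\mathcal{A}\right) $, and dually ${}^{Roos}\check{H}^{n}\left( U,\mathcal{B}\right) \simeq \check{H}^{n}\left( U,\mathcal{B}\right) $, naturally in $U$, $\mathcal{A}$, $\mathcal{B}$.

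The one substantive point is the key lemma of the second paragraph --- verifying that the components of $W\downarrow \mathrm{dom}$ and of the \u{C}ech groupoid are contractible and are indexed precisely by $R\left( W\right) $; this is exactly where the hypothesis "generated by a cover/sieve" is used, and it takes some care with the comma-category combinatorics. Everything else is formal: the bar presentations of the first paragraph, uniqueness of projective resolutions, and the cofinality bookkeeping. A secondary thing to keep track of is that the running assumption "topology generated by a pretopology" enters twice --- once for base-changeability (so the fibre products in $\widetilde{N}_{\bullet }$ exist) and once for the cofinality in part (2).
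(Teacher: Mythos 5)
Your proof is correct, but it is organized quite differently from the paper's, so let me compare. The paper proves part (1) by assembling a single first-quadrant bicomplex $X_{s,t}=\bigoplus \mathcal{A}\left( U_{0}\right) $, the sum taken over chains $U_{0}\rightarrow \cdots \rightarrow U_{s}\rightarrow U_{i_{0}}\times _{U}\cdots \times _{U}U_{i_{t}}$, whose rows are Roos complexes and whose columns are \u{C}ech complexes, and then runs the two spectral sequences of Theorem \ref{Th-Spectral-sequence}: the horizontal one degenerates because each comma category over a fibre product has a terminal object, giving $H_{n}\left( Tot_{\bullet }\left( X\right) \right) \simeq H_{n}\left( \left\{ U_{i}\rightarrow U\right\} ,\mathcal{A}\right) $, while the vertical one degenerates because over a fixed Roos chain the column is $\bigoplus_{S^{\bullet +1}}\mathcal{D}$ with $S$ the set of factorizations of $U_{s}\rightarrow U$ through the cover, acyclic with $H_{0}=\mathcal{D}$ when $S\neq \varnothing $ and zero otherwise, giving $H_{n}\left( Tot_{\bullet }\left( X\right) \right) \simeq H_{n}\left( R,\mathcal{A}\right) $. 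Your two contractibility checks --- initial objects in the fibres of $W\downarrow \mathrm{dom}$, and contractibility of the nonempty indiscrete groupoids in the \u{C}ech nerve --- are precisely these two local collapses, but you repackage them as the assertion that $k\left[ B_{\bullet }\left( R\right) \right] $ and $k[\widetilde{N}_{\bullet }]$ are projective resolutions of the same presheaf $k\left[ R\right] $ and then invoke the comparison theorem instead of a spectral sequence. Your route yields the slightly stronger conclusion that the two complexes are naturally chain homotopy equivalent (not merely that their homologies agree), at the price of the extra layer of linearization and of verifying that $\mathcal{A}\otimes _{\mathbf{Set}^{\mathbf{C}_{X}}}\left( -\right) $ and $Hom_{\mathbf{Set}^{\mathbf{C}_{X}}}\left( -,\mathcal{B}\right) $ factor additively through it; the paper's bicomplex works with $\mathcal{A}$-coefficients throughout and avoids this. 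For part (2) the paper simply applies $\underleftarrow{\lim }$ to the natural isomorphisms of part (1); your explicit cofinality argument (every covering sieve is generated by a cover, and $\left\{ U_{i}\times _{U}V_{j}\right\} $ gives common refinements) spells out what the paper leaves implicit there.
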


\begin{proof}
The reasoning below is similar to \cite[Proposition V.2.3.4 and Exercise
V.2.3.6]{SGA4-2-MR0354653}. Let us prove the statement for the pre\textbf{co}%
sheaf $\mathcal{A}$. The proof for the presheaf $\mathcal{B}$ is similar.
Assume that the sieve $R$ is generated by a family $\left\{ U_{i}\rightarrow
U\right\} $. We construct first natural isomorphisms%
\begin{equation*}
H_{n}\left( R,\mathcal{A}\right) 
\simeq%
H_{n}\left( \left\{ U_{i}\rightarrow U\right\} ,\mathcal{A}\right) .
\end{equation*}%
Applying $\underleftarrow{\lim }$ will give us the desired natural
isomorphisms%
\begin{equation*}
^{Roos}\check{H}_{n}\left( U,\mathcal{A}\right) =\underset{R\in Cov\left(
U\right) }{\underleftarrow{\lim }}H_{n}\left( R,\mathcal{A}\right) 
\simeq%
\underset{\left\{ U_{i}\rightarrow U\right\} \in Cov\left( U\right) }{%
\underleftarrow{\lim }}H_{n}\left( \left\{ U_{i}\rightarrow U\right\} ,%
\mathcal{A}\right) =\check{H}_{n}\left( U,\mathcal{A}\right) .
\end{equation*}%
Let $X_{\bullet ,\bullet }$ be the following bicomplex:%
\begin{equation*}
\left( X_{s,t},d,\delta \right) 
{:=}%
\left( \dbigoplus\limits_{U_{0}\rightarrow U_{1}\rightarrow ...\rightarrow
U_{s}\rightarrow U_{i_{0}}\underset{U}{\times }U_{i_{1}}\underset{U}{\times }%
...\underset{U}{\times }U_{i_{t}}}\mathcal{A}\left( U_{0}\right) ,d,\delta
\right)
\end{equation*}%
where the \textbf{horizontal} differentials $d_{\bullet ,\bullet }$ are like
in Definition \ref{Def-Roos-complex}, while the \textbf{vertical}
differentials $\delta _{\bullet ,\bullet }$ are like in Definition \ref%
{Def-Cech-complex}. Consider the two spectral sequences converging to the
total homology:%
\begin{eqnarray*}
^{ver}E_{s,t}^{2} &=&~^{hor}H_{s}~^{ver}H_{t}\left( X_{\bullet ,\bullet
}\right) \implies H_{s+t}\left( Tot_{\bullet }\left( X\right) \right) , \\
^{hor}E_{s,t}^{2} &=&~^{ver}H_{t}~^{hor}H_{s}\left( X_{\bullet ,\bullet
}\right) \implies H_{s+t}\left( Tot_{\bullet }\left( X\right) \right) .
\end{eqnarray*}%
Since the comma category%
\begin{equation*}
\mathbf{C}_{X}\downarrow \left( U_{i_{0}}\underset{U}{\times }U_{i_{1}}%
\underset{U}{\times }...\underset{U}{\times }U_{i_{t}}\right)
\end{equation*}%
has a terminal object%
\begin{equation*}
\begin{diagram}
U_{i_{0}}\underset{U}{\times }U_{i_{1}}\underset{U}{\times }...\underset{U}{\times }U_{i_{t}} & \rTo^{1_{U_{i_{0}}\times ...\times U_{i_{t}}}} & U_{i_{0}}\underset{U}{\times }U_{i_{1}}\underset{U}{\times }...\underset{U}{\times }U_{i_{t}},
\end{diagram}%
\end{equation*}%
it follows that%
\begin{equation*}
^{hor}E_{s,t}^{1}=~^{hor}H_{s}\left( X_{\bullet ,t}\right) =\underset{%
\mathbf{C}_{X}~\downarrow ~\left( U_{i_{0}}\underset{U}{\times }U_{i_{1}}%
\underset{U}{\times }...\underset{U}{\times }U_{i_{t}}\right) }{%
\underleftarrow{\lim }^{s}}\mathcal{A}=\left\{ 
\begin{array}{ccc}
\mathcal{A}\left( U_{i_{0}}\underset{U}{\times }U_{i_{1}}\underset{U}{\times 
}...\underset{U}{\times }U_{i_{t}}\right) & \text{if} & s=0, \\ 
0 & \text{if} & s>0,%
\end{array}%
\right.
\end{equation*}%
Therefore%
\begin{equation*}
^{hor}E_{s,t}^{2}=~^{ver}H_{t}~^{hor}H_{s}\left( X_{\bullet ,\bullet
}\right) =\left\{ 
\begin{array}{ccc}
H_{n}\left( \left\{ U_{i}\rightarrow U\right\} ,\mathcal{A}\right) & \text{if%
} & s=0, \\ 
0 & \text{if} & s>0,%
\end{array}%
\right.
\end{equation*}%
the spectral sequence degenerates from $E^{2}$ on, and%
\begin{equation*}
H_{n}\left( Tot_{\bullet }\left( X\right) \right) 
\simeq%
H_{n}\left( \left\{ U_{i}\rightarrow U\right\} ,\mathcal{A}\right) .
\end{equation*}%
The vertical spectral sequence is as follows:%
\begin{equation*}
^{ver}E_{s,t}^{1}=~^{ver}H_{t}\left( X_{s,\bullet }\right) ,
\end{equation*}%
where $X_{s,\bullet }$ allows the following description:%
\begin{equation*}
X_{s,t}=\dbigoplus\limits_{\substack{ U_{0}\rightarrow U_{1}\rightarrow
...\rightarrow U_{s}\rightarrow U  \\ \varphi \in T\left( U_{s}\rightarrow
U,U_{i_{0}}\underset{U}{\times }U_{i_{1}}\underset{U}{\times }...\underset{U}%
{\times }U_{i_{t}}\right) }}\mathcal{A}\left( U_{0}\right) ,
\end{equation*}%
where%
\begin{equation*}
T\left( U_{s}\rightarrow U,U_{i_{0}}\underset{U}{\times }U_{i_{1}}\underset{U%
}{\times }...\underset{U}{\times }U_{i_{t}}\right) 
{:=}%
\dcoprod\limits_{i_{0},i_{1},...,i_{t}}Hom_{U}\left( U_{s}\rightarrow
U,U_{i_{0}}\underset{U}{\times }U_{i_{1}}\underset{U}{\times }...\underset{U}%
{\times }U_{i_{t}}\right) ,
\end{equation*}%
and the coproduct (disjoint union) is taken in the category of sets. Denote
temporarily $T\left( U_{s}\rightarrow U,U_{i_{0}}\underset{U}{\times }%
U_{i_{1}}\underset{U}{\times }...\underset{U}{\times }U_{i_{t}}\right) $ by $%
S$. It follows that%
\begin{equation*}
~^{ver}H_{t}\left( X_{s\bullet }\right) =H_{t}\left[ \dbigoplus\limits_{X}%
\mathcal{D}\longleftarrow \dbigoplus\limits_{X\times X}\mathcal{D}%
\longleftarrow ...\longleftarrow \dbigoplus\limits_{X^{n}}\mathcal{D}%
\longleftarrow ...\right] =\left\{ 
\begin{array}{ccc}
\mathcal{D} & \text{if} & t=0~\&~S\mathbf{\neq }\varnothing \\ 
0 & \text{if} & t\neq 0~\&~S\neq \varnothing \\ 
0 & \text{if} & S=\varnothing%
\end{array}%
\right. ,
\end{equation*}%
where%
\begin{equation*}
\mathcal{D}=\dbigoplus\limits_{U_{0}\rightarrow U_{1}\rightarrow
...\rightarrow U_{s}\rightarrow U}\mathcal{A}\left( U_{0}\right) .
\end{equation*}%
The set $S$ is non-empty iff $\left( U_{s}\rightarrow U\right) \in \mathbf{C}%
_{R}$. Finally,%
\begin{eqnarray*}
^{ver}E_{s,t}^{1} &=&~^{ver}H_{t}\left( X_{s\bullet }\right) =\left\{ 
\begin{array}{ccc}
\dbigoplus\limits_{\left( U_{0}\rightarrow U_{1}\rightarrow ...\rightarrow
U_{s}\rightarrow U\right) \in \mathbf{C}_{R}}\mathcal{A}\left( U_{0}\right)
& \text{if} & t=0 \\ 
0 & \text{if} & t\neq 0%
\end{array}%
\right. , \\
^{ver}E_{s,t}^{2} &=&~^{hor}H_{s}~^{ver}H_{t}\left( X_{s\bullet }\right)
=\left\{ 
\begin{array}{ccc}
H_{s}\left( R,\mathcal{A}\right) & \text{if} & t=0 \\ 
0 & \text{if} & t\neq 0%
\end{array}%
\right. ,
\end{eqnarray*}%
the spectral sequence degenerates from $E^{2}$ on, and%
\begin{equation*}
H_{n}\left( \left\{ U_{i}\rightarrow U\right\} ,\mathcal{A}\right) 
\simeq%
~^{ver}E_{0,n}^{2}%
\simeq%
Tot_{n}\left( X\right) 
\simeq%
~^{hor}E_{n,0}^{2}%
\simeq%
H_{n}\left( R,\mathcal{A}\right) .
\end{equation*}
\end{proof}

\subsection{Pro-homotopy and pro-homology}

Let $\mathbf{Top}\ $be the category of topological spaces and continuous
mappings. The following categories are closely related to $\mathbf{Top}$:
the category $H\left( \mathbf{Top}\right) $ of homotopy types, the category $%
\mathbf{Pro}\left( H\left( \mathbf{Top}\right) \right) $ of pro-homotopy
types, and the category $H\left( \mathbf{Pro}\left( \mathbf{Top}\right)
\right) $ of homotopy types of pro-spaces. The latter category is used in 
\emph{strong shape theory}. It is finer than the former which is used in 
\emph{shape theory}. The pointed versions $\mathbf{Pro}\left( H\left( 
\mathbf{Top}_{\ast }\right) \right) $ and $H\left( \mathbf{Pro}\left( 
\mathbf{Top}_{\ast }\right) \right) $ are defined similarly.

One of the most important tools in strong shape theory is a \emph{strong
expansion} (see \cite{Mardesic-MR1740831}, conditions (S1) and (S2) on p.
129). In this paper, it is sufficient to use a weaker notion: an $H\left( 
\mathbf{Top}\right) $-\emph{expansion} (\cite[\S I.4.1]%
{Mardesic-Segal-MR676973}, conditions (E1) and (E2)). Those two conditions
are equivalent to the following

\begin{definition}
\label{HTOP-extension}\label{Def-HTOP-extension}Let $X$ be a topological
space. A morphism $X\rightarrow \left( Y_{j}\right) _{j\in \mathbf{I}}$ in $%
\mathbf{Pro}\left( H\left( \mathbf{Top}\right) \right) $ is called an $%
H\left( \mathbf{Top}\right) $-expansion (or simply \textbf{expansion}) if
for any polyhedron $P$ the following mapping%
\begin{equation*}
\underrightarrow{\lim }_{j}\left[ Y_{j},P\right] =\underrightarrow{\lim }%
_{j}Hom_{H\left( \mathbf{Top}\right) }\left( Y_{j},P\right) \longrightarrow
Hom_{H\left( \mathbf{Top}\right) }\left( X,P\right) =\left[ X,P\right]
\end{equation*}%
is bijective where $\left[ Z,P\right] $ is the set of homotopy classes of
continuous mappings from $Z$ to $P$.

An expansion is called \textbf{polyhedral} (or an $H\left( \mathbf{Pol}%
\right) $-expansion) if all $Y_{j}$ are polyhedra.
\end{definition}

\begin{remark}
\label{Rem-HPOL-extension}~

\begin{enumerate}
\item The pointed version of this notion (an $H\left( \mathbf{Pol}_{\ast
}\right) $-expansion) is defined similarly.

\item For any (pointed) topological space $X$ there exists an $H\left( 
\mathbf{Pol}\right) $-expansion (an $H\left( \mathbf{Pol}_{\ast }\right) $%
-expansion), see \cite[Theorem I.4.7 and I.4.10]{Mardesic-Segal-MR676973}.

\item Any two $H\left( \mathbf{Pol}\right) $-expansions ($H\left( \mathbf{Pol%
}_{\ast }\right) $-expansions) of a (pointed) topological space $X$ are
isomorphic in the category $\mathbf{Pro}\left( H\left( \mathbf{Pol}\right)
\right) $ ($\mathbf{Pro}\left( H\left( \mathbf{Pol}_{\ast }\right) \right) $%
), see \cite[Theorem I.2.6]{Mardesic-Segal-MR676973}.
\end{enumerate}
\end{remark}

\begin{definition}
\label{Def-Normal-covering}An open covering is called \textbf{normal} \cite[%
\S I.6.2]{Mardesic-Segal-MR676973}, iff there is a partition of unity
subordinated to it.
\end{definition}

\begin{remark}
Theorem 8 from \cite[App.1, \S 3.2]{Mardesic-Segal-MR676973}, shows that an $%
H\left( \mathbf{Pol}\right) $- or an $H\left( \mathbf{Pol}_{\ast }\right) $%
-expansion for $X$ can be constructed using nerves of normal (see Definition %
\ref{Def-Normal-covering}) open coverings of $X$.
\end{remark}

Pro-homotopy is defined in \cite[p. 121]{Mardesic-Segal-MR676973}:

\begin{definition}
\label{Def-Pro-homotopy-groups}\label{Pro-homotopy-groups}For a (pointed)
topological space $X$, define its pro-homotopy pro-sets%
\begin{equation*}
pro\text{-}\pi _{n}\left( X\right) 
{:=}%
\left( \pi _{n}\left( Y_{j}\right) \right) _{j\in \mathbf{J}}
\end{equation*}%
where $X\rightarrow \left( Y_{j}\right) _{j\in \mathbf{J}}$ is an $H\left( 
\mathbf{Pol}\right) $-expansion if $n=0$, and an $H\left( \mathbf{Pol}_{\ast
}\right) $-expansion if $n\geq 1$.

Similar to the \textquotedblleft usual\textquotedblright\ algebraic
topology, $pro$-$\pi _{0}$ is a pro-set (an object of $\mathbf{Pro}\left( 
\mathbf{Set}\right) $), $pro$-$\pi _{1}$ is a pro-group (an object of $%
\mathbf{Pro}\left( \mathbf{Gr}\right) $), and $pro$-$\pi _{n}$ are abelian
pro-groups (objects of $\mathbf{Pro}\left( \mathbf{Ab}\right) $) for $n\geq
2 $.
\end{definition}

Pro-homology groups are defined in \cite[\S II.3.2]{Mardesic-Segal-MR676973}%
, as follows:

\begin{definition}
\label{Def-Pro-homology-groups}\label{Pro-homology-groups}For a topological
space $X$, and an abelian group $G$, define its pro-homology groups as%
\begin{equation*}
pro\text{-}H_{n}\left( X,G\right) 
{:=}%
\left( H_{n}\left( Y_{j},G\right) \right) _{j\in \mathbf{J}}
\end{equation*}%
where $X\rightarrow \left( Y_{j}\right) _{j\in \mathbf{J}}$ is a polyhedral
expansion.
\end{definition}

\bibliographystyle{alpha}
\bibliography{Cosheaves}

\end{document}